\documentclass[12pt,letterpaper]{article}
\usepackage[top=2.2cm, bottom=2.2cm, left=2cm, right=2cm]{geometry}
\usepackage{amsmath,amsfonts,amssymb}
\usepackage{tikz-cd}
\usepackage{appendix}
\usepackage{amsthm}

\DeclareMathOperator{\Spec}{\operatorname{Spec}}
\newcommand{\Z}{\mathbb{Z}}
\newcommand{\Q}{\mathbb{Q}}
\newcommand{\CC}{\mathbb{C}}
\DeclareMathOperator{\Ho}{\mathop{\mathrm{Ho}}}
\DeclareMathOperator{\hocolim}{\mathop{\mathrm{hocolim}}}
\DeclareMathOperator{\colim}{\mathop{\mathrm{colim}}}
\DeclareMathOperator{\const}{\mathop{\mathrm{const}}}
\DeclareMathOperator{\chara}{\mathop{\mathrm{char}}}

\theoremstyle{plain}
\newtheorem{theorem}{Theorem}[section]
\newtheorem{varthm}{Theorem}

\newtheorem{lemma}{Lemma}[section]
\newtheorem{proposition}{Proposition}[section]
\newtheorem{corollary}{Corollary}[section]

\theoremstyle{definition}
\newtheorem{definition}{Definition}[section]

\theoremstyle{remark}
\newtheorem{remark}{Remark}[section]
\newtheorem{example}{Example}[section]

\usepackage{enumitem}

\title{Affine models for Noetherian schemes}
\author{Alexey G.~Gorinov, Egor S.~Kosolapov}
\date{}

\usepackage{hyperref}
\begin{document}
\maketitle

\begin{abstract}
Let $S$ be a base scheme, assumed separated and Noetherian. We define \emph{adequate classes} of morphisms of $S$-schemes by formalizing certain properties of homotopy equivalences of complex algebraic varieties. Other examples of adequate classes include morphisms of varieties over an algebraically closed field of positive characteristic which induce an isomorphism of \'etale cohomology, and weak equivalences in the cdh topology. An \emph{affine model} for a Noetherian scheme $X$ over $S$ is an affine scheme $M_X$ equipped with an adequate morphism $M_X\to X$. 

In this paper we construct affine models for arbitrary separated schemes of finite type over $S$. Our construction can be viewed as a generalization of the Jouanolou trick. As an application, we construct a mixed Hodge structure on the Leray spectral sequence of an arbitrary proper morphism $f:X\to Y$ of complex algebraic varieties, generalizing an argument by Donu Arapura which assumed $Y$ quasi-projective and $f$ projective.
\end{abstract}

\section{Introduction}
Let $S$ be a base scheme, which we assume separated and Noetherian unless stated otherwise.

\begin{definition}
A class $\mathcal{W}$ of morphisms in a subcategory $\mathcal{C}$ of schemes over $S$ is called \emph{adequate} if the following conditions are satisfied:
\begin{itemize}
\item $\mathcal{W}$ is closed under composition;
\item $\mathcal{W}$ contains the projections $X \times_S \mathbb{A}^1_S \to X$;
\item For any morphism $f: X \to Y$ in $\mathcal{C}$ and any cover of $X = X_1 \cup X_2$ by closed subschemes and $Y = Y_1 \cup Y_2$ by open subschemes, if $f(X_i) \subset Y_i$ and the induced morphisms $X_i \to Y_i$ and $X_1 \cap X_2 \to Y_1 \cap Y_2$ belong to $\mathcal{W}$, then $f \in \mathcal{W}$.
\end{itemize}
\end{definition}

One can think of arrows in $\mathcal{W}$ as homotopy equivalences of some type or another, see examples below.

An \emph{affine model} for a scheme $X$ is a scheme $Y \in \mathcal{C}$ affine over $\Z$ with a morphism $m: Y \to X$ that belongs to $\mathcal{W}$. One of the main results of this paper is as follows:

\begin{varthm}[Theorem~\ref{chap2:main_thrm}]\label{thm:main_intro}
    Let $\mathcal{C}$ be the category of separated schemes of finite type over a separated Noetherian scheme $S$, and let $\mathcal{W}$ be any class of adequate morphisms of $\mathcal{C}$. Then every scheme $X \in \mathcal{C}$ admits an affine model.
\end{varthm}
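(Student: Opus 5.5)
The plan is to induct on the number $n$ of affine open subschemes needed to cover $X$. Affine models will be glued along \emph{closed} subschemes. The point is that a pushout of affine schemes along closed immersions exists and is again affine: it is $\Spec$ of the fibre product of rings $A\times_C B$ (Ferrand's pushout). This matches the shape of the third axiom, where the source is covered by closed pieces and the target by open pieces.

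\textbf{Base case and setup.} For $n=1$, $X$ is affine and the identity is a model. I would first check that identities lie in $\mathcal{W}$, e.g.\ by applying the third axiom to a trivial cover, or else take $X\times_S\mathbb{A}^1_S\to X$ as the model. For the inductive step write $X=U\cup V$ with $U$ affine and $V$ covered by $n-1$ affines. Since $X$ is separated, $U\cap V$ is affine, and the inclusions $U\cap V\to U$, $U\cap V\to V$ are affine morphisms. By induction choose a model $m_V:M_V\to V$. Models of $U$ and of $U\cap V$ are these affine schemes themselves.

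\textbf{Key steps.} The goal is to build $M=M_1\cup M_2$, with $M_1,M_2$ closed, such that:
\begin{itemize}
\item $M_1$ maps to $U$ and $M_2$ maps to $V$;
\item $M_{12}=M_1\cap M_2$ maps to $U\cap V$;
\item all three restrictions lie in $\mathcal{W}$.
\end{itemize}
Given this, the third axiom gives $M\to X$ in $\mathcal{W}$. The maps glue to a morphism from the pushout because Ferrand pushouts along closed immersions of affines are pushouts in all schemes. Three checks remain.
\begin{enumerate}
\item $M$ is affine. This holds because it is $\Spec$ of $\mathcal{O}(M_1)\times_{\mathcal{O}(M_{12})}\mathcal{O}(M_2)$.
\item $M$ is of finite type over $S$. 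I would show that a fibre product $A\times_C B$ of finitely generated algebras over surjections $A\to C$, $B\to C$ is finitely generated, by a standard Artin--Tate style argument over a Noetherian base.
\item $M$ is separated. This is automatic for an affine scheme.
\end{enumerate}

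\textbf{The construction, which I expect to be the main obstacle.} The natural maps go the wrong way. $U\cap V\to U$ and $m_V^{-1}(U\cap V)\to M_V$ are open immersions rather than closed ones, and restricting a model $M_V\to V$ over the open $U\cap V$ need not stay in $\mathcal{W}$ (no base-change axiom is available). I would first try to fix both problems with a double mapping cylinder, using $\mathbb{A}^1$ to turn open data into closed data. Take $M_{12}$ affine together with maps to $U\cap V$ and to $M_V$, and embed it as a closed subscheme via graphs. For example, $M_{12}\hookrightarrow M_{12}\times_S M_V$ is a closed immersion since $M_V$ is separated. Then glue
\[
M_U\ \cup_{M_{12}\times\{0\}}\ (M_{12}\times_S\mathbb{A}^1_S)\ \cup_{M_{12}\times\{1\}}\ M_V,
\]
and check membership in $\mathcal{W}$ piece by piece with the $\mathbb{A}^1$-axiom and composition.

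To avoid the base-change issue, I would strengthen the induction hypothesis. The claim to carry through is that for any finite affine open cover $\{V_j\}$ of $V$ there is a model $M_V\to V$ whose preimages of all finite intersections $V_{j_1}\cap\dots\cap V_{j_k}$ are affine and are again models. Then $m_V^{-1}(U\cap V)$ is a model of $U\cap V$, because $U\cap V$ is covered by the $U\cap V_j$. This stronger statement is what the gluing step must reproduce, and verifying it for the glued $M$ is where I expect most of the work to lie.
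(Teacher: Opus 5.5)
\textbf{There is a genuine gap, at exactly the step you flagged.} The double mapping cylinder does not turn open immersions into closed ones; it only moves them to the two ends of the cylinder. The maps $M_{12}\times\{0\}\to U$ and $M_{12}\times\{1\}\to M_V$ still factor through open immersions, so you are gluing along a closed immersion on one side only. This has two consequences:
\begin{itemize}
\item $M_{12}\times_S\mathbb{A}^1_S$ is not a closed subscheme of the result, so the third axiom cannot be applied to your decomposition.
\item The result is usually not of finite type. For $U=\Spec k[x]$ and $M_{12}=\Spec k[x,x^{-1}]$, gluing just one end already gives the ring $k[x]+t\,k[x,x^{-1}][t]$, which is not finitely generated.
\end{itemize}

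\textbf{The missing idea is to use affine space, not $M_{12}$ or a cylinder, as the $\mathcal{W}$-contractible factor.} Since $M_{12}$ is affine of finite type, choose a closed embedding $e\colon M_{12}\hookrightarrow\mathbb{A}^N_S$ and glue along
\[
(M_{12}\to U,\ e)\colon M_{12}\hookrightarrow U\times_S\mathbb{A}^N_S,\qquad (M_{12}\to M_V,\ e)\colon M_{12}\hookrightarrow M_V\times_S\mathbb{A}^N_S .
\]
Each map is a graph $M_{12}\hookrightarrow M_{12}\times_S T$ (closed because $T$ is separated) followed by $e\times\mathrm{id}_T$, hence a closed immersion. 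The projections $U\times_S\mathbb{A}^N_S\to U$ and $M_V\times_S\mathbb{A}^N_S\to M_V\to V$ lie in $\mathcal{W}$ by the $\mathbb{A}^1$-axiom and composition. Your graph $M_{12}\hookrightarrow M_{12}\times_S M_V$ is also closed, but $M_{12}\times_S M_V\to V$ is not in $\mathcal{W}$ in general; over $\CC$ with $M_{12}=\mathbb{G}_m$ it is not a homotopy equivalence. With the $\mathbb{A}^N_S$ version, the Ferrand pushout is affine and of finite type, as in your checks 1--3. Its two pieces form a closed cover with scheme-theoretic intersection $M_{12}$. The third axiom, applied with the open cover $\{U,V\}$ of $X$, then finishes the step, and no cylinder is needed. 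This is the paper's construction.

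\textbf{The input $M_{12}$ is also not set up correctly.} $U\cap V$ is not affine when $V$ is not. For example, $X=\mathbb{P}^2$, $U=\{x_0\neq 0\}$, $V=\{x_1\neq0\}\cup\{x_2\neq0\}$ gives $U\cap V\cong\mathbb{A}^2\setminus\{0\}$. Only the inclusion $U\cap V\hookrightarrow V$ is affine. So you must take $M_{12}=m_V^{-1}(U\cap V)$, which is affine and maps to $M_V$, and you must show it is a model of $U\cap V$.

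The statement to carry through the induction is the one the paper uses: pulling $m_V$ back along any affine open immersion $Y\hookrightarrow V$ gives a model of $Y$. Your version, about preimages of intersections of the $V_j$, does not reach $U\cap V$. You also cannot assemble $m_V^{-1}(U\cap V)\to U\cap V$ from the pieces over the $U\cap V_j$. Those pieces form an open cover of the source, and the axioms only glue along closed covers of the source.

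The paper proves its version by restricting the closed decomposition $(M_{X'}\times_S\mathbb{A}^N_S)\cup_{M_Z}(U_n\times_S\mathbb{A}^N_S)$ to $m_X^{-1}(Y)$. The pieces become $(Y\times_X M_{X'})\times_S\mathbb{A}^N_S$ and $(U_n\cap Y)\times_S\mathbb{A}^N_S$, glued along $Y\times_X M_Z$. By induction these lie over models of $X'\cap Y$, $U_n\cap Y$ and $Z\cap Y$. The third axiom then applies with the open cover $\{X'\cap Y,\,U_n\cap Y\}$ of $Y$.
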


\begin{remark}
The affine models from the theorem will also be affine over $S$ by Lemma~\ref{chap2:lemma_closed_affine_embeddings}.
\end{remark}

We will now give a few examples of classes of adequate morphisms which we will explore in more detail later. 
In all these examples $\mathcal{C}$ will be as in Theorem~\ref{thm:main_intro}.

\begin{example}\label{ex:intro_spec_c}
    Let $S = \mathop{\mathrm{Spec}}(\mathbb{C})$. A morphism $f: X \to Y$  in $\mathcal{C}$ belongs to $\mathcal{W}$ if and only if it induces a homotopy equivalence $X(\CC)\to Y(\CC)$ of the spaces of complex points in the complex analytic topology.
\end{example}

\begin{example}\label{ex:intro_char_p}
        Let $S = \mathop{\mathrm{Spec}}(k)$ where $k$ is a field of characteristic $p>0$. A morphism $f: X \to Y$ in $\mathcal{C}$ belongs to $\mathcal{W}$ if and only if it induces an isomorphism
        \[
        H^{*}_{\mbox{{\scriptsize \'et}}}(Y; \mathcal{F}) \to H^{*}_{\mbox{{\scriptsize \'et}}}(X; f^{*}\mathcal{F})
        \]
        for any $l$-torsion sheaf $\mathcal{F}$ on $Y$ with $l\neq p$ a prime.
\end{example}

\begin{example}\label{ex:intro_cdh}
    Let $S$ be an arbitrary separated Noetherian base scheme. Then we can take $\mathcal{W}$ to be the class of morphisms that become weak equivalences in the model structure on simplicial presheaves induced by the cdh topology.
\end{example}

We discuss Examples~\ref{ex:intro_spec_c}, \ref{ex:intro_char_p} and~\ref{ex:intro_cdh} in detail in Sections~\ref{example_spec_c}, \ref{example_etale_case} and~\ref{example_cdh} respectively.

\begin{remark}
We do not know if one can replace cdh by Nisnevich in Example~\ref{ex:intro_cdh}.
\end{remark}

Affine models were inspired by the \emph{Jouanolou trick}~\cite{Jo73}, which we will now briefly review. Let $S=\Spec (k)$ where $k$ is an algebraically closed field. Suppose $X$ is a quasi-projective variety over~$k$. We want to construct an affine algebraic variety $M_X$ over~$k$, and a morphism $M_X\to X$ of $k$-varieties which would be a Zariski locally trivial fibre bundle with fibre $\mathbb{A}^n$ (for some $n$). We start with $X=\mathbb{P}^m$ and take $M_{\mathbb{P}^m}$ to be the subvariety of the affine space of all $(m+1)\times(m+1)$-matrices $A$ over $k$ given by the equations $A^2=A$ and $\mathop{\mathrm{rk}} A=1$. On the level of $k$-points, the morphism $M_{\mathbb{P}^m}\to \mathbb{P}^m$ takes a matrix $A$ to its image $\in\mathbb{P}^m(k)$.

If $X\subset\mathbb{P}^m$ is closed, we set $M_X=X\times_{\mathbb{P}^m}M_{\mathbb{P}^m}$. If $X$ is arbitrary quasi-projective, let $\bar X\supset X$ be the closure of $X$ in some $\mathbb{P}^m\supset X$. We may assume that after blowing up $\bar X-X$ if necessary the complement $\bar X-X$ is an effective Cartier divisor. Then the inclusion $X\to \bar X$ is affine. Set $M_X=X\times_{\mathbb{P}^m}M_{\mathbb{P}^m}$ and $\bar M_X=\bar X\times_{\mathbb{P}^m}M_{\mathbb{P}^m}$. Since the $X\to \bar X$ is affine, so is the inclusion $M_X\to\bar M_X$. As $\bar M_X$ is affine, we conclude that so is $M_X$. 

\smallskip

This construction was later generalized by Thomason~\cite{We89} to schemes having ample families of line bundles, i.e.\ to divisorial schemes. 
Let us mention a few examples, as well as a non-example:

\begin{example}
    Any projective scheme $X$ over an affine base $S$ is divisorial. 
    A quasi-projective variety over an algebraically closed field is divisorial.
\end{example}

\begin{example}
    Every separated, locally factorial Noetherian scheme $X$ over a field is divisorial. This follows from the results of~\cite{Bo91}. In particular, every smooth scheme over a field is divisorial.
\end{example}

\begin{example}
    A proper variety $X$ over an algebraically closed field $k$ with $\operatorname{Pic}(X)= 0$ cannot be divisorial. 
    To give a concrete example, in~\cite{Ei92}, Example 3.5, Eikelberg constructs a proper toric variety $X_\Sigma$ with $\operatorname{Pic}(X_\Sigma) = 0$ defined by a fan $\Sigma$ derived from the faces of a triangular prism with one vertex perturbed. 
\end{example}

In view of these examples it is natural to look for an analog of Jouanolou's and Thomason's constructions which works for a larger class of schemes but retains some of the useful properties. Among these properties, the first one that comes to mind is that over $\CC$, starting from a variety $X$ we get an affine variety $M_X$ and a morphism $M_X\to X$ which is a homotopy equivalence on the complex points. Axiomatizing this idea one arrives at the notion of an affine model. 

We will now briefly outline our construction and mention a few applications.

Assume for simplicity that the base $S=\Spec (k)$ where $k$ is an algebraically closed field, and $X = U_1 \cup U_2$ where $U_1$ and $U_2$ are affine open subschemes. Consider the pushout square

\[
\begin{tikzcd}
    U_1 \cap U_2 \arrow[hook]{r}{\iota_2} \arrow[hook]{d}{\iota_1} & U_2 \arrow{d} \\
    U_1 \arrow{r} & X.
\end{tikzcd}
\]

As $X$ is separated, $U_1 \cap U_2$ is affine. Let $e: U_1 \cap U_2 \hookrightarrow \mathbb{A}^n_k$ be a closed embedding. We then consider the diagram

\[
\begin{tikzcd}
    U_1 \cap U_2 \arrow[hook]{r}{(\iota_2, e)} \arrow[hook]{d}{(\iota_1, e)} & U_2 \times_k \mathbb{A}^n_k \\
    U_1 \times_k \mathbb{A}^n_k.
\end{tikzcd}
\]

We will see that
\begin{itemize}
    \item $(\iota_1, e)$ and $(\iota_2, e)$ are closed embeddings;
    \item The pushout $M_X$ of this diagram (in schemes over $k$) exists and is affine;
    \item The induced morphism $m_X: M_X \to X$ lies in $\mathcal{W}$.
\end{itemize}

Note that the fibers of $m$ over closed points of $U_1 - U_2$ and $U_2 - U_1$ are isomorphic to $\mathbb{A}^n_k$, while over closed points of $U_1 \cap U_2$ they consist of two copies of $\mathbb{A}^n_k$ glued along a point. So typically, $m_X$ will not be an affine bundle, and $M_X$ will not or smooth or irreducible even if $X$ is, in contrast to what one gets by applying Jouanolou's or Thomason's constructions. This notwithstanding, in some respects $m_X$ does resemble a locally trivial bundle with contractible fiber:  
%
%

\begin{varthm}[Theorem~\ref{chap3:main_thm_Spec_C}]\label{thm:intro_ho_eq}
For every complex algebraic variety $X$ there exist an affine complex algebraic variety $M_X$ and a morphism $m_X:M_X\to X$ which induces a homotopy equivalence $M_X(\CC)\to X(\CC)$.

Moreover, $m_X$ has contractible fibers, induces a (hyper)cohomology isomorphism for every bounded below complex of sheaves on $X(\CC)$, and the base change theorem is true for pulling back arbitrary morphisms of complex algebraic varieties along $m_X$.
\end{varthm}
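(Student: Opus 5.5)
We would deduce the theorem from Theorem~\ref{thm:main_intro}, applied with $S=\Spec(\CC)$ and a suitably chosen adequate class. There are two candidates. The first is the class $\mathcal{W}$ of Example~\ref{ex:intro_spec_c}, consisting of morphisms inducing homotopy equivalences on complex points. The second is a stronger class $\mathcal{W}'$, consisting of morphisms $f$ such that $f(\CC)$ is a proper surjective map with contractible fibers that is also a homotopy equivalence, and that remains such after any base change. The second class has the advantage that a single model yields all the listed properties at once.

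For $\mathcal{W}$ itself we check the three axioms of an adequate class.
\begin{itemize}
\item Closure under composition is clear.
\item The projections $X\times\mathbb{A}^1\to X$ become $X(\CC)\times\CC\to X(\CC)$, which are homotopy equivalences.
\item The gluing axiom is the substantive one. Let $X=X_1\cup X_2$ with the $X_i$ closed and $Y=Y_1\cup Y_2$ with the $Y_i$ open. The open cover $\{Y_i(\CC)\}$ is excisive, so $Y(\CC)$ is the homotopy pushout of $Y_1\leftarrow Y_{12}\to Y_2$. A cover by closed subvarieties is also a homotopy pushout, because complex varieties admit triangulations in which closed subvarieties are subcomplexes (Łojasiewicz), so $X_{12}\hookrightarrow X_i$ are cofibrations. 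The gluing lemma for homotopy pushouts then gives that $f$ is a homotopy equivalence.
\end{itemize}
Theorem~\ref{thm:main_intro} now produces an affine $M_X$ together with $m_X\in\mathcal{W}$. The remark following that theorem shows $M_X$ is affine and of finite type over $\CC$, hence an affine complex variety.

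The additional properties should come from the explicit construction rather than from adequacy alone. We would verify by induction along the construction that $m_X$ is proper on complex points and has contractible fibers. The building blocks are the pushouts of $U_i\times\mathbb{A}^n$ along closed embeddings, and over each point a fiber is a finite union of affine spaces glued along contractible closed pieces, in the shape of a tree. The cleanest route is to rerun Theorem~\ref{thm:main_intro} with the stronger class $\mathcal{W}'$ of morphisms that are universally proper-with-contractible-fibers. We would check adequacy of $\mathcal{W}'$ fiberwise: over a point of $Y_1\cap Y_2$ the fiber is $F_1\cup F_2$ glued along $F_{12}$, with all three contractible, hence contractible by the gluing argument above.

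Here, however, we hit a difficulty: the projection $X\times\mathbb{A}^1\to X$ is not proper. We would therefore replace properness by the weaker requirement that $m_X(\CC)$ be a locally trivial-ish map that is locally fibre-homotopy trivial on an open cover, giving a cohomologically proper, Vietoris–Begle type condition. The most workable version is to require that for every $y$ and every sufficiently small neighborhood basis $V\ni y$, the inclusion $m^{-1}(y)\to m^{-1}(V)$ is a homotopy equivalence, with contractible fibers. This condition is stable under composition and gluing, and it holds for $\mathbb{A}^1$-projections. From it the Vietoris–Begle theorem, giving $Rm_*m^*\mathcal{F}\simeq\mathcal{F}$ stalkwise via $\varinjlim_V H^*(m^{-1}V)=H^*(\text{fiber})$, yields the hypercohomology isomorphism for bounded below complexes. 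The same condition is preserved under base change along any $g:Z\to X$, since the fibers are unchanged, and this gives base change.

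The main obstacle is exactly this choice: finding a topological condition that is simultaneously stable under the closed/open gluing axiom and holds for $\mathbb{A}^1$-projections, while still being strong enough for Vietoris–Begle. We expect the hardest verification to be the gluing step for the neighborhood condition at points of $Y_1\cap Y_2$, where closed pieces meet.
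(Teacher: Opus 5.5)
Your proof of the first assertion is correct, but it takes a different route from the paper. You check the gluing axiom for $\mathcal{W}$ by writing $Y(\CC)$ as a homotopy pushout over a numerable open cover and $X(\CC)$ as a strict pushout along cofibrations (via a {\L}ojasiewicz triangulation compatible with $X_1$, $X_2$, $X_1\cap X_2$), and then apply the gluing lemma. The paper instead compares the open and closed Mayer--Vietoris sequences for local systems, uses the groupoid van Kampen theorem, and finishes with a Whitehead-type criterion. Your version is shorter but imports the triangulation theorem; the paper's stays within sheaf theory.

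The remaining assertions, however, are not proved. As you notice, $\mathcal{W}'$ does not contain the $\mathbb{A}^1$-projections. Your replacement neighbourhood condition is never shown to be closed under composition or under the open/closed gluing; you leave the gluing open yourself, and the good neighbourhood bases for $f|_{X_1}$, $f|_{X_2}$, $f|_{X_1\cap X_2}$ need not even be compatible. So not even contractibility of the fibres is established. The missing observation is simpler than what you attempt. The base change of the constructed $m_X$ along any $Z\to X$ is, on complex points, again an iterated pushout of the same shape, with each piece replaced by its base change (e.g.\ $U_i\times\mathbb{A}^n$ by $g^{-1}(U_i)\times\mathbb{A}^n$). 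Hence it lies in $\mathcal{W}$ by adequacy (Lemma~\ref{chap2:stab_of_pullback}), and taking $Z$ to be a point gives contractible fibres. Equivalently, drop properness from $\mathcal{W}'$: the class of morphisms all of whose base changes are homotopy equivalences is itself adequate, and Theorem~\ref{thm:main_intro} applies to it directly.

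For the hypercohomology and base change statements, your condition is the wrong kind of hypothesis.

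\textbf{Cohomology isomorphism.} A homotopy equivalence $m^{-1}(y)\hookrightarrow m^{-1}(V)$ only controls cohomology with (locally) constant coefficients. For a general sheaf $\mathcal{F}$, $m^*\mathcal{F}$ is not constant on $m^{-1}(V)$, and Vietoris--Begle requires a closed (e.g.\ proper) map, which $m_X$ is not. The paper instead inducts along the construction. The map $H^*(U;\mathcal{F})\to H^*(U\times\CC^n;\pi^*\mathcal{F})$ is an isomorphism for \emph{every} sheaf by homotopy invariance with pulled-back coefficients (Lemma~\ref{chap3:simple_projection_induce_iso}). The open Mayer--Vietoris sequence on $X$ is then compared with the closed one on $M_X$ via the five lemma, and a spectral sequence handles bounded below complexes.

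\textbf{Base change.} Saying ``the fibres are unchanged'' does not show that your condition survives base change along $g\colon Z\to X$. The condition concerns $V\times_X M_X$ for neighbourhoods $V$ of points of $Z$, and these are not preimages of neighbourhoods in $X$. You also do not explain how the condition would give the isomorphism on stalks at $z\in M_X$, which are colimits over neighbourhoods of $z$, not over sets $m^{-1}(W)$. The paper uses a cohomological local condition: openness plus $\varinjlim_{U\ni z}H^*(m(U);\mathcal{F})\cong\varinjlim_{U\ni z}H^*(U;m^*\mathcal{F})$. This holds for projections and passes through the open/closed gluing by Mayer--Vietoris and exactness of filtered colimits. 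The paper combines it with the global cohomology isomorphism (Lemmas~\ref{chap4:lemma_base_change_for_affine_model_global_version} and~\ref{chap4:lemma_affine_model_is_almost_submersion}).
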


As an application, we generalize a result of Arapura, proven in~\cite{Ar03} for projective morphisms with quasi-projective target variety:

\begin{varthm}[Theorem~\ref{chap4:main_thm}]\label{thm:intro_leray_is_motivic}
    Let $f: X \to Y$ be an arbitrary proper morphism between arbitrary complex algebraic varieties $X$ and $Y$. Then the Leray spectral sequence associated with this morphism is motivic.
\end{varthm}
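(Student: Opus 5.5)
The proof of Theorem~\ref{thm:intro_leray_is_motivic} will follow Arapura's argument in~\cite{Ar03}, with his quasi-projectivity and projectivity hypotheses replaced by affine models from Theorem~\ref{thm:intro_ho_eq}. Arapura's mechanism is as follows. The Leray filtration on $H^*(X)$ for $f$ coincides, up to renumbering, with a filtration obtained from a cellular or skeletal filtration of the base. On an affine variety $Y$ one builds such a filtration from a chain of closed subvarieties $Y_0\subset Y_1\subset\dots\subset Y_n=Y$ with $\dim Y_i=i$. These are chosen, via Nori's basic lemma, so that $H^j(Y_i,Y_{i-1};R^qf_*\mathbb{Z})$ vanishes for $j\neq i$. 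Then the spectral sequence of the filtered space $f^{-1}(Y_\bullet)$ is expressed in terms of relative cohomology of pairs of varieties, and each term carries a mixed Hodge structure compatible with the differentials. Only the affineness of $Y$ is essential for the basic lemma. Arapura uses projectivity of $f$ and quasi-projectivity of $Y$ to reduce to the affine case via the Jouanolou trick.

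The plan has four steps. First, take the affine model $m=m_Y\colon M_Y\to Y$ of Theorem~\ref{thm:intro_ho_eq}, and form the fibre product $X'=X\times_Y M_Y$ with projections $f'\colon X'\to M_Y$ and $m'\colon X'\to X$. Second, show that pulling back along $m$ identifies the two Leray spectral sequences. By the base change statement of Theorem~\ref{thm:intro_ho_eq} we have $m^*Rf_*\mathbb{Z}\simeq Rf'_*\mathbb{Z}$, so $m^*R^qf_*\mathbb{Z}\cong R^qf'_*\mathbb{Z}$. Since $m$ induces isomorphisms on hypercohomology of bounded below complexes, applying this to the truncations $\tau_{\le q}Rf_*\mathbb{Z}$ shows that the Leray spectral sequences of $f$ and $f'$ are isomorphic from $E_2$ onward. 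The same argument gives $H^*(X)\cong H^*(X')$ compatibly with the Leray filtrations. Third, rerun Arapura's construction on the affine variety $M_Y$ for the proper map $f'$, choosing the Nori filtration adapted to the constructible sheaves $R^qf'_*\mathbb{Z}$. Fourth, check that the isomorphism in the second step is induced by the algebraic morphisms $m$ and $m'$. Since the mixed Hodge structures on both sides come from Deligne's theory applied to algebraic maps, this compatibility is automatic. The resulting motivic structure is then independent of the choices, or at least well defined after transport.

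The main obstacle is the third step. Arapura's proof uses projectivity of $f$ in two places. One is to know that the $R^qf_*$ are constructible with good properties. The other is to realize the relevant relative cohomology groups of pairs as motives, for instance via Nori's category or via mixed Hodge structures on pairs $(f'^{-1}Y_i,f'^{-1}Y_{i-1})$. For a proper $f'$, constructibility of $Rf'_*\mathbb{Z}$ still holds, and the relative cohomology of pairs of arbitrary varieties still carries mixed Hodge structures. So the only genuine point to verify is the degeneration-type vanishing from the basic lemma. That lemma requires only that $M_Y$ be affine and that the sheaf be constructible. I would first try to check that Arapura's argument, read carefully, uses projectivity only through properness and constructibility, and fall back on Chow's lemma together with cohomological descent if it does not.

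A secondary subtlety is that $M_Y$ is reducible and singular. The basic lemma, however, holds for arbitrary affine varieties, so this should cause no trouble.
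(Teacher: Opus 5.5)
Your Steps 1--3 follow the paper's reduction. You pull $f$ back to an affine model $m_Y\colon M_Y\to Y$, use the cohomology isomorphism and base change for $m_Y$ to identify the Leray spectral sequences of $f$ and $f'$ from $E_2$ on, and run Arapura's cellular-filtration argument over the affine base $M_Y$. Your worry about projectivity in Step 3 is unfounded. The comparison $E_1(Y_\bullet,Rf_*\mathcal{F}^\bullet)\cong E_1(f^{-1}Y_\bullet,\mathcal{F}^\bullet)$ only uses $f_*=f_!$ and proper base change along the open inclusions $Y-Y_a\to Y$ (Lemma~\ref{chap4:functoriality_of_basic_spec_seq}), so no Chow lemma or descent is needed.

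\textbf{The genuine gap is Step 4.} Over a non-affine $Y$, the $E_r$-terms $H^p(Y;R^qf_*\Q)$ are not of the form $\Phi\mathcal{H}^\bullet(\text{pair of varieties})$. So the theory $\mathcal{H}^\bullet$ gives them no structure for the transported one to be ``automatically compatible'' with. The lift for $f$ is \emph{defined} by transport, through a choice of affine model $M_Y$ and a choice of cellular filtration of $M_Y$. Moreover, ``motivic'' asks for a lift of the Leray spectral sequence as a \emph{functor} on $\mathcal{C}^{prop}$, whose objects are pairs $(f,Z)$ with coefficients $j_!G_{X-Z}$. So you must show that the transported lift is independent of these choices and natural for commutative squares. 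Your sentence ``independent of the choices, or at least well defined after transport'' is exactly the part that needs proof. It is also where the non-quasi-projective setting actually bites, because these affine models are not functorial in $Y$.

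Concretely, four things are missing.
\begin{enumerate}
\item[(i)] Independence of the cellular filtration. Dominate two cellular filtrations by a third one containing $Y^{(1)}_i\cup Y^{(2)}_i$, which Beilinson's lemma allows (Corollary~\ref{chap4:cell_filtr_from_arbitrary_filtr}).
\item[(ii)] Naturality for squares whose bases $Y'\to Y$ are both affine. Choose the cellular filtration of $Y$ so that it contains the closures of the images of the $Y'_i$.
\item[(iii)] Independence of the affine model. Compare $M^{(1)}_Y$ and $M^{(2)}_Y$ through $M^{(1)}_Y\times_Y M^{(2)}_Y$, which is affine because $Y$ is separated, so (ii) applies to its two projections. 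You must then prove that these projections induce isomorphisms on the cohomology of \emph{every} sheaf, so that conservativity of $\Phi$ upgrades the resulting maps of lifts to isomorphisms. This is not formal. The paper proves it by induction on the number of affine opens used to build the model, via the open and closed Mayer--Vietoris sequences (Lemma~\ref{aux_lemma}).
\item[(iv)] For a general square with $Y,Y'$ not affine, you need affine models $M_{Y'}\to M_Y$ covering $Y'\to Y$. This is the second half of Theorem~\ref{chap2:main_thrm}: take an affine model of $Y'\times_Y M_Y$.
\end{enumerate}
Without (i)--(iv) you have produced some structure on the Leray spectral sequence of each individual $f$, but not a lift of the functor, so the theorem is not yet proved.
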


Informally, a spectral sequence being motivic means that any functorial construction on the cohomology groups of algebraic varieties (such as e.g.\ mixed Hodge structures) that is compatible with the long exact sequence of relative cohomology groups extends to this spectral sequence. This yields the following corollary:

\begin{varthm}[Corollary~\ref{chap4:main_corollary}]\label{cor:intro_leray_has_mhs}
    For an arbitrary proper morphism $f: X \to Y$ of complex algebraic varieties, the Leray spectral sequence associated with $f$ carries a natural mixed Hodge structure.
\end{varthm}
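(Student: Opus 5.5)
The plan is to deduce the corollary formally from Theorem~\ref{thm:intro_leray_is_motivic}, using two inputs: Deligne's mixed Hodge theory for pairs of complex algebraic varieties, and the fact that mixed Hodge structures form an abelian category. Concretely, I expect the proof of Theorem~\ref{thm:intro_leray_is_motivic} (Theorem~\ref{chap4:main_thm}) to produce the following data. Let $m_Y:M_Y\to Y$ be the affine model of Theorem~\ref{thm:intro_ho_eq}, and set $X'=X\times_Y M_Y$, so that $f':X'\to M_Y$ is proper with affine target. By the cohomological and base change properties of $m_Y$ in Theorem~\ref{thm:intro_ho_eq}, the Leray spectral sequence of $f$ is identified with that of $f'$. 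Following Arapura, one then chooses a filtration $\emptyset=Z_{-1}\subset Z_0\subset\dots\subset Z_n=M_Y$ by closed subvarieties (Beilinson's basic lemma) such that $H^i(Z_p,Z_{p-1};R^qf'_*\mathbb{Z})=0$ for $i\neq p$. Putting $X_p=f'^{-1}(Z_p)$, the spectral sequence of the exact couple built from the groups $H^*(X_p,X_{p-1})$ and the long exact sequences of the triples $(X_p,X_{p-1},X_{p-2})$ is naturally isomorphic, from $E_2$ on, to the Leray spectral sequence. I take this to be the content of the word \emph{motivic}.

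Granting this, the argument has three steps. First, each $H^*(X_p,X_{p-1})$ carries Deligne's mixed Hodge structure. The maps in the exact couple are either induced by morphisms of pairs of varieties or are connecting homomorphisms of triples, and both kinds are morphisms of mixed Hodge structures. Second, because the category of mixed Hodge structures is abelian, every page $E_r$ and every differential $d_r$ obtained from the exact couple lies in that category. Likewise the filtration on the abutment $H^*(X')\cong H^*(X)$, given by the images of the $H^*(X',X_{p})$, or equivalently by the kernels of restriction to the $X_p$, consists of sub-mixed Hodge structures. Third, transporting everything through the isomorphisms of the previous paragraph puts a mixed Hodge structure on every term of the Leray spectral sequence of $f$.

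The step I expect to be the main obstacle is \emph{naturality}: showing that the resulting mixed Hodge structure does not depend on the choice of $M_Y$ and of the filtration $Z_\bullet$, and is functorial in $f$. I would first treat the filtration. Given two Beilinson filtrations, Arapura's method should produce a third filtration compatible with both, and the comparison maps between the corresponding exact couples are induced by inclusions of pairs of varieties. These maps are therefore morphisms of mixed Hodge structures, and they induce the identity on the Leray spectral sequence. Next I would treat the model. Given two affine models $M_Y$ and $M'_Y$, I would use $M_Y\times_Y M'_Y\to Y$ as a common refinement, or else an affine model of it mapping to both, using that $\mathcal{W}$ is closed under composition. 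The projections are morphisms of varieties compatible with the identifications with the Leray spectral sequence of $f$, so the induced isomorphisms respect the mixed Hodge structures. Functoriality for a commutative square of proper maps would be handled the same way, by choosing the models and filtrations compatibly.
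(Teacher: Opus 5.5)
Your outline is the paper's own argument: the corollary comes from Theorem~\ref{chap4:main_thm}. That theorem is proved by reducing to an affine target via a model, using cellular filtrations and the exact couple of relative cohomology groups, and then checking independence of the filtration, independence of the model, and functoriality. There is one gap, in the independence of the model. You justify passing to $W=M_Y\times_Y M'_Y$ (or a model of it) ``using that $\mathcal{W}$ is closed under composition'', and you then assert that the projections are compatible with the Leray spectral sequence of $f$. Membership in $\mathcal{W}$ only means a homotopy equivalence on complex points, which gives cohomology isomorphisms only for locally constant coefficients. The $E_2$-comparison (Lemma~\ref{chap4:comp_of_spec_sequences}, together with proper base change) instead needs $H^p(Y;\mathcal{G})\to H^p(W;\mathcal{G}|_W)$ to be an isomorphism for the constructible, typically not locally constant, sheaves $\mathcal{G}=R^qf_*\mathbb{Q}$. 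A morphism in $\mathcal{W}$ with affine source can destroy the Leray spectral sequence. For example, $\{0\}\hookrightarrow\mathbb{A}^1_\CC$ is such a morphism. Pulling back the proper map $\{1\}\hookrightarrow\mathbb{A}^1_\CC$ along it gives the empty map, whose Leray spectral sequence vanishes, while the original one has $E_2^{0,0}=\mathbb{Q}$.

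What you need is the stronger all-sheaves property. This is a feature of the specific construction (part~\ref{item:coho_iso} of Theorem~\ref{chap3:main_thm_Spec_C}), not of $\mathcal{W}$, and it must be re-verified for $W\to M'_Y$.
\begin{itemize}
\item The paper does this in Lemma~\ref{aux_lemma}. The map $W\to M'_Y$ is the base change of the iterated closed-cover gluing that defines $M_Y$, so it is again glued from products with affine spaces, and the same open/closed Mayer--Vietoris induction applies.
\item Alternatively, you can derive it formally for a sheaf $\mathcal{G}$ on $Y$. Base change along $m'_Y$ (part~\ref{item:base_change}) and part~\ref{item:coho_iso} applied to both models give $R\Gamma(W;\mathcal{G}|_W)\cong R\Gamma(M'_Y;(m'_Y)^*Rm_{Y*}m_Y^*\mathcal{G})\cong R\Gamma(M_Y;m_Y^*\mathcal{G})\cong R\Gamma(Y;\mathcal{G})$.
\item Note that $W$ is already affine, since it is closed in $M_Y\times_\CC M'_Y$ because $Y$ is separated. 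So no further model is needed.
\item The same issue arises in your functoriality step. There the model of $Y'$ lying over $M_Y$ is obtained by modelling $Y'\times_Y M_Y$, and its map to $Y'$ must have the all-sheaves property, not merely lie in $\mathcal{W}$.
\end{itemize}
A smaller correction: the exact couple comes from the triples $(X',X_p,X_{p-1})$, with $D_1^{p,q}=H^{p+q}(X',X_{p-1})$ and $E_1^{p,q}=H^{p+q}(X_p,X_{p-1})$. The triples $(X_p,X_{p-1},X_{p-2})$ only produce $d_1$.
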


The idea of the proof of Theorem~3.1 in~\cite{Ar03} is as follows: Let $f:X\to Y$ be a morphism of complex algerbaic varieties. Firstly, one can define a spectral sequence associated with an exhaustive filtration by closed subsets for any topological space. In the algebraic case these spectral sequences are clearly motivic.

Secondly, when the target $Y$ of the morphism $f: X \to Y$ is affine, there exists a filtration of $Y$ by closed subvarieties such that the spectral sequence obtained by filtering $X$ by the preimages is isomorphic to the Leray spectral sequence starting from the second page. This parallels the construction of the Leray spectral sequence for a fiber bundle built from a filtration of $Y$ by cellular skeleta. In fact, by Beilinson's lemma (see Lemma~\ref{chap4:Beilinson_lemma}) one can generalize the notion of a cellular filtration to any morphism $f: X \to Y$ and any constructible sheaf $\mathcal{F}$ on $X$. 

Thirdly, if the target $Y$ is not necessarily affine, one uses the Jouanolou trick to construct an affine variety $M_Y$ and a morphism $M_Y\to Y$ such that the Leray spectral sequence of $f:X\to Y$ and of the pullback $X\times_Y M_Y\to M_Y$ are isomorphic.

Our proof of Theorem~\ref{thm:intro_leray_is_motivic} follows the same plan up until the last step, where we use our construction of affine models, the Jouanolou trick not being available for non-quasi-projective varieties. For affine models constructed using the Jouanolou trick it is straightforward to check that the Leray spectral sequences of $X\to Y$ and $X\times_Y M_Y\to M_Y$ are isomorphic. In our case this is still true but requires some work.

\smallskip

Theorems~\ref{thm:intro_ho_eq} and~\ref{thm:intro_leray_is_motivic} have \'etale analogs, namely Theorems~\ref{chap3:main_thrm_etale_case} and~\ref{chap4:etale_case_motivic_nature} respectively.

\smallskip

\begin{remark}
In a companion paper~\cite{egor_non_proper} Theorems~\ref{thm:intro_leray_is_motivic} and~\ref{cor:intro_leray_has_mhs} will be extended to the case when $f$ is not necessarily proper. 
\end{remark}

\begin{remark}
Theorem~\ref{thm:intro_ho_eq} has an analog for complex analytic spaces such that the underlying topological space is Hausdorff and has a countable dense subset; affine schemes get replaced with Stein spaces. 
The details will appear elsewhere.
\end{remark}

\begin{remark}
Another approach to equipping the Leray spectral sequence with a mixed Hodge structure is via M.~Saito's theory of mixed Hodge modules, see~\cite[Remark 4.6(2)]{saito} and~\cite[Corollary 14.14]{mhs}. We expect our approach to agree with M.~Saito's.
\end{remark}

{\bf Acknowledgements:} We are grateful to Arshak Aivazyan for useful conversations, and to Semyon (Sam) Molokov for pointing out the reference~\cite{An16} to us.

\medskip

{\bf Organization:} The rest of the paper is organized as follows:
\begin{itemize}
    \item \textbf{Section \ref{pushouts_in_cat_of_schemes}:} We review pushouts in the category of schemes following Karl Schwede \cite{Sch05} and prove a few corollaries and modifications. The main results are Theorem~\ref{chapter1:main_theorem} and Corollary~\ref{chapter1:pushforward_of_finite_type}.
    \item \textbf{Section \ref{Construction_of_aff_model}:} We construct affine models, first assuming the base scheme $S$ is affine (Section~\ref{sec:main_thm_affine}) and then in general (Section~\ref{sec:main_thm_general}). The main result is Theorem \ref{chap2:main_thrm}. 
    \item \textbf{Section \ref{section_examples}:} We analyze Examples~\ref{ex:intro_spec_c}-\ref{ex:intro_cdh} and check that they indeed describe adequate classes of morphisms. For schemes over $\CC$ or a field of positive characteristic we additionally establish properties of affine models which are needed to prove the motivic nature of the Leray spectral sequence. The main results are Theorems~\ref{chap3:main_thm_Spec_C}, \ref{chap3:main_thrm_etale_case}, and \ref{chap3:A1_when_adequate}, which treat the cases of $S=\Spec (\CC)), S=\Spec(k), \chara k>0$, and $S$ arbitrary separarated Noetherian respectively.
    \item \textbf{Section \ref{Motivic_nature_of_Leray}:} We discuss the Leray spectral sequence of an arbitrary proper morphism of complex algebraic varieties. 
    After that, we briefly outline the analog in the \'etale case. The main results are Theorem \ref{chap4:main_thm} and Theorem \ref{chap4:etale_case_motivic_nature}, which cover the cases of $S=\Spec (\CC)$ and $S=\Spec(k),\chara k>0$ respectively.
    \item \textbf{Appendix:} The paper has an appendix (Appendix~\ref{appendix_Mayer_Vietoris}) in which we describe the Mayer-Vietoris exact sequence for sheaves on categories with a coverage. 
\end{itemize}

\section{Pushouts in the Category of Schemes}\label{pushouts_in_cat_of_schemes}

In this section we review pushouts in the category of schemes following mainly Schwede’s paper \cite{Sch05}, and prove a few modifications and generalizations along the way such as Lemma~\ref{chapter1:condition_variety_alg}, and its immediate geometric Corollary~\ref{chapter1:condition_variety_geom} which gives a sufficient condition for the resulting pushout to be a variety. We will need these later in order to construct affine models. All schemes in this section will be over a given base scheme $S$.

Let $\{X_i\}$ be a finite collection of schemes. Suppose that for each unordered pair of indices $\{i, j\}$ we are given a scheme $Z_{ij}$ and morphisms $\varphi_{\{i,j\},i}: Z_{ij} \rightarrow X_i$ and $\varphi_{\{i,j\},j}: Z_{ij} \rightarrow X_j$. This data gives rise to the diagram:
\[
\begin{tikzcd}
	& {X_i} \\
	{Z_{ij}} \\
	& {X_j}.
	\arrow["{\varphi_{\{i,j\},i}}", from=2-1, to=1-2]
	\arrow["{\varphi_{\{i,j\},j}}"', from=2-1, to=3-2]
\end{tikzcd}
\]
Putting together these diagrams for all $i,j$ we get a large diagram $D$ that contains all $X_i, Z_{ij}$ and all arrows $\varphi_{\{i,j\},i}$ and $\varphi_{\{i,j\},j}$.

\begin{theorem}\label{chapter1:main_theorem}
    If all morphisms in the diagram $D$ are closed embeddings, then the pushout exists in the category of schemes. If moreover all schemes are affine over $\Z$, then the pushout is isomorphic to the pushout in the category of affine schemes, and each $X_i$ embeds in the pushout as a closed subscheme. 
\end{theorem}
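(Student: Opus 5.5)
I would first treat the case where all schemes are affine, then reduce the general case to it by gluing. For the affine case, write $X_i=\Spec A_i$ and $Z_{ij}=\Spec B_{ij}$. The closed embeddings correspond to surjections $A_i\to B_{ij}$ and $A_j\to B_{ij}$. I would set $A=\lim A_i$, the fibre product ring $\{(a_i)\in\prod A_i : \varphi_{\{i,j\},i}^*(a_i)=\varphi_{\{i,j\},j}^*(a_j)\ \forall i,j\}$, and $P=\Spec A$. Clearly $P$ is the pushout in affine schemes. The key algebraic point is that each projection $A\to A_i$ is surjective, so $X_i\to P$ is a closed embedding.

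With two pieces this is the standard fibre product of rings, where surjectivity of $A_2\to B_{12}$ lifts any $a_1$. With finitely many pieces I would argue by induction on the number of $X_i$, pushing out one piece at a time. This needs care: the $Z_{ij}$ are given pairwise with no compatibility on triple overlaps. In that case, is $A\to A_i$ still surjective? I expect this to be the main obstacle. If it fails in general, I would instead prove the weaker statement that the image of $\Spec A_i\to\Spec A$ is closed with $A\to A_i$ surjective up to the needed compatibility, replacing $X_i$ by its scheme-theoretic image if necessary. I would follow Schwede's treatment \cite{Sch05}, where the key input is that $\Spec$ of a fibre product of rings along surjections is the topological pushout.

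Next I would show that $P$ is the pushout in all schemes, not just affine ones. Using Schwede's result that for a surjection-fibre-product the underlying space of $\Spec A$ is the topological pushout and the structure sheaf is the fibre product of the pushed-forward sheaves, a morphism to an arbitrary scheme $T$ is determined locally. For each point of $P$ I would find a basic open $D(f)$ such that localization commutes with the finite limit. Localization is exact, so this holds, and $A_f$ is again the limit of the $(A_i)_f$. Compatible maps $X_i\to T$ then glue over such opens after refining to affine opens of $T$.

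For the general (non-affine) case, I would construct the pushout topologically as the quotient space $|P|$ of $\coprod|X_i|$ by the identifications, and give it the fibre product sheaf of rings. I would then check that it is locally affine: around a point, choose affine opens $U_i\subset X_i$ whose preimages in the $Z_{ij}$ agree. For closed embeddings this is arranged by shrinking to opens saturated under the equivalence relation, using that closed embeddings are homeomorphisms onto closed sets. On such a saturated affine chart the affine case applies. The universal property then follows from the local universal property together with gluing of morphisms.
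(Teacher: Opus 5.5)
Your suspicion about triple overlaps is right, and the gap cannot be closed. With three or more pieces and unconstrained pairwise data, $A\to A_i$ need not be surjective, and the closed-embedding clause is false as stated.

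Take $X_1=X_2=X_3=\Spec k[t]$ and $Z_{12}=Z_{23}=Z_{13}=\Spec k$, where $Z_{12}$ maps to $t=0$ in $X_1$ and in $X_2$, $Z_{23}$ maps to $t=0$ in $X_2$ and in $X_3$, and $Z_{13}$ maps to $t=1$ in $X_1$ and to $t=0$ in $X_3$. Everything is affine and every arrow is a closed embedding. However, $A=\{(f_1,f_2,f_3)\in k[t]^3: f_1(0)=f_2(0)=f_3(0)=f_1(1)\}$, so the image of $A\to k[t]$ is $\{f: f(0)=f(1)\}$. Worse, in any cocone, hence in any pushout, the points $t=0$ and $t=1$ of $X_1$ have the same image, since they are linked through $Z_{12},Z_{23},Z_{13}$. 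So $X_1$ cannot embed at all, and your fallback of passing to scheme-theoretic images proves a different statement.

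The same chains of identifications break your non-affine step. Once two points $p\neq q$ of a single $X_i$ are identified, every saturated open containing $p$ also contains $q$. If $p,q$ lie in no common affine open of $X_i$, there is no saturated affine chart (this is Ferrand's obstruction to pinching). So ``shrinking to saturated opens'' cannot produce the charts you need.

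The paper gives no argument beyond the citation of Schwede. It only ever applies the theorem to two-piece diagrams $X_1\hookleftarrow Z\hookrightarrow X_2$ of affine schemes: Lemma~\ref{chap2:main_thrm_base_ind}, the induction step of Lemma~\ref{lemma:main_ch_2_affine_case}, and the proof of Theorem~\ref{chap2:main_thrm}. For that case your plan is sound and follows Schwede's route. The map $A_1\times_B A_2\to A_i$ is visibly surjective, $\Spec$ of the fibre product is the ringed-space pushout, and the universal property among all schemes follows.

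In the non-affine two-piece case, the chart step needs an actual argument; ``closed embeddings are homeomorphisms onto closed sets'' is not enough. Given $x\in Z$ and affine opens $U_0\ni x$ of $X$ and $V_0\ni x$ of $Y$:
\begin{itemize}
\item pick a neighbourhood $W$ of $x$ in $Z$ that is principal in both affine schemes $U_0\cap Z$ and $V_0\cap Z$;
\item lift its two defining functions along the surjections $\mathcal{O}(U_0)\to\mathcal{O}(U_0\cap Z)$ and $\mathcal{O}(V_0)\to\mathcal{O}(V_0\cap Z)$.
\end{itemize}
This yields affine $U\subset U_0$ and $V\subset V_0$ with $U\cap Z=W=V\cap Z$.

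For more than two pieces you must add a compatibility hypothesis on triple overlaps. Even then, gluing one piece at a time leaves the closed-embedding setting. For example, three copies of $\mathbb{A}^1$ glued pairwise at their origins give the three coordinate axes in $\mathbb{A}^3$. Attaching a fourth piece $\mathbb{A}^2$ along the lines $x=0$, $y=0$, $x=y$ is a compatible datum, yet it requires pushing out along the map from the axes to $\mathbb{A}^2$, which is finite but not a closed embedding. This happens even though the final colimit is just that $\mathbb{A}^2$, in which every piece does embed.
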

This is~\cite[Theorem 3.11]{Sch05}. 
The requirement that the arrows should be closed embeddings is necessary as the following example shows:
\begin{example}[Example 3.2 in~\cite{Sch05}]\label{chapter1:ex_push_not_affine}
    Consider the affine schemes $$ X \cong \Spec k[x,y,y^{-1}],\ Y \cong \Spec k[x,y]_{(x,y)},\mbox{ and } Z \cong \Spec(k[x,y]_{(x,y)}[y^{-1}]) $$ with the following natural maps:
    \[
    \begin{tikzcd}
    	Z \arrow[r,"{f^{\#}}"] \arrow[d,"{g^{\#}}"'] & X \\
    	Y
    \end{tikzcd}
    \]
    where \( f, g \) are the canonical inclusions
    \[
    f: k[x,y,y^{-1}] \hookrightarrow k[x,y]_{(x,y)}[y^{-1}],\quad g: k[x,y]_{(x,y)} \hookrightarrow k[x,y]_{(x,y)}[y^{-1}].
    \]
In this case the pushout of the diagram in the category of schemes does not exist.
\end{example}
Briefly, the idea of the proof of~\cite[Theorem 3.11]{Sch05} is to first construct the pushout in the category of ringed spaces (which is bicomplete) and then investigate whether or not the result is scheme. For example, the pushout of the diagram from Example~\ref{chapter1:ex_push_not_affine} in ringed spaces is not a scheme.

Suppose $X\gets Z\to Y$ is a diagram of schemes. The remainder of this section explores the question when the pushout ringed space, which we will denote $X\cup_Z Y$, is indeed a scheme.

\begin{lemma}\label{chapter1:affine_push_of_two}
Let $A, B$ be commutative rings, and $I \subset A$ an ideal. Let $\pi: A \to A/I$ be the canonical projection and $\gamma: B \to A/I$ a ring homomorphism. Set $Z = \Spec(A/I)$, $X = \Spec(A)$, and $Y = \Spec(B)$. 

Then the pushout $W = X \cup_Z Y$ of $X\gets Z\to Y$ in ringed spaces is the pushout of the same diagram in affine schemes, and also in all schemes.  Moreover, $W$ contains $Y$ as a closed subscheme.
\end{lemma}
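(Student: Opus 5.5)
The natural candidate for the pushout is $W' = \Spec(C)$, where $C = A \times_{A/I} B = \{(a,b) : \pi(a) = \gamma(b)\}$ is the fibre product of rings. Since $\Spec$ is an anti-equivalence between rings and affine schemes, $W'$ is automatically the pushout of $X \gets Z \to Y$ in affine schemes. The plan is to show that the ringed space pushout $W = X\cup_Z Y$ is isomorphic to $W'$. Once this is done, $W$ is a scheme, and since the ringed space pushout has the universal property among all ringed spaces, hence among all schemes (and among locally ringed spaces once we check the maps are local), it is also the pushout in schemes.

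\textbf{Key steps.}

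First, the projection $C \to B$, $(a,b)\mapsto b$, is surjective, because $\pi$ is surjective: given $b$, choose $a$ with $\pi(a)=\gamma(b)$. Its kernel is $J = I\times 0 \cong I$. So $Y = \Spec(B) = \Spec(C/J)$ is a closed subscheme of $W'$, which gives the last claim.

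Second, we compare the underlying topological spaces. Let $U = W' \setminus V(J)$. For $s=(s_1,0)\in J$ we have $C_s \cong A_{s_1}$: the map $C\to A$ becomes an isomorphism after inverting $s$, because $(a,b)\cdot s = (a s_1, 0)$ kills the $B$-component and any $a\in A$ satisfies $a s_1 \in I$, so $(a s_1,0)\in C$. Hence $U \cong X\setminus Z$. Combined with $V(J)\cong Y$, this shows that $W'$ is set-theoretically $Y \sqcup (X\setminus Z)$, which is exactly the underlying set of the ringed space pushout.

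For the topology, we check that a subset $F\subset W'$ is closed if and only if its preimages in $X$ and $Y$ are closed. The key fact is that $C\to A$ is integral, indeed $A$ is finite over $C$ modulo the $J$-part. More precisely, $A = C\cdot 1 + I$ and $I\subset C$, so $A$ is generated as a $C$-module by $1$ over $C + I$; since $I = J$ is already in $C$ via $(i,0)$, the image of $C\to A$ together with $I$ gives everything. Then $X \sqcup Y \to W'$ is finite and surjective, hence closed, so $W'$ carries the quotient topology.

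Third, we compare structure sheaves. For $f\in C$, the sections of the pushout sheaf over $D(f)$ are the fibre product $\mathcal{O}_X(D(f_A))\times_{\mathcal{O}_Z(D(f_Z))}\mathcal{O}_Y(D(f_B))$, which is $A_f\times_{(A/I)_f} B_f$. Localization is exact, so it commutes with this finite limit, and this equals $C_f$.

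\textbf{Main obstacle.} The delicate point is the topological claim. One must show that the natural continuous bijection from the ringed-space pushout to $\Spec C$ is a homeomorphism, and that $C\to A$ is integral, or at least that $X\to W'$ is a closed map. I would prove that $X\to W'$ is closed by writing $A$ as generated over the image of $C$: every $a\in A$ satisfies $\pi(a)=\gamma(b)$ for some $b$, so $a=(a,b)$ already lies in the image of $C$. Thus $C\to A$ is surjective, and $X\to W'$ is a closed immersion. This makes the closedness immediate and also shows that $X$ embeds in $W'$ as a closed subscheme.
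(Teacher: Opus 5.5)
Your overall plan is the standard argument behind the result the paper cites from Schwede, and those steps are correct. It identifies the ringed-space pushout with $\Spec C$, $C=A\times_{A/I}B$, using the surjection $C\to B$ with kernel $J=I\times 0$, the isomorphisms $C_s\cong A_{s_1}$ for $s=(s_1,0)\in J$, and exactness of localization for the structure sheaves.

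The gap is the topological step. Your claims that $A=\phi(C)+I$, that $C\to A$ is integral, and finally that $C\to A$ is surjective all require every $\pi(a)$ to lie in $\gamma(B)$. That means $\gamma$ must be surjective, which is exactly what the lemma does not assume (the paper stresses this). In the paper's example $A=k[x,y]$, $I=(x)$, $B=k$, one gets $C\cong k[x,xy,xy^2,\dots]$. Here $y$ is not integral over $C$ (set $x=0$ in a monic relation). Also $X\to W'$ collapses the line $x=0$ to a point, so it is neither finite nor a closed immersion. For $A=k[x,t,t^{-1}]$, $I=(x)$, $B=k[t]$, the map $X\to W'$ is not even closed, since $Z\cong\mathbb{G}_m$ goes onto a non-closed subset of $Y\cong\mathbb{A}^1$. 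So ``finite surjective, hence closed'' is unavailable. Your final assertion that $X$ is a closed subscheme of $W'$ is also false in general; this is why the lemma claims it only for $Y$.

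The quotient-topology claim is still true, but it needs a different argument. Let $F\subset W'$ have closed preimages in $X$ and $Y$. Then $F\cap Y$ is closed in $W'$, being closed in the closed subset $Y=V(J)$. Also $F\cap U$ is closed in $U\cong X\setminus Z$. So it suffices to show $\overline{F\cap U}\cap Y\subset F$.
\begin{itemize}
\item Write the preimage of $F$ in $X$ as $V(\mathfrak a)$. Then $F\cap U\subset\phi(V(\mathfrak a))$, whose closure is $V(\phi^{-1}(\mathfrak a))$ for $\phi\colon C\to A$.
\item Since $\psi\colon C\to B$ is surjective, $V(\phi^{-1}(\mathfrak a))\cap Y=V(\psi(\phi^{-1}(\mathfrak a)))=V(\gamma^{-1}(\pi(\mathfrak a)))$.
\item This set is the closure in $Y$ of the image of $V(\mathfrak a)\cap Z$. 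That image lies in the closed set $F\cap Y$, so the closure does too.
\end{itemize}

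Finally, your phrase \emph{hence among all schemes} needs one more line. A ringed-space map $W\to T$ to a scheme is local on stalks: every point of $W$ comes from $X$ or $Y$, the maps $X\to W$ and $Y\to W$ are local, and $\alpha$ is local whenever $\beta$ and $\beta\circ\alpha$ are.
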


This is~\cite[Theorems 3.4 and 3.5]{Sch05}. Note that the ring map $B\to A/I$ is not assumed surjective, so this lemma does not directly follow from Theorem~\ref{chapter1:main_theorem}.

\begin{example}[Example 3.7 in~\cite{Sch05}]
   Consider the following diagram
\[\begin{tikzcd}
	{\operatorname{Spec}k[x,y]/(x)} & {\operatorname{Spec}k[x,y]} \\
	{\operatorname{Spec}k}
	\arrow[from=1-1, to=1-2]
	\arrow[from=1-1, to=2-1]
\end{tikzcd}\]
where $k$ is a field.
Then by Lemma~\ref{chapter1:affine_push_of_two} the pushout in schemes exists and is affine. Calculating this pushout, denote it $W$, we get $W\cong \Spec k[x,xy, xy^{2}, \dots]$.
\end{example}

 This example shows that the pushout need not be variety even if $X,Y,Z$ are. We investigate the question whether the pushout is a variety in Corollary \ref{chapter1:condition_variety_geom}.

\begin{proposition}\label{chapter1:push_of_two}
Let $Z$ be a closed subscheme of quasi-compact schemes $X$ and $Y$. Then $X \cup_Z Y$ is the pushout in the category of schemes, and both $X$ and $Y$ are closed subschemes of $X \cup_Z Y$. 
\end{proposition}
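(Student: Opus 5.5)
The plan is to reduce to the affine case via Lemma~\ref{chapter1:affine_push_of_two} by covering $X$ and $Y$ by compatible affine opens, and then glue. Let $W=X\cup_Z Y$ be the pushout in ringed spaces. As a topological space, $W$ is $X\sqcup Y$ with the two copies of $Z$ identified. Since $Z$ is closed in both, the images of $X$ and $Y$ are closed in $W$. The structure sheaf is $\mathcal{O}_W=\mathcal{O}_X\times_{\mathcal{O}_Z}\mathcal{O}_Y$, pushed forward to $W$. The claim has three parts: $W$ is a scheme, $W$ is the pushout in schemes, and $X$, $Y$ are closed subschemes of $W$.

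\textbf{Step 1: local charts.} Fix a point $w\in W$.
\begin{itemize}
\item If $w$ lies in $X\setminus Z$, then $X\setminus Z$ is an open subset of $W$ isomorphic to the open subscheme $X\setminus Z$ of $X$, so $W$ is locally affine there. The same holds for $Y\setminus Z$.
\item If $w\in Z$, the key point is to find affine opens $U\subset X$ and $V\subset Y$ with $U\cap Z=V\cap Z$ containing $w$. Then $U\cup_{U\cap Z}V$ is an open subset of $W$, and by Lemma~\ref{chapter1:affine_push_of_two} (indeed Theorem~\ref{chapter1:main_theorem}, as both maps are closed embeddings) it is the affine scheme $\Spec(A\times_{A/I}B)$.
\end{itemize}

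\textbf{Step 2: matching affine neighbourhoods (main obstacle).} Take affine opens $U_0\ni w$ in $X$ and $V_0\ni w$ in $Y$. Then $U_0\cap Z$ and $V_0\cap Z$ are affine open subsets of the affine schemes $Z\cap U_0$ and $Z\cap V_0$, both containing $w$. Pick $W'\subset U_0\cap V_0\cap Z$ which is simultaneously a principal open in $Z\cap U_0$ and in $Z\cap V_0$. This is possible by the standard lemma that two affine opens containing a point share a neighbourhood that is principal in both.

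So $W'=D(\bar f)\subset Z\cap U_0$. Lift $\bar f$ along the surjection $\mathcal{O}(U_0)\to\mathcal{O}(U_0\cap Z)$ to some $f$, and set $U=D(f)\subset U_0$, which satisfies $U\cap Z=W'$. Do the same on the $Y$ side to obtain $V$. Closedness of $Z$ is exactly what makes these lifts possible.

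I will also check that the open sets of $W$ are the pairs of opens agreeing on $Z$, and that the restriction of $\mathcal{O}_W$ to $U\cup_{W'}V$ is the ringed-space pushout of the restrictions. The latter is immediate because fibre products of sheaves are computed locally. This gives that $W$ is a scheme. Quasi-compactness is not needed for this step; it is part of the hypothesis and is harmless.

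\textbf{Step 3: universal property and closed embeddings.} Any scheme $T$ with compatible maps $X\to T\leftarrow Y$ receives a unique ringed-space map from $W$. This map is local on stalks because the stalks of $\mathcal{O}_W$ at points of $Z$ are fibre products of local rings over a local ring, hence local, with the maps to them local. Therefore it is a morphism of locally ringed spaces, i.e.\ of schemes, and $W$ is the pushout in schemes.

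Finally, $X\to W$ is a homeomorphism onto a closed subset. Locally on the charts of Step 1 it is $\Spec A\to\Spec(A\times_{A/I}B)$, which is induced by the surjection $A\times_{A/I}B\to A$. This map is surjective because $B\to A/I$ is surjective, since $Z$ is closed in $Y$. Hence $X\to W$ is a closed embedding, and symmetrically so is $Y\to W$.
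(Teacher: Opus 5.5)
Your proof is correct. The paper gives no argument of its own and simply cites Schwede's Corollary~3.9, and your argument is the standard one behind that result: you match affine charts $U\subset X$, $V\subset Y$ with $U\cap Z=V\cap Z$ by lifting a simultaneously principal open of $Z$ along $\mathcal{O}(U_0)\to\mathcal{O}(U_0\cap Z)$ and $\mathcal{O}(V_0)\to\mathcal{O}(V_0\cap Z)$, apply Lemma~\ref{chapter1:affine_push_of_two}, and check that the stalk maps are local (and, as you note, quasi-compactness is never used).
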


This is~\cite[Corollary 3.9]{Sch05}.

In the sequel we will need to know whether or not the pushout is a variety. The following lemma give a sufficient condition for that.
\begin{lemma}\label{chapter1:condition_variety_alg}
    Let $R$ be a Noetherian commutative ring. Let $I_{1}, I_{2}, I_{3}$ be ideals in $R$. Consider the ideal $J = I_{1}+I_{2}+I_{3}$. Let $k_{1},\dots, k_{n}$ be a system of generators of $I_{3}$. Denote $A = R[x_{1},\dots, x_{n}]$. Consider the homomorphisms $\pi_{i}: A \rightarrow R/I_{i}$ the determined by formulas
    \begin{equation*}
        \pi_{1}(x_{j}) = k_{j},\ \pi_{2}(x_{j}) = 0.
    \end{equation*}
    
    Then the pullback of the diagram
\[\begin{tikzcd}
	& {R/I_{1}} \\
	{R/I_{2}} & {R/J}
	\arrow[from=1-2, to=2-2]
	\arrow[from=2-1, to=2-2]
\end{tikzcd}\]
in commutative rings is 
\begin{equation*}
    A/(Ker(\pi_{1})\cap Ker(\pi_{2})).
\end{equation*}
In particular, this pullback is a finitely generated $R$-algebra. 
\end{lemma}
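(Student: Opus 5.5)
The plan is to identify the pullback concretely as a subring of a product and then show that the evident map from $A$ onto it is surjective. In commutative rings, the pullback of $R/I_1 \to R/J \leftarrow R/I_2$ is the subring
\[
P=\{(a,b)\in R/I_1\times R/I_2 : \bar a=\bar b \text{ in } R/J\}.
\]
The two $R$-algebra homomorphisms $\pi_1,\pi_2$ together give an $R$-algebra map $\Phi=(\pi_1,\pi_2)\colon A\to R/I_1\times R/I_2$. Its kernel is $\operatorname{Ker}(\pi_1)\cap\operatorname{Ker}(\pi_2)$. So the lemma reduces to two claims: the image of $\Phi$ lies in $P$, and $\Phi$ maps onto $P$.

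\textbf{Step 1 (image in $P$).} Both composites $A\to R/J$ are $R$-algebra maps, so it suffices to compare them on the generators $x_j$. The first sends $x_j$ to $\bar k_j$, and $\bar k_j=0$ in $R/J$ because $k_j\in I_3\subset J$. The second sends $x_j$ to $0$. Hence the two composites agree and $\Phi(A)\subset P$.

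\textbf{Step 2 (surjectivity), the only real step.} Take $(a,b)\in P$ and lift $a=\bar r$, $b=\bar s$ with $r,s\in R$. The condition $(a,b)\in P$ means $r-s\in J$, so we can write $r-s=i_1+i_2+i_3$ with $i_\nu\in I_\nu$. Since the $k_j$ generate $I_3$, we can further write $i_3=\sum_j c_jk_j$ with $c_j\in R$. Now set
\[
f=r-i_1-\sum_j c_jk_j+\sum_j c_jx_j\in A.
\]
Applying $\pi_1$, the two sums cancel, so $\pi_1(f)=\overline{r-i_1}=a$ in $R/I_1$. Applying $\pi_2$ kills the $x_j$, so $\pi_2(f)=\overline{r-i_1-i_3}=\overline{s+i_2}=b$ in $R/I_2$. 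Thus $\Phi(f)=(a,b)$ and $\Phi$ is onto $P$.

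\textbf{Conclusion.} By the first isomorphism theorem, $P\cong A/(\operatorname{Ker}(\pi_1)\cap\operatorname{Ker}(\pi_2))$. This is a quotient of the polynomial ring $R[x_1,\dots,x_n]$, hence a finitely generated $R$-algebra.

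The only non-formal point is the choice of $f$ in Step 2. The variables $x_j$ are used there to correct the $I_3$-part of the discrepancy $r-s$, which is invisible modulo $I_1$ but not modulo $I_2$. Noetherianity of $R$ is used only to guarantee that $I_3$ has a finite system of generators $k_1,\dots,k_n$, so that $A$ has finitely many variables.
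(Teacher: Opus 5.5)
Your proof is correct and follows the paper's argument. You identify the pullback with pairs agreeing modulo $J$, produce a preimage in $A$ by absorbing the $I_3$-part of the discrepancy into the variables $x_j$, and quotient by $\operatorname{Ker}(\pi_1)\cap\operatorname{Ker}(\pi_2)$; your $f$ equals $s+i_2+\sum_j c_jx_j$, which is exactly the paper's lift, and your Step 1 (that the image of $A$ lands in the fibre product) is a point the paper leaves implicit.
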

\begin{proof}
     Note that the pullback of the diagram in the lemma is a ring $K$ such that for every pair $(a_{1}, a_{2}) \in R/I_{1}\times R/I_{2}$ with $a_{1} = a_{2}\ \operatorname{mod}\ J$ there exists a unique lift of that pair to $K$.\\
    First consider ring $A$. We claim that there exists a (not necessarily unique) lift of the pair $(a_{1}, a_{2})$ to $A$. Indeed, let $\bar{a}_{1}$ and $\bar{a}_{2}$ be lifts of $a_{i},i=1,2$ to $R$. We have $\bar{a}_{1} = \bar{a}_{2}$ modulo $J$, i.e.\ there exist $i_{j} \in I_{j}, j=1,2,3$ such that
    \begin{equation*}
        \bar{a}_{1}=\bar{a}_{2}+i_{1}+i_{2}+i_{3}.
    \end{equation*}
    Since $i_{3} \in I_{3} = (k_{1}, \dots, k_{n})$, we can rewrite the equation above in the form below:
    \begin{equation*}
        \bar{a}_{1} - i_{1} = \bar{a}_{2} + i_{2} + \sum\limits_{i=1}^{n}r_{i}k_{i}.
    \end{equation*}
    The desired lift is
    \begin{equation*}
        \bar{a}_{2}+i_{2}+\sum\limits_{i=1}^{n}r_{i}x_{i}.
    \end{equation*}
    In order to ensure the uniqueness of the lift it is sufficient to quotient $A$ by all elements that go to $0$ via $\pi_{1}\times \pi_{2}$, i.e.\ by $Ker(\pi_{1}) \cap Ker(\pi_{2})$.
\end{proof}
Dualising this lemma we get the following corollaries.
\begin{corollary}\label{chapter1:condition_variety_geom}
   Suppose $S=\Spec (k)$ where $k$ is an algebraically closed field. Let $X,Y, Z$ be $k$-varieties, and let $f:Z\to X, g:Z\to Y$ be closed embeddings. The pushout of the diagram
\[\begin{tikzcd}
	Z & Y \\
	X
	\arrow["g", hook, from=1-1, to=1-2]
	\arrow["f"', hook, from=1-1, to=2-1]
\end{tikzcd}\]
is again a variety.
\end{corollary}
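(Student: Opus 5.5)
The plan is to reduce to the affine case and then apply Lemma~\ref{chapter1:condition_variety_alg}. Proposition~\ref{chapter1:push_of_two} already tells us that the pushout $W = X\cup_Z Y$ exists in schemes, and that $X$ and $Y$ are closed subschemes of $W$. Its underlying space is the topological pushout, so $W$ is covered by the images of $X$ and $Y$.

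\textbf{Local structure.} First we check that $W$ is covered by opens of a standard shape. Take a point $w$ of $W$ and choose affine opens $U\subset X$ and $V\subset Y$ such that $U\cap Z = V\cap Z$ as open subsets of $Z$. We can arrange this by shrinking: start with affine neighbourhoods, then pass to basic opens $D(a)\subset U$ and $D(b)\subset V$ whose traces on $Z$ agree. Such basic opens exist because $Z$ is closed in both, so functions on $Z$ lift locally. Then $U\cup_{U\cap Z} V$ is open in $W$. By Lemma~\ref{chapter1:affine_push_of_two}, or Theorem~\ref{chapter1:main_theorem}, it is affine, with coordinate ring the fibre product $\Gamma(U)\times_{\Gamma(U\cap Z)}\Gamma(V)$.

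\textbf{Finite type.} It suffices to show each such fibre product is a finitely generated $k$-algebra. Embed $U$ and $V$ as closed subschemes of affine spaces, and replace both by closed subschemes of one affine scheme $\Spec R$, where $R$ is a polynomial ring. The aim is to write $U=\Spec R/I_1$, $V=\Spec R/I_2$, and $U\cap Z=\Spec R/J$ with $J=I_1+I_2+I_3$, and then apply Lemma~\ref{chapter1:condition_variety_alg}.

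Arranging this common ambient ring is the main obstacle. We need $R$ with surjections $R\to\Gamma(U)$ and $R\to\Gamma(V)$ that are compatible on $\Gamma(U\cap Z)$. The first attempt is the following. Choose generators of $\Gamma(U)$ and of $\Gamma(V)$. Let $R=k[s,t]$, with $s\mapsto$ the generators of $\Gamma(U)$, and $t\mapsto$ lifts to $\Gamma(U)$ of the images of the $\Gamma(V)$-generators in $\Gamma(U\cap Z)$. Define the map to $\Gamma(V)$ symmetrically. Both maps are surjective, and they agree on $\Gamma(U\cap Z)$ because both are lifts of the same elements. The kernel of $R\to\Gamma(U\cap Z)$ then contains $I_1+I_2$. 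Let $I_3$ be any finitely generated ideal making up the difference; this is possible since $R$ is Noetherian. Lemma~\ref{chapter1:condition_variety_alg} now gives the pullback as a quotient of $R[x_1,\dots,x_n]$, so it is of finite type.

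\textbf{Remaining conditions.} Separatedness holds because $W$ is covered by the two closed subschemes $X$ and $Y$, whose images are separated. Concretely, the diagonal $W\to W\times W$ is checked on the affine cover above, and the pair-wise intersections are controlled by the intersections in $X$, in $Y$, and in $Z$. Reducedness follows from the fact that the fibre product of reduced rings is a subring of the product, hence reduced. If ``variety'' also requires irreducibility, that fails in general, for instance for two lines glued at a point, so we would use the paper's convention: reduced, separated, of finite type.
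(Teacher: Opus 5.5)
Your proposal is correct and follows the paper's route: the paper obtains this corollary simply by ``dualising'' Lemma~\ref{chapter1:condition_variety_alg} on affine pieces. You supply details the paper leaves implicit, namely the affine open cover of $W$ by pushouts $U\cup_{U\cap Z}V$, the common polynomial ring surjecting compatibly onto $\Gamma(U)$ and $\Gamma(V)$, and the reducedness check. The only loose spot is your ``concretely'' sentence on separatedness, which you can replace by noting that $\Delta_W(W)$ is the union of the images of $\Delta_X(X)$ and $\Delta_Y(Y)$ under the closed immersions $X\times_k X\to W\times_k W$ and $Y\times_k Y\to W\times_k W$, and is therefore closed.
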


\begin{corollary}\label{chapter1:pushforward_of_finite_type}
    Let $S$ be a Noetherian scheme and let
\[\begin{tikzcd}
	Z & Y \\
	X
	\arrow["g", hook, from=1-1, to=1-2]
	\arrow["f"', hook, from=1-1, to=2-1]
\end{tikzcd}\]
be a diagram of schemes of finite type over $S$. Suppose the maps $f$ and $g$ are closed embeddings. Then the pushout of this diagram is again a scheme of finite type over $S$.
\end{corollary}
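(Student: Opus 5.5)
The plan is to reduce to the affine case and apply Lemma~\ref{chapter1:condition_variety_alg}, then glue. By Proposition~\ref{chapter1:push_of_two} (the schemes are of finite type over a Noetherian $S$, hence quasi-compact), the pushout $W = X\cup_Z Y$ exists in schemes, and $X$ and $Y$ are closed subschemes of $W$. The underlying topological space of $W$ is the topological pushout. Its structure sheaf is $\mathcal{O}_W = \mathcal{O}_X\times_{\mathcal{O}_Z}\mathcal{O}_Y$, the fibre product of the pushforwards. What remains is to show that $W\to S$ is locally of finite type and quasi-compact.

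Quasi-compactness is immediate, since $W$ is the union of the images of the quasi-compact spaces $X$ and $Y$. The map $W\to S$ exists by the universal property.

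For local finiteness, fix an affine open $\Spec R_0\subset S$; we may replace $S$ by it. Let $w\in W$. I will construct an affine open neighbourhood of $w$ of the form $\Spec(B_X\times_{B_Z}B_Y)$, where $\Spec B_X\subset X$ and $\Spec B_Y\subset Y$ are affine opens with the same trace $\Spec B_Z$ on $Z$. This is the standard step in Schwede's proof (and Lemma~\ref{chapter1:affine_push_of_two} identifies the pushout of such affine pieces). So locally $W$ is $\Spec(P)$ with
\[ P = B_X\times_{B_Z} B_Y, \]
where $B_X\to B_Z$ and $B_Y\to B_Z$ are surjections and $B_X, B_Y$ are finitely generated $R_0$-algebras.

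It then suffices to show that $P$ is a finitely generated $R_0$-algebra. To do this I will put both rings over a common Noetherian ring $R$, choosing $R=R_0[t_1,\dots,t_a,s_1,\dots,s_b]$. Map $R$ onto $B_X$ by sending the $t$'s to generators of $B_X$ and each $s_j$ to a lift of the image in $B_Z$ of a chosen generator of $B_Y$. Map $R$ onto $B_Y$ symmetrically. Refining the generators so that the two composites $R\to B_Z$ agree, we get $B_X=R/I_1$, $B_Y=R/I_2$, and $B_Z=R/J$ with $J\supseteq I_1+I_2$. Write $J=I_1+I_2+I_3$, where $I_3$ is finitely generated because $R$ is Noetherian. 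Lemma~\ref{chapter1:condition_variety_alg} now says $P$ is a quotient of $R[x_1,\dots,x_n]$, so it is finitely generated over $R_0$.

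The main obstacle is arranging a single $R$ that surjects compatibly onto $B_X$ and $B_Y$ over $B_Z$. Adding the $s$-variables fixes this: send each extra variable to $0$ on one side if necessary, or better, lift the generators of $B_Z$ coming from each side to both $B_X$ and $B_Y$, using surjectivity of $B_X\to B_Z$ and $B_Y\to B_Z$. The other point needing care is the existence of compatible affine neighbourhoods. For this I cite Schwede's construction underlying Proposition~\ref{chapter1:push_of_two}.
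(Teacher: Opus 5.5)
Your proposal is correct and is essentially the paper's argument: work over an affine $\Spec R_0\subset S$ and apply Lemma~\ref{chapter1:condition_variety_alg} to $B_X\times_{B_Z}B_Y$. You also fill in two points the paper's proof passes over, namely covering $W$ by affine pieces coming from compatible affine opens of $X$ and $Y$ (the paper simply assumes the preimages affine), and presenting $B_X,B_Y,B_Z$ as $R/I_1,R/I_2,R/J$ for one Noetherian ring $R=R_0[t,s]$. Only the lifting version of that presentation works; it makes the two composites $R\to B_Z$ agree automatically, so no refinement is needed, whereas sending extra variables to $0$ on one side would in general break that agreement.
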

\begin{proof}
    Since $S$ is Noetherian, it admits an open covering $\{ U_{i}\}_{i=1}^{n}$ with $U_{i}\cong \Spec R$ where $R$ is a Noetherian ring.

    Let $s: X\cup_{Z}Y\to S$ be the structure morphism induced by the structure morphisms $s_{X}: X\to S$, $s_{Y}: Y\to S$ and $s_{Z}: Z\to S$. Then the pullback $s^{-1}(U_{i})$ can be identified as follows
    \[
    s^{-1}(U_{i}) = s_{X}^{-1}(U_{i})\cup_{s_{Z}^{-1}(U_{i})} s_{Y}^{-1}(U_{i}).
    \]
    Without loss of generality one may assume that the pullbacks $s_{X}^{-1}(U_{i})$ and $s_{Z}^{-1}(U_{i})$ are affine. Then Lemma \ref{chapter1:condition_variety_alg} implies that the open set $s^{-1}(U_{i})$ is isomorphic to $\Spec T$ with $T$ being a finitely generated $R$-algebra. Hence the pushout is of finite type over $S$.
\end{proof}
\begin{remark}
    Unfortunately, the condition that $S$ be Noetherian is necessary in Corollary~\ref{chapter1:pushforward_of_finite_type}. This happens because for a non-Noetherian ring $R$ Lemma \ref{chapter1:condition_variety_alg} need not hold, as the following example shows.

    Let $R \cong k[x_{1},x_{2},\ldots]$, where $k$ is a field. Let $A \cong C \cong R$ and let $B \cong k[x_{1},x_{2},\ldots]/(x_{1},x_{2},\ldots) \cong k$. Let the maps $f:A \to B$ and $g: C \to B$ be the natural projections. Then the pullback $P$ is the $R$-algebra which consists of pairs of polynomials $(f_{1}(x_{1},x_{2},\ldots), f_{2}(y_{1},y_{2},\ldots))$ with the same constant term.

    We claim that this $R$-algebra $P$ is not finitely generated. Indeed, suppose the contrary, and let $\{ (a_{i}, b_{i}) \}_{i=1}^{n}$ be a finite set of generators of $P$. Take $m$ such that the generators $\{ (a_{i}, b_{i})\}_{i=1}^{n}$ do not contain the variable $x_{m}$.
    
    Since $\{ (a_{i}, b_{i})\}_{i=1}^{n}$ are generators of $P$, one can write
\[
(x_{m},0) = \sum\limits_{i=1}^{k}r_{i}h_{i},
\]
where $r_{i} \in R$ and $h_{i}$ are monomials in the generators $\{ (a_{i}, b_{i})\}_{i=1}^{n}$. Note that a term $r_{i}h_{i}$ is divisible by $x_{m}$ if and only if $r_{i}\in R$ is divisible by $x_{m}$. Hence one can rewrite the right hand side as follows
\[
x_{m}\cdot p + \text{term\ that\ are\ not\ divisible\ by\ } x_{m}
\]
where $p$ belongs to $P$.

It is easy to see that second sum is necessarily $0$. Hence we have an equality
\[
(x_{m},0) = x_{m}\cdot p,
\]
where $p \in P$. Therefore, dividing by $x_{m}$ we get $(1,0) = p \in P$, which is impossible.
\end{remark}

\section{Construction of an affine model}\label{Construction_of_aff_model}
In this section we construct affine models. (Recall that affine models and adequate classes of morphisms were defined in the the Introduction.) Unless stated otherwise, all schemes in this section will be assumed separated and of finite type over a base scheme $S$, which will be assumed Noetherian.

The main result of this section is as follows.
\begin{theorem}\label{chap2:main_thrm}
    Let $\mathcal{C}$ be the category of separated schemes of finite type over a separated Noetherian base scheme $S$. Then, for adequate class $\mathcal{W}$ of morphisms of $\mathcal{C}$, and any object $X \in \mathcal{C}$, there exists an affine model $m_{X}: M_{X} \to X$.

    Moreover, for any morphism $f: X\to Y$ in $\mathcal{C}$ there are affine models $m_{Y}: M_{Y}\to Y$ and $m_{X}: M_{X}\to X$, and a morphism $f_M:M_X\to M_Y$ such that the next diagram commutes:
\[\begin{tikzcd}
	{M_{X}} & {M_{Y}} \\
	X & Y.
	\arrow["{f_{M}}", from=1-1, to=1-2]
	\arrow["{m_{X}}"', from=1-1, to=2-1]
	\arrow["{m_{Y}}", from=1-2, to=2-2]
	\arrow["f"', from=2-1, to=2-2]
\end{tikzcd}\]
\end{theorem}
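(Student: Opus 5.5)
We will argue by induction on the number $r$ of affine opens needed to cover $X$, first for affine $S=\Spec R$ and then reduce the general case to it. For $r=1$ we take $M_X=X$ and $m_X=\mathrm{id}$. To see that identities lie in $\mathcal{W}$, apply the third axiom with $X_1=X_2=X$ and $Y_1=Y_2=Y$; this reduces the claim to a single morphism lying in $\mathcal{W}$, which we arrange via the $\mathbb{A}^1$-projection axiom together with closure under composition. (Alternatively we could take $M_X=X\times\mathbb{A}^n$ in the affine case.)

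For the inductive step, write $X=U\cup V$ with $V$ affine and $U$ covered by $r-1$ affines. Since $X$ is separated, $U\cap V$ is covered by $r-1$ affines. By induction there are affine models $M_U\to U$ and $M_{U\cap V}\to U\cap V$, and we want these compatible with the inclusion $U\cap V\subset U$. To get compatibility we will not use $M_{U\cap V}$ but the restriction $M_U\times_U (U\cap V)$. This is affine because $U\cap V\to U$ is affine ($V$ is affine and $X$ is separated), and it is an affine model of $U\cap V$ by a base-change stability that we have to prove along the way. This suggests strengthening the induction hypothesis: \emph{for every open $W\subset X$, the restriction $M_X\times_X W\to W$ lies in $\mathcal{W}$}.

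Set $N=M_U\times_U(U\cap V)$, which is affine, and choose a closed embedding $e:N\hookrightarrow\mathbb{A}^n_R$. The maps $N\to M_U\times\mathbb{A}^n$ and $N\to V\times\mathbb{A}^n$, given by $(\text{incl},e)$ and $(\text{proj},e)$, are closed embeddings because $e$ is a closed embedding and the targets are separated. By Theorem~\ref{chapter1:main_theorem} and Corollary~\ref{chapter1:pushforward_of_finite_type}, the pushout $M_X=(M_U\times\mathbb{A}^n)\cup_N(V\times\mathbb{A}^n)$ exists, is affine, is of finite type, and contains both pieces as closed subschemes. It is also separated since it is affine. The map $m_X:M_X\to X$ is induced from the two pieces. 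Now apply the third axiom with the closed cover $X_1=M_U\times\mathbb{A}^n$, $X_2=V\times\mathbb{A}^n$ and the open cover $Y_1=U$, $Y_2=V$. The intersection $X_1\cap X_2$ is $N$, which maps to $U\cap V$. The maps $X_i\to Y_i$ are compositions of $\mathbb{A}^n$-projections with the given models, hence lie in $\mathcal{W}$, and $N\to U\cap V$ lies in $\mathcal{W}$ by the strengthened hypothesis. Hence $m_X\in\mathcal{W}$. The strengthened hypothesis is then re-verified for $M_X$ restricted to an open $W$ by the same gluing argument, with everything pulled back to $W$; this uses that all the constructions commute with open restriction.

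Functoriality will be handled by building $M_X$ relative to $M_Y$. Given $f:X\to Y$, first construct $M_Y$. Then $X'=X\times_Y M_Y$ is separated and of finite type, and the projection $X'\to X$ should lie in $\mathcal{W}$ by the base-change stability above, extended along the gluing. Next take an affine model $M_{X'}\to X'$ and set $M_X=M_{X'}$, with $f_M$ the composite $M_{X'}\to X'\to M_Y$. Composition closure then gives $m_X\in\mathcal{W}$. For general separated Noetherian $S$, the same induction works using affine opens of $X$ lying over affine opens of $S$, together with closed embeddings into $\mathbb{A}^n_{S'}$ over affine $S'$ composed with the affine map to $S$. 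Lemma~\ref{chap2:lemma_closed_affine_embeddings} is cited for the affineness over $S$.

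The main obstacle is the base-change stability, namely that pulling $m_Y$ back along an arbitrary $f$ (not just an open immersion) stays in $\mathcal{W}$. The adequate-class axioms only talk about $\mathbb{A}^1$-projections and gluing, so the plan is to prove it by induction along the same gluing construction. In the pullback, $X\times_Y M_Y$ is again glued from pieces $f^{-1}(U)\times_U M_U\times\mathbb{A}^n$ and $f^{-1}(V)\times\mathbb{A}^n$ along a closed intersection. One must check that the pushout commutes with this base change, since closed pushouts are flat-local enough here and base change along $f$ of such a gluing is again the gluing. Once that is checked, the third axiom applies verbatim with $Y_1=f^{-1}(U)$ and $Y_2=f^{-1}(V)$.
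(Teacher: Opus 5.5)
Your architecture is the paper's (Lemmas~\ref{lemma:main_ch_2_affine_case}, \ref{chap2:stab_of_pullback} and~\ref{chap2:semi-functoriality}). You glue $\mathbb{A}^n$-thickened pieces along a closed embedding of the restricted model $N=M_U\times_U(U\cap V)$, induct on the size of an affine cover, prove by the same induction that $X\times_Y M_Y\to X$ lies in $\mathcal{W}$, and take $M_X$ to be an affine model of $X\times_Y M_Y$. Two of your justifications fail as written, though both are repaired by things you already mention.

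First, your argument that identities lie in $\mathcal{W}$ is circular. The third axiom with both covers trivial has $\mathrm{id}_X$ itself among its hypotheses, and no composite of $\mathbb{A}^1$-projections is the identity of a nonempty scheme. Identities genuinely need not lie in an adequate class: over a field, the morphisms that strictly raise dimension, together with $\mathrm{id}_\varnothing$, form an adequate class. So you must use your parenthetical choice $M_X=X\times\mathbb{A}^1$ for affine $X$. Phrase the induction hypothesis as ``$T\times_U M_U\to T$ lies in $\mathcal{W}$ for every morphism $T\to U$ in $\mathcal{C}$''. Then every map you feed to the gluing axiom, both for $M_X$ and for its base changes, is a composite of $\mathbb{A}^1$-projections and base changes of earlier models, so no identity is ever needed. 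This is more careful than the paper, whose two-piece step (Lemma~\ref{chap2:main_thrm_base_ind}) and the first step of Lemma~\ref{chap2:stab_of_pullback} silently use $\mathrm{id}_{U_1\cap U_2}\in\mathcal{W}$.

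Second, for non-affine $S$, composing a closed embedding $N\hookrightarrow\mathbb{A}^n_{S'}$ with the open inclusion $\mathbb{A}^n_{S'}\subset\mathbb{A}^n_S$ gives only an immersion. So $(\mathrm{incl},e)$ and $(\mathrm{proj},e)$ need not be closed, and Theorem~\ref{chapter1:main_theorem} does not apply. Instead, apply Lemma~\ref{chap2:lemma_closed_affine_embeddings} to $N$ itself: it is affine and of finite type over the separated scheme $S$, hence embeds closedly in $\mathbb{A}^n_S$. After that, nothing in your argument uses affineness of $S$. Overlaps of affine opens are affine because $X$ is separated over $\Z$, and Lemma~\ref{chap2:product_clos_emb}, Theorem~\ref{chapter1:main_theorem} and Corollary~\ref{chapter1:pushforward_of_finite_type} hold over any separated Noetherian $S$. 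Your induction therefore runs verbatim over $S$, which is a genuine simplification of the paper's second induction over an affine cover of $S$.

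Finally, ``flat-local enough'' is not a reason, since pushouts along closed immersions do not commute with base change in general. Here they do commute. Away from $N$ only one piece is present. Over $U\cap V$ the piece $M_U\times\mathbb{A}^n$ restricts to $N\times\mathbb{A}^n$, in which $N$ sits as the graph of $e$ with a retraction over $Y$. Hence the fibre-product ring splits as a module, and the splitting survives tensor products. Alternatively, if closed covers are read set-theoretically, apply the gluing axiom directly to the cover of $X\times_Y M_Y$ by the preimages of the two pieces, whose intersection is $X\times_Y N$.
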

In the rest of the section we prove Theorem \ref{chap2:main_thrm}. The section consists of two parts. The first part deals with the case of an affine base scheme $S \cong \Spec R$. After proving the Theorem \ref{chap2:main_thrm} in this case we discuss basic properties of our construction. The second part contains the proof of Theorem \ref{chap2:main_thrm} in the general case.

\subsection{Affine base scheme}\label{sec:main_thm_affine}
As we mentioned in the Introduction, the main idea of the construction of an affine model is to turn open embeddings into close embeddings, and then to apply Theorem~\ref{chapter1:main_theorem} to obtain the pushout. The following lemmas shows how to turn an arbitrary morphism into a closed embedding.
\begin{lemma}\label{chap2:stacks_lemma}
    Let $S$ be a scheme. Let $g: X \to Y$ be a morphism of schemes over $S$. Consider the graph $i: X \to X\times_{S} Y$. If structure morphism $Y \to S$ is separated, then $i$ is a closed embedding.   
\end{lemma}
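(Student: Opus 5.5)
The plan is to present the graph $i=(\mathrm{id}_X,g)\colon X\to X\times_S Y$ as a base change of the diagonal $\Delta_{Y/S}\colon Y\to Y\times_S Y$, and then use that closed embeddings are stable under base change.

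First, consider the morphism $g\times\mathrm{id}_Y\colon X\times_S Y\to Y\times_S Y$. I claim the square
\[
\begin{tikzcd}
	X \arrow[r,"i"] \arrow[d,"g"'] & X\times_S Y \arrow[d,"g\times \mathrm{id}_Y"] \\
	Y \arrow[r,"\Delta_{Y/S}"'] & Y\times_S Y
\end{tikzcd}
\]
is cartesian. The check is on $T$-valued points. A $T$-point of the fibre product $Y\times_{Y\times_S Y}(X\times_S Y)$ is a triple $(y,x,y')$ with $x\in X(T)$ and $y,y'\in Y(T)$, all over the same $T$-point of $S$, satisfying $(y,y)=(g(x),y')$. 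This forces $y=y'=g(x)$, so the triple is determined by $x$ alone. Under this identification the projection to $X\times_S Y$ is $x\mapsto (x,g(x))$, which is exactly $i$. Hence the square is cartesian by the Yoneda lemma.

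Second, $Y\to S$ is separated, so by definition $\Delta_{Y/S}$ is a closed embedding. Closed embeddings are stable under arbitrary base change: locally over an affine open $\Spec B$ of the target, the base change of $\Spec(B/J)\to\Spec B$ along $\Spec C\to \Spec B$ is $\Spec(C/JC)\to \Spec C$. It follows that $i$ is a closed embedding.

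The one point needing care is the identification of $i$ with the base-change morphism in the cartesian square, i.e.\ checking that the composite $X\to Y\times_{Y\times_S Y}(X\times_S Y)\to X\times_S Y$ really is the graph. On points this is immediate. Everything else is standard (e.g.\ Stacks Project, Tag 01KR). No Noetherian or finite type hypothesis is used.
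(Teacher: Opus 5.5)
Your proof is correct and is essentially the paper's approach: the paper just cites the Stacks Project (Lemma 67.4.6), and the argument there is the same one you give, namely that the graph is the base change of the closed embedding $\Delta_{Y/S}$ along $g\times_S \mathrm{id}_Y$. Your check that the square is cartesian and that closed embeddings are stable under base change is complete.
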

\begin{proof}
    See \cite[Lemma 67.4.6.]{Stacks}
\end{proof}
\begin{lemma}\label{chap2:product_clos_emb}
        Let $X, Y, Z$ be schemes over $S$ with the structure map $Y\to S$ separated. Suppose $f: X\to Y$ is an arbitrary morphism of schemes and $g: X\to Z$ is a closed embedding. Then the morphism
     \begin{equation*}
         g\times_S f: X \to Z\times_{S} Y
     \end{equation*}
     is a closed embedding.
\end{lemma}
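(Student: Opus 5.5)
The plan is to factor $g\times_S f$ as a composition of two closed embeddings. Write $(g,f)$ for the morphism $X\to Z\times_S Y$ with components $g$ and $f$. The factorization is
\[
X \xrightarrow{\ (\mathrm{id}_X, f)\ } X\times_S Y \xrightarrow{\ g\times \mathrm{id}_Y\ } Z\times_S Y .
\]
Projecting to $Z$ and to $Y$ shows the composite equals $(g,f)$, since the first component is $g\circ \mathrm{id}_X=g$ and the second is $f$. Because closed embeddings are stable under composition, it suffices to check that each factor is a closed embedding.

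The first factor $(\mathrm{id}_X,f)$ is exactly the graph $i$ of $f$. Since the structure map $Y\to S$ is separated, Lemma~\ref{chap2:stacks_lemma} shows directly that it is a closed embedding.

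For the second factor I would identify $g\times\mathrm{id}_Y$ with a base change of $g$. The square formed by $X\times_S Y\to Z\times_S Y$, the projections to $X$ and $Z$, and $g$ is cartesian: indeed $(Z\times_S Y)\times_Z X\cong X\times_S Y$ by the standard associativity of fibre products. Closed embeddings are stable under base change, so $g\times\mathrm{id}_Y$ is a closed embedding.

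There is no genuine obstacle; the only points requiring care are verifying the identification of the composite and the cartesian square, both routine universal-property checks. Note that separatedness is used only for the graph, and no hypothesis on $Z$ beyond $g$ being a closed embedding is needed.
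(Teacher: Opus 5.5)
Your proposal is correct and matches the paper's proof: the paper also factors $g\times_S f$ as the graph $X\to X\times_S Y$, a closed embedding by Lemma~\ref{chap2:stacks_lemma}, followed by $g\times_S \operatorname{id}_Y$, a closed embedding. Your observation that $g\times_S\operatorname{id}_Y$ is a base change of $g$ spells out the justification the paper leaves implicit.
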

\begin{proof}
    Note that the morphism $g\times_S f: X\to Z\times_{S}Y$ is the composition
    \[
    X \xrightarrow{i} X\times_{S} Y\xrightarrow{g\times_S \operatorname{id}} Z\times_{S}Y.
    \]
    The first morphism $i$ is a closed embedding because of Lemma \ref{chap2:stacks_lemma}. The second morphism is a closed embedding being the product of a closed embedding and the identity morphism. Hence $g\times_S f$ is a closed embedding too.
\end{proof}
\begin{lemma}\label{chap2:lemma_closed_affine_embeddings}
    Let $X$ be a scheme over $S$ such that $X\to\Spec(\Z)$ is affine. Assume that $S$ is separated and that the structure morphism $f: X \to S$ is of finite type. Then there exists a closed embedding
    \[
    X \hookrightarrow \mathbb{A}^{n}_{S}
    \]
    for some $n$.
\end{lemma}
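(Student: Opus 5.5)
Since $X$ is affine over $\Z$, I will write $X = \Spec(A)$. The plan is to factor the desired embedding through the graph of the structure morphism $f$, and then reduce to a statement about the finitely generated algebra $A$ over $\Z$.

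\textbf{Step 1: reduction to finite generation over $\Z$.} Since $S$ is separated, Lemma~\ref{chap2:product_clos_emb} applies: a closed embedding $g: X \hookrightarrow \mathbb{A}^n_{\Z}$ together with $f: X\to S$ yields a closed embedding
\[
g\times f: X \to \mathbb{A}^n_{\Z}\times_{\Z} S \cong \mathbb{A}^n_S .
\]
So it suffices to show that $A$ is a finitely generated ring. A surjection $\Z[x_1,\dots,x_n]\to A$ gives the required $g$.

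\textbf{Step 2: finite generation of $A$.} Since $S$ is Noetherian, I cover it by finitely many affine opens $U_i=\Spec R_i$, with each $R_i$ a Noetherian ring. Each $R_i$ is a quotient of a Noetherian ring, but it need not be finitely generated over $\Z$; this is exactly the subtlety here.

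\textbf{Main obstacle.} For instance, $S=\Spec\Q$ and $X=S$ satisfy the hypotheses, yet $\Q$ is not a finitely generated ring. So the statement as literally read ("$X\to\Spec\Z$ affine, embed in $\mathbb{A}^n_S$") cannot go through $\Z$ in general. I would therefore argue directly over $S$ rather than over $\Z$. Each preimage $f^{-1}(U_i)$ is the intersection of the affine scheme $X$ with the preimage of an affine open in the separated scheme $S$. It is therefore affine, equal to $\Spec A_i$ with $A_i$ a finitely generated $R_i$-algebra.

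Choose finitely many elements $a_1,\dots,a_m\in A$ whose images generate every $A_i$ over $R_i$. This is possible because $A_i$ is a localization-type restriction of $A$: elements of $A_i$ are sections over $f^{-1}(U_i)$, and after multiplying by suitable powers of functions pulled back from $R_i$ they come from $A$. Making this precise requires care when $U_i$ is not principal, and I expect it to be the hardest point. I would first try to refine $\{U_i\}$ to opens whose preimages are principal opens $D(h)$ in $X$, using quasi-compactness of $X$. On such an open, a section has the form $a/h^k$ with $a\in A$. Choosing $a_1,\dots,a_m$ to include the numerators of all generators then suffices.

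\textbf{Step 3: the embedding.} The elements $a_j$ define an $S$-morphism $X\to\mathbb{A}^m_S$. Over each $U_i$, its restriction is $\Spec$ of the surjection $R_i[x_1,\dots,x_m]\to A_i$, hence a closed embedding. Being a closed embedding is local on the target, so $X\to\mathbb{A}^m_S$ is a closed embedding. This gives the required $n=m$.
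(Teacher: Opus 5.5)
Your Step~2 has a genuine gap, at exactly the point you flag as the hardest. You never show that finitely many $a_1,\dots,a_m\in A$ generate each $A_i$ as an $R_i$-algebra, and the mechanism you propose cannot show it.

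The refinement is itself unjustified. Shrinking $U_i$ to a principal open $D(r)$ makes the preimage principal in $f^{-1}(U_i)$, not in $X$. Also, functions pulled back from $R_i$ are not global functions on $X$.

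But even granting the refinement, the claim that the numerators of generators $a/h^k$ of $A_h$ suffice is false. You would also need $1/h$ to lie in the $R_i$-subalgebra generated by the image of $A$, and nothing in your argument gives this. The telltale sign is that separatedness of $S$ enters your argument only to make $f^{-1}(U_i)$ affine.

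Here is a concrete failure. Take $S$ the affine line with doubled origin, $S=U_1\cup U_2$ with $U_1\cong U_2\cong\Spec k[t]$, take $X=U_1$, and let $f$ be the open inclusion. Every step of your argument goes through: $f^{-1}(U_1)=X$ and $f^{-1}(U_2)=D(t)$ are affine and principal in $X$. Yet the $k[t]$-subalgebra of $A_2=k[t,t^{-1}]$ generated by the image of $A=k[t]$ is just $k[t]$. So no closed embedding $X\hookrightarrow\mathbb{A}^n_S$ over $S$ exists there.

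The missing idea is to use separatedness through the graph of $f$. By Lemma~\ref{chap2:stacks_lemma} (over the base $\Spec\Z$), the graph $X\to X\times_{\Z}S$ of $f$ is a closed embedding because $S$ is separated. Restrict it over the open subscheme $X\times_{\Z}U_i=\Spec(A\otimes_{\Z}R_i)$, whose preimage is $f^{-1}(U_i)$. This shows that $f^{-1}(U_i)\to\Spec(A\otimes_{\Z}R_i)$ is a closed embedding, i.e.\ $A\otimes_{\Z}R_i\to A_i$ is surjective.

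Hence each of the finitely many $R_i$-algebra generators of $A_i$ is an $R_i$-linear combination of images of finitely many elements of $A$. Collecting these elements over the finitely many $i$ gives $a_1,\dots,a_m$, and your Step~3 then works verbatim.

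Step~1 should simply be dropped; your $\Spec\Q$ remark correctly shows it cannot work as stated. The paper itself gives no argument here and just cites the Stacks Project.
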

\begin{proof}
    This is a particular case of~\cite[Lemma 70.13.1]{Stacks}.
\end{proof}
The proof of Theorem~\ref{chap2:main_thrm} in the case when $S$ is affine is contained in the next few lemmas. The first lemma deals with the special case when $X$ admits an affine open cover with only two elements.
\begin{lemma}\label{chap2:main_thrm_base_ind}
    Let $X = U_{1}\cup U_{2}$ be an open affine cover of a separated scheme $X$ of finite type over an affine Noetherian scheme $S$. Then there exists an affine model $m_{X}: M_{X} \to X$.
\end{lemma}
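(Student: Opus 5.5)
Following the outline in the Introduction, set $U_{12}=U_1\cap U_2$. It is affine because $X$ is separated and $S$ is affine. By Lemma~\ref{chap2:lemma_closed_affine_embeddings} there is a closed embedding $e:U_{12}\hookrightarrow\mathbb{A}^n_S$. Let $\iota_i:U_{12}\to U_i$ be the open inclusions.

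\textbf{Step 1: the closed embeddings.} Since $\mathbb{A}^n_S\to S$ is separated, Lemma~\ref{chap2:product_clos_emb}, applied with $g=e$ and $f=\iota_i$ (with the roles of the factors arranged appropriately), shows that $(\iota_i,e):U_{12}\to U_i\times_S\mathbb{A}^n_S$ is a closed embedding for $i=1,2$. All three schemes $U_{12}$, $U_1\times_S\mathbb{A}^n_S$ and $U_2\times_S\mathbb{A}^n_S$ are affine. Theorem~\ref{chapter1:main_theorem} therefore gives a pushout $M_X$ in schemes, which is also the pushout in affine schemes and contains both $V_i:=U_i\times_S\mathbb{A}^n_S$ as closed subschemes meeting in $U_{12}$. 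Corollary~\ref{chapter1:pushforward_of_finite_type} shows that $M_X$ is of finite type over $S$. It is separated because it is affine over the separated scheme $S$. Hence $M_X\in\mathcal C$. The compositions $V_i\to U_i\hookrightarrow X$ agree on $U_{12}$, so the universal property yields $m_X:M_X\to X$.

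\textbf{Step 2: adequacy.} I would apply the third axiom with the closed cover $M_X=V_1\cup V_2$ and the open cover $X=U_1\cup U_2$. First I need to check that these really are covers in the required sense: topologically, the pushout along closed embeddings is the union of $V_1$ and $V_2$ glued along $U_{12}$. Next, $m_X(V_i)\subset U_i$ by construction. Finally, the induced maps are in $\mathcal W$:
\begin{itemize}
\item $V_i\to U_i$ is the projection $U_i\times_S\mathbb{A}^n_S\to U_i$. It factors as $n$ successive projections $Y\times_S\mathbb{A}^1_S\to Y$, and these lie in $\mathcal W$ by the second axiom together with closedness under composition.
\item $V_1\cap V_2=U_{12}\to U_1\cap U_2=U_{12}$ is the identity. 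This holds because $U_{12}\to V_i\to U_i$ equals $\iota_i$.
\end{itemize}

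\textbf{Main obstacle.} The identity map is not obviously in $\mathcal W$, and this is the point I expect to need care. To handle it, write the identity as the composite $U_{12}\xrightarrow{0} U_{12}\times\mathbb{A}^1\to U_{12}$. A cleaner route is to apply the third axiom to $\mathrm{id}_Y$ itself, taking $Y_1=Y_2=Y$ as the open cover and the same $Y$ twice as the closed cover. This only reduces the question to the identity again, so it does not help. Instead I would argue via $\mathbb{A}^1$: the projection $p:Y\times\mathbb{A}^1\to Y$ is in $\mathcal W$, and composing with the zero section $s$ gives $p\circ s=\mathrm{id}$. That still requires $s\in\mathcal W$, which is not available. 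The simplest fix is to choose $e$ so that the intersection map is a projection rather than the identity: replace $e$ by an embedding into $\mathbb{A}^{n+1}_S$ and compare.

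Failing that, I would interpret the intersection as the scheme-theoretic intersection $V_1\times_{M_X}V_2$, which is $U_{12}$, and check whether the paper's convention implicitly includes identities in $\mathcal W$. All the intended examples contain identities, so it is natural to assume this (or add it as a harmless convention). I expect this bookkeeping, together with verifying that the set-theoretic covers and intersections in the pushout are as claimed, to be the only delicate points.
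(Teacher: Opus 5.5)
Your Steps 1 and 2 are exactly the paper's proof. The paper embeds $U_1\cap U_2$ in $\mathbb{A}^n_S$, glues $U_1\times_S\mathbb{A}^n_S$ and $U_2\times_S\mathbb{A}^n_S$ along it via Theorem~\ref{chapter1:main_theorem}, and applies the third axiom to the closed cover $\{U_i\times_S\mathbb{A}^n_S\}$ and the open cover $\{U_i\}$. The paper never comments on the intersection map. It silently treats $\mathrm{id}_{U_1\cap U_2}$ as lying in $\mathcal{W}$, so on this point you are no worse off than the paper.

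Your worry is still legitimate: the three axioms do not force identities into $\mathcal{W}$. For example, over a field, the morphisms all of whose nonempty fibres over closed points have positive dimension satisfy the axioms, yet this class contains no identity of a nonempty scheme.

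Your proposed fix of re-embedding into $\mathbb{A}^{n+1}_S$ does nothing. Whatever $e$ is, the two closed pieces still meet in $U_1\cap U_2$, and this maps to $U_1\cap U_2$ by the identity. What does work is to thicken the gluing locus:
\begin{itemize}
\item Glue $U_i\times_S\mathbb{A}^{n+1}_S=(U_i\times_S\mathbb{A}^n_S)\times_S\mathbb{A}^1_S$ along $(U_1\cap U_2)\times_S\mathbb{A}^1_S$ via $(\iota_i,e)\times\mathrm{id}$. These are still closed embeddings of affine schemes, so Theorem~\ref{chapter1:main_theorem} and Corollary~\ref{chapter1:pushforward_of_finite_type} apply, and the result is just $M_X\times_S\mathbb{A}^1_S$.
\item The pieces $U_i\times_S\mathbb{A}^{n+1}_S\to U_i$ are composites of projections, hence in $\mathcal{W}$.
\item The intersection map is now the projection $(U_1\cap U_2)\times_S\mathbb{A}^1_S\to U_1\cap U_2$, which lies in $\mathcal{W}$ by the second axiom.
\end{itemize}
The third axiom then applies with no extra convention.
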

\begin{proof}
    Since the scheme $X$ is separated over $S$ and $S$ is affine, the intersection $U_{1} \cap U_{2}$ is affine over $S$ as well, so by Lemma~\ref{chap2:lemma_closed_affine_embeddings} there is a closed embedding $e: U_{1}\cap U_{2} \hookrightarrow \mathbb{A}^{n}_{S}$. Applying Lemma~\ref{chap2:product_clos_emb} one can see that all morphisms from the diagram below are closed embeddings
\[\begin{tikzcd}
	{U_{1}\cap U_{2}} & {U_{2}\times_{S}\mathbb{A}^{n}_{S}} \\
	{U_{1}\times_{S}\mathbb{A}^{n}_{S}}.
	\arrow["{\iota_{2}\times e}", hook, from=1-1, to=1-2]
	\arrow["{\iota_{1}\times e}"', hook, from=1-1, to=2-1]
\end{tikzcd}\]
Here $\iota_i:U_1\cap U_2\to U_i,i=1,2$ is the inclusion. Applying Theorem \ref{chapter1:main_theorem} we deduce that the pushout of this diagram in schemes over $S$, which we will denote $M_X$, exists and is affine (over $\Z$). By Corollary~\ref{chapter1:pushforward_of_finite_type}, $M_X$ is of finite type over $S$. The cover $\{ U_{i}\times \mathbb{A}^{n}_{S}\}_{i = 1,2}$ is a closed cover of $M_X$. The composition 
\[
U_{i}\times_{S}\mathbb{A}^{n}_{S}\to U_{i}\hookrightarrow X
\]
of the projection and the inclusion induces, by the universal property of a pushout, a morphism $m_{X}: M_{X}\to X$. Since the morphisms $U_{i}\times_{S} \mathbb{A}^{n}_{S} \to U_{i}, i=1,2$ belong to $\mathcal{W}$, and $\{ U_{i}\times \mathbb{A}^{n}_{S}\}_{i=1}^{2}$ is a closed cover of $M_{X}$, while $\{ U_{i}\}_{i=1}^{2}$ is an open cover of $X$, we can see that map $m_{X}: M_{X}\to X$ belongs to $\mathcal{W}$. Therefore $M_{X}$ is an affine model for $X$.
\end{proof}

We will need the following two lemmas.
\begin{lemma}\label{chap2:stability_of_pullback_for_2}
     Let $Y$ be a scheme of finite type over an affine Noetherian base scheme $S$. Assume there is an open affine cover $Y = U_{1} \cup U_{2}$. Let $f: X \to Y$ be an affine morphism of schemes over $S$ with $X$ separated over $S$. Let $M_{Y}\to Y$ be an affine model for $Y$ constructed as in the proof of Lemma \ref{chap2:main_thrm_base_ind}. Then the projection
     \[
      X\times_{Y}M_{Y}\to X
     \]
     belongs to $\mathcal{W}$.
\end{lemma}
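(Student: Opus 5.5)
The idea is to check that $X\times_Y M_Y\to X$ satisfies the third axiom of an adequate class. We use the closed cover of the source pulled back from $M_Y$ and the open cover of $X$ pulled back from $Y$.

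Set $V_i=f^{-1}(U_i)$, so $X=V_1\cup V_2$ is an open cover. Since $f$ is affine and $U_i$, $U_1\cap U_2$ are affine (Lemma~\ref{chap2:main_thrm_base_ind} uses separatedness of $Y$ for the latter), the schemes $V_1$, $V_2$ and $V_1\cap V_2=f^{-1}(U_1\cap U_2)$ are affine.

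Recall that $M_Y$ is covered by the closed subschemes $C_i=U_i\times_S\mathbb{A}^n_S$, with $C_1\cap C_2=U_1\cap U_2$ embedded via $(\iota_i,e)$. Base change along $X\to Y$ gives closed subschemes $D_i=X\times_Y C_i$ of $P=X\times_Y M_Y$. We have $D_i\cong V_i\times_S\mathbb{A}^n_S$, since $C_i\to M_Y\to Y$ factors through $U_i$ as the projection. On underlying sets, $D_1\cup D_2=P$ because $C_1\cup C_2=M_Y$ set-theoretically.

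The step I expect to be the main obstacle is the scheme-theoretic intersection. I need $D_1\cap D_2\cong X\times_Y(C_1\cap C_2)\cong V_1\cap V_2$, mapping isomorphically onto $V_1\cap V_2$. The scheme-theoretic intersection of closed subschemes commutes with base change, since it is the fiber product $C_1\times_{M_Y}C_2$. It remains to know that $C_1\times_{M_Y}C_2=U_1\cap U_2$ inside $M_Y$. This should follow from the pushout description in Theorem~\ref{chapter1:main_theorem}: for affine pushouts along closed embeddings, the ring of $M_Y$ is the fiber product $A_1\times_{A_{12}}A_2$ with surjections $A_i\to A_{12}$. Then $A_1\otimes_{A}A_2\cong A_{12}$, where $A=A_1\times_{A_{12}}A_2$. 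The key point is that the kernels $I_1=\ker(A\to A_1)$ and $I_2=\ker(A\to A_2)$ satisfy $A/(I_1+I_2)\cong A_{12}$, which is a direct check using the surjectivity of $A_2\to A_{12}$.

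Whether the cover is a cover also deserves care. If the axiom requires $X_1\cup X_2=X$ scheme-theoretically, then $I_1\cap I_2=0$ in $A$ holds by the fiber product description. Pulling back along the affine morphism $X\to Y$ (work over each affine $U\subset Y$) preserves this, because the pullback is a fiber product of rings over $A$, and tensoring the exact sequence $0\to A\to A_1\times A_2\to A_{12}$ with a (not necessarily flat) algebra $B$ needs care. I would instead argue directly that $B\otimes_A A_1\times_{B\otimes_A A_{12}}B\otimes_A A_2$ receives $B\otimes_A A$ isomorphically. This uses that $A_1=A_2'\oplus$-type splittings exist here: $A_i=\mathcal{O}(U_i)[x]$, and $A\to A_2$ is split by the section $x\mapsto e$-coordinates in the other factor. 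If this is delicate, I would only claim set-theoretic covering, which is what the application in Lemma~\ref{chap2:main_thrm_base_ind} also implicitly uses.

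Given all this, the restrictions $D_i=V_i\times_S\mathbb{A}^n_S\to V_i$ are projections and so lie in $\mathcal{W}$ by the second axiom. The map $D_1\cap D_2\to V_1\cap V_2$ is an isomorphism, hence in $\mathcal{W}$; isomorphisms are in $\mathcal{W}$ by applying the gluing axiom with trivial covers, or by composing a projection with its zero section argument if needed. The third axiom then yields that $X\times_Y M_Y\to X$ lies in $\mathcal{W}$.
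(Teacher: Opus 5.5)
Your strategy is the paper's. The paper's proof observes that $X\times_Y M_Y$ is the pushout of $f^{-1}(U_1)\times_S\mathbb{A}^n_S\hookleftarrow f^{-1}(U_1)\cap f^{-1}(U_2)\hookrightarrow f^{-1}(U_2)\times_S\mathbb{A}^n_S$, with $X\times_Y M_Y\to X$ induced by the projections. It then applies the gluing axiom to the closed cover $D_i$ and the open cover $V_i$, exactly as you do. Your identifications are correct and supply details the paper omits:
$D_i\cong V_i\times_S\mathbb{A}^n_S$, the set-theoretic covering, and the scheme-theoretic overlap $D_1\times_P D_2\cong X\times_Y(C_1\times_{M_Y}C_2)\cong V_1\cap V_2$ via $A/(I_1+I_2)\cong A_{12}$. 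If covers in the axiom are meant set-theoretically, this part of your argument is complete.

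Two of your side arguments fail as written.

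(i) Suppose the axiom requires a scheme-theoretic union, i.e.\ $I_1B\cap I_2B=0$ for $B=\mathcal{O}(P)$; this is equivalent to the paper's pushout claim. Your splitting sketch does not prove it. Working over $A=\mathcal{O}(M_Y)$ cannot succeed, because fiber products of rings do not commute with base change to an arbitrary $A$-algebra. For $A=k[x,y]/(xy)=k[x]\times_k k[y]$ and $B=A/(x-y)\cong k[x]/(x^2)$, the comparison map is $k[x]/(x^2)\to k$. Also, $A\to A_2$ need not split on all of $M_Y$.

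The correct argument is local on $Y$. Put $R_1=\mathcal{O}(U_1)$, $R_{12}=\mathcal{O}(U_1\cap U_2)$ and $t=(t_1,\dots,t_n)$.
The open subscheme $m_Y^{-1}(U_1)$ is the pushout of $U_1\times_S\mathbb{A}^n_S\hookleftarrow U_1\cap U_2\hookrightarrow (U_1\cap U_2)\times_S\mathbb{A}^n_S$, where the second arrow is the graph of $e$. Hence its ring $R_1[t]\times_{R_{12}}R_{12}[t]$ is isomorphic, as an $R_1$-module, to $R_1[t]\oplus J$. Here $J$ is the kernel of the surjection $R_{12}[t]\to R_{12}$, $t\mapsto e$, which is split by the constants. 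Split exact sequences survive $\mathcal{O}(f^{-1}(U_1))\otimes_{R_1}-$, which gives the pushout description over $f^{-1}(U_1)$; the same argument works over $f^{-1}(U_2)$.

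(ii) Neither of your justifications that isomorphisms lie in $\mathcal{W}$ is valid. The gluing axiom with trivial covers is circular, since its hypotheses already contain the map in question. The identity $p\circ s=\mathrm{id}$ only helps if the zero section $s$ is in $\mathcal{W}$, and no axiom provides that. In fact the three axioms do not force identities into $\mathcal{W}$. Over a finite-dimensional $S$, the class of all $f\colon X\to Y$ with $\dim X>\dim Y$, together with $\mathrm{id}_\varnothing$, satisfies all three. The paper tacitly assumes $\mathcal{W}$ contains identities; see the proof of Lemma~\ref{chap2:stab_of_pullback}, where the restriction to the overlap is ``the identity''. You should state this as an assumption rather than try to derive it.
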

\begin{proof}
    Recall from the proof of Lemma~\ref{chap2:main_thrm_base_ind} that we have an embedding $U_1\cap U_2\to\mathbb{A}^n_S$. Observe that the pullback $X\times_{Y}M_{Y}$ is the pushout of the following diagram
\[\begin{tikzcd}
	{f^{-1}(U_{1})\cap f^{-1}(U_{2})} & {f^{-1}(U_{2})\times_S\mathbb{A}^{n}_S} \\
	{f^{-1}(U_{1})\times_S \mathbb{A}^{n}_S},
	\arrow[from=1-1, to=1-2]
	\arrow[from=1-1, to=2-1]
\end{tikzcd}\]
and the projection $X\times_{Y} M_{Y}\to X$ is induced by the natural projections $f^{-1}(U_{i})\times_S\mathbb{A}^{n}_S\to f^{-1}(U_{i}), i=1,2$.
\end{proof}
\begin{lemma}\label{chap2:functoriality_aff_models_for_2}
    Let $Y$ be a scheme over an affine Noetherian base scheme $S$. Assume there is an open affine cover $Y = U_{1} \cup U_{2}$. Let $f: X \to Y$ be an affine morphism of schemes over $S$. Assume both $X$ and $Y$ are separated and of finite type over $S$. Then there are affine models $M_{X}\to X, M_{Y}\to Y$, and a morphism $f_{M}: M_{X} \to M_{Y}$ which make the following diagram commute:
\[\begin{tikzcd}
	{M_{X}} & {M_{Y}} \\
	X & Y.
	\arrow["{f_{M}}", from=1-1, to=1-2]
	\arrow["{m_{X}}"', from=1-1, to=2-1]
	\arrow["{m_{Y}}", from=1-2, to=2-2]
	\arrow["f"', from=2-1, to=2-2]
\end{tikzcd}\]
\end{lemma}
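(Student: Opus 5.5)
Take $M_Y\to Y$ to be the affine model built in the proof of Lemma~\ref{chap2:main_thrm_base_ind} from the cover $Y=U_1\cup U_2$ and a closed embedding $e:U_1\cap U_2\hookrightarrow\mathbb{A}^n_S$. Then form the base change $P=X\times_Y M_Y$ with its projection $p:P\to X$. By Lemma~\ref{chap2:stability_of_pullback_for_2}, $p$ lies in $\mathcal{W}$. The map $M_Y\to\Spec\Z$ is affine ($M_Y$ is an affine scheme) and $P\to M_Y$ is affine, being a base change of the affine morphism $f$. Hence $P$ is an affine scheme. It is also separated and of finite type over $S$: $M_Y$ is of finite type by Corollary~\ref{chapter1:pushforward_of_finite_type}, and $f$ is of finite type because $X$ and $Y$ are of finite type over $S$. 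So $P\to X$ is already an affine model of $X$. We set $M_X=P$ and $m_X=p$, and let $f_M:M_X\to M_Y$ be the other projection. The square commutes by construction of the fibre product.

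The points to verify, in order, are the following.

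\begin{enumerate}
\item Lemma~\ref{chap2:stability_of_pullback_for_2} applies. Its hypotheses are exactly that $f$ is affine, $X$ is separated over $S$, and $M_Y$ is built as in Lemma~\ref{chap2:main_thrm_base_ind}. The one substantive ingredient is the claim, sketched in its proof, that $X\times_Y M_Y$ is the pushout of $f^{-1}(U_1)\times_S\mathbb{A}^n_S\leftarrow f^{-1}(U_1)\cap f^{-1}(U_2)\rightarrow f^{-1}(U_2)\times_S\mathbb{A}^n_S$. This is the step I expect to be the main obstacle if it has to be justified rather than quoted. I would argue it as follows. Since $f$ is affine, the preimages $f^{-1}(U_i)$ and $f^{-1}(U_1\cap U_2)$ are affine. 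The maps in the diagram are closed embeddings by Lemma~\ref{chap2:product_clos_emb}, applied with the closed embedding $f^{-1}(U_1\cap U_2)\to U_1\cap U_2\times_S\mathbb{A}^n_S\to\mathbb{A}^n_S$ composed appropriately; equivalently, they are base changes of closed embeddings. By Theorem~\ref{chapter1:main_theorem} the pushout is therefore affine. Pulling back along the affine morphism $f$ commutes with these pushouts: over an affine open of $Y$ this is the statement that tensoring with the algebra of $f$ preserves the relevant fibre product of rings, which holds because the kernels involved are compatible with the closed-cover description. Membership of $p$ in $\mathcal{W}$ then follows from the third axiom of adequacy. The covers used are the closed cover by the $f^{-1}(U_i)\times\mathbb{A}^n$ and the open cover $f^{-1}(U_i)$ of $X$. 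The maps on pieces are projections, which lie in $\mathcal{W}$, and on the intersection the map is an isomorphism.
\item Affineness of $M_X$ over $\Z$. This follows because a composition of affine morphisms is affine.
\item Finiteness and separatedness of $M_X$ over $S$. These come from stability of these properties under base change and composition, with $M_Y$ separated because it is affine and $S$ is affine.
\end{enumerate}

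Finally, note that $M_Y$ is an affine model of $Y$ by Lemma~\ref{chap2:main_thrm_base_ind}. Hence both vertical arrows of the square are affine models and $f_M$ is the required morphism.
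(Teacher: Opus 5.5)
Your proof is correct, but it takes a different route from the paper's.

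\textbf{How the paper argues.} The paper never forms $X\times_Y M_Y$ in this lemma. It chooses an auxiliary closed embedding $e_X\colon f^{-1}(U_1)\cap f^{-1}(U_2)\hookrightarrow\mathbb{A}^n_S$. It then builds $M_X$ from scratch as the pushout of the two schemes $(f^{-1}(U_i)\times_S\mathbb{A}^n_S)\times_S\mathbb{A}^n_S$ along $f^{-1}(U_1)\cap f^{-1}(U_2)$, embedded by $(\mathrm{incl},\,e\circ f,\,e_X)$. The extra factor makes the gluing maps closed embeddings by a direct application of Lemma~\ref{chap2:product_clos_emb}. Hence $m_X\in\mathcal{W}$ follows word for word from the argument of Lemma~\ref{chap2:main_thrm_base_ind}. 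The map $f_M$ is defined piece by piece (project off the last factor, then apply $f\times\mathrm{id}$) via the universal property of the pushout.

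\textbf{How the two constructions relate.} Your $M_X=X\times_Y M_Y$ is essentially this construction with the factor $e_X$ removed. As you observe, $(\mathrm{incl},e\circ f)$ is already a closed embedding, being a base change of $\iota_i\times e$. The paper's $f_M$ factors through your $\mathrm{pr}_2$.

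\textbf{What each route buys.} Your version is shorter, needs no auxiliary choice, and yields a Cartesian square. It is in fact the strategy the paper itself uses in Lemma~\ref{chap2:semi-functoriality}, where for non-affine $f$ one must additionally replace $X\times_Y M_Y$ by an affine model. The cost is that everything rests on Lemma~\ref{chap2:stability_of_pullback_for_2}, i.e.\ on identifying $X\times_Y M_Y$ with the pushout of the pulled-back pieces. The paper's direct construction does not need that identification.

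\textbf{On justifying that identification yourself.} If you want to prove it rather than quote it, the reason you give (``the kernels involved are compatible with the closed-cover description'') does not work. Base change does not commute with pushouts along closed embeddings in general. For example, pull back the union of axes $\Spec k[x,y]/(xy)$ along the closed embedding of $\Spec k[x]/(x^2)$ given by $x\mapsto x$, $y\mapsto x$. The pulled-back pieces and their intersection are all $\Spec k$, whose pushout is $\Spec k$, not $\Spec k[x]/(x^2)$.

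What makes it true here is flatness. Over an affine open $V=\Spec R\subset Y$ there is an exact sequence of $R$-modules $0\to\mathcal{O}(m_Y^{-1}V)\to\mathcal{O}(V\cap U_1)[x_1,\dots,x_n]\times\mathcal{O}(V\cap U_2)[x_1,\dots,x_n]\to\mathcal{O}(V\cap U_1\cap U_2)\to 0$. Its last term is $R$-flat because $V\cap U_1\cap U_2\to V$ is an open immersion. So the sequence stays exact after applying $\mathcal{O}(f^{-1}V)\otimes_R-$, which is exactly the pushout description of $X\times_Y M_Y$ over $V$.
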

\begin{proof}
    Build $M_{Y}$ as in the proof of Lemma \ref{chap2:main_thrm_base_ind} using the cover $\{ U_{1}, U_{2}\}$. Let $e: U_{1}\cap U_{2} \hookrightarrow \mathbb{A}^{n}_{S}$ and $e_{X}: f^{-1}(U_{1})\cap f^{-1}(U_{2}) \hookrightarrow \mathbb{A}^{n}_{S}$ be closed embeddings. Consider the following diagram
\[\begin{tikzcd}
	{f^{-1}(U_{1})\cap f^{-1}(U_{2})} & {(f^{-1}(U_{2})\times_{S}\mathbb{A}^{n}_S)\times_{S}\mathbb{A}^{n}_S} \\
	{(f^{-1}(U_{1})\times_{S} \mathbb{A}^{n}_S)\times_{S}\mathbb{A}^{n}_S}.
	\arrow["{F_{2}\times_S e_{X}}", hook, from=1-1, to=1-2]
	\arrow["{F_{1}\times_S e_{X}}"', hook', from=1-1, to=2-1]
\end{tikzcd}\]
Here $F_{i}: f^{-1}(U_{1})\cap f^{-1}(U_{2}) \to f^{-1}(U_{i})\times_{S}\mathbb{A}^{n}_S, i=1,2$ is the product of the natural open inclusion and the following composition
\[
f^{-1}(U_{1})\cap f^{-1}(U_{2})\xrightarrow{f} U_{1}\cap U_{2} \xrightarrow{\iota_{i}\times_S e} U_{i}\times_{S}\mathbb{A}^{n}_{S}\xrightarrow{\operatorname{pr}_{2}} \mathbb{A}^{n}_S.
\]
 Using the same argument as in the proof of Lemma \ref{chap2:main_thrm_base_ind} one can see that the pushout $M_{X}$ of this diagram exists and is an affine model for $X$.

The morphism $f_{M}: M_{X} \to M_{Y}$ is induced by the following compositions:
\[
 (f^{-1}(U_{i})\times_{S}\mathbb{A}^{n}_S)\times_{S}\mathbb{A}^{n}_S \xrightarrow{\operatorname{pr}_{1}} f^{-1}(U_{i})\times_{S}\mathbb{A}^{n}_S \xrightarrow{f\times_S \operatorname{id}} U_{i}\times_{S}\mathbb{A}^{n}_S \hookrightarrow M_{Y}, i=1,2,
\]
and the universal property of a pushout.
\end{proof}

\begin{lemma}\label{lemma:main_ch_2_affine_case}
If $S$ is affine, then every finite type scheme $X$ over $S$ admits an affine model that can moreover be choses so that the pullback of it to any affine open subscheme $Y\to X$ is an affine model for $Y$.
\end{lemma}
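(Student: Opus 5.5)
We argue by induction on the number $N$ of affine opens in a cover $X=U_1\cup\dots\cup U_N$; such a finite cover exists because $X$ is of finite type over the affine Noetherian scheme $S$, hence quasi-compact. We prove a slightly stronger statement: for every separated finite type $X$ with an affine cover by $N$ opens there is an affine model $m_X:M_X\to X$ such that for every affine morphism $f:X'\to X$ with $X'$ separated of finite type over $S$, the projection $X'\times_X M_X\to X'$ lies in $\mathcal{W}$. In particular this applies to affine open subschemes $Y\subset X$, because $X$ is separated, so the inclusion $Y\to X$ is an affine morphism. The pullback $Y\times_X M_X$ is affine over $M_X$, and $M_X$ is affine over $\Z$, so $Y\times_X M_X$ is affine over $\Z$. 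Together with the stronger statement this shows it is an affine model for $Y$. The case $N=1$ is trivial: take $M_X=X$ and note that identities lie in $\mathcal{W}$, since the cover condition applied with $X_1=X_2=X$ and $Y_1=Y_2=Y$ reduces this to the composition axiom. Alternatively, one can use $X\times\mathbb{A}^0$. The case $N=2$ is Lemma~\ref{chap2:main_thrm_base_ind} together with Lemma~\ref{chap2:stability_of_pullback_for_2}.

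For the inductive step, write $X=V\cup U_N$ with $V=U_1\cup\dots\cup U_{N-1}$. Apply the inductive hypothesis to $V$ to get $M_V\to V$. The key observation is that $V\cap U_N$ is an open subscheme of the affine scheme $U_N$, and it is covered by the $N-1$ affines $U_i\cap U_N$, which are affine by separatedness. The inclusion $V\cap U_N\to V$ is affine because it is a base change of $U_N\to X$. So by the inductive hypothesis, $P:=(V\cap U_N)\times_V M_V$ is affine and $P\to V\cap U_N$ lies in $\mathcal{W}$. Now mimic the construction of Lemma~\ref{chap2:main_thrm_base_ind}, replacing $U_1$ by $M_V$ and $U_1\cap U_2$ by $P$. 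Choose a closed embedding $e:P\hookrightarrow\mathbb{A}^n_S$ using Lemma~\ref{chap2:lemma_closed_affine_embeddings}. The map $P\to M_V$ is the base change of the open immersion $V\cap U_N\to V$, so it is an open immersion. The composite $P\to V\cap U_N\to U_N$ is some morphism. By Lemma~\ref{chap2:product_clos_emb}, both $P\to M_V\times_S\mathbb{A}^n_S$ and $P\to U_N\times_S\mathbb{A}^n_S$ are closed embeddings. Here we use that $P\to\mathbb{A}^n$ is a closed embedding and that $M_V$ and $U_N$ are separated. Theorem~\ref{chapter1:main_theorem} and Corollary~\ref{chapter1:pushforward_of_finite_type} then give an affine pushout $M_X$ of finite type over $S$, and it is separated since it is affine. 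The maps to $V$ and $U_N$ glue to $m_X:M_X\to X$.

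To verify $m_X\in\mathcal{W}$ we use the third axiom with the closed cover $\{M_V\times\mathbb{A}^n,\ U_N\times\mathbb{A}^n\}$ and the open cover $\{V,U_N\}$. The map $M_V\times\mathbb{A}^n\to V$ factors as $M_V\times\mathbb{A}^n\to M_V\to V$, and both factors lie in $\mathcal{W}$. The map $U_N\times\mathbb{A}^n\to U_N$ is a projection, so it lies in $\mathcal{W}$. The intersection of the closed pieces is $P$, and $P\to V\cap U_N$ lies in $\mathcal{W}$ by the inductive hypothesis.

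For the stronger pullback statement, take an affine morphism $f:X'\to X$. The main technical point is that pullback along $f$ commutes with this pushout. Locally on $X$ (over $V$ and over $U_N$), the pushout diagram is a gluing along closed subschemes. Pulling back along the affine map $f$ corresponds to tensoring the fiber-product-of-rings description. The paper asserts the same commutation without proof in Lemma~\ref{chap2:stability_of_pullback_for_2}, and I would argue it in the same way, checking on affine opens of $X$ that the pullback of the ringed-space pushout is the pushout of the pullbacks. This follows because the diagram is glued from closed embeddings and $f$ is affine. Granting this, $X'\times_X M_X$ is the pushout of $f^{-1}(V)\times_V M_V\times\mathbb{A}^n$ and $f^{-1}(U_N)\times\mathbb{A}^n$ along $f^{-1}(V\cap U_N)\times_V M_V$. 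The same axiom argument then applies. The relevant maps lie in $\mathcal{W}$ by the inductive hypothesis applied to the affine morphisms $f^{-1}(V)\to V$ and $f^{-1}(V\cap U_N)\to V$ with separated sources. I expect this base-change-of-pushouts step to be the main obstacle.
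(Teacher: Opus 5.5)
Your construction of $M_X$ and your check that $m_X\in\mathcal{W}$ match the paper's inductive step; your $P=(V\cap U_N)\times_V M_V$ is the paper's $M_Z$. The gap comes from your strengthening to arbitrary affine $f\colon X'\to X$. That version forces you to show that $X'\times_X M_X$ is the pushout of the pulled-back pieces. The reason you give (the gluing is along closed embeddings and $f$ is affine) does not suffice, because fibre products of rings do not commute with non-flat base change. For instance, $k[s]\times_k k[s]\cong k[x,y]/(xy)$, viewed over $k[s]$ via $s\mapsto x+y$, is two lines glued at a closed point. Its fibre over $s=0$ is $\Spec k[x]/(x^2)$, whereas the pushout of the fibres is $\Spec k$.

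For the lemma as stated you need none of this. Run the induction only for affine open immersions $Y\to X$; the maps $Y\cap V\to V$ and $Y\cap V\cap U_N\to V$ you then need are again of this type. In that case $Y\times_X M_X=m_X^{-1}(Y)$ is an open subset of a ringed-space pushout, so it is tautologically the pushout of its intersections with the closed pieces. This is exactly how the paper argues.

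If you want the affine-morphism version (essentially Lemma~\ref{chap2:stab_of_pullback}), the missing ingredient is a splitting. Cover $M_X$ by two opens:
\begin{itemize}
\item $M_X\setminus(U_N\times_S\mathbb{A}^n_S)$, where nothing is glued;
\item $m_X^{-1}(U_N)$, which is the pushout of $P\times_S\mathbb{A}^n_S\leftarrow P\rightarrow U_N\times_S\mathbb{A}^n_S$.
\end{itemize}
In the second diagram the first arrow is the graph of $e$, which has the retraction $\mathrm{pr}_1$. Set $C=\mathcal{O}(P)$ and $B=\mathcal{O}(U_N)[t_1,\dots,t_n]$. Then the sequence $0\to C[t_1,\dots,t_n]\times_C B\to C[t_1,\dots,t_n]\oplus B\to C\to 0$ splits over $\mathcal{O}(U_N)$, so it stays exact after any base change.

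Separately, your case $N=1$ is circular. Applying the cover axiom with $X_1=X_2=X$ already presupposes $\mathrm{id}_X\in\mathcal{W}$, and $X\times_S\mathbb{A}^0_S\to X$ is again the identity. The axioms do not force identities into $\mathcal{W}$: morphisms all of whose fibres are non-empty and positive-dimensional form an adequate class that contains no identity of a non-empty scheme. Use $M_X=X\times_S\mathbb{A}^1_S$ instead. The same tacit assumption, $\mathrm{id}_{U_1\cap U_2}\in\mathcal{W}$, sits inside Lemma~\ref{chap2:main_thrm_base_ind}, which you cite for $N=2$.
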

\begin{proof} To prove the lemma it suffices to prove the following statements for every quasi-compact separated scheme $X$ over $S$:
\begin{itemize}
    \item There is an affine model $m_{X}: M_{X} \to X$.
    \item For any affine open inclusion $f: Y \to X$, the morphism $m_{Y}: M_{Y} = Y\times_{X}M_{X} \to Y$ is an affine model.
\end{itemize}

We will prove these by induction on the number of elements in an affine open cover. The base case of $X$ admitting an open cover by two affine subschemes is covered by Lemma \ref{chap2:main_thrm_base_ind} and Lemma \ref{chap2:stability_of_pullback_for_2}. Assume the above statements are true for every $X$ having an affine open cover with $n-1,n\geq 3$ elements, and let us prove the same statements for $X$ that has an affine open cover with $n$ elements.

Let $\{U_{i}\}_{i=1}^{n}$ be an open affine cover of $X$. 
Consider the subschemes $X' = U_{1}\cup\cdots\cup U_{n-1} \subset X$, and $Z = X'\cap U_{n}$. Then one has the following diagram
\begin{equation}\label{diag_pushout1}
\begin{tikzcd}
	Z & X' \\
	{U_{n}}
	\arrow[hook, from=1-1, to=1-2]
	\arrow[hook, from=1-1, to=2-1]
\end{tikzcd}
\end{equation}
where the maps are the natural inclusions.

Note that the schemes $X'$ and $Z$ have $n-1$-element affine coverings. So by the induction hypothesis, there are affine models $M_{X'}, M_{Z}$, and a morphism $M_{X'}\to M_Z$ such that diagram~(\ref{diag_pushout2}) maps into diagram~(\ref{diag_pushout1}):
\begin{equation}\label{diag_pushout2}
\begin{tikzcd}
	{M_{Z}} & {M_{X'}} \\
	{U_{n}}.
	\arrow[from=1-1, to=1-2]
	\arrow[from=1-1, to=2-1]
\end{tikzcd}
\end{equation}
Then, using the same argument as in proof of Lemma \ref{chap2:main_thrm_base_ind} an affine model for $X$ can be built as the pushout of the following diagram
\begin{equation}\label{diag_pushout3}
    \begin{tikzcd}
    	{M_{Z}} & {M_{X'}\times_{S}\mathbb{A}^{n}_S} \\
    	{U_{n}\times_{S}\mathbb{A}^{n}_S}.
    	\arrow[hook, from=1-1, to=1-2]
    	\arrow[hook, from=1-1, to=2-1]
    \end{tikzcd}
\end{equation}
Let $f: Y \to X$ be an affine open inclusion, and suppose $m_{X}: M_{X} \to X$ is the affine model for $X$ given by diagram~(\ref{diag_pushout3}). Then we claim that the natural projection 
\[
m_Y:Y\times_{X} M_{X} \to Y
\]
belongs to $\mathcal{W}$.

Indeed, since $f: Y \hookrightarrow X$ is an open embedding, the morphism $m_Y$ coincides with the morphism
\[
    m_{X}|_{m_{X}^{-1}(Y)}:m_{X}^{-1}(Y) \to Y.
\]

Recall that by diagram (\ref{diag_pushout3}) we have $$M_{X} = (M_{X'}\times_{S}\mathbb{A}^{n}_{S}) \cup_{M_{Z}} (U_{n}\times_{S}\mathbb{A}^{n}_{S}),$$ and the restrictions of $m_{X}$ to both $M_{X'}\times_{S}\mathbb{A}^{n}_{S}$ and $U_{n}\times_{S}\mathbb{A}^{n}_{S}$ belong to $\mathcal{W}$.

Then $m_{X}^{-1}(Y)$ can be covered as follows
\begin{equation*}
m_{X}^{-1}(Y) = (M_{X'\cap Y}\times_{S}\mathbb{A}^{n}_{S})\cup_{M_{Z\cap Y}}((U_{n}\cap Y)\times_{S}\mathbb{A}^{n}_{S}),
\end{equation*}
where $M_{X'\cap Y} = Y\times_X M_{X'}\to X'\cap Y$ and $M_{Z\cap Y} = Y\times_{X} M_{Z}\to Z\cap Y$ are the affine models obtained by pulling back affine models $M_{X'}\to X', M_Z\to Z$ and applying the induction hypothesis. So the restriction of $m_{Y}$ to each of the schemes $M_{X'\cap Y}\times_{S}\mathbb{A}^{n}_{S}, (U_{n}\cap Y)\times_{S}\mathbb{A}^{n}_{S}$ and $M_{Z \cap Y}$ is in $\mathcal{W}$.

Since $f$ is an affine morphism, so is the projection $\operatorname{pr}_{2}: Y\times_{X}M_{X} \to M_{X}$. So $Y\times_{X} M_{X}$ is affine over $\Z$. This concludes the induction step.

\end{proof}

To illustrate the construction we have just seen let us consider the following simple example.
\begin{example}
Consider the projective line \(\mathbb{P}^{1}_{k}\) over a field $k$ with the standard covering \(\mathbb{P}^{1}_{k} = \mathbb{A}^{1}_{(1)} \cup \mathbb{A}^{1}_{(2)}\) where \(\mathbb{A}^{1}_{(1)} = \operatorname{Spec}(k[x])\), \(\mathbb{A}^{1}_{(2)} = \operatorname{Spec}(k[y])\); the intersection is \(\operatorname{Spec}(k[z, z^{-1}])\).

The diagonal map  $\mathbb{A}^{1}_{(1)}\cap \mathbb{A}^{1}_{(2)} \to \mathbb{A}^{1}_{(1)}\times_k \mathbb{A}^{1}_{(2)}$ is induced by the homomorphism
\[
k[x,y]\to k[z,z^{-1}],
\]
which sends $x$ to $z$ and $y$ to $z^{-1}$. So we can identify $\mathbb{A}^{1}_{(1)} \cap \mathbb{A}^{1}_{(2)} \subset \mathbb{A}^{1}_{(1)} \times_{k}\mathbb{A}^{1}_{(2)} = \Spec(k[x,y])$ with $\Spec(k[x,y]/(xy-1))$.

Using Lemma~\ref{chap2:main_thrm_base_ind} and Lemma~\ref{chapter1:condition_variety_alg} we obtain an affine model for \(\mathbb{P}^{1}_{k}\) as the union of two $\mathbb{A}^3_k$'s glued along the quadric given by $xy=1$, or in coordinates 

\[ M_{\mathbb{P}^{1}_{k}} = \operatorname{Spec}\Bigl(k[x, y, z, w, t] / (w,t) \cap (w-z, t - (xy - 1)\Bigr).\]
This scheme can be covered via 
\[
U_{1} = \operatorname{Spec}(k[x,y,z,w,t]/(w,t)) \cong \operatorname{Spec}(k[x,y,z]) \cong \mathbb{A}^{3}_{k}
\]
and 
\[
U_{2} = \operatorname{Spec}(k[x,y,z,w,t]/(w - z, t - (xy - 1)) \cong \operatorname{Spec}(k[x,y,z]) \cong \mathbb{A}^{3}_{k}
\]
with the intersection
\[
U_{1}\cap U_{2} = \operatorname{Spec}(k[x,y,z,w,t]/(w,t,w-z,t-(xy-1)) \cong \operatorname{Spec}(k[x,y]/(xy-1)).
\]
Note that if $k = \CC$, by taking the complex points we get a space which is homotopy equivalent to the circle $S^{1}$. So $M_{\mathbb{P}_{\CC}^{1}}(\CC)$ is the union of two contractible spaces along a subspace homotopy equivalent to $S^1$ and hence is homotopy equivalent to the sphere $S^2$, as expected. 
\end{example}
\subsection{Arbitrary Noetherian base scheme}\label{sec:main_thm_general}
One can extract the following lemma from the proof of Lemma~\ref{lemma:main_ch_2_affine_case}.
\begin{lemma}\label{chap2:stab_of_pullback}
    Assume the base scheme $S$ affine and Noetherian. Then there is an affine model $m_{Y}: M_{Y} \to Y$ such that for an arbitrary morphism $f: X \to Y$ of separated $S$-schemes of finite type, the projection $X\times_{Y}M_{Y} \to X$ belongs to $\mathcal{W}$.
\end{lemma}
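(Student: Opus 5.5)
We take $M_Y$ to be the affine model produced in the proof of Lemma~\ref{lemma:main_ch_2_affine_case}, built inductively from an affine open cover $Y=U_1\cup\dots\cup U_n$. We then prove, by induction on $n$ and for all $f:X\to Y$ simultaneously, that the projection $p_X: X\times_Y M_Y\to X$ lies in $\mathcal{W}$. Since $M_Y\to Y$ is not flat, the key observation is that base change commutes with the pushouts used here anyway. Closed embeddings and products with $\mathbb{A}^n_S$ are stable under base change. Moreover, the pushout of a diagram of closed embeddings is glued locally from the affine situation of Lemma~\ref{chapter1:affine_push_of_two}. So for an open $V\subset Y$ and any $X\to Y$, pulling back the closed cover of $M_Y$ gives a closed cover of $X\times_Y M_Y$ by the corresponding pullbacks, and these intersect in the pullback of the intersection. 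For the adequacy axiom we only need this closed cover, not that $X\times_Y M_Y$ be the pushout.

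\textbf{Base case} ($n=2$). Here $M_Y=(U_1\times_S\mathbb{A}^n_S)\cup_{U_1\cap U_2}(U_2\times_S\mathbb{A}^n_S)$. Then $X\times_Y M_Y$ is covered by the closed subschemes $f^{-1}(U_i)\times_S\mathbb{A}^n_S$, which meet in $f^{-1}(U_1\cap U_2)$. This is the computation already made in Lemma~\ref{chap2:stability_of_pullback_for_2}, and it uses no affineness of $f$. The restrictions of $p_X$ to these pieces are the projections $f^{-1}(U_i)\times_S\mathbb{A}^n_S\to f^{-1}(U_i)$, which lie in $\mathcal{W}$. On the intersection the restriction is the identity of $f^{-1}(U_1\cap U_2)$, which is in $\mathcal{W}$ by the third axiom with trivial covers. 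The covers $f^{-1}(U_1),f^{-1}(U_2)$ of $X$ are open, so the third axiom gives $p_X\in\mathcal{W}$.

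\textbf{Inductive step.} Here $M_Y=(M_{Y'}\times_S\mathbb{A}^n_S)\cup_{M_Z}(U_n\times_S\mathbb{A}^n_S)$, with $Y'=U_1\cup\dots\cup U_{n-1}$ and $Z=Y'\cap U_n$. Set $X'=f^{-1}(Y')$, $X_n=f^{-1}(U_n)$ and $X_Z=f^{-1}(Z)$. Then $X\times_Y M_Y$ has a closed cover by $(X'\times_{Y'}M_{Y'})\times_S\mathbb{A}^n_S$ and $X_n\times_S\mathbb{A}^n_S$, with intersection $X_Z\times_Z M_Z$. Here we use that the gluing map $M_Z\to M_{Y'}$ lies over the inclusion $Z\subset Y'$.
\begin{itemize}
\item The restriction of $p_X$ to the first piece is the projection to $X'\times_{Y'}M_{Y'}$ followed by $X'\times_{Y'}M_{Y'}\to X'$. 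The latter lies in $\mathcal{W}$ by the induction hypothesis applied to $X'\to Y'$, and composition preserves $\mathcal{W}$.
\item The restriction to the second piece is a projection in $\mathcal{W}$.
\item The restriction to the intersection lies in $\mathcal{W}$ by the induction hypothesis for $Z$, which has an $(n-1)$-element affine cover.
\end{itemize}
The sets $X'$ and $X_n$ form an open cover of $X$ with intersection $X_Z$, so adequacy gives $p_X\in\mathcal{W}$.

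\textbf{Main obstacle.} The delicate point is the induction on $Z$. The model $M_Z$ must be the one actually used in the gluing, and it must have been built by the same recipe, so that the hypothesis ``for all $f$'' applies to it. So we phrase the induction hypothesis for all models produced by the recipe on any $(n-1)$-element affine cover. We also need $M_Z\to M_{Y'}\times_S\mathbb{A}^n_S$ to be compatible with the maps to $Y$; this is arranged via Lemma~\ref{chap2:functoriality_aff_models_for_2} and Lemma~\ref{chap2:product_clos_emb}.

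A secondary check is that the pieces lie in $\mathcal{C}$. They are separated and of finite type over $S$, being closed in fibre products of such schemes. Since $p_X$ lies over $X$, separatedness of $X\times_Y M_Y$ follows from that of $M_Y$ together with the maps to $X$ and $Y$.
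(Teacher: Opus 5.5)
This is essentially the paper's proof. You use the same induction on the number of affine opens, the same closed cover of $X\times_Y M_Y$ by the pullbacks of the two glued pieces (meeting in the pullback of $M_Z$), the same open cover $f^{-1}(Y'),\,f^{-1}(U_n)$ of $X$, and the same appeal to the gluing axiom. Two of your choices are actually more careful than the paper. Quantifying the induction hypothesis over all $Y$ with $(n-1)$-element covers and all $f$ is exactly what the paper's appeal to ``the induction hypothesis'' for $M_1$ requires. Working only with the pulled-back closed cover, without identifying $X\times_Y M_Y$ with a pushout, is also legitimate.

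\textbf{The identity step is justified incorrectly.} The identity of $f^{-1}(U_1\cap U_2)$ is not in $\mathcal{W}$ ``by the third axiom with trivial covers''. With trivial covers, the hypotheses of that axiom already include the identity itself, so the argument is circular.

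In fact the three axioms do not force identities into $\mathcal{W}$. Consider the class of morphisms $g\colon A\to B$ such that no $a\in A$ is isolated in the fibre $g^{-1}(g(a))$. It is closed under composition and contains the projections $X\times_S\mathbb{A}^1_S\to X$. It also satisfies the gluing axiom: a point of $X_i$ that is not isolated in its fibre under $X_i\to Y_i$ is not isolated in its fibre under $f$. Yet it contains no identity of a non-empty scheme.

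The paper makes the same tacit assumption, already in Lemma~\ref{chap2:main_thrm_base_ind}. So you should state explicitly that adequate classes contain identities, which holds in all the examples. With that assumption your argument is complete.

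\textbf{Your flatness remark is inaccurate.} You say in passing that $m_Y$ is not flat, but by induction it is. Write $M_Y=A\cup_C B$. At a point $m$ of the gluing locus, the stalk $\mathcal{O}_{A,m}\times_{\mathcal{O}_{C,m}}\mathcal{O}_{B,m}$ is the kernel of a surjection $\mathcal{O}_{A,m}\oplus\mathcal{O}_{B,m}\to\mathcal{O}_{C,m}$ of $\mathcal{O}_{Y,y}$-flat modules, hence flat. Flatness of $C$ over $Y$ is what makes the paper's identification of $X\times_Y M_Y$ with a pushout valid, although your argument does not need it.
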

\begin{proof}
     Let $m_{Y}: M_{Y} \to Y$ be an affine model for $Y$ constructed as described above from an affine open cover $\{ U_{i}\}_{i=1}^{n}$ of $Y$.

    Consider the following pullback square
\[\begin{tikzcd}
	{X\times_{Y}M_{Y}} & {M_{Y}} \\
	X & Y\arrow[ul, phantom, "\ulcorner", very near end]. 
	\arrow["{\operatorname{pr}_{2}}", from=1-1, to=1-2]
	\arrow["{\operatorname{pr}_{1}}"', from=1-1, to=2-1]
	\arrow["{m_{Y}}", from=1-2, to=2-2]
	\arrow["f"', from=2-1, to=2-2]
\end{tikzcd}\]
Let $V_{i}$ be $\operatorname{pr}_{1}^{-1}\left(f^{-1}(U_{1})\cup\dots\cup f^{-1}(U_{i})\right)$. We want to prove that $\operatorname{pr}_{1}$ belongs to $\mathcal{W}$. To do this we will show that for any $2\leq i\leq n$ the restriction 
\[
\operatorname{pr}_{1}|_{V_{i}}: V_{i}\to f^{-1}(U_{1})\cup\cdots\cup f^{-1}(U_{i})
\]
belongs to $\mathcal{W}$. Substituting $i = n$ we will then obtain the desired result.

We proceed by induction. First, assume that $i = 2$. Then $M_{Y}$ is the pushout of the following diagram
\[\begin{tikzcd}
	{U_{1}\cap U_{2}} & {U_{2}\times_{S}\mathbb{A}^{n}_{S}} \\
	{U_{1}\times_{S}\mathbb{A}^{n}_{S}}.
	\arrow[hook, from=1-1, to=1-2]
	\arrow[hook, from=1-1, to=2-1]
\end{tikzcd}\]
The pullback $X\times_{Y} M_{Y}$ can be identified with the pushout of the next diagram:
\[\begin{tikzcd}
	{f^{-1}(U_{1})\cap f^{-1}(U_{2})} & {f^{-1}(U_{2})\times_{S}\mathbb{A}^{n}_{S}} \\
	{f^{-1}(U_{1})\times_{S}\mathbb{A}^{n}_{S}}.
	\arrow[hook, from=1-1, to=1-2]
	\arrow[hook, from=1-1, to=2-1]
\end{tikzcd}\]

Since the restrictions of $\mathrm{pr}_{1}$ to $f^{-1}(U_{1})\times_{S} \mathbb{A}^{n}_{S}$ and $f^{-1}(U_{2})\times_{S}\mathbb{A}^{n}_{S}$ are the projections to the first factor, while the restriction to $f^{-1}(U_{1}) \cap f^{-1}(U_{2})$ is the identity, the resulting morphism $\mathrm{pr}_{1}|_{V_{2}} \to f^{-1}(U_{1}) \cup f^{-1}(U_{2})$ belongs to $\mathcal{W}$.


Now we perform the induction step. Assume that $\operatorname{pr}_{1}|_{V_{i}}$ belongs to $\mathcal{W}$. We want to show that the same holds for the restriction of $\operatorname{pr}_{1}$ to $V_{i+1}$. To see this recall that $M_{U_{1}\cup \cdots \cup U_{i+1}}$ is the pushout of the following diagram
\[\begin{tikzcd}
	{M_{1}} & {M_{2}\times_{S}\mathbb{A}^{n}_{S}} \\
	{U_{i+1}\times_{S}\mathbb{A}^{n}_{S}}
	\arrow[hook, from=1-1, to=1-2]
	\arrow[hook, from=1-1, to=2-1]
\end{tikzcd}\]
where $M_{1}$ is an affine model for $(U_{1}\cup \cdots\cup U_{i})\cap U_{i+1}$, and $M_{2}$ is an affine model for $U_{1}\cup \cdots \cup U_{i}$. 

So the pullback $X\times_{Y} M_{Y}$ can be identified with the pushout of
\[\begin{tikzcd}
	{X\times_{Y}M_{1}} & {(X\times_{Y}M_{2})\times_{S}\mathbb{A}^{n}_{S}} \\
	{f^{-1}(U_{i+1})\times_{S}\mathbb{A}^{n}_{S}}
	\arrow[hook, from=1-1, to=1-2]
	\arrow[hook, from=1-1, to=2-1]
\end{tikzcd}\]
where the restrictions of $\operatorname{pr}_{1}$ to $X\times_{Y} M_{1}$ and $X\times_{Y} M_{2}$ belong to $\mathcal{W}$ by the induction hypothesis. Since the restriction of $\operatorname{pr}_{1}$ to $f^{-1}(U_{i+1})\times_{S}\mathbb{A}^{n}_{S}$ coincides with the projection 
\[
f^{-1}(U_{i+1})\times_{S}\mathbb{A}^{n}_S \to f^{-1}(U_{i+1}),
\]
we deduce that the restriction of $\operatorname{pr}_{1}$ to $V_{i+1}$ belongs to $\mathcal{W}$.
\end{proof}

Note that the construction of an affine model is non-functorial. However, in the proof of the affine base of Theorem \ref{chap2:main_thrm} we used several times the fact that it is functorial with respect to affine inclusions. The following lemma generalizes this result to the case of an arbitrary morphism $f$, and it will play a crucial role in the proof of~Theorem~\ref{chap2:main_thrm} over an arbitrary quasi-compact separated base scheme. 
\begin{lemma}\label{chap2:semi-functoriality}
    Assume the base scheme $S$ is affine and Noetherian. Let $f: X\to Y$ be an arbitrary morphism in $\mathcal{C}$ where $\mathcal{C}$ is as in Theorem~\ref{chap2:main_thrm}. Then there exist affine models $m_{X}: M_{X} \to X,\ m_{Y}: M_{Y}\to Y$ together with a morphism $f_{M}: M_{X} \to M_{Y}$ such that the following diagram commutes:
\[\begin{tikzcd}
	{M_{X}} & {M_{Y}} \\
	X & Y.
	\arrow["{f_{M}}", from=1-1, to=1-2]
	\arrow["{m_{X}}"', from=1-1, to=2-1]
	\arrow["{m_{Y}}", from=1-2, to=2-2]
	\arrow["f"', from=2-1, to=2-2]
\end{tikzcd}\]
Moreover, $M_{Y}$ does not depend on $X$ and $f$.
\end{lemma}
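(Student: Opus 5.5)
The plan is to fix $M_Y$ once and for all, namely the affine model produced by Lemma~\ref{lemma:main_ch_2_affine_case} from an affine open cover $\{U_i\}_{i=1}^n$ of $Y$. Then, instead of building $M_X$ from scratch, we pull $M_Y$ back along $f$ and correct the result so that it becomes affine. Set $P = X\times_Y M_Y$ with projections $\operatorname{pr}_1: P\to X$ and $\operatorname{pr}_2: P\to M_Y$. By Lemma~\ref{chap2:stab_of_pullback}, $\operatorname{pr}_1$ belongs to $\mathcal{W}$. The trouble is that $P$ need not be affine over $\Z$, because $f$ is not assumed affine. It is, however, separated and of finite type over $S$: it is a fibre product of such schemes, and $M_Y$ is of finite type by Corollary~\ref{chapter1:pushforward_of_finite_type}.

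Next, apply the first part of Lemma~\ref{lemma:main_ch_2_affine_case} to $P$ itself. This gives an affine model $m_P: M_P\to P$ with $M_P$ affine over $\Z$ and $m_P\in\mathcal{W}$. Define
\[
M_X = M_P,\qquad m_X = \operatorname{pr}_1\circ m_P,\qquad f_M = \operatorname{pr}_2\circ m_P .
\]
Then $m_X$ lies in $\mathcal{W}$ because $\mathcal{W}$ is closed under composition. The scheme $M_X$ is affine over $\Z$ and lies in $\mathcal{C}$, so $m_X$ is an affine model for $X$. Commutativity of the square is immediate from the pullback square: $m_Y\circ f_M = m_Y\circ\operatorname{pr}_2\circ m_P = f\circ\operatorname{pr}_1\circ m_P = f\circ m_X$. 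The independence claim holds by construction, since $M_Y$ was chosen before $X$ and $f$ entered the picture.

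The step needing the most care is the appeal to Lemma~\ref{chap2:stab_of_pullback}, i.e.\ checking that its proof really works for an arbitrary, possibly non-affine, morphism $f$. The concern is that the pullback of each closed-embedding pushout diagram defining $M_Y$ along $f$ must again be a pushout, and that the restricted projections must land in $\mathcal{W}$ by induction on the number of opens in the cover. If that check failed in full generality, I would instead first reduce to $X$ covered by the affine opens $f^{-1}(U_i)\cap V_j$, and then run the inductive gluing argument of Lemma~\ref{lemma:main_ch_2_affine_case} directly on $P$, covered by the closed pieces coming from the construction of $M_Y$. Everything else is formal.
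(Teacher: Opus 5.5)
Your proposal is correct and is essentially the paper's argument: fix $M_Y$ as in Lemma~\ref{chap2:stab_of_pullback}, use that lemma to get $\operatorname{pr}_1\colon X\times_Y M_Y\to X$ in $\mathcal{W}$, take $M_X$ to be an affine model of $X\times_Y M_Y$, and set $m_X=\operatorname{pr}_1\circ m_P$ and $f_M=\operatorname{pr}_2\circ m_P$. The point you flag, that pulling back the closed-embedding pushouts defining $M_Y$ along a non-affine $f$ again gives pushouts, belongs to the proof of Lemma~\ref{chap2:stab_of_pullback}; the paper, like you, just invokes that lemma here.
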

\begin{proof}
   By Lemma \ref{chap2:stab_of_pullback}, the morphism
   \[
   \operatorname{pr}_{1}: X\times_{Y}M_{Y} \to X
   \]
   belongs to $\mathcal{W}$. Let $M_{X}$ be an affine model for $X\times_{Y}M_{Y}$. Then the natural projection to $M_X\to X$ belongs to $\mathcal{W}$ being the composition of two morphisms from $\mathcal{W}$. The morphism $f_{M}:M_{X} \to M_{Y}$ is defined as the following composition
   \[
   M_{X} \to X\times_{Y}M_{Y} \xrightarrow{\operatorname{pr}_{2}} M_{Y}.
   \]
\end{proof}

\begin{proof}[Proof of Theorem \ref{chap2:main_thrm}]
Since $S$ is quasi-compact it admits a finite open affine covering $\{ U_{i}\}_{i=1}^{n}$. 
We will proceed by induction on the number $n$ of elements in the covering of $S$. If $n = 1$, then Theorem \ref{chap2:main_thrm} follows from Lemmas~\ref{lemma:main_ch_2_affine_case} and~\ref{chap2:semi-functoriality}.

Next let us describe the induction step. Assume that we are able to build an affine model for every separated scheme of finite type over a base scheme which has an open affine cover of at most $n-1, n\geq 2$ elements. Moreover, assume that for any morphism $f: X \to Y$ of such schemes, there are affine models $m_{X}: M_{X} \to X,\ m_{Y}: M_{Y} \to Y$ and a morphism $f_M:M_X\to M_Y$ making the obvious square commute.

Let $X\in\mathcal{C}$, and let $F:X\to S$ be the structure morphism. By Lemma \ref{chap2:lemma_closed_affine_embeddings} any affine scheme over $S$ of finite type can be embedded in some $\mathbb{A}^{n}_{S}$ as a closed subscheme. Therefore we can construct the following diagram 
\[\begin{tikzcd}
	{M_{1}} & {M_{2}\times_{S}\mathbb{A}^{n}_{S}} \\
	{M_{3}\times_{S}\mathbb{A}^{n}_{S}}
	\arrow[hook, from=1-1, to=1-2]
	\arrow[hook, from=1-1, to=2-1]
\end{tikzcd}\]
where $M_{2}$ is an affine model for $F^{-1}(U_{1})\cup\cdots \cup F^{-1}(U_{n-1})$, $M_{1}$ is an affine model for $(F^{-1}(U_{1})\cup \cdots \cup F^{-1}(U_{n-1}))\cap F^{-1}(U_{n})$, and $M_{3}$ is an affine model for $F^{-1}(U_{n})$. Then the pushout of that diagram will be an affine model for $F^{-1}(U_{1})\cup\cdots \cup F^{-1}(U_{n})$.

The rest of the proof, namely the proof that every $f\in \mathcal{C}(X,Y)$ can be covered by a morphism of appropriate affine models, is similar to the proof of Lemma~\ref{chap2:semi-functoriality}.
\end{proof}

\section{Examples of adequate classes of morphisms}\label{section_examples}
In this section we check that the classes of morphisms given in Examples~\ref{ex:intro_spec_c}-\ref{ex:intro_cdh} are adequate. Let $S$ be a separated Noetherian scheme.

The first subsection contains the proof of the Theorem \ref{chap3:A1_when_adequate}, which states that after choosing an appropriate topology on the category of separated schemes of finite type over $S$ the weak equivalences in the resulting $\mathbb{A}^{1}$-homotopy theory form an adequate class of morphisms.

The second, respectively third subsection studies very particular cases of $S$, namely $S = \Spec \mathbb{C}$, respectively $S=\Spec(k)$ where $k$ is a field of characteristic $p>0$. After checking the axioms we will prove some additional properties of affine models which we will need in order to establish the motivic nature of the Leray spectral sequence.

\subsection{\texorpdfstring{$\mathbb{A}^{1}$}{}-homotopy theory}\label{example_cdh}
Here we will discuss Example~\ref{ex:intro_cdh}. More specifically, we review, following~\cite{An16}, the basic constructions of unstable $\mathbb{A}^{1}$-homotopy theory and show that, under an appropriate choice of a Grothendieck topology on the category of schemes of finite type over $S$, $\mathbb{A}^{1}$-equivalences form an adequate class of morphisms. 

If $\mathcal{C}',\mathcal{C}''$ are categories, we denote the category of functors $\mathcal{C}'\to \mathcal{C}''$ by $[\mathcal{C}',\mathcal{C}'']$. Let $\mathcal{C}$ be the category of separated schemes of finite type over a base scheme $S$. We use $\operatorname{Set}$, respectively $\operatorname{sSet}$ to denote the category of sets, respectively of simplicial sets. 

Consider the category of simplicial presheaves $[\mathcal{C}^{op}, \operatorname{sSet}]$ on $\mathcal{C}$, i.e.\ the category of contravariant functors from $\mathcal{C}$ to $\operatorname{sSet}$. Note that each scheme $X \in \mathcal{C}$ gives us the constant simplicial presheaf defined by $X(U)_{n} = \operatorname{Hom}(U, X)$.
This correspondence defines an embedding of categories $\mathcal{C} \hookrightarrow [\mathcal{C}, \operatorname{sSet}]$.

Next, we need a model structure on $[\mathcal{C}^{op}, \operatorname{sSet}]$. In the sequel we will use the standard (Quillen) model structure on $\operatorname{sSet}$. So we get the \emph{projective} model structure on $[\mathcal{C}^{op}, \operatorname{sSet}]$ in which
a morphism of presheaves $\mathcal{F} \to \mathcal{G}$ is a weak equivalence (respectively a fibration) if, for any object $X \in \mathcal{C}$, the corresponding morphism
$\mathcal{F}(X) \to \mathcal{G}(X)$
is a weak equivalence (respectively a fibration) of simplicial sets. The cofibrations are defined using the left lifting property with respect to the acyclic fibrations. The result is indeed a model structure, see~\cite[Proposition 3.32]{An16} and the references in the proof.

The next step involves enlarging the class of weak equivalences by taking the left Bousfield localization with respect to a class of morphisms constructed using a Grothendieck topology. Let us mention a few examples of topologies on $\mathcal{C}$ used in motivic homotopy theory.
\begin{example}\label{chap3:example_Nis_top}
  The \emph{Nisnevich topology} \cite[Section 1.1]{Ni89} is generated by collections of \'etale maps
    \[
   \{U_{i}\}_{i \in I} \to X,
    \]
    such that the following conditions are satisfied:
    \begin{itemize}
        \item For any point $x \in X$ there is an $i \in I$, and a point $u_{i} \in U_{i}$ over $x$ such that the corresponding morphism of the residue fields is an isomorphism.
    \end{itemize}

\end{example}
\begin{example}\label{chap3:example_chd_top}
    The \emph{cdh topology}, introduced in \cite{Voe09}, is generated by the Nisnevich topology and the covers of the following type
    \[
    \{\{ A\xrightarrow{e} X\},\ \{Y\xrightarrow{p} X\} \},
    \]
    where $e$ is a closed embedding and the morphism $p$ is proper, and moreover
    \[
    p^{-1}(X- A)\to X- A
    \]
    induces an isomorphism of the reduced schemes.
\end{example}
\begin{example}\label{chap3:example_open_closed_top}
    Our final example is the \emph{open-closed topology}. It was introduced and used by Kahn in~\cite{Ka02}, and it is generated by finite closed and finite open covers.
\end{example}
In order to define the Bousfield localization of $[\mathcal{C}^{op}, \operatorname{sSet}]$ that we are after we will need hypercovers. Recall that one can view $V\in \mathcal{C}$ as a simplicial presheaf via the embedding $\mathcal{C}\to[\mathcal{C}^{op},\operatorname{Set}]\to[\mathcal{C}^{op},\operatorname{sSet}]$. A $\tau$-hypercover of $V\in \mathcal{C}$ is a map of simplicial presheaves $U\to V$ of a particular kind, constructed using $\tau$, see e.g.~\cite[Section~3.4]{An16}. We will mainly be interested in $\tau$-hypercovers of the simplest possible type, namely \v{C}ech hypercovers, which we will now describe.
\begin{example}\label{chap3:example_cech_object}
    Let $U \to V$ be a $\tau$-cover of $V$. Let us introduce a simplicial object in $\mathcal{C}$ called a \emph{\v{C}ech object} and defined by
    \[
     \check{U}_{n} = \overbrace{U\times_{V}U\times_{V}\dots\times_{V}U}^{n+1\ \text{elements}}.
    \]
   The face and degeneracy morphisms, and the morphism $\check{U}_\bullet\to V$ are defined in a straightforward way. (Here $V$ is viewed as a constant simplicial object.)

    One can consider $\check{U}_{\bullet}$ as a simplicial presheaf $\check{U}$ on $\mathcal{C}$ via $$[\triangle^{op},\mathcal{C}]\to [\triangle^{op},[\mathcal{C}^{op},\operatorname{Set}]]=[\mathcal{C}^{op},[\triangle^{op},\operatorname{Set}]]=[\mathcal{C}^{op},\operatorname{sSet}].$$ The resulting map $\check{U}\to V$ of simplicial presheaves is an example of a $\tau$-hypercover, called a \emph{\v{C}ech hypercover}. 
\end{example}
Using~\cite[Proposition 3.28]{An16} we get a model structure on $[\mathcal{C}^{op}, \operatorname{sSet}]$ by taking the left Bousfield localization of the projective model structure with respect to the following morphisms:
\begin{itemize}
    \item the projections $X\times_{S}\mathbb{A}^{1}_{S} \to X$;
    \item $U_{\bullet}\to V$ with $U_{\bullet}$ being a $\tau$-hypercover of $V$.
\end{itemize}
(Recall that $\mathcal{C}$ is essentially small, so we may assume that these morphisms form a set, and~\cite[Proposition 3.28]{An16} is applicable.) Let $\mathrm{L}_{\tau,\mathbb{A}^1}[\mathcal{C}^{op}, \operatorname{sSet}]$ denote the resulting model category.
\begin{theorem}\label{chap3:A1_when_adequate}
    Assume that the topology $\tau$ contains both finite (Zariski-)open covers and finite (Zariski-)closed covers. Let $\mathcal{W}$ be the class of morphisms in the category $\mathcal{C}$ which go to weak equivalences of $\mathrm{L}_{\tau,\mathbb{A}^1}[\mathcal{C}^{op}, \operatorname{sSet}]$ under the embedding $\mathcal{C} \hookrightarrow [\mathcal{C}^{op}, \operatorname{sSet}]$. Then the class $\mathcal{W}$ is adequate. 
\end{theorem}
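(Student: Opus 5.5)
We verify the three axioms of an adequate class in turn, working in the model category $\mathrm{L}_{\tau,\mathbb{A}^1}[\mathcal{C}^{op}, \operatorname{sSet}]$ and identifying each scheme with its representable simplicial presheaf.

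\emph{Composition.} Weak equivalences in any model category are closed under composition by the two-out-of-three property. Hence $\mathcal{W}$, being the preimage of the weak equivalences under the embedding $\mathcal{C}\hookrightarrow[\mathcal{C}^{op},\operatorname{sSet}]$, is closed under composition.

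\emph{Projections.} The projections $X\times_S\mathbb{A}^1_S\to X$ are among the morphisms we localized at. In a left Bousfield localization the localizing maps become weak equivalences. (Strictly, they become local equivalences; since representables are cofibrant in the projective structure, no cofibrant replacement issue arises.)

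\emph{Gluing axiom.} This is the main step. Let $X=X_1\cup X_2$ be a closed cover, $Y=Y_1\cup Y_2$ an open cover, and $f$ a morphism with $f(X_i)\subset Y_i$ whose restrictions lie in $\mathcal{W}$.

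First, the \v{C}ech hypercover of the cover $\{Y_1,Y_2\}$ is a weak equivalence to $Y$. I will show it is weakly equivalent to the homotopy pushout of $Y_1\leftarrow Y_1\cap Y_2\to Y_2$. The \v{C}ech nerve of $Y_1\sqcup Y_2\to Y$ has $n$-simplices given by disjoint unions of iterated intersections, and these intersections stabilize at $Y_1\cap Y_2$ because $Y_i\times_Y Y_i=Y_i$ for monomorphisms. Its homotopy colimit, the realization of this simplicial presheaf, is therefore the homotopy pushout; this is the standard comparison of the \v{C}ech nerve of a two-element cover by monomorphisms with the homotopy pushout, and can be checked objectwise in simplicial sets. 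The same argument applies verbatim to the closed cover $\{X_1,X_2\}$ with intersection $X_1\cap X_2=X_1\times_X X_2$, since closed immersions are also monomorphisms. Both covers are $\tau$-covers by the hypothesis on $\tau$, so
\[
\hocolim(X_1\leftarrow X_1\cap X_2\to X_2)\to X
\quad\text{and}\quad
\hocolim(Y_1\leftarrow Y_1\cap Y_2\to Y_2)\to Y
\]
are weak equivalences.

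Second, $f$ induces a map between these two pushout diagrams that is an objectwise weak equivalence by assumption. Homotopy pushouts are homotopy invariant in any model category: after projective cofibrant replacement of the span diagrams, or using left properness, which holds since left Bousfield localizations of the left proper projective structure are left proper. Hence the induced map of homotopy pushouts is a weak equivalence.

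Third, the square consisting of the two hocolim maps to $X$ and $Y$, the map of homotopy pushouts, and $f$ commutes. Two-out-of-three then gives that $f$ is a weak equivalence, that is, $f\in\mathcal{W}$.

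The main obstacle is the first step: identifying the \v{C}ech hypercover with the homotopy pushout, and ensuring that hypercover maps become weak equivalences in the localization despite the cofibrancy subtleties. Representables are projectively cofibrant, and the \v{C}ech object is Reedy cofibrant as a simplicial diagram of coproducts of representables, so its realization computes the homotopy colimit. The degenerate-intersection collapse follows from the simplicial identity that the nerve of a two-element poset-like cover deformation retracts onto the span. I would treat this comparison with some care and cite the corresponding statement in~\cite{An16}.
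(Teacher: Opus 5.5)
Your proof is correct. It shares the paper's skeleton: the \v{C}ech maps $\check{X}\to X$ and $\check{Y}\to Y$ are inverted by the localization, a homotopy colimit is invariant under levelwise weak equivalences, and two-out-of-three finishes. The difference is which homotopy colimit you use.

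The paper never passes to a span. It views $\check{f}$ as a map $\check{f}_\bullet$ of simplicial objects in simplicial presheaves. It then uses Lemma~\ref{lemma:hocolim_simpl_presheaves} (the diagonal computes $\hocolim_{\triangle^{op}}$) to identify $\check{f}$ with $\hocolim_{\triangle^{op}}(\check{f}_\bullet)$. It concludes because each level $\check{f}_n$ is a coproduct of copies of $X_1\to Y_1$, $X_2\to Y_2$ and $X_1\cap X_2\to Y_1\cap Y_2$; that open and closed immersions are monomorphisms enters here, implicitly.

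You instead compress the \v{C}ech nerve to the homotopy pushout of the span, and this costs you an extra comparison lemma. Objectwise, both the \v{C}ech nerve and the homotopy pushout are weakly equivalent to the discrete sieve generated by the cover, again by the monomorphism property. You should also say explicitly that this comparison is natural in $f$, so that your square with $f$ commutes in the homotopy category. One way: the double mapping cylinder of the span maps to the \v{C}ech nerve, sending $h_{X_1\cap X_2}\times\Delta^1$ to the $1$-simplex indexed by $(1,2)$.

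In return, your invariance step involves only three maps and is the familiar homotopy invariance of homotopy pushouts. It does not need coproducts of weak equivalences at every simplicial level, and it makes visible the Mayer--Vietoris/van Kampen picture behind Proposition~\ref{chap3:admisibilty}. The paper's route is shorter and extends verbatim to covers with more than two pieces. Your Reedy-cofibrancy remark is unnecessary once you use the diagonal lemma.
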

\begin{example}\label{chap3:example_admisssible_topologies}
    This theorem applies to $\tau$ equal the cdh topology or the open-closed topology discussed in Examples \ref{chap3:example_chd_top} and \ref{chap3:example_open_closed_top} respectively.
\end{example}
Before proving Theorem \ref{chap3:A1_when_adequate}, we will review a few facts about localizations and derived functors. If $\mathcal{M}$ is a category with weak equivalences, we let $\Ho(\mathcal{M})$ denote the localization in the weak equivalences.

\begin{definition}
     Let $\mathcal{M}$ be a category with weak equivalences and $\mathcal{N}$ an arbitrary category. Let $\gamma:\mathcal{M}\to\Ho(\mathcal{M})$ be the localization functor. Suppose $F: \mathcal{M} \to \mathcal{N}$ is a functor. Then the left derived functor $LF$ of $F$ can be defined as the left universal functor $LF:\Ho(\mathcal{M})\to\mathcal{N}$ such that there is a natural transformation $LF\circ\gamma\to F$. Left universal means here that all other such functors $\Ho(\mathcal{M})\to\mathcal{N}$ map into $LF$ and the obvious diagrams commute. The details can be found e.g.\ in~\cite[Section 9]{DwSpa}.
\end{definition}
\begin{remark}
 The left derived functor may or may nor exist. However if it does exist, then it is unique up to isomorphism. Suppose $\begin{tikzcd}\mathcal{M}\ar[r,bend left,"l"]& \mathcal{N}\ar[l,bend left,"r"]\end{tikzcd}$ is a Quillen adjunction, and $F$ is the composition of the left Quillen adjoint $r:\mathcal{M}\to\mathcal{N}$ and the localization $\mathcal{N}\to \Ho(\mathcal{N})$. Then the left derived functor of $F$ exists and can be calculated on cofibrant objects of $\mathcal{M}$ (see e.g.~\cite[Proposition 9.3]{DwSpa}). 
\end{remark}

Let now $I$ be a small category and $\mathcal{M}$ a category with weak equivalences. We define the weak equivalences in $[I,\mathcal{M}]$ object-wise. Assume that we have an adjunction
\begin{equation}\label{eq:colim_const_adj}
    \colim: 
    \begin{tikzcd} 
    [][I, \mathcal{M}] \ar[r,bend left]& \mathcal{M}\ar[l,bend left]
    \end{tikzcd}
    :\const
\end{equation}
where $\const$ is the constant diagram functor. Assuming in addition that the left derived functor $L\colim$ exists, we define the \emph{homotopy colimit functor} $\hocolim: \Ho([I, \mathcal{M}]) \to \Ho (\mathcal{M})$ as the composition of $L\colim$ and the localization functor $\gamma:\mathcal{M} \to \Ho (\mathcal{M})$. Note that $L\colim$ does exist e.g.\ if $\mathcal{M}$ is a model category such that the projective model structure on $[I,\mathcal{M}]$ exists as then~(\ref{eq:colim_const_adj}) becomes a Quillen adjunction.

\smallskip

Let $F\in[\mathcal{C},\operatorname{sSet}]$ be a simplicial presheaf. One can view $F$ as a simplicial object in simplicial presheaves, denoted $F_\bullet$, via
$$[\mathcal{C}^{op},[\triangle^{op},\operatorname{Set}]]=[\triangle^{op},[\mathcal{C},\operatorname{Set}]]\to [\triangle^{op},[\mathcal{C},\operatorname{sSet}]].$$

The homotopy colimit functor
$$\hocolim_{\triangle^{op}}:\Ho ([\triangle^{op},[\mathcal{C},\operatorname{sSet}]])\to \Ho([\mathcal{C},\operatorname{sSet}]])$$ exists (as can be seen e.g.\ by applying Proposition 4.3 of~\cite{An16}). Moreover, there is a functor 
$\mathop{diag}:[\triangle^{op},[\mathcal{C},\operatorname{sSet}]]\to [\mathcal{C},\operatorname{sSet}]]$ on the level of the non-homotopy categories which calculates $\hocolim_{\triangle^{op}}$:
\begin{lemma}\label{lemma:hocolim_simpl_presheaves}
Let 
$$\mathop{diag}:[\triangle^{op},[\mathcal{C}^{op},\operatorname{sSet}]]=[\triangle^{op}\times\mathcal{C}^{op}\times\triangle^{op},\operatorname{Set}]\to [\mathcal{C}^{op}\times\triangle^{op},\operatorname{Set}]=[\mathcal{C},\operatorname{sSet}]$$ be induced by the diagonal embedding $\triangle^{op}\to\triangle^{op}\times\triangle^{op}$. Then the following diagram commutes up to isomorphism of functors:
$$
\begin{tikzcd}[column sep=huge]
\left[\triangle^{op},\left[\mathcal{C}^{op},\operatorname{sSet}\right]\right]\arrow[r,"\mathop{diag}"]\arrow[d] & \left[\mathcal{C}^{op},\operatorname{sSet}\right]\arrow[d]\\
\Ho\left[\triangle^{op},\left[\mathcal{C}^{op},\operatorname{sSet}\right]\right]\arrow[r,"\hocolim_{\triangle^{op}}"] & \Ho\left[\mathcal{C}^{op},\operatorname{sSet}\right].
\end{tikzcd}
$$

Moreover, if $F_\bullet\in [\triangle^{op},[\mathcal{C},\operatorname{sSet}]]$ is obtained from $F\in [\mathcal{C},\operatorname{sSet}]$, then $\mathop{diag}(F_\bullet)\cong F$, functorially in $F$.
\end{lemma}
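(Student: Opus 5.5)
The plan is to reduce the statement objectwise to simplicial sets and then quote the classical fact that the diagonal computes homotopy colimits of bisimplicial sets.

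\textbf{Step 1: reduce to a fixed object.} Weak equivalences in the projective model structure on $[\mathcal{C}^{op},\operatorname{sSet}]$ are detected objectwise. Homotopy colimits in the projective structure can also be computed objectwise, because evaluation at each $U\in\mathcal{C}$ is both a left and a right Quillen functor on the projective structures: it preserves projective cofibrations between diagrams and commutes with colimits. In the same way $\mathop{diag}$ is computed objectwise. Hence it suffices to prove the following: for a bisimplicial set $B\in[\triangle^{op},\operatorname{sSet}]$, the diagonal $\mathop{diag}B$ represents $\hocolim_{\triangle^{op}}B$ in $\Ho(\operatorname{sSet})$, naturally in $B$. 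One must also check the naturality that makes the square commute. For this it is enough to produce a natural isomorphism $\hocolim\circ\gamma\cong\gamma\circ\mathop{diag}$. To produce it, first show that $\mathop{diag}$ sends objectwise weak equivalences to weak equivalences, so that it descends to $\Ho$. Then identify the induced functor with $L\colim$ by universality.

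\textbf{Step 2: the simplicial-set case.} Two classical facts are needed.

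\emph{Homotopy invariance.} A levelwise weak equivalence of bisimplicial sets induces a weak equivalence on diagonals. This is the standard realization lemma, proved by skeletal induction along the filtration of $\mathop{diag}B$ by simplicial degree and by gluing along pushouts of cofibrations.

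\emph{Identification with the homotopy colimit.} Use the Reedy model structure on $[\triangle^{op},\operatorname{sSet}]$, whose cofibrations are the monomorphisms, so every object is cofibrant. In this structure $\mathop{diag}$ is left Quillen, isomorphic to the coend $B\mapsto\int^{[n]}\Delta^n\times B_n$, with right adjoint $K\mapsto([n]\mapsto K^{\Delta^n})$. The derived functor of $\mathop{diag}$ is therefore $\mathop{diag}$ itself. By the Bousfield--Kan formula, this coend with the Reedy-cofibrant cosimplicial object $\Delta^\bullet$ computes $\hocolim_{\triangle^{op}}$. Both the Reedy and projective structures have objectwise weak equivalences, so their homotopy categories coincide. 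This gives the isomorphism with $L\colim$ as defined in the paper, and the square in the statement commutes. I expect the main obstacle to be this bookkeeping between the various model structures and the paper's definition of $\hocolim$ via the universal property. The cleanest route is probably to cite~\cite{An16} (or Hirschhorn) for the fact that the diagonal computes homotopy colimits over $\triangle^{op}$.

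\textbf{Step 3: the second assertion.} This part is formal. If $F_\bullet$ comes from $F$, then $(F_\bullet)_n(U)$ is the discrete simplicial set $F(U)_n$. So $F_\bullet(U)$ is the bisimplicial set $([n],[m])\mapsto F(U)_n$, constant in $m$, and its diagonal is $[n]\mapsto F(U)_n$, i.e.\ $F(U)$. These identifications are equalities of sets compatible with all structure maps, so $\mathop{diag}(F_\bullet)\cong F$ naturally in $F$.
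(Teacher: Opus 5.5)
Your proposal is correct and takes the paper's route: the paper proves the first assertion simply by citing Dugger--Hollander--Isaksen for the classical fact that the diagonal computes $\hocolim_{\triangle^{op}}$ of a simplicial object in simplicial presheaves, and calls the second assertion straightforward, which is exactly what your Step~3 checks. Your Steps~1--2 just unpack that citation (objectwise reduction, realization lemma, Reedy cofibrancy). The one loose point is that the Bousfield--Kan formula literally uses $N(\triangle/[n])$ rather than $\Delta^n$, so you need the standard comparison $\hocolim B\to\mathop{diag}B$ for Reedy cofibrant $B$ (e.g.\ in Hirschhorn's book), which is what you already propose to cite.
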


\begin{proof} The first assertion is~\cite[Remark 2.1]{dhi}. The second one is straightforward.\end{proof}

\begin{proof}[Proof of Theorem \ref{chap3:A1_when_adequate}]
    It is clear from the definitions that the class of morphisms $\mathcal{W}$ defined in the theorem is closed under composition and contains the projections $X\times_{S}\mathbb{A}^{1}_{S} \to X$, so the only non-trivial condition is the last one. Let $\{ U_{i}\}_{i=1}^{2}$ and $\{ V_{i}\}_{i=1}^{2}$ be a closed, respectively an open cover of the scheme $X$, respectively $Y$. Let $f: X\to Y$ be a morphism such that $f(U_i)\subset V_i$ for $i=1,2$, and assume that the morphisms $U_1\to V_1, U_2\to V_2, U_1\cap U_2\to V_1\cap V_2$ induced by $f$ belong to $\mathcal{W}$. We need to show that then so does $f$.

    Let $\check{U}$ and $\check{V}$ be the \v{C}ech objects associated with the covers $\{ U_{i}\}_{i=1}^{2}$ and $\{ V_{i}\}_{i=1}^{2}$ respectively and viewed as objects of $[\mathcal{C}^{op}, \operatorname{sSet}]$. The morphism $f$ induces a morphism $\check{f}: \check{U}\to \check{V}$. Moreover, viewing $\check{U}$ and $\check{V}$ as objects $\check{U}_\bullet,\check{V}_\bullet\in[\triangle^{op},[\mathcal{C},\operatorname{sSet}]]$ and applying Lemma~\ref{lemma:hocolim_simpl_presheaves} we get the commutative diagram in $[\triangle^{op},[\mathcal{C},\operatorname{sSet}]]$
$$
\begin{tikzcd}
\mathop{diag}(\check{U}_\bullet) \ar[r,"\cong"] \ar[d,"\sim_{\mathrm{L}}","\mathop{diag}(\check{f}_\bullet)"'] & {\check{U}} \ar[r,"\sim_{\mathrm{L}}"]\ar[d,"\check{f}"] & X \ar[d,"f"]\\
\mathop{diag}(\check{V}_\bullet) \ar[r,"\cong"] & {\check{V}} \ar[r,"\sim_{\mathrm{L}}"] & Y.
\end{tikzcd}
$$

The arrows marked $\cong$ are isomorphisms by Lemma~\ref{lemma:hocolim_simpl_presheaves}. The arrows marked $\sim_{\mathrm{L}}$ become weak equivalences after we left Bousfield localize: for $\check{U}\to X$ and $\check{V}\to Y$ this follows from the definition of the localization, and $\mathop{diag}(\check{f}_\bullet)$ represents $\hocolim(\check{f}_\bullet)$ by Lemma~\ref{lemma:hocolim_simpl_presheaves}. We conclude that $f$ is a weak equivalence in $\mathrm{L}_{\tau,\mathbb{A}^1}[\mathcal{C}^{op}, \operatorname{sSet}]$, i.e.\ $f\in\mathcal{W}$.
\end{proof}
\subsection{Schemes over \texorpdfstring{$\CC$}{}}\label{example_spec_c}
In this section we investigate Example~\ref{ex:intro_spec_c}. Let $\mathcal{C}$ be the category of separated schemes of finite type over $\Spec \mathbb{C}$. In Section~\ref{example_spec_c}, we will often identify $X\in\mathcal{C}$ with $X(\CC)$, always assumed equipped with the complex analytic topology. In particular, by saying that $X\in\mathcal{C}$ is contractible we will mean that $X(\CC)$ is contractible, by a sheaf on $X\in\mathcal{C}$ we will mean a sheaf on $X(\CC)$ etc. 
For simplicity all complexes of sheaves are assumed bounded below. We expect this restriction can be removed if need be.

We will show in Proposition \ref{chap3:admisibilty} that $\mathcal{C}$ together with the set of morphisms $\mathcal{W}$ that induce a homotopy equivalence on the complex points is an adequate pair. 

Moreover, we will prove the following theorem. 
\begin{theorem}\label{chap3:main_thm_Spec_C}
    Let $X$ be a separated scheme of finite type over $\Spec \mathbb{C}$. Then $X$ admits an affine model $m_{X}: M_{X} \to X$ that has the following properties:
    \begin{enumerate}
        \item\label{item:ho_eq} $m_{X}$ is a homotopy equivalence.
        \item $m_{X}$ has contractible fibers.
        \item\label{item:coho_iso} For every bounded below complex of sheaves $\mathcal{F}^{\bullet}$ on $X$, the natural map 
        \[
            H^{*}(X; \mathcal{F}^{\bullet}) \to H^{*}(M_X; m_{X}^{*}(\mathcal{F}^{\bullet}))
        \]
        is an isomorphism.
        \item\label{item:base_change}
        If $X'\to X$ is an arbitrary morphism of separated schemes of finite type over $\CC$, then the base change map along $m_X:M_X\to X$ is an isomorphism for any bounded below complex of sheaves on $X'$.
    \end{enumerate}
\end{theorem}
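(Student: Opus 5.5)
We will not take the affine model produced by Theorem~\ref{chap2:main_thrm} for an abstract adequate class. Instead we retrace its inductive construction from Lemmas~\ref{chap2:main_thrm_base_ind} and~\ref{lemma:main_ch_2_affine_case} and carry all four properties (1)--(4) through the induction together. At each step $M_X$ is a pushout $A\cup_C B$ of closed embeddings, where $A\to U$ and $B\to V$ cover an open cover $X=U\cup V$ and $C\to U\cap V$. By the induction hypothesis, the maps $A\to U$, $B\to V$, $C\to U\cap V$ are affine models already satisfying (1)--(4). The base pieces are products $W\times\mathbb{A}^n\to W$, which clearly satisfy (1)--(4): the fibres are $\CC^n$, and the projection along $\CC^n$ gives the cohomology isomorphism and base change by the homotopy invariance of sheaf cohomology, or by proper base change after compactifying the fibre. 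A first step is therefore to check that the pushout $M_X(\CC)$ in schemes has, on complex points, the topological pushout $A(\CC)\cup_{C(\CC)}B(\CC)$ with the analytic topology. This follows from the affine description in Theorem~\ref{chapter1:main_theorem}, since closed subsets glue along a closed subset and the map from the topological pushout is a continuous bijection that is a homeomorphism on each closed piece.

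Property (2) is fibrewise. Over $x\in U\setminus V$ the fibre is the fibre of $A$; similarly over $V\setminus U$. Over $x\in U\cap V$ the fibre is $F_A\cup_{F_C}F_B$, a union of two contractible spaces along a contractible closed subspace. These are complex-analytic sets, hence CW pairs via triangulation, so the union is contractible. Property (3) follows from the Mayer--Vietoris sequences. Write $M_U=m^{-1}(U)\supset A$ and $M_V=m^{-1}(V)\supset B$. We first show the restriction $H^*(M_U;m^*\mathcal{F})\to H^*(A;\ldots)$ is an isomorphism. For this we use the closed Mayer--Vietoris sequence for $M_U=A\cup(B\cap M_U)$ with intersection $C$, together with the hypothesis that $B\cap M_U\to C$ induces isomorphisms. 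That hypothesis itself follows from (4) and (3) for $B\to V$ restricted over the open set $U\cap V$. Comparing the open Mayer--Vietoris sequence on $X$ with the closed one on $M_X$ and applying the five lemma then gives (3).

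Property (4) is the main obstacle. Here the base change map is $m^*Rg_*\mathcal{G}\to Rg'_*m'^*\mathcal{G}$ for $g:X'\to X$. We would check it is an isomorphism on stalks at a point $p\in M_X$ over $x$. The left side is $\varinjlim H^*(g^{-1}(N);\mathcal{G})$ over neighbourhoods $N$ of $x$. The right side is a colimit over neighbourhoods of $p$ in $M_X$ of $H^*$ of their preimages in $X'\times_X M_X$. The idea is to choose a cofinal system of neighbourhoods of $p$ of the form $m^{-1}(N)\cap P$, where $P$ is a "conical" neighbourhood that deformation retracts onto the fibre direction, using local conic structure of analytic sets. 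Then (3) applied over $N$ for the base-changed model $X'\times_X M_X\to X'$ identifies the two colimits. That base-changed model is again one of our pushout models by Lemma~\ref{chap2:stab_of_pullback}, so (3) holds for it by the induction. If this local argument proves too delicate, we would fall back to a global argument: prove (4) by the same Mayer--Vietoris induction, reducing to the $\mathbb{A}^n$-projection case via the closed cover of $M_X$. This uses that the base change map is compatible with the restriction triangles $j_!j^*\to\mathrm{id}\to i_*i^*$ for the closed pieces $A$, $B$, $C$.

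Property (1) follows from (2) together with the fact that $m_X$ is proper on each closed piece relative to a CW structure. Alternatively, in the spirit of Proposition~\ref{chap3:admisibilty}, it follows from the gluing lemma for homotopy equivalences: apply it to the homotopy pushout squares $X=U\cup V$ (open cover, hence a homotopy pushout) and $M_X=A\cup_C B$ (closed cofibrations of analytic CW pairs, hence a homotopy pushout), with the maps on the pieces being homotopy equivalences by induction.
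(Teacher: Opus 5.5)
\textbf{Gap in (4).} Your treatment of (1)--(3) is essentially sound, but the argument you lead with for (4) has a real gap. Property (3) for the base-changed model over $N$ computes the cohomology of $g^{-1}(N)\times_N m_X^{-1}(N)$, which is a \emph{saturated} set. The stalk of $Rg'_*m'^*\mathcal{G}$ at $p$, however, is a colimit over non-saturated neighbourhoods such as $m_X^{-1}(N)\cap P$. To compare the two you would need something like a deformation retraction of $m_X^{-1}(N)$ onto $m_X^{-1}(N)\cap P$ \emph{over $N$}, so that it survives base change to $g^{-1}(N)$. The local conic structure of the analytic set $M_X(\CC)$ is not fibred over $X$ and gives nothing of the sort.

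The paper's route through weak acyclic submersions runs into the same issue. It asserts that $R\Gamma(\varphi)$, i.e.\ $H^*(V;\mathcal{F}^\bullet_U)\to H^*(V\times_Y U;h_U^*\mathcal{F}^\bullet_U)$, is an isomorphism for small non-saturated $U\ni z$. Property B does not supply this, since it only concerns $g$ itself.

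\textbf{The fix.} Your fallback is the argument that works, but run it with the closed Mayer--Vietoris sequence rather than $j_!j^*\to\mathrm{id}\to i_*i^*$. The reason is that $Rg'_*$ does not commute with $j'_!$, but it does commute with pushforward from the closed pieces: with $A'=X'\times_X A$, $g'_A\colon A'\to A$ and so on, one has $Rg'_*\circ i_{A'*}=i_{A*}\circ Rg'_{A*}$. So on $M'=X'\times_X M_X$, with $\mathcal{K}=m'^*\mathcal{G}$, apply $Rg'_*$ to the sequence $0\to\mathcal{K}\to i_{A'*}i_{A'}^*\mathcal{K}\oplus i_{B'*}i_{B'}^*\mathcal{K}\to i_{C'*}i_{C'}^*\mathcal{K}\to 0$. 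Compare the result with the analogous sequence for $m_X^*Rg_*\mathcal{G}$ on $M_X=A\cup B$.

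By pasting of Cartesian squares, the base change maps of $A\to U$, $B\to V$, $C\to U\cap V$ give a map of triangles. These maps are isomorphisms by induction; in the base case $W\times\mathbb{A}^n\to W$ this follows from homotopy invariance, since neighbourhoods $N\times(\text{ball})$ are cofinal. The five lemma then yields (4). This needs neither the openness of $m_X$ nor any local structure of $M_X(\CC)$, and it replaces the paper's weak-acyclic-submersion lemma.

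\textbf{Smaller points.}
In (3), drop the detour through $M_U$. Comparing the open Mayer--Vietoris sequence of $X=U\cup V$ with the closed one of $M_X=A\cup B$ only needs $A\to U$, $B\to V$, $C\to U\cap V$ to be cohomology isomorphisms, exactly as in the paper. Your version needlessly makes (3) depend on (4).

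In (1), your first argument is not valid: $m_X$ is not proper on $A$ or $B$, and contractible fibres alone do not give a homotopy equivalence. The gluing-lemma argument is correct, and it is a cleaner substitute for the paper's proof via adequacy (Mayer--Vietoris for local systems plus groupoid van Kampen).

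Your direct fibrewise proof of (2) is fine; the paper instead deduces it from stability of $\mathcal{W}$ under pullback to a point.
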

This theorem will be needed in the proof that the Leray spectral sequence of a proper morphism is motivic as it will allow us to compare the Leray spectral sequence for $X \to Y$ with the Leray spectral sequence of the base change to an affine model, i.e.\ to the Leray spectral sequence of the morphism $X\times_{Y} M_{Y} \to M_{Y}$. The last part of the theorem will be used in~\cite{egor_non_proper} to prove that the Leray spectral sequence of an arbitrary (i.e.\ not necessarily proper) morphism in $\mathcal{C}$ is also motivic.

\subsubsection{Proof of Theorem~\ref{chap3:main_thm_Spec_C}, parts~\ref{item:ho_eq}-\ref{item:coho_iso}}

\begin{proposition}\label{chap3:admisibilty}
    The pair $(\mathcal{C}, \mathcal{W})$ defined above is adequate.
\end{proposition}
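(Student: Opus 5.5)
We check the three axioms of an adequate class for $\mathcal{W}$, the morphisms $f\colon X\to Y$ in $\mathcal{C}$ such that $f\colon X(\CC)\to Y(\CC)$ is a homotopy equivalence. Closure under composition is immediate. The projection $X\times_{\CC}\mathbb{A}^1_{\CC}\to X$ becomes $X(\CC)\times\CC\to X(\CC)$ on complex points, and this is a homotopy equivalence because $\CC$ is contractible. All the work lies in the gluing axiom.

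For the gluing axiom, take $X=X_1\cup X_2$ with $X_i$ closed and $Y=Y_1\cup Y_2$ with $Y_i$ open, and assume $f(X_i)\subset Y_i$ and that $X_i\to Y_i$ and $X_1\cap X_2\to Y_1\cap Y_2$ are homotopy equivalences. On complex points $X(\CC)$ is the pushout of $X_1(\CC)\leftarrow X_{12}(\CC)\to X_2(\CC)$ along closed inclusions, and $Y(\CC)$ is the union of two open sets. The plan is to show that both squares are \emph{homotopy} pushouts, and then conclude by homotopy invariance of homotopy pushouts together with Whitehead's theorem.

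\begin{itemize}
\item For the open cover of $Y(\CC)$ this is standard: an excisive triad is a homotopy pushout, since the double mapping cylinder maps to $Y(\CC)$ by a homotopy equivalence using a partition of unity. Complex points of varieties are paracompact metrizable spaces, so partitions of unity exist.
\item For the closed cover of $X(\CC)$ we need the inclusions $X_{12}(\CC)\hookrightarrow X_i(\CC)$ to be cofibrations. This holds because complex algebraic varieties admit triangulations (Łojasiewicz, Hironaka) with any given finite family of closed subvarieties as subcomplexes. Hence $(X(\CC);X_1(\CC),X_2(\CC))$ is a CW triad, and a pushout along a cofibration is a homotopy pushout.
\end{itemize}

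With both squares identified as homotopy pushouts, the map $f$ is induced by a map of diagrams that is an objectwise homotopy equivalence. By the gluing lemma (Brown's, or tom Dieck's theorem on homotopy pushouts), the induced map of homotopy pushouts is a homotopy equivalence. The left vertical comparison, from the double mapping cylinder of the $X$-square to $X(\CC)$, is a homotopy equivalence because the $X_{12}(\CC)\hookrightarrow X_i(\CC)$ are cofibrations. The right vertical comparison, from the double mapping cylinder of the $Y$-square to $Y(\CC)$, is a homotopy equivalence because the cover is open. By two-out-of-three, $f$ is a homotopy equivalence. To avoid point-set subtleties one may instead show that $f$ is a weak equivalence via Mayer--Vietoris/van Kampen on homotopy groups, and then invoke Whitehead's theorem, since all spaces involved have the homotopy type of CW complexes (again by triangulability).

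The main obstacle is the point-set step: justifying that the closed cover gives a homotopy pushout, i.e.\ that $X_{12}(\CC)\hookrightarrow X_i(\CC)$ are cofibrations. I would cite the triangulation theorem for semi-algebraic or analytic sets compatible with finitely many closed subvarieties, and apply it to $X$ with the subsets $X_1, X_2, X_{12}$.
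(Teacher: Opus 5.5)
Your argument is correct, and it takes a genuinely different route from the paper's. The paper never forms homotopy pushouts. It applies the Mayer--Vietoris sequences of Theorem~\ref{appendix:Mayer_Vietoris_main_thrm} to the closed cover of $X(\CC)$ and the open cover of $Y(\CC)$, and uses the five lemma to show that $f$ induces isomorphisms $H^*(Y;\mathcal{F})\to H^*(X;f^*\mathcal{F})$ for every locally constant sheaf $\mathcal{F}$, in particular a bijection on $\pi_0$. It then applies the groupoid Seifert--van Kampen theorem on a set of base points to get an isomorphism of fundamental groupoids. Finally it concludes with a Whitehead-type criterion: an isomorphism on $\pi_1$ together with isomorphisms on cohomology with arbitrary local coefficients (Hatcher \S 4.2, Ex.~12).

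Your proof replaces all of this by the gluing lemma for double mapping cylinders plus the two comparison equivalences. It therefore produces a homotopy equivalence directly, with no passage through local systems, groupoids or Whitehead's theorem. It also makes the point-set input explicit. The paper's use of van Kampen for a \emph{closed} cover implicitly requires that $X_1(\CC)\cap X_2(\CC)$ be a neighbourhood deformation retract in $X_i(\CC)$, and its final Whitehead step requires that the spaces have CW homotopy type. Both follow from exactly the triangulability you invoke. For non-quasi-projective $X$, cite the triangulation of complex analytic spaces rather than the semi-algebraic one.

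What the paper's route buys is uniformity with the rest of the paper. The same Mayer--Vietoris-plus-five-lemma pattern proves adequacy in the \'etale setting (Theorem~\ref{chap3:etale_case_admissibility}), where no homotopy-pushout argument is available. Your fallback via ``Mayer--Vietoris/van Kampen'' and Whitehead is essentially the paper's route. If you use it, note that ordinary homology Mayer--Vietoris is not enough when the pieces are not simply connected. You would need local coefficients (or universal covers), which is why the paper works with all locally constant sheaves.
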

\begin{proof}
The only non-trivial condition on $\mathcal{W}$ that needs checking is the following:
\begin{itemize}
    \item Let $f: X\to Y$ be a morphism in $\mathcal{C}$ which is compatible with a closed covering $X = X_{1}\cup X_{2}$ and an open covering $Y = Y_{1}\cup Y_{2}$, meaning that $f(X_i)\subset Y_i, i=1,2$. If the restrictions
    \[
    f|_{X_{i}}: X_{i}\to Y_{i},\ f|_{X_{1}\cap X_{2}}: X_{1}\cap X_{2} \to Y_{1} \cap Y_{2}
    \]
    are homotopy equivalences, then so is $f: X\to Y$.
\end{itemize}

By Theorem \ref{appendix:Mayer_Vietoris_main_thrm} there is a Mayer-Vietoris sequence both for the closed cover $X=X_1\cup X_2$ and for the open cover $Y=Y_1\cap Y_2$, and $f$ induces a natural map of these sequences. Next, note that a homotopy equivalence $g: Z \rightarrow W$ induces, for every locally constant sheaf $\mathcal{G}$ on $W$, an isomorphism
\[
g^{*}: H^{*}(W; \mathcal{G}) \to H^{*}(Z; g^{*}\mathcal{G}).
\]
Applying the Mayer-Vietoris sequences and the $5$-lemma we see that the morphism $f: X\to Y$ induces an isomorphism of the cohomology groups
\[
f^{*}: H^{*}(Y; \mathcal{F})\to H^{*}(X; f^{*}\mathcal{F})
\]
for every locally constant sheaf $\mathcal{F}$ on $Y$. In particular, $f$ induces a bijection $\pi_{0}(X)\to \pi_{0}(Y)$.

Let us choose $A \subset X$ to be a subset such that $A$ contains at least one element in each path component of $X_{1}\cap X_{2}, X_{1}$ and $X_{2}$; and $f|_A$ is injective. 

By our assumptions, $f$ induces isomorphisms of the fundamental groupoids 
$$
\begin{aligned}
\pi_1(X_{1}\cap X_{2},A\cap X_1\cap X_2)&
\to \pi_1(Y_{1}\cap Y_{2},f(A\cap X_1\cap X_2)),\\
\pi_1(X_{1},A\cap X_1)&\to \pi_1(Y_{1},f(A\cap X_1)),\\
\pi_1(X_{2},A\cap X_2)&\to \pi_1(Y_{2},f(A\cap X_2)).
\end{aligned}
$$
Applying the groupoid pushout version of the Seifert-Van Kampen theorem (see e.g.~\cite[6.7.2]{Brown}), we see that $f_*:\pi_1(X,A)\to \pi_1(Y,f(A))$ is an isomorphism of groupoids.

So $f$ induces isomorphisms of the fundamental groups of the path components, and an isomorphism of the cohomology groups of arbitrary local systems. This implies that $f$ is a homotopy equivalence, cf.\ \cite[Section 4.2, Exercise 12]{Hatcher_AT}.
\end{proof}

We have now proved that the pair $(\mathcal{C}, \mathcal{W})$ is adequate. So we can apply Theorem~\ref{chap2:main_thrm} to obtain an affine model $m_{X}: M_{X} \to X$. Combining this with Corollary~\ref{chapter1:condition_variety_geom} and Lemma~\ref{chap2:stab_of_pullback}, we get the following result.
\begin{theorem}\label{chap3:pullback_along_subvariety}
    Every scheme $X\in\mathcal{C}$ admits an affine model $m_{X}: M_{X} \to X$. If $X$ is a variety (i.e.\ if $X$ is reduced), then $M_X$ is again a variety. Moreover, for every morphism $Y \subset X$ in $\mathcal{C}$ the pullback $Y\times_X M_X \to Y$
    belongs to $\mathcal{W}$.
\end{theorem}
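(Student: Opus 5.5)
The plan is to obtain the three assertions of Theorem~\ref{chap3:pullback_along_subvariety} from the results of Section~\ref{Construction_of_aff_model}, applied with $S=\Spec\CC$ and the adequate class $\mathcal{W}$ of Proposition~\ref{chap3:admisibilty}.

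\textbf{Existence.} Since $S$ is affine and Noetherian, Lemma~\ref{lemma:main_ch_2_affine_case} (equivalently Theorem~\ref{chap2:main_thrm}) gives an affine model $m_X\colon M_X\to X$. We fix the model produced by the inductive construction: choose a finite affine open cover $\{U_i\}_{i=1}^n$ of $X$. For $n=2$, $M_X$ is the pushout of $U_1\times\mathbb{A}^N\hookleftarrow U_1\cap U_2\hookrightarrow U_2\times\mathbb{A}^N$. In general it is the pushout of diagram~(\ref{diag_pushout3}), built from the models $M_{X'}$ and $M_Z$ of $X'=U_1\cup\dots\cup U_{n-1}$ and $Z=X'\cap U_n$.

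\textbf{Reducedness.} We argue by induction on $n$ that $M_X$ is reduced whenever $X$ is. If $X$ is reduced, then every open subscheme $U_n$, $X'$, $Z$ is reduced. By induction $M_{X'}$ and $M_Z$ are reduced, and products with $\mathbb{A}^N_\CC$ over $\CC$ stay reduced. Corollary~\ref{chapter1:condition_variety_geom} then says that a pushout of varieties along closed embeddings is a variety. One can also see this directly: the pushout ring is a fibre product $B_1\times_{C}B_2$, which is a subring of the reduced ring $B_1\times B_2$, so it is reduced. Finite type over $\CC$ comes from Corollary~\ref{chapter1:pushforward_of_finite_type}, and affineness from Theorem~\ref{chapter1:main_theorem}.

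\textbf{Pullback along $Y\to X$.} This is exactly Lemma~\ref{chap2:stab_of_pullback} applied to $f\colon Y\to X$, since $S$ is affine and Noetherian. It says that for the model constructed from an affine cover, $Y\times_X M_X\to Y$ lies in $\mathcal{W}$ for every morphism $f$ in $\mathcal{C}$, in particular for a subvariety inclusion.

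The inductive mechanism is the following. $Y\times_X M_X$ is the pushout of the pulled-back diagram, because pullback along $Y\to X$ commutes with these pushouts along closed embeddings, which are glued locally over the open cover $f^{-1}(U_i)$. The pieces $f^{-1}(U_i)\times\mathbb{A}^N\to f^{-1}(U_i)$ lie in $\mathcal{W}$, and the third axiom of adequacy (closed cover upstairs, open cover downstairs) finishes the induction.

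\textbf{Main obstacle.} The delicate step is the compatibility of fibre products with these scheme pushouts, i.e.\ identifying $Y\times_X M_X$ with the pushout of the pulled-back pieces. I would check this locally on $X$ over each $U_i$. Over $U_i$ the relevant part of $M_X$ is described by affine pushouts, i.e.\ by fibre products of rings, and base change along the affine morphism $f^{-1}(U_i)\to U_i$ is tensoring.

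Tensoring does not in general preserve fibre products of rings. Here, however, it works, because the maps are surjective onto quotients by ideals. In that situation one can use Proposition~\ref{chapter1:push_of_two}: the pulled-back pieces are closed subschemes covering $Y\times_X M_X$ set-theoretically and glued along the pulled-back intersection. The ideal-theoretic subtlety concerns only the scheme structure. The adequacy axiom requires a closed cover $X_1\cup X_2$, but, as the paper implicitly does, it is enough to have closed subschemes whose union is the whole space with the expected intersection.
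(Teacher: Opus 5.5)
Your route is the paper's: existence from Theorem~\ref{chap2:main_thrm}, the variety claim from Corollary~\ref{chapter1:condition_variety_geom} (your remark that $B_1\times_C B_2\subset B_1\times B_2$ is reduced is a fine direct check), and the pullback claim from Lemma~\ref{chap2:stab_of_pullback}. That lemma asserts, without comment, that $Y\times_X M_X$ is the pushout of the pulled-back pieces.

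\textbf{The error is in your justification of that identification.} Surjectivity of $B_i\to C$ does \emph{not} make base change preserve the fibre product. For example, $A=k[x,y]/(xy)\cong k[x]\times_k k[y]$ with both maps surjective. Yet for $R=A/(x-y)\cong k[x]/(x^2)$ one gets $(R\otimes_A k[x])\times_{R\otimes_A k}(R\otimes_A k[y])=k\times_k k=k\neq R$.

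The actual reason is flatness. Over an affine open $W=\Spec A\subset X$, the model is $\Spec P$ with an exact sequence $0\to P\to B_1\times B_2\to C\to 0$. Here $C$ is $A$-flat:
\begin{itemize}
\item in the base case, $C$ is the coordinate ring of $U_1\cap U_2\cap W$, which is an affine open of $W$;
\item in the inductive step, $C$ is the coordinate ring of the part of $M_Z$ over $W\cap Z$, which is flat by induction, since the same sequence shows $P$ is $A$-flat whenever $B_1,B_2,C$ are.
\end{itemize}
So $\mathrm{Tor}^A_1(C,R)=0$ for the coordinate ring $R$ of any affine open of $f^{-1}(W)$, and the sequence stays exact after applying $-\otimes_A R$.

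\textbf{Your fallback is valid here and makes the identification unnecessary.} The closed subschemes $Y\times_X(U_i\times\mathbb{A}^n)\cong f^{-1}(U_i)\times\mathbb{A}^n$ cover $Y\times_X M_X$ set-theoretically. Their scheme-theoretic intersection is $Y\times_X(U_1\cap U_2)$, because base change commutes with fibre products. The proof of Proposition~\ref{chap3:admisibilty} only uses the closed cover of complex points, so this is enough. Proposition~\ref{chapter1:push_of_two} plays no role in this argument.
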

An immediate corollary of this theorem is
\begin{corollary}\label{chap3:contractible_fibers}
    The complex points of the fibers of $m_{X}$ are contractible (in the complex analytic topology).
\end{corollary}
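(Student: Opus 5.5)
Fix a complex point $x\in X(\CC)$, that is, a closed point $x:\Spec\CC\to X$. We apply the last assertion of Theorem~\ref{chap3:pullback_along_subvariety} to the morphism $x:\Spec\CC\to X$ in $\mathcal{C}$. The fiber of $m_X$ over $x$ is, as a scheme, the pullback $\Spec\CC\times_X M_X$. Its space of complex points is exactly the set-theoretic fiber $m_X^{-1}(x)\subset M_X(\CC)$, with the subspace topology, because taking complex points commutes with fiber products and the analytic topology on a fiber product is the subspace topology of the product. Theorem~\ref{chap3:pullback_along_subvariety} therefore gives that the projection $\Spec\CC\times_X M_X\to\Spec\CC$ belongs to $\mathcal{W}$, i.e.\ induces a homotopy equivalence on complex points. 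Since the target is a point, $m_X^{-1}(x)$ is contractible.

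The only thing to verify carefully is that the last part of Theorem~\ref{chap3:pullback_along_subvariety} really applies to an arbitrary morphism $Y\to X$ (here $Y=\Spec\CC$), not only to open or affine inclusions. This is what Lemma~\ref{chap2:stab_of_pullback} provides: for the affine model built by the iterated pushout construction, the pullback along any morphism $f:Y\to X$ is again an iterated pushout of the pieces $f^{-1}(U_i)\times\mathbb{A}^n$ along closed embeddings. Adequacy then gives membership in $\mathcal{W}$.

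Concretely, for $Y=\Spec\CC$ the argument unwinds as follows. The fiber is obtained by gluing copies of $\mathbb{A}^n_\CC$, one for each $U_i$ containing $x$, along closed subschemes that are themselves fibers of smaller affine models. By induction these smaller fibers are contractible (or empty, when $x$ lies outside the relevant intersection, in which case no gluing occurs there). At each step one glues contractible spaces along a contractible closed subspace. The gluing is along closed subvarieties, which are sub-CW-complexes after triangulation, so the union is again contractible. This is the same verification as Proposition~\ref{chap3:admisibilty} applied to the open cover of the point.

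The main potential obstacle is the empty-intersection case: adequacy with $X_1\cap X_2=\emptyset$ over $Y_1\cap Y_2$. If the fiber over $x$ meets only one piece, the pushout degenerates to that piece, which is fine. If $x\in U_i\cap U_j$, the intersection fiber is nonempty, so no disconnected situation arises over a point.
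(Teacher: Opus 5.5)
Your proposal is correct and is essentially the paper's argument. The paper presents the corollary as an immediate consequence of the last assertion of Theorem~\ref{chap3:pullback_along_subvariety} (which rests on Lemma~\ref{chap2:stab_of_pullback}), applied to the inclusion of a complex point $x:\Spec\CC\to X$, so that the fiber maps to a point by a homotopy equivalence; your extra unwinding of the iterated pushout is a harmless consistency check rather than a needed step.
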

Before diving into the proof of the main theorem of this section we need the following preliminary lemma.
\begin{lemma}\label{chap3:simple_projection_induce_iso}
  Let $X,Y$ be topological spaces with $Y$ contractible. Then for every sheaf $\mathcal{F}$ on $X$, the map $\pi: X\times Y \rightarrow X$
    induces an isomorphism
    \begin{equation*}
        \pi^{*}: H^{*}(X; \mathcal{F})\rightarrow H^{*}(X\times Y; \pi^{*}\mathcal{F}).
    \end{equation*}
\end{lemma}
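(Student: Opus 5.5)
We will deduce the lemma from homotopy invariance of sheaf cohomology, applied to a homotopy that lives over $X$. Since $Y$ is contractible, fix a point $y_0\in Y$ and a homotopy $h:Y\times[0,1]\to Y$ with $h(\cdot,0)=\mathrm{const}_{y_0}$ and $h(\cdot,1)=\mathrm{id}_Y$. Let $s:X\to X\times Y$, $s(x)=(x,y_0)$. Then $\pi\circ s=\mathrm{id}_X$, so $s^*\pi^*\mathcal{F}=\mathcal{F}$ and $s^*\circ\pi^*=\mathrm{id}$ on $H^*(X;\mathcal{F})$. In particular $\pi^*$ is injective, and it remains to show that $\pi^*\circ s^*$ is the identity on $H^*(X\times Y;\pi^*\mathcal{F})$. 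We note that $s^*$ is well defined with the stated target because $s^*\pi^*\mathcal{F}\cong\mathcal{F}$ canonically.

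\textbf{Step 1: homotopy invariance over $[0,1]$.} Let $Z$ be any space, $\mathcal{G}$ a sheaf on $Z$, and $p:Z\times[0,1]\to Z$ the projection. We claim that $p^*:H^*(Z;\mathcal{G})\to H^*(Z\times[0,1];p^*\mathcal{G})$ is an isomorphism. To see this, note that $p$ is a proper map (closed, with compact Hausdorff fibres). Proper base change then computes the stalk of $R^qp_*p^*\mathcal{G}$ at $z$ as
\[
H^q([0,1];\mathcal{G}_z)=\begin{cases}\mathcal{G}_z, & q=0,\\ 0, & q>0.\end{cases}
\]
Hence $\mathcal{G}\to Rp_*p^*\mathcal{G}$ is a quasi-isomorphism, and the Leray spectral sequence of $p$ degenerates to give the claim. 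Let $i_0,i_1:Z\to Z\times[0,1]$ be the inclusions at the endpoints. Since $p\circ i_t=\mathrm{id}_Z$, both $i_0^*$ and $i_1^*$ are inverse to the isomorphism $p^*$, and therefore $i_0^*=i_1^*$ as maps $H^*(Z\times[0,1];p^*\mathcal{G})\to H^*(Z;\mathcal{G})$.

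\textbf{Step 2: the homotopy over $X$.} Apply Step 1 with $Z=X\times Y$ and $\mathcal{G}=\pi^*\mathcal{F}$. Define
\[
H:X\times Y\times[0,1]\to X\times Y,\qquad H(x,y,t)=(x,h(y,t)).
\]
The key point is that $\pi\circ H=\pi\circ p$, so there is a canonical identification $H^*\pi^*\mathcal{F}\cong p^*\pi^*\mathcal{F}$. Consequently $H$ induces a map
\[
H^*:H^*(X\times Y;\pi^*\mathcal{F})\to H^*(X\times Y\times[0,1];p^*\pi^*\mathcal{F}).
\]
We have $H\circ i_1=\mathrm{id}$ and $H\circ i_0=s\circ\pi$, and these equalities are compatible with the identifications of the pulled-back sheaves, since everything happens over $X$. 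Step 1 gives $i_0^*H^*=i_1^*H^*$, which says exactly that $(s\circ\pi)^*=\pi^*s^*$ is the identity. Combined with $s^*\pi^*=\mathrm{id}$, this shows that $\pi^*$ is an isomorphism.

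\textbf{Main obstacle.} The delicate point is Step 1 for a completely general space $Z$, namely the use of proper base change (Godement's form, for closed maps with compact Hausdorff fibres). If one prefers to avoid it, one can instead show directly that $R^qp_*p^*\mathcal{G}=0$ for $q>0$. This follows from the vanishing of higher cohomology of $p^*\mathcal{G}$ on tubes $U\times[0,1]$, using a finite subdivision of $[0,1]$ together with Mayer--Vietoris; the tube lemma is what makes $p$ closed and makes tubes cofinal among neighbourhoods of a fibre. A second, minor point needing care is the bookkeeping of the canonical sheaf isomorphisms $H^*\pi^*\mathcal{F}\cong p^*\pi^*\mathcal{F}$ and $i_t^*p^*\cong\mathrm{id}$, which must be checked to match. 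In the paper's applications all spaces are complex points of varieties, so any extra local hypotheses one might want are satisfied anyway.
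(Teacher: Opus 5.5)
Your proof is correct and is essentially the paper's argument. The paper also uses $\pi\circ i=\mathrm{id}$ together with a homotopy from $i\circ\pi$ to the identity that lives over $X$ and is compatible with $\pi^*\mathcal{F}$, and concludes by homotopy invariance of sheaf cohomology. The only difference is that the paper cites Bredon (Corollary II.11.8) for that invariance, whereas your Step 1 proves it via the stalk computation $R^q p_* p^*\mathcal{G}=0$ for $q>0$ (tautness of the compact fibres $\{z\}\times[0,1]$), which is essentially how Bredon proves it.
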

\begin{proof}
Let $f:X_1\to X_2$ be a continuous map, and let $\mathcal{F}_i,i=1,2$ be a sheaf of Abelian groups on $X_i$. A \emph{map} $(X_1,\mathcal{F}_1)\to (X_2, \mathcal{F}_2)$ over $f$ is a map of sheaves $f^*(\mathcal{F}_2)\to \mathcal{F}_1$ over $X_1$, or equivalently a map of sheaves $\mathcal{F}_2\to f_*(\mathcal{F}_1)$ over $X_2$.
Either version induces homomorphisms of the cohomology groups $H^\bullet(X_2;\mathcal{F}_2)\to H^\bullet(X_1; \mathcal{F}_1)$.

Let $p:X_1\times[0,1]\to X_1$ be the projection. We have two obvious maps $$(X_1,\mathcal{F}_1)\rightrightarrows (X_1\times [0,1],p^*(\mathcal{F}_1))$$ over the inclusions $x_1\mapsto (x_1,0), x_1\in X_1$ and $x_1\mapsto (x_1,1),x_1\in X_1$ respectively. Two maps $(X_1,\mathcal{F}_1)\to (X_2, \mathcal{F}_2)$ over $f$ are \emph{homotopic} if the underlying continuous maps $X_1\to X_2$ are homotopic via a homotopy $F:X_1\times [0,1]\to X_2$ such that there is a map $(X\times [0,1],p^*(\mathcal{F}_1))\to (X_2,\mathcal{F}_2)$ over $F$ which makes the next diagram commute:
$$
\begin{tikzcd}
(X_1,\mathcal{F}_1) \ar[r,shift left=.75ex]\ar[r,shift right=.75ex] & (X_1\times[0,1],p^*(\mathcal{F}_1))\ar[r] & (X_2,\mathcal{F}_2).
\end{tikzcd}
$$
If this is the case, the induced cohomology maps $H^\bullet(X_2;\mathcal{F}_2)\to H^\bullet(X_1;\mathcal{F}_1)$ are the same, see e.g.~\cite[Corollary II.11.8]{bredon}.

Now we can apply these remarks to the spaces $X\times Y$ and $X$, the sheaves $\mathcal{F}$ and $\pi^*\mathcal{F}$, and the maps $\pi$ and $i:X\to X\times Y$, the latter being is an embedding with a fixed $Y$-component.
\end{proof}
We are now ready to prove parts~\ref{item:ho_eq}-\ref{item:coho_iso} of Theorem \ref{chap3:main_thm_Spec_C}.
\begin{proof}[Proof of Theorem \ref{chap3:main_thm_Spec_C}, parts~\ref{item:ho_eq}-\ref{item:coho_iso}]
    The existence of an affine model and the first two properties have been proven in Theorem~\ref{chap2:main_thrm} and Corollary~\ref{chap3:contractible_fibers}. So it suffices to prove the third property, which we will do by checking it for the affine model $m_X:M_X\to X$ constructed in the proof of Theorem~\ref{chap2:main_thrm}.

    Let us first assume that the complex of sheaves is a single sheaf $\mathcal{F}$ in degree $0$. 
    We proceed by induction on the number $n$ of elements in an affine open cover $\{ U_{i}\}_{i=1}^{n}$ of $X$ used to construct an affine model in the proof of Theorem~\ref{chap2:main_thrm}.

    If $n = 2$, then the affine model $M_X$ for $X$ is the pushout of the following diagram
\[\begin{tikzcd}
	{U_{1}\cap U_{2}} & {U_{2}\times \mathbb{A}^n_\CC} \\
	{U_{1}\times \mathbb{A}^n_\CC}.
	\arrow[hook, from=1-1, to=1-2]
	\arrow[hook, from=1-1, to=2-1]
\end{tikzcd}\]
We showed in Lemma \ref{chap3:simple_projection_induce_iso} that the projections $U_{i}\times\mathbb{A}^n_\CC\to U_{i}$ induce isomorphisms on cohomology. Therefore, applying the open and closed versions of the Mayer-Vietoris exact sequence (see Theorem~\ref{appendix:Mayer_Vietoris_main_thrm}) and the $5$-lemma we deduce that in this case the natural homomorphism
\begin{equation}\label{eq:iso_sheaf_coho}
m^{*}_X: H^{*}(X; \mathcal{F}) \to H^{*}(M_X; m^{*}_X\mathcal{F}) 
\end{equation}
is an isomorphism.

    Now we describe the induction step. Assume that~(\ref{eq:iso_sheaf_coho}) is an isomorphism for all separated schemes $X$ of finite type over $\CC$  that have affine open covers with at most $n-1,n\geq 3$ elements. Suppose $X$ is a separated complex algebraic scheme of finite type that has an affine open cover $X=\bigcup_{i=1}^n U_i$ which we use to build an affine model $M_X\to X$. Recall from the proof of Theorem~\ref{chap2:main_thrm} that $M_X$ is the pushout of the following diagram:
\[\begin{tikzcd}
	{M_{1}} & {M_{2}\times \mathbb{A}^n_\CC} \\
	{U_{n}\times \mathbb{A}^n_\CC}
	\arrow[hook, from=1-1, to=1-2]
	\arrow[hook, from=1-1, to=2-1]
\end{tikzcd}\]
where $M_{1}$ is an affine model for $(U_{1}\cup \dots \cup U_{n-1})\cap U_{n}$, and $M_{2}$ is an affine model for $U_{1}\cup\dots\cup U_{n-1}$.

The induction hypothesis implies that each of the morphisms 
\[
M_{1}\to (U_{1}\cup\cdots \cup U_{n-1})\cap U_{n}, M_{2}\times\mathbb{A}^n_\CC \to U_{1}\cup\cdots \cup U_{n-1}
\]
induces a cohomology isomorphism for every sheaf on $(U_{1}\cup\cdots \cup U_{n-1})\cap U_{n}$, respectively on $U_{1}\cup\cdots \cup U_{n-1}$.

Applying the $5$-lemma and the Mayer-Vietoris sequence we see that~(\ref{eq:iso_sheaf_coho}) is an isomorphism for every sheaf $\mathcal{F}$ on $X$.

This completes the proof in the case when $\mathcal{F}^\bullet$ is a single sheaf in degree $0$. The general case follows from the spectral sequence $({}'E^{pq}_r,d'_r)$ described e.g.\ in~\cite[Section II.4.5]{Go58}.
\end{proof}

\subsubsection{Proof of Theorem~\ref{chap3:main_thm_Spec_C}, part~\ref{item:base_change}}
Let us first recall the definition of the base change map. Suppose we have a commutative (not necessarily Cartesian) diagram of continuous maps
$$
\begin{tikzcd}
W\ar[d,"p"]\ar[r,"h"] & X\ar[d,"f"]\\
Z\ar[r,"g"] & Y.
\end{tikzcd}
$$
Let $\mathcal{F}^\bullet$ be a complex of sheaves on $X$. Using the unit of the adjunction $(h^*,Rh_*)$ we get a map
$$Rf_* \mathcal{F}^\bullet\to Rf_* Rh_* h^*\mathcal{F}^\bullet=Rg_*Rp_*h^*\mathcal{F}^\bullet.$$
The adjunction $(g^*,Rg_*)$ gives us then the \emph{base change map}
$$\alpha:g^*Rf_* \mathcal{F}^\bullet\to Rp_*h^*\mathcal{F}^\bullet.$$

\begin{definition}
    An open continuous map $g: Z \to Y$ is called an \textit{weak acyclic submersion} if the following properties are satisfied:
    \begin{enumerate}[label=\Alph*]

        \item\label{itm:weak_acyclic_prop_a} For any $z\in Z$ the colimit map
        \begin{equation*}
            \colim \limits_{U \ni z} H^{\bullet}(g(U); \mathcal{F}^{\bullet}) \to \colim \limits_{U\ni z}H^{\bullet}(U; g^{*}\mathcal{F}^{\bullet})
        \end{equation*}
        is an isomorphism for any complex of sheaves $\mathcal{F}^{\bullet}$ on $Y$.
        \item\label{itm:weak_acyclic_prop_b} 
        For any complex of sheaves $\mathcal{F}^{\bullet}$ on $Y$, the map
        $$H^*(Y,\mathcal{F}^{\bullet})\to H^*(Z,g^*\mathcal{F}^{\bullet})$$
        is an isomorphism.
    \end{enumerate}
\end{definition}
\begin{example}
    Recall that a \textit{topological submersion} $g: Z\to Y$ is a continuous map such that for each $z \in Z$ there is an open neighborhood $z\in U \subset Z$ with a homeomorphism $\varphi: U \to g(U) \times V$ making the following triangle commute:
    \[\begin{tikzcd}
    	U && {g(U)\times V} \\
    	& {g(U)}.
    	\arrow["\varphi", from=1-1, to=1-3]
    	\arrow["g"', from=1-1, to=2-2]
    	\arrow["{\mathrm{pr}_{1}}", from=1-3, to=2-2]
    \end{tikzcd}\]
    A topological submersion has property~\ref{itm:weak_acyclic_prop_a}.
\end{example}
Our first goal is to prove the following lemma:
\begin{lemma}\label{chap4:lemma_base_change_for_affine_model_global_version}
    Consider the following Cartesian square of continuous maps
        \[\begin{tikzcd}
        	{X\times_{Y}Z} & X \\
        	Z & Y
        	\arrow["h", from=1-1, to=1-2]
        	\arrow["p"', from=1-1, to=2-1]
        	\arrow["\ulcorner"{anchor=center, pos=0.125}, draw=none, from=1-1, to=2-2]
        	\arrow["f", from=1-2, to=2-2]
        	\arrow["g"', from=2-1, to=2-2]
        \end{tikzcd}\]
    where $g$ is a weak acyclic submersion. For any complex of sheaves $\mathcal{F}^{\bullet}$ over $X$, the base change morphism
    \begin{equation*}
        \alpha_{X,\mathcal{F}^\bullet}:g^{*}Rf_{*}\mathcal{F}^{\bullet}\to Rp_{*}h^{*}\mathcal{F}^{\bullet}
    \end{equation*}
    is a quasi-isomorphism.
\end{lemma}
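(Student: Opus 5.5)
The plan is to verify that $\alpha_{X,\mathcal{F}^\bullet}$ is a quasi-isomorphism stalkwise at every point $z\in Z$, reducing the question to a comparison of colimits of hypercohomology groups over neighbourhoods of $z$.

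\textbf{Step 1: the left-hand stalk.} For any complex of sheaves on a space, the stalk cohomology at a point is the colimit over open neighbourhoods of the hypercohomology. Hence
\[
\mathcal{H}^\bullet(g^*Rf_*\mathcal{F}^\bullet)_z=\mathcal{H}^\bullet(Rf_*\mathcal{F}^\bullet)_{g(z)}=\colim_{V\ni g(z)} H^\bullet(f^{-1}(V);\mathcal{F}^\bullet).
\]
Since $g$ is open, each $g(U)$ with $U\ni z$ is an open neighbourhood of $g(z)$. These sets are cofinal: given $V$, take $U=g^{-1}(V)$, so $g(U)\subset V$. Therefore this colimit equals $\colim_{U\ni z} H^\bullet(f^{-1}(g(U));\mathcal{F}^\bullet)$.

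\textbf{Step 2: the right-hand stalk.} In the same way,
\[
\mathcal{H}^\bullet(Rp_*h^*\mathcal{F}^\bullet)_z=\colim_{U\ni z}H^\bullet(p^{-1}(U);h^*\mathcal{F}^\bullet).
\]
Unwinding the adjunctions in the definition of $\alpha$, the stalk map of $\alpha$ at $z$ is the colimit of the pullback maps
\[
h^*\colon H^\bullet(f^{-1}(g(U));\mathcal{F}^\bullet)\to H^\bullet(p^{-1}(U);h^*\mathcal{F}^\bullet).
\]
Here $p^{-1}(U)=f^{-1}(g(U))\times_{g(U)}U$ and $h$ restricts to the base change of $g|_U\colon U\to g(U)$. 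It therefore suffices to show that these maps become an isomorphism in the colimit over $U$.

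\textbf{Step 3: rewrite via the Leray isomorphism.} Write $V_U=f^{-1}(g(U))$ and $G^\bullet=Rf_*\mathcal{F}^\bullet$. Then
\[
H^\bullet(V_U;\mathcal{F}^\bullet)=H^\bullet(g(U);G^\bullet),\qquad H^\bullet(p^{-1}(U);h^*\mathcal{F}^\bullet)=H^\bullet(U;Rp_*h^*\mathcal{F}^\bullet).
\]
Property~\ref{itm:weak_acyclic_prop_a}, applied to the complex $G^\bullet$, identifies $\colim_U H^\bullet(g(U);G^\bullet)$ with $\colim_U H^\bullet(U;g^*G^\bullet)$. So the claim reduces to comparing $g^*G^\bullet$ with $Rp_*h^*\mathcal{F}^\bullet$ after taking hypercohomology over small $U$. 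This is where the argument threatens to become circular.

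\textbf{Step 4 (main obstacle): breaking the circularity.} I would first try to show that properties~\ref{itm:weak_acyclic_prop_a} and~\ref{itm:weak_acyclic_prop_b} are stable under base change and under restriction to open subsets $U$ mapping onto $g(U)$. Concretely, the goal is that the base change $h_U\colon p^{-1}(U)\to V_U$ again induces isomorphisms on hypercohomology, by property~\ref{itm:weak_acyclic_prop_b}, for complexes pulled back from $V_U$. Applying this to $\mathcal{F}^\bullet|_{V_U}$ would give the required isomorphism already before passing to the colimit.

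If this stability is not directly available, the fallback is to use property~\ref{itm:weak_acyclic_prop_a} locally on $X$. This would show that $\mathcal{F}^\bullet\to Rh_*h^*\mathcal{F}^\bullet$ is a quasi-isomorphism over the relevant opens. One then concludes using $Rf_*Rh_*=Rg_*Rp_*$ together with property~\ref{itm:weak_acyclic_prop_b} for $g$. I expect establishing this stability of the weak acyclic submersion properties under base change to be the heart of the proof.
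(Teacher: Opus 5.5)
Your Steps~1--2 are correct, and they already do all the formal work. They identify the stalk of $\alpha$ at $z$ with $\colim_{U\ni z}$ of the pullbacks $h_U^*\colon H^\bullet(f^{-1}(g(U));\mathcal{F}^\bullet)\to H^\bullet(f^{-1}(g(U))\times_{g(U)}U;h^*\mathcal{F}^\bullet)$. This is exactly the map $R\Gamma(\varphi)$ in the paper's proof. The paper follows the same route: it factors $\varphi$ through the unit $\theta$ and disposes of $\theta$ by property~A. It then closes the argument by asserting that $R\Gamma(\varphi)$ is an isomorphism ``by property~B applied to $Rj_*\mathcal{F}_U$''. Your Step~4 is precisely this assertion, and you leave it unproved, so the proposal has a genuine gap at its only non-formal point.

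Neither of your strategies can close that gap. Property~A holds for \emph{every} open map: the sets $g(U)$, $U\ni z$, are cofinal among neighbourhoods of $g(z)$, so both sides of~A equal the stalk $\mathcal{H}^\bullet(\mathcal{F}^\bullet)_{g(z)}$. So~A carries no information, in particular none about $h$. Property~B is a global statement about $Z\to Y$ and about complexes on $Y$ (so ``B applied to $Rj_*\mathcal{F}_U$'' does not literally make sense). It is not inherited by $U\to g(U)$ for small $U$. In your fallback, even granting $\mathcal{F}^\bullet\simeq Rh_*h^*\mathcal{F}^\bullet$ (which~A does not give), combining it with $Rf_*Rh_*=Rg_*Rp_*$ and~B only shows that $R\Gamma(Z;\alpha)$ is an isomorphism. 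That does not make $\alpha$ a quasi-isomorphism.

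In fact, with only A and B as hypotheses the statement is false. Take the following data:
\begin{itemize}
\item $Y$ a point;
\item $K=\{0\}\cup\{1/n: n\geq 1\}$ and $Z=(K\times[0,1])/(K\times\{1\})$, which is compact and contractible, so $g\colon Z\to Y$ is open and satisfies A and B;
\item $X=\mathbb{N}$ with the discrete topology, and $\mathcal{F}=\mathbb{Z}$.
\end{itemize}
Then $g^*Rf_*\mathbb{Z}$ is the constant sheaf $\prod_{\mathbb{N}}\mathbb{Z}$, while $\mathcal{H}^0(Rp_*h^*\mathbb{Z})=p_*\mathbb{Z}=\prod_{\mathbb{N}}\mathbb{Z}_Z$. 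Set $T_n=\{1/n\}\times[0,1)$ and consider the section $(\mathbf{1}_{T_n})_n$ over $Z$ minus the cone point. At $z=(0,0)$ its germ is not the germ of a constant sequence, because every neighbourhood of $z$ contains $z$ and meets almost all $T_n$. Hence $\mathcal{H}^0(\alpha)_z$ is not surjective. This happens even though here $\mathbb{Z}\to Rh_*h^*\mathbb{Z}$ and $R\Gamma(Z;\alpha)$ are both isomorphisms.

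What is actually needed is a local hypothesis. Every $z$ should have a cofinal system of neighbourhoods $U$ for which $U\to g(U)$ is universally acyclic, i.e.\ $H^\bullet(V;\mathcal{G}^\bullet)\to H^\bullet(V\times_{g(U)}U;\mathrm{pr}_1^*\mathcal{G}^\bullet)$ is an isomorphism for every $V\to g(U)$ and every complex $\mathcal{G}^\bullet$ on $V$. Given that, your Step~2 finishes the proof at once. For the affine models, this is the property that would have to be verified from their explicit structure (products with $\mathbb{A}^n$ glued along closed subschemes). A natural route is homotopy invariance for $V\times D\to V$ with $D$ a ball, combined with closed Mayer--Vietoris.
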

\begin{proof}
Let $i: U \hookrightarrow Z$ be an arbitrary open subset. Denote $j: V = f^{-1}(g(U)) \hookrightarrow X$ and consider the following Cartesian square
\begin{equation}\label{diagram_for_open_subsets}\begin{tikzcd}
	{V\times_{Y}U} & V \\
	U & {g(U)}.
	\arrow["{h_{U}}", from=1-1, to=1-2]
	\arrow["{p_{U}}"', from=1-1, to=2-1]
	\arrow["\ulcorner"{anchor=center, pos=0.125}, draw=none, from=1-1, to=2-2]
	\arrow["{f_{U}}", from=1-2, to=2-2]
	\arrow["{g_{U}}"', from=2-1, to=2-2]
\end{tikzcd}
\end{equation}
Set $\mathcal{F}^\bullet_U=j^*\mathcal{F}^\bullet$. By the definition of the base change map
$\alpha_{U, \mathcal{F}^{\bullet}_U}:g_{U}^{*}R(f_{U})_{*}\mathcal{F}^{\bullet}_U \to R(p_{U})_{*}h_{U}^{*}\mathcal{F}^{\bullet}_U$, the next diagram commutes:
\begin{equation}\label{diag:base_change_prelim}
\begin{tikzcd}
	{g_{U}^{*}R(f_{U})_{*}\mathcal{F}_{U}^{\bullet}} && {R(p_{U})_{*}h_{U}^{*}\mathcal{F}_{U}^{\bullet}} \\
	& {g_{U}^{*}R(f_{U})_{*}\mathcal{F}_{U}^{\bullet}}.
	\arrow["{\alpha_{U, \mathcal{F}_{U}^{\bullet}}}", from=1-1, to=1-3]
	\arrow["{\mathrm{id}}", from=2-2, to=1-1]
	\arrow["\Phi = \Phi_{U, \mathcal{F}_{U}^{\bullet}}"', from=2-2, to=1-3]
\end{tikzcd}
\end{equation}
Here $\Phi$ corresponds to the composition
\begin{equation}\label{base_change_def_phi}
    \varphi_{U, \mathcal{F}_{U}^{\bullet}}: R(f_{U})_{*}\mathcal{F}_{U}^\bullet \to R(f_{U})_{*}R(h_{U})_{*}h_{U}^{*}\mathcal{F}^{\bullet}_{U} = R(g_{U})_{*} R(p_{U})_{*}h_{U}^{*}\mathcal{F}_{U}^{\bullet}
\end{equation}
via the adjunction $(g_{U}^{*}, R(g_{U})_{*})$.

The adjunction $(g_{U}^{*}, R(g_{U})_{*})$ transforms~(\ref{diag:base_change_prelim}) into the commutative diagram
\[\begin{tikzcd}
	{R(g_{U})_{*}g_{U}^{*}R(f_{U})_{*}\mathcal{F}^{\bullet}_U} && {R(g_{U})_{*}R(p_{U})_{*}h_{U}^{*}\mathcal{F}^{\bullet}_U\cong R(f_{U})_{*} R(h_{U})_{*} h_{U}^{*}\mathcal{F}^{\bullet}_U} \\
	& {R(f_{U})_{*}\mathcal{F}^{\bullet}_U}.
	\arrow["{R(g_{U})_{*}\alpha_{U,\mathcal{F}^{\bullet}_U}}", from=1-1, to=1-3]
	\arrow["{\theta=\theta_{U, \mathcal{F}^{\bullet}_U}}", from=2-2, to=1-1]
	\arrow["{\varphi=\varphi_{U, \mathcal{F}^{\bullet}_U}}"', from=2-2, to=1-3]
\end{tikzcd}\]
Here $\theta$ is the unit of the adjunction $(g^{*}_{U}, R(g_{U})_{*})$ applied to the complex $Rf_{*}\mathcal{F}^{\bullet}_U$, and $\varphi$ is the map~(\ref{base_change_def_phi}).

Let now $z\in Z$ be a point. The $R\Gamma$ of $\theta$ coincides with the natural map
\begin{equation*}
    H^{*}(g(U);R(f_{U})_{*}\mathcal{F}^{\bullet}_U) \to H^{*}(U; g_{U}^{*}R(f_{U})_{*}\mathcal{F}^{\bullet}_U).
\end{equation*}
So the colimit of the maps $R\Gamma(\theta)$ over all neighborhoods $ U\ni z$ coincides with the map
\begin{equation*}
    \colim \limits_{U \ni z} H^{\bullet}(g_{U}(U); R(f_{U})_{*}\mathcal{F}^{\bullet}_U) \to \colim \limits_{U \ni z}H^{\bullet}(U; g_{U}^{*}R(f_{U})_{*}\mathcal{F}^{\bullet}_U).
\end{equation*}
Since $g$ is a weak acyclic submersion this map is in fact an isomorphism by property~\ref{itm:weak_acyclic_prop_a}.

Also note that the map $R\Gamma(\varphi)$ coincides with
\begin{equation*}
    H^{*}(V; \mathcal{F}^{\bullet}_U) \to  H^{*}(V\times_{Y}U; h_{U}^{*}\mathcal{F}^{\bullet}_U).
\end{equation*}
By property~\ref{itm:weak_acyclic_prop_b} applied to the complex $Rj_*\mathcal{F}_U$ the map $R\Gamma (\varphi)$ is an isomorphism, and hence so is the colimit of $R\Gamma (\varphi)$ over $U\ni z$.

After taking functorial resolutions we may assume that both $g^{*}Rf_{*}\mathcal{F}^{\bullet}$ and $ Rp_{*}h^{*}\mathcal{F}^{\bullet}$ are term-wise $\Gamma$-acyclic complexes of sheaves, and so are their restrictions to any open $U\subset X$. Then the above argument shows that $\alpha_{X,\mathcal{F}^\bullet}$ is a quasi-isomorphism on stalks, hence a quasi-isomorphism.
\end{proof}

\smallskip

Now we need to check that Lemma~\ref{chap4:lemma_base_change_for_affine_model_global_version} is applicable to the affine models constructed in Section~\ref{Construction_of_aff_model}.

\begin{lemma}\label{chap4:lemma_affine_model_is_almost_submersion}
    Let $Y$ be a separated scheme of finite type over $\CC$. The affine model $m_{Y}: M_{Y} \to Y$ constructed in the proof of Theorem~\ref{chap2:main_thrm} is a weak acyclic submersion.
\end{lemma}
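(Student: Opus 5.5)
The plan is to verify the three requirements separately: that $m_Y$ is open, property~\ref{itm:weak_acyclic_prop_b}, and property~\ref{itm:weak_acyclic_prop_a}. Each goes by induction on the number $n$ of affine opens used to build $M_Y$ in the proof of Theorem~\ref{chap2:main_thrm}. Recall that $M_Y$ is the pushout of
\[
U_n\times\mathbb{A}^N_\CC \hookleftarrow M_1 \hookrightarrow M_2\times\mathbb{A}^N_\CC .
\]
Here $M_1$ and $M_2$ are the affine models of $Z=(U_1\cup\dots\cup U_{n-1})\cap U_n$ and $X'=U_1\cup\dots\cup U_{n-1}$. Write $P_1=M_2\times\mathbb{A}^N$ and $P_2=U_n\times\mathbb{A}^N$ for the two closed pieces. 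The base case $n=2$ has the same shape with $M_1,M_2$ replaced by $U_1\cap U_2$ and $U_1$.

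Property~\ref{itm:weak_acyclic_prop_b} is already done: it is part~\ref{item:coho_iso} of Theorem~\ref{chap3:main_thm_Spec_C}, applied to an arbitrary bounded below complex on $Y$.

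For openness, take an open $W\subset M_Y$. Then $m_Y(W)=m_Y(W\cap P_1)\cup m_Y(W\cap P_2)$, so it suffices to show each term is open in $Y$.
\begin{itemize}
\item The set $W\cap P_2$ is open in $U_n\times\mathbb{A}^N$. Its image under the projection is open in $U_n$, hence in $Y$.
\item The set $W\cap P_1$ is open in $M_2\times\mathbb{A}^N$. Its projection to $M_2$ is open. By the induction hypothesis $m_{M_2}\colon M_2\to X'$ is open, so the image is open in $X'$, which is open in $Y$.
\end{itemize}

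For property~\ref{itm:weak_acyclic_prop_a}, fix $z\in M_Y$ and $y=m_Y(z)$, and split into cases according to which pieces contain $z$.

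If $z\notin P_1$, then near $z$ the space $M_Y$ coincides with the open set $P_2-M_1$. There I take neighbourhoods of the form $(V\times B)\cap(P_2-M_1)$ with $B$ a ball, or more simply shrink until they avoid $M_1$, so that they are products $V\times B$ with $V\ni y$ open. Lemma~\ref{chap3:simple_projection_induce_iso} gives the isomorphism for each such neighbourhood, hence in the colimit, since these neighbourhoods are cofinal.

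If $z\notin P_2$, the same argument works with the product $M_2\times B$. I then combine Lemma~\ref{chap3:simple_projection_induce_iso} with the induction hypothesis that $m_{M_2}$ satisfies property~\ref{itm:weak_acyclic_prop_a}.

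The essential case is $z\in M_1=P_1\cap P_2$. For a neighbourhood $U\ni z$, I want to compare two Mayer--Vietoris sequences from Theorem~\ref{appendix:Mayer_Vietoris_main_thrm}:
\begin{itemize}
\item the closed one for $U=(U\cap P_1)\cup(U\cap P_2)$;
\item the open one for $g(U)=g(U\cap P_1)\cup g(U\cap P_2)$.
\end{itemize}
Both $g(U\cap P_i)$ are open by the openness argument. Filtered colimits are exact, so I can pass to the colimit over $U\ni z$ and apply the five lemma. The terms with $P_1$, $P_2$ and $M_1$ are then handled by the previous cases together with the induction hypothesis applied to $m_{M_1}$ and $m_{M_2}$.

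The main obstacle is that the two covers do not match exactly. One needs a cofinal system of neighbourhoods $U$ of $z$ such that
\[
g(U\cap P_1)\cap g(U\cap P_2)=g(U\cap M_1),
\]
and such that the traces $U\cap P_i$ and $U\cap M_1$ form cofinal systems for the pieces. Only then does the colimit over $U$ of the open Mayer--Vietoris sequence match the closed one term by term.

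I would first try to build such $U$ inductively. Take $U=U_1'\cup_{U_1'\cap M_1}U_2'$, where $U_2'=V\times B$ is a neighbourhood in $P_2$ and $U_1'$ is an inductively constructed good neighbourhood in $P_1$. The two pieces must restrict to the same open subset of $M_1$. Since $M_1$ sits in $P_1$ and $P_2$ via graph-type closed embeddings, the plan is to choose a good neighbourhood in $M_1$ first and then thicken it compatibly into each $P_i$, using the product structure in the $\mathbb{A}^N$ direction. If this exact equality proves too rigid, the fallback is to prove only that the discrepancy between $g(U\cap P_1)\cap g(U\cap P_2)$ and $g(U\cap M_1)$ vanishes after taking colimits. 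That weaker statement suffices for the five lemma argument.
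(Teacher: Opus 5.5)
Your openness argument and your appeal to part~3 of Theorem~\ref{chap3:main_thm_Spec_C} for property~B agree with the paper and are fine. For openness you split $m_Y(W)$ along the closed pieces $P_1,P_2$, project, and use the induction hypothesis for $m_{M_2}$. The gap is in property~A. You follow the paper's route: Mayer--Vietoris for the closed cover of $U$ and the open cover of $g(U)$, exactness of colimits, and the five lemma. You also correctly spot the point that the paper's one-line sketch passes over: $g(U\cap P_1)\cap g(U\cap P_2)$ need not equal $g(U\cap M_1)$, so the two sequences do not match term by term. But you then leave the case $z\in M_1$ open. Neither the compatible thickening of neighbourhoods nor the ``fallback'' is carried out, so as written the proposal does not prove~A.

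The missing observation is that A needs no induction, because it holds for every continuous open map $g\colon Z\to Y$. Put $y=g(z)$. The sets $g(U)$, with $U\ni z$ open, are open neighbourhoods of $y$ and are cofinal among them: given $V\ni y$, take $U=g^{-1}(V)$. Hence the left-hand colimit equals $\colim_{V\ni y}H^q(V;\mathcal{F}^\bullet)$. Compute this with a Godement resolution $\mathcal{F}^\bullet\to\mathcal{I}^\bullet$ and use exactness of filtered colimits; it is $H^q(\mathcal{I}^\bullet_y)=H^q(\mathcal{F}^\bullet_y)$. In the same way the right-hand colimit is $H^q((g^*\mathcal{F}^\bullet)_z)=H^q(\mathcal{F}^\bullet_y)$. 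The comparison map is the canonical identification $(g^*\mathcal{F}^\bullet)_z=\mathcal{F}^\bullet_y$. This is also why your fallback would succeed: after passing to the colimit, every term of both Mayer--Vietoris sequences is this same stalk. So the lemma reduces to your openness argument plus part~3 of Theorem~\ref{chap3:main_thm_Spec_C}. The case analysis and the search for matched neighbourhoods can be dropped.
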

\begin{proof}
    Property~\ref{itm:weak_acyclic_prop_b} follows from part~\ref{item:coho_iso} of Theorem~\ref{chap3:main_thm_Spec_C}. Openness and property~\ref{itm:weak_acyclic_prop_a} follow from the next observations.
    \begin{enumerate}
        \item 
        For a separated scheme $Z$ of finite type over $\CC$, the projection $Z \times \mathbb{A}^{n}_\CC \to Z$ is open and has property~\ref{itm:weak_acyclic_prop_a}.
        \item Let $g: Z \to Y$ be a morphism of separated schemes of finite type over $\CC$. If $Z$ has a closed cover $Z = Z_{1} \cup Z_{2}$ and $Y$ has an open cover $Y = Y_{1} \cup Y_{2}$ such that the restrictions $g|_{Z_{i}}: Z_{i} \to Y_{i}, i=1,2$ and $g|_{Z_{1}\cap Z_{2}}: Z_{1} \cap Z_{2} \to Y_{1} \cap Y_{2}$ are open and have property~\ref{itm:weak_acyclic_prop_a}, then the same is true for $g$. This follows from the Mayer-Vietoris Theorem~\ref{appendix:Mayer_Vietoris_main_thrm} and the fact that the direct limit is an exact functor.
        \item The proof of Theorem~\ref{chap2:main_thrm} is by induction on the number of elements in an affine open cover of a scheme $Y$. At each step the previous two remarks ensure that the morphism $m_Y:M_Y\to Y$ is open and has property~\ref{itm:weak_acyclic_prop_a}.
    \end{enumerate}
\end{proof}

\begin{proof}[Proof of Theorem~\ref{chap3:main_thm_Spec_C}, part~\ref{item:base_change}]
The statement follows from Lemmas~\ref{chap4:lemma_base_change_for_affine_model_global_version} and~\ref{chap4:lemma_affine_model_is_almost_submersion}.
\end{proof}

\subsection{Schemes over a field of positive characteristic}\label{example_etale_case}
Finally, in Section~\ref{example_etale_case} we will briefly discuss Example~\ref{ex:intro_char_p}. Let $\mathcal{C}$ be the category of separated schemes of finite type over $S=\Spec k$ with $k$ being an arbitrary field of characteristic $p$. Define a subclass of morphisms $\mathcal{W}$ in $\mathcal{C}$ as follows: $f: X\to Y$ belongs to $\mathcal{W}$ iff for any $l$-torsion sheaf $\mathcal{F}$ with $l \neq p$ the corresponding map of the \'etale cohomology groups
\[
 H^{*}_{\mbox{{\scriptsize \'et}}}(Y; \mathcal{F}) \to H^{*}_{\mbox{{\scriptsize \'et}}}(X; f^{*}\mathcal{F})
\]
is an isomorphism.
We will prove the following theorem:
\begin{theorem}\label{chap3:etale_case_admissibility}
    The class $\mathcal{W}$ defined above is adequate.
\end{theorem}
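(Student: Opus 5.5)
We have to check the three axioms of an adequate class for the étale class $\mathcal{W}$.

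\emph{Composition.} Closure under composition is immediate. For $X \xrightarrow{f} Y \xrightarrow{g} W$ and an $l$-torsion sheaf $\mathcal{F}$ on $W$, the pullback $g^*\mathcal{F}$ is again $l$-torsion, and $(gf)^*\cong f^*g^*$. So $H^*_{\text{\'et}}(W;\mathcal{F})\to H^*_{\text{\'et}}(X;(gf)^*\mathcal{F})$ factors as a composition of two isomorphisms.

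\emph{$\mathbb{A}^1$-invariance.} We need the projection $\pi: X\times_k\mathbb{A}^1_k\to X$ to induce isomorphisms on étale cohomology with $l$-torsion coefficients, $l\neq p$. The plan is to prove $R\pi_*\pi^*\mathcal{F}\cong\mathcal{F}$ and then apply the Leray spectral sequence. This is a stalk computation at geometric points $\bar x$. By smooth base change (applicable since $\pi$ is smooth and the coefficients are prime to $p$), it reduces to $H^*_{\text{\'et}}(\mathbb{A}^1_{K};A)$ for $K$ a separably closed field and $A$ a constant $l$-torsion group. This cohomology is $A$ in degree $0$ and zero otherwise, by the acyclicity of $\mathbb{A}^1$ for prime-to-$p$ coefficients (SGA~4, XV, or SGA~4$\tfrac12$). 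Equivalently, we could cite homotopy invariance of étale cohomology with torsion coefficients prime to the characteristic directly. For a general $l$-torsion sheaf, killed by some power $l^m$, we first handle constructible sheaves and then pass to filtered colimits, since étale cohomology on a qcqs scheme commutes with them.

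\emph{Gluing axiom.} This is the main step. Let $f:X\to Y$ be given with a closed cover $X=X_1\cup X_2$ and an open cover $Y=Y_1\cup Y_2$, $f(X_i)\subset Y_i$. I would imitate the proof of Proposition~\ref{chap3:admisibilty}, using étale Mayer--Vietoris sequences. The open cover of $Y$ gives the usual long exact sequence
\[
\cdots\to H^q_{\text{\'et}}(Y;\mathcal{F})\to H^q_{\text{\'et}}(Y_1;\mathcal{F})\oplus H^q_{\text{\'et}}(Y_2;\mathcal{F})\to H^q_{\text{\'et}}(Y_1\cap Y_2;\mathcal{F})\to\cdots,
\]
which follows from the exact sequence $0\to j_{12!}\mathcal{F}\to j_{1!}\mathcal{F}\oplus j_{2!}\mathcal{F}\to\mathcal{F}\to 0$ (or from Theorem~\ref{appendix:Mayer_Vietoris_main_thrm}). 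For the closed cover, with $i_1,i_2,i_{12}$ the closed immersions, we use the exact sequence of sheaves on $X_{\text{\'et}}$
\[
0\to\mathcal{G}\to i_{1*}i_1^*\mathcal{G}\oplus i_{2*}i_2^*\mathcal{G}\to i_{12*}i_{12}^*\mathcal{G}\to 0.
\]
Exactness is checked on stalks at geometric points. If $\bar x$ lies only in $X_1$, the sequence reads $0\to\mathcal{G}_{\bar x}\to\mathcal{G}_{\bar x}\to 0\to 0$; if $\bar x$ lies in $X_1\cap X_2$, it is the diagonal followed by the difference. Since $i_*$ is exact for closed immersions, $H^*(X;i_*-)=H^*(X_i;-)$, so we obtain the closed Mayer--Vietoris sequence. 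Here the intersection $X_1\cap X_2$ is interpreted as the scheme-theoretic intersection, consistently with the adequacy definition; only the underlying reduced structure matters for étale cohomology.

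Taking $\mathcal{G}=f^*\mathcal{F}$, the map $f$ induces a morphism from the open sequence of $Y$ to the closed sequence of $X$. For naturality, note that $f^*$ composed with restriction to $X_i$ is the pullback along $f|_{X_i}$, and that the connecting maps are compatible because both sequences come from sheaf-level sequences related by adjunction. The five lemma then finishes the argument. The only real care needed is this compatibility of the two long exact sequences; I would establish it by mapping $f^*$ of the open sequence into the closed sequence on $X$ and checking commutativity at the sheaf level.
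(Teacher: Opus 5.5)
Your overall plan matches the paper's. Composition is immediate. $\mathbb{A}^1$-invariance is homotopy invariance of \'etale cohomology with prime-to-$p$ torsion coefficients; the paper simply cites Milne for it. Your smooth-base-change sketch only paraphrases that theorem, so citing it, as you also offer, is the right move. The gluing axiom then follows from Mayer--Vietoris sequences for the open cover of $Y$ and the closed cover of $X$, plus the five lemma. Your closed-cover sequence $0\to\mathcal{G}\to i_{1*}i_1^*\mathcal{G}\oplus i_{2*}i_2^*\mathcal{G}\to i_{12*}i_{12}^*\mathcal{G}\to 0$ is correct and self-contained.

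\textbf{The gap is in the open-cover half, and your naturality argument rests on it.} Applying $H^*(Y;-)$ to $0\to j_{12!}\mathcal{F}\to j_{1!}\mathcal{F}\oplus j_{2!}\mathcal{F}\to\mathcal{F}\to 0$ produces groups such as $H^*(Y;j_{1!}j_1^*\mathcal{F})=H^*(Y,Y-Y_1;\mathcal{F})$. That is relative cohomology in the paper's notation, not $H^*(Y_1;\mathcal{F})$. For $\mathbb{P}^1_k=\mathbb{A}^1_k\cup\mathbb{A}^1_k$ with constant coefficients the two already differ in degree $0$. So this is a Mayer--Vietoris sequence for a different theory. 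Your plan of ``mapping $f^*$ of the open sequence into the closed sequence at the sheaf level'' therefore compares the wrong objects: $f^*$ of that sequence ends with $f^*\mathcal{F}$, whereas the closed sequence begins with it.

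\textbf{How to repair it.} Work with resolutions.
\begin{enumerate}
\item Take an injective resolution $\mathcal{F}\to I^\bullet$ on $Y$. The sequence $0\to I^\bullet\to j_{1*}j_1^*I^\bullet\oplus j_{2*}j_2^*I^\bullet\to j_{12*}j_{12}^*I^\bullet\to 0$ is exact, and its global sections give the correct open Mayer--Vietoris sequence. Equivalently, apply $\mathrm{Ext}^*_Y(-,\mathcal{F})$ to $0\to j_{12!}\mathbb{Z}\to j_{1!}\mathbb{Z}\oplus j_{2!}\mathbb{Z}\to\mathbb{Z}\to 0$, using $\mathrm{Ext}^q_Y(j_{U!}\mathbb{Z},\mathcal{F})=H^q(U;\mathcal{F})$.
\item Let $u_1,u_2,u_{12}$ be the open embeddings of $f^{-1}(Y_1)$, $f^{-1}(Y_2)$, $f^{-1}(Y_1\cap Y_2)$ into $X$. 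Choose an injective resolution $f^*\mathcal{F}\to J^\bullet$ and a map $f^*I^\bullet\to J^\bullet$ compatible with the augmentations.
\item This map gives a morphism from the global sections of the open sequence for $I^\bullet$ to those of the analogous sequence for $J^\bullet$ and the cover $\{f^{-1}(Y_1),f^{-1}(Y_2)\}$ of $X$.
\item The restriction maps $u_{1*}u_1^*J^\bullet\to i_{1*}i_1^*J^\bullet$, and likewise for $2$ and $12$, map that sequence to your closed sequence for $J^\bullet$. They exist because $X_1\subset f^{-1}(Y_1)$, $X_2\subset f^{-1}(Y_2)$ and $X_1\cap X_2\subset f^{-1}(Y_1\cap Y_2)$.
\item Taking (hyper)cohomology gives a morphism of long exact sequences, connecting maps included. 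On the $Y_1$, $Y_2$, $Y_1\cap Y_2$ terms it is pullback along $f|_{X_1}$, $f|_{X_2}$, $f|_{X_1\cap X_2}$, so the five lemma finishes.
\end{enumerate}
Alternatively, take both sequences and the morphism between them from Theorem~\ref{appendix:Mayer_Vietoris_main_thrm}, which is what the paper does.
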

\begin{proof}
    It is clear that $\mathcal{W}$ is closed under composition. By e.g.\ \cite[Chapter VI, Corollary 4.20]{Mi80}, the projection $X\times_{\Spec k}\mathbb{A}^{1}_{\Spec k} \to X$ belongs to $\mathcal{W}$. By Theorem \ref{appendix:Mayer_Vietoris_main_thrm} there are Mayer-Vietoris \'etale cohomology exact sequences both for open and closed covers, and $f$ induces natural morphism between those. This implies the last condition.
\end{proof}
Further, we have the following analog of Theorem \ref{chap3:main_thm_Spec_C}
\begin{theorem}\label{chap3:main_thrm_etale_case}
    Let $X$ be a separated scheme of finite type over $\Spec k$, with $k$ being a field of characteristic $p$. Then $X$ admits an affine model $m_{X}: M_{X} \to X$ with the following properties: 
    \begin{enumerate}
    \item For any bounded below complex of $l$-torsion sheaves $\mathcal{F}^{\bullet}$ with $l \neq p$, the morphism $m_{X}$ induces an isomorphism
    \[
    H^{*}_{\mbox{\scriptsize\rm \'et}}(X; \mathcal{F}^{\bullet}) \to H^{*}_{\mbox{\scriptsize\rm \'et}}(M_{X}; m_{X}^{*}\mathcal{F}^{\bullet}).
    \]
    \item If $X'\to X$ is an arbitrary morphism of separated schemes of finite type over $k$, then the base change map along $m_X:M_X\to X$ is an isomorphism for any complex of \'etale sheaves on~$X'$.
    \end{enumerate}
\end{theorem}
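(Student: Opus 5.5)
We will follow the proof of Theorem~\ref{chap3:main_thm_Spec_C} step by step, with the complex analytic topology replaced by the \'etale site and sheaves restricted to $l$-torsion ones, $l\neq p$. By Theorem~\ref{chap3:etale_case_admissibility} the class $\mathcal{W}$ is adequate, so Theorem~\ref{chap2:main_thrm} gives an affine model $m_X:M_X\to X$. We take the specific one built in Section~\ref{Construction_of_aff_model}, by induction on the number $n$ of affine opens covering $X$, as an iterated pushout along closed embeddings of pieces $M_2\times_k\mathbb{A}^N_k$ and $U_n\times_k\mathbb{A}^N_k$ over $M_1$.

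For part~1 we first treat a single $l$-torsion sheaf $\mathcal{F}$ in degree~$0$, by induction on $n$. The projections $U\times_k\mathbb{A}^N_k\to U$ induce isomorphisms on cohomology with coefficients in pulled-back $l$-torsion sheaves. This follows from homotopy invariance (\cite[Chapter VI, Corollary 4.20]{Mi80}) applied $N$ times; equivalently, $R\pi_*\pi^*\mathcal{F}\cong\mathcal{F}$ by smooth base change together with the acyclicity of $\mathbb{A}^N$ over a separably closed field. We then compare the closed Mayer--Vietoris sequence on $M_X$ with the open one on $X$ (Theorem~\ref{appendix:Mayer_Vietoris_main_thrm}); $m_X$ maps the former to the latter. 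The five lemma and the induction hypothesis for $M_1$ and $M_2$ conclude the argument. The passage to bounded below complexes is made with the hypercohomology spectral sequence $'E$, exactly as before.

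For part~2 we run the stalkwise argument of Lemma~\ref{chap4:lemma_base_change_for_affine_model_global_version}, with points replaced by geometric points $\bar z\to M_X$. Colimits over open neighbourhoods become colimits over \'etale neighbourhoods, so stalks become cohomology of strict henselizations. We need an \'etale analogue of a weak acyclic submersion. Property~B is part~1 applied to $Rj_*\mathcal{F}^\bullet$, using that $Rj_*$ preserves $l$-torsion. Property~A should say that for every geometric point $\bar z$ of $M_X$ with image $\bar y$, the map
\[
H^*(\operatorname{Spec}\mathcal{O}^{sh}_{Y,\bar y};\mathcal{G})\to H^*(\operatorname{Spec}\mathcal{O}^{sh}_{M_X,\bar z};m_X^*\mathcal{G})
\]
is an isomorphism. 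Since the source is just the stalk $\mathcal{G}_{\bar y}$, property~A amounts to the vanishing of higher cohomology of $m_X^*\mathcal{G}$ on the strict henselization of $M_X$ at $\bar z$. The reason this holds for our models is that they are built from projections $Z\times\mathbb{A}^N\to Z$, which are smooth. For a smooth morphism, property~A is exactly the content of the smooth base change theorem, formulated as $\pi^*R\Gamma\simeq R\Gamma\pi^*$ at strictly henselian local schemes. We then propagate property~A through closed-open gluings by the Mayer--Vietoris argument of Lemma~\ref{chap4:lemma_affine_model_is_almost_submersion}, using exactness of filtered colimits. Here we intersect the closed cover of $M_X$ with the strict henselization and use that closed immersions are compatible with strict henselization.

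The main obstacle is this last step. Openness of $m_X$ played a role over $\CC$, and \'etale neighbourhoods of $\bar z$ do not map to \'etale neighbourhoods of $\bar y$. We therefore plan to phrase everything directly via the stalk formula $(Rp_*h^*\mathcal{F})_{\bar z}=R\Gamma(X\times_Y\operatorname{Spec}\mathcal{O}^{sh}_{M_X,\bar z},h^*\mathcal{F})$, which is valid since all morphisms are quasi-compact and quasi-separated. The claim then reduces, by induction along the pushout construction, to smooth base change for the pieces $Z\times\mathbb{A}^N\to Z$ and to the closed Mayer--Vietoris sequences. We will also restrict to $l$-torsion complexes, as in part~1, since smooth base change needs torsion prime to $p$.
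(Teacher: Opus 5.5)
Your argument is correct in substance. For part~2 it takes a genuinely different route from the paper.

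For part~1 the paper does no induction. By construction $m_X\in\mathcal{W}$, and membership in $\mathcal{W}$ \emph{is} the single-sheaf statement. So only the passage to bounded below complexes via the spectral sequence $E_1^{p,q}=H^q(X;\mathcal{F}^p)$ is needed. Your Mayer--Vietoris induction just re-proves adequacy along the construction, which is correct but unnecessary.

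For part~2 the paper only asserts that the proof of Theorem~\ref{chap3:main_thm_Spec_C}, part~\ref{item:base_change}, carries over verbatim. That proof rests on Lemmas~\ref{chap4:lemma_base_change_for_affine_model_global_version} and~\ref{chap4:lemma_affine_model_is_almost_submersion}: openness, images $g(U)$ of neighbourhoods, and property~B. As you noticed, these have no direct \'etale counterpart. Your final plan bypasses them:
\begin{enumerate}
\item At a geometric point $\bar z\mapsto\bar y$, the base change map is the pullback $R\Gamma(X'\times_X\Spec\mathcal{O}^{sh}_{X,\bar y};\mathcal{F}^\bullet)\to R\Gamma(X'\times_X\Spec\mathcal{O}^{sh}_{M_X,\bar z};\mathcal{F}^\bullet)$.
\item The closed pieces $A,B$ of $M_X$, with $A\times_{M_X}B=M_1$, cut $\Spec\mathcal{O}^{sh}_{M_X,\bar z}$ into the strict henselizations of $A$ and $B$ at $\bar z$, meeting in that of $M_1$. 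Only one piece survives if $\bar z\notin M_1$.
\item Compare the closed Mayer--Vietoris sequence of $X'\times_X(-)$ for this cover with the trivial one on $X'\times_X\Spec\mathcal{O}^{sh}_{X,\bar y}$. The five lemma reduces everything to the pieces.
\item The pieces are handled by induction, smooth base change for $Z\times\mathbb{A}^N\to Z$, and compatibility of base change maps with composition.
\end{enumerate}
This route does not use part~1 at all, and it shows exactly where the torsion hypothesis enters.

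Two adjustments are needed.

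First, discard the property~A paragraph. As you formulate it, property~A holds for \emph{every} morphism. On a strictly henselian local scheme, $\Gamma$ is the exact stalk functor at the closed point, so both sides are just $\mathcal{G}_{\bar y}$ in degree~$0$. It is not what smooth base change says, and it plays no role in your stalk argument.

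Second, restricting part~2 to bounded below $l$-torsion complexes is necessary, not a weakness. The unrestricted claim fails for the model of Section~\ref{Construction_of_aff_model}, which both you and the paper use. Take $X=\mathbb{P}^1_k$ with its standard cover, $X'=U_1\times\mathbb{A}^1\to U_1\subset X$, and $\mathcal{F}=\Z/p$.
\begin{itemize}
\item Near a point $\bar z\in M_X$ over $0\in U_1-U_2$, the model is just the projection $U_1\times\mathbb{A}^n\to U_1$.
\item By Artin--Schreier, the degree-one stalk map is $A[t]/\wp(A[t])\to B[t]/\wp(B[t])$ with $\wp(g)=g^p-g$. Here $A$ and $B$ are the strict henselizations of $U_1$ at $0$ and of $M_X$ at $\bar z$.
\item Reduce modulo the maximal ideal of $A$ and compare the coefficients of $t,t^p,t^{p^2},\dots$. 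This shows the class of $wt$, for a fibre coordinate $w$ vanishing at $\bar z$, is not in the image.
\end{itemize}
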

\begin{proof}
    The existence of an affine model follows immediately from Theorems~\ref{chap3:etale_case_admissibility} and~\ref{chap2:main_thrm}. To pass from single sheaves to complexes of sheaves we use the spectral sequence 
    \[
    E_{1}^{p,q} \cong H^{q}_{\mbox{\scriptsize \'et}}(X; \mathcal{F}^{p})\Rightarrow H^{p+q}_{\mbox{\scriptsize \'et}}(X; \mathcal{F}^{\bullet}).
    \]
The proof of the last part repeats verbatim the proof of Theorem~\ref{chap3:main_thm_Spec_C}, part~\ref{item:base_change}.
%
\end{proof}

\section{Motivic nature of the Leray spectral sequence}\label{Motivic_nature_of_Leray}
\subsection{Complex algebraic case}\label{motivic_nature_complex}
Starting from Section~\ref{subsection:main_result_c} we will use the conventions stated at the beginning of Section~\ref{example_spec_c}, namely the base scheme $S=\Spec\CC$, by a sheaf on a separated scheme $X$ of finite type over $\CC$ we will understand a sheaf on $X(\CC)$ in the complex analytic topology etc. This time however, in line with~\cite{Ar03} we will write $V_\CC$ instead of $\mathcal{C}$ to denote the category of separated schemes of finite type over $\CC$ (see Definition~\ref{def_vs}).

Let $f:X\to Y$ be a continuous map of topological spaces. Suppose $\mathcal{F}^\bullet$ is a complex of sheaves on $X$. The \emph{Leray spectral sequence} of $f$ and $\mathcal{F}^\bullet$, denoted $(\mathcal{L}^{p,q}_r(f,\mathcal{F}^\bullet))$, is the Grothendieck spectral sequence for the composition of functors $R(pt_X)_*=R(pt_Y)_*\circ Rf_*$ where $pt_X:X\to pt, pt_Y:Y\to pt$ are maps to a point. We have
$$\mathcal{L}_2^{p,q}(f,\mathcal{F}^\bullet)= H^p(Y;R^q \mathcal{F}^\bullet)\Rightarrow H^{p+q}(X;\mathcal{F}^\bullet).$$

Equivalently, the Leray spectral sequence is obtained as follows: the canonical filtration on $Rf_*\mathcal{F}^\bullet$ induces a spectral sequence $(E^r_{p,q})$ that converges to $H^\bullet(Y,Rf_*\mathcal{F}^\bullet)=H^\bullet(X,\mathcal{F}^\bullet)$. We have then
$$
E_r^{p,q}=\mathcal{L}_{r+1}^{2p+q,-p}(f,\mathcal{F}^\bullet),r\geq 1,
$$
see~\cite[Exemple 1.4.8]{Deligne_theorie_de_hodge_2} or~\cite[Section 1]{Ar03}.
\subsubsection{Motivic spectral sequences}
This section contains the proof that
    if $f: X \to Y$ is an arbitrary proper morphism of separated schemes of finite type over $\CC$, then the Leray spectral sequence is motivic starting from the second page $E_{2}^{p,q} \cong H^{p}(Y; R^{q}f_{*}\mathcal{F})$, see Theorem~\ref{chap4:main_thm} below. The section follows the same general plan as Arapura's paper~\cite{Ar03}, with modifications which are due to the fact that we use different affine models.

We will shortly explain what the word ``motivic'' means in this statement, but first we mention a corollary of Theorem~\ref{chap4:main_thm}:
\begin{corollary}\label{chap4:main_corollary}
    For $f: X \to Y$ an arbitrary proper morphism of separated schemes of finite type over $\CC$ and an open subscheme $j:U\to X$, the corresponding Leray spectral sequence
    \[
    E_{2}^{p,q} \cong H^{p}(Y; R^{q}f_{*}\mathbb{Q}) \Longrightarrow H^{p+q}(X,X-U; \mathbb{Q})
    \]
    has a mixed Hodge structure (MHS) starting from the second page. This MHS is compatible with the one on the cohomology groups $H^{*}(X,X-U; \mathbb{Q})$, and is functorial in $f$.
\end{corollary}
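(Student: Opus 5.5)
The corollary should follow formally from Theorem~\ref{chap4:main_thm} once we spell out what ``motivic'' buys us. We apply it to the proper morphism $f$ and the complex $\mathcal{F}^\bullet = Rj_!\,\mathbb{Q}_U$ (extension by zero from the open $U$). Then $H^{p+q}(X;\mathcal{F}^\bullet)=H^{p+q}(X,X-U;\mathbb{Q})$. Proper base change for the proper map $f$ identifies the sheaves $R^qf_*$ on $Y$ in a way compatible with fibre restriction.

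The plan is to use the precise form of Theorem~\ref{chap4:main_thm}, in the spirit of \cite{Ar03}. From the second page on, the Leray spectral sequence is isomorphic to the spectral sequence of a filtered object built from relative cohomology groups of pairs of algebraic varieties, together with their connecting maps. Concretely, we pass to the affine model $m_Y:M_Y\to Y$ from Theorem~\ref{chap3:main_thm_Spec_C}. By part~\ref{item:base_change} (base change) and part~\ref{item:coho_iso} (cohomology isomorphism), the Leray spectral sequences of $f$ and of $X\times_Y M_Y\to M_Y$ agree from $E_2$ on. Beilinson's lemma then gives a filtration $Y_\bullet$ of $M_Y$ by closed subvarieties, and with it the preimage filtration of $X\times_YM_Y$ (intersected with the preimage of $U$ where needed). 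The $E_1$ terms of the resulting filtration spectral sequence are groups $H^\ast(X_p,X_{p-1}\cup (X-U)_p;\mathbb{Q})$, which carry Deligne MHS. The differentials are connecting morphisms of triples, so they are morphisms of MHS.

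Since the category of MHS is abelian and the differentials are morphisms of MHS, every page $E_r$ inherits a MHS, and so does the induced filtration on the abutment. The abutment filtration is by images of restriction maps, which are also MHS morphisms; this gives compatibility with Deligne's MHS on $H^\ast(X,X-U;\mathbb{Q})$. The Beilinson-filtration spectral sequence agrees with Leray from $E_2$ onward, with an index shift, so the MHS descends to the Leray pages $E_r$ for $r\ge2$.

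The main obstacle is independence of choices, which is needed for naturality and functoriality in $f$. The affine model, the closed embeddings, and the Beilinson filtration are all non-canonical. We handle this in two steps. First, given two choices, Theorem~\ref{chap2:main_thrm} (the commutative square of affine models) provides a common refinement, or a map between models. Second, Beilinson's lemma lets us choose filtrations compatibly, as in Arapura's argument, so that the induced map of spectral sequences is a morphism of MHS and is the identity on Leray terms, hence an isomorphism. The same device, applied to a morphism $f'\to f$ covered by a map of affine models $f_M$, yields functoriality in $f$.
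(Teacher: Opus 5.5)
Your proposal is correct and follows the paper. The paper's entire proof is that the corollary is Theorem~\ref{chap4:main_thm} applied to the classical enhanced Betti theory $H^\bullet(-,-;\mathbb{Q})$ with values in mixed Hodge structures (Example~\ref{example:mhs_as_enhanced_betti}) and to the object $(f, j_!\mathbb{Q}_U)$ of $\mathcal{C}^{prop}$. Everything after your first paragraph (affine model, Beilinson filtration, independence of choices, functoriality) re-sketches the proof of that theorem and is not needed for the corollary.
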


In this section we also prove the following result.

\begin{theorem}\label{chap4:leray_on_affine_come_from_filtration}
    Let $f: X \to Y$ be an arbitrary proper morphism of separated schemes of finite type over $\CC$, and let $\mathcal{F}^\bullet$ be a complex of sheaves on $X$ with constructible cohomology. 
    
    If $Y$ is affine, then the Leray spectral sequence for $f$ and $\mathcal{F}^\bullet$ on $X$ is induced by a filtration of $Y$ by closed subschemes. 
    
    If $Y$ is arbitrary, then $Y$ has an affine model $m_Y:M_Y\to Y$ such that the Leray spectral sequence of $f$ and $\mathcal{F}^\bullet$ is isomorphic to that of $M_Y\times_{Y} X\to M_Y$ and the pullback of $\mathcal{F}^\bullet$.
\end{theorem}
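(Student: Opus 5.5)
The proof has two parts, following Arapura's plan in~\cite{Ar03}, with the Jouanolou trick replaced by the affine models of Theorem~\ref{chap3:main_thm_Spec_C}.

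\textbf{Affine case.} Suppose $Y$ is affine. Since $f$ is proper and $\mathcal{F}^\bullet$ is constructible, $Rf_*\mathcal{F}^\bullet$ is a constructible complex on $Y$, with cohomology sheaves $R^qf_*\mathcal{F}^\bullet$. I will use Beilinson's lemma (Lemma~\ref{chap4:Beilinson_lemma}) repeatedly to build a filtration
\[
\emptyset=Y_{-1}\subset Y_0\subset Y_1\subset\cdots\subset Y_d=Y
\]
by closed subschemes with $\dim Y_p\le p$. The filtration is chosen so that, for each $p$ and $q$,
\[
H^i\bigl(Y_p,Y_{p-1};R^qf_*\mathcal{F}^\bullet\bigr)=0 \quad\text{for } i\neq p.
\]
Beilinson's lemma supplies exactly this on affine varieties: given $Y_p$, a constructible sheaf on it, and a suitable general hypersurface section $Y_{p-1}$, the relative cohomology is concentrated in degree $p$. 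Apply it to the finitely many constructible sheaves $R^qf_*\mathcal{F}^\bullet|_{Y_p}$, for the finitely many relevant $q$. Because $Y$ is affine, the same choice also makes $H^i(Y_p,Y_{p-1};Rf_*\mathcal{F}^\bullet)$ computable from the $R^q$ terms alone.

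Filtering $X$ by the preimages $X_p=f^{-1}(Y_p)$ gives a spectral sequence. Its $E_1$-term is
\[
E_1^{p,q}=H^{p+q}(X_p,X_{p-1};\mathcal{F}^\bullet)=H^{p+q}\bigl(Y_p,Y_{p-1};Rf_*\mathcal{F}^\bullet\bigr),
\]
where the equality uses proper base change. The vanishing condition collapses the hypercohomology spectral sequence for each pair, so $E_1^{p,q}\cong H^p(Y_p,Y_{p-1};R^qf_*\mathcal{F}^\bullet)$. This $E_1$ page is a "cellular cochain complex" computing $H^p(Y;R^qf_*\mathcal{F}^\bullet)$. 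To identify it with the Leray spectral sequence from $E_2$ on, I will compare both with the spectral sequence of the bifiltered complex: on $R\Gamma(Y,Rf_*\mathcal{F}^\bullet)$, put the filtration by the $Y_p$ together with the canonical filtration $\tau_{\le q}$ on $Rf_*\mathcal{F}^\bullet$, and invoke Deligne's comparison of filtrations (the filtration bête versus the décalée, \cite[1.3--1.4]{Deligne_theorie_de_hodge_2}). The vanishing is exactly the hypothesis Deligne's lemma needs, and it then yields an isomorphism from $E_2$ onward. This is Arapura's argument and it should go through verbatim.

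\textbf{General case.} Let $m_Y\colon M_Y\to Y$ be the affine model from Theorem~\ref{chap3:main_thm_Spec_C}. Form the Cartesian square with $f'\colon X'=X\times_YM_Y\to M_Y$ and $h\colon X'\to X$. Then $f'$ is proper and $h^*\mathcal{F}^\bullet$ is constructible. Part~\ref{item:base_change} of Theorem~\ref{chap3:main_thm_Spec_C} gives
\[
m_Y^*Rf_*\mathcal{F}^\bullet\cong Rf'_*h^*\mathcal{F}^\bullet,
\]
compatibly with the canonical truncations, since $m_Y^*$ is exact. Pulling back along $m_Y$ therefore gives a morphism of filtered complexes
\[
\bigl(R\Gamma(Y,Rf_*\mathcal{F}^\bullet),\tau\bigr)\to\bigl(R\Gamma(M_Y,Rf'_*h^*\mathcal{F}^\bullet),\tau\bigr),
\]
and hence a morphism from the Leray spectral sequence of $f$ to that of $f'$. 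On the $E_2$ page this morphism is the pullback $H^p(Y;R^qf_*\mathcal{F}^\bullet)\to H^p(M_Y;m_Y^*R^qf_*\mathcal{F}^\bullet)$. That pullback is an isomorphism by part~\ref{item:coho_iso} of Theorem~\ref{chap3:main_thm_Spec_C}, applied to the single sheaf $R^qf_*\mathcal{F}^\bullet$. A morphism of spectral sequences that is an isomorphism on $E_2$ is an isomorphism on all later pages, which finishes the proof.

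\textbf{Main obstacle.} I expect the hardest step to be the affine case: choosing the filtration so that the vanishing holds simultaneously for all the $R^qf_*\mathcal{F}^\bullet$, and verifying carefully that the spectral sequence of the filtration agrees with Leray from $E_2$ on, with the correct reindexing. The general case is comparatively formal, because the needed properties of $m_Y$ (cohomology isomorphism for arbitrary sheaves and base change) are already established in Theorem~\ref{chap3:main_thm_Spec_C}.
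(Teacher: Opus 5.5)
Your proposal is correct and follows the paper's route. The affine case is Arapura's argument: Beilinson's lemma gives a filtration cellular for all $R^qf_*\mathcal{F}^\bullet$, proper base change identifies the $E_1$ page, and Arapura's comparison with Leray from $E_2$ on is what the paper cites as Proposition~\ref{chap4:leray_iso_to_spec_seq_from_filtr}. The general case is exactly Lemma~\ref{chap4:reduce_to_aff} via Lemma~\ref{chap4:comp_of_spec_sequences}. The only minor difference is that you get $m_Y^*R^qf_*\mathcal{F}^\bullet\cong R^qf'_*h^*\mathcal{F}^\bullet$ from the base change property of affine models (part~\ref{item:base_change} of Theorem~\ref{chap3:main_thm_Spec_C}), whereas the paper uses proper base change for $f$.
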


\smallskip

In order to state Theorem~\ref{chap4:main_thm} we need a few definitions. 
\begin{definition}\label{def_vs}
Let $V_S$ be the category of separated schemes of finite type over a scheme $S$. If $S=\Spec k$ where $k$ is a field, we will abbreviate $V_S$ to $V_k$. Let $V_S^{2}$ be the category consisting of pairs $(X,Y)$ with $X\in V_S$ and $Y\subset X$ is a closed $S$-subscheme. A morphism $f: (X, Y)\to (X', Y')$ in this category is a morphism of schemes $f: X\to X'$ satisfying $f(Y) \subset Y'$. Analogously, one can define the category $V_S^{3}$ of triples $(X,Y,Z)$ such that $X,Y,Z\in V_S$ and $Z\subset Y, Y\subset X$ are closed subschemes.
\end{definition}
\begin{definition}
    A \emph{weak cohomology theory} is a collection of functors
    \[
    (\mathcal{H}^{i}: V^{2}_S \to \mathcal{A}),
    \]
    with $\mathcal{A}$ being a (fixed) Abelian category such as for any triple $(X,Y,Z)\in V^3_S$, the following sequence is exact:
    \begin{equation}\label{exact_seq_relative_coho_weak_theories}
    \cdots \to \mathcal{H}^{i}(X, Y)\to \mathcal{H}^{i}(X, Z)\to \mathcal{H}^{i}(Y,Z)\to\mathcal{H}^{i+1}(X,Y)\to\cdots.
    \end{equation}
\end{definition}
In this section we focus on the following two examples. For more examples see Section 1 of \cite{Ar03}.
\begin{example}\label{example:betti_obvious}
    $S=\Spec \CC, \mathcal{A} \cong Ab$, the category of Abelian groups, and $\mathcal{H} = H(-;\Z)$, the singular cohomology with integer coefficients.
\end{example}
\begin{example}\label{example:mhs_as_enhanced_betti}
    $S=\Spec \CC, \mathcal{A}$ is the category of mixed Hodge structures, and $\mathcal{H}=H^\bullet(-;\Q)$.
\end{example}
\begin{definition}
   A \emph{spectral sequence} $E_{r}^{\bullet, \bullet}$ in an Abelian category $\mathcal{A}$ is a collection of bigraded objects for each $r \geq r_{0}$ with differentials of bidegree $(r, -r+1)$, together with a filtered graded object $(H^{\bullet}, F^{\bullet})$, and isomorphisms
   \[
   E_{r+1}^{\bullet, \bullet} \cong H^{\bullet}(E_{r}^{\bullet, \bullet}),\ E^{\bullet, \bullet}_{\infty} \cong \operatorname{Gr}_{F}H^\bullet.
   \]
\end{definition}

The Leray spectral sequence can be viewed as a functor from the category of arrows in $V_\CC$ to the category of spectral sequences in $\mathcal{A}=Ab$ starting at page $r_0=2$.

\begin{definition}\label{chap4:def_of_lift_of_spec_seq}
    Consider a functor ${}_\mathcal{B}E$ from some category $\mathcal{C}$ to spectral sequences in an Abelian category $\mathcal{B}$, and let $\Phi: \mathcal{A} \to \mathcal{B}$ be an exact conservative functor between Abelian categories. Then a \emph{lift} of ${}_\mathcal{B}E$ to $\mathcal{A}$ is a functor ${}_\mathcal{A}E$ from $\mathcal{C}$ to spectral sequences in $\mathcal{A}$, and an isomorphism ${}_\mathcal{B}E \cong \Phi\circ {}_\mathcal{A} E$. 
\end{definition}
\begin{definition}
    An \emph{enhanced Betti theory} is a pair $((\mathcal{H}^i: V^{2}_S \to \mathcal{A}), \Phi)$ where $(\mathcal{H}^i)$ is a weak cohomology theory and $\Phi: \mathcal{A} \to Ab$ is an exact conservative functor. 
\end{definition}


\begin{definition}
    Let $((\mathcal{H}^i: V^{2}_S \to \mathcal{A}), \Phi)$ be an enhanced Betti theory, and let $\mathcal{C}$ be a category equipped with a functor $F:\mathcal{C}\to V^2_S$. Suppose ${}_{Ab}E$ is a functor from $\mathcal{C}$ to spectral sequences of Abelian groups such that the $H^\bullet$-component of ${}_{Ab}E$ is isomorphic to $\Phi\big(\mathcal{H}^\bullet(F(-))\big)$. Informally, we require that ${}_{Ab}E(c),c\in\mathcal{C}$ should converge to $\Phi\big(\mathcal{H}^\bullet(F(c))\big)$, functorially in $c$. The functor ${}_{Ab}E$ is \emph{motivic} if it has a lift ${}_\mathcal{A}E$ into $\mathcal{A}$ such that the $H^\bullet$-component of ${}_\mathcal{A}E$ is isomorphic to $\mathcal{H}^\bullet(F(-))$.
\end{definition}

\begin{example}\label{example:filtration_motivic_mhs}
Let us give a simple example of a motivic spectral sequence. Take $S=\Spec\CC$. Let $\mathcal{C}$ be the category of schemes of finite type over $\CC$ equipped with an exhausive filtration by closed subschemes. Let $F$ be the composition of the functor $\mathcal{C}\to V_\CC$ that forgets the filtration and the embedding $V_\CC\to V^2_\CC$ that takes $X\in V_\CC$ to $(X,\varnothing)$. Take $\mathcal{H}^\bullet$ to be the rational cohomology viewed as a mixed Hodge structure (see Example~\ref{example:mhs_as_enhanced_betti}). Then the spectral sequence induced by the filtration is motivic (\cite[Lemma 3.8]{Ar03}). 

The main idea is to construct an exact couple from the long exact sequences of the triples $(X,X_{a+1},X_a)$ where $X_a$ is the $a$-th term of the filtration. See Proposition~\ref{chap4:spec_seq_from_filtr_is_motivic} for a slightly more general version of the statement.
\end{example}

\subsubsection{Main result}\label{subsection:main_result_c}
Let $G$ be a fixed non-zero Abelian group. For a space $X$ we let $G_X$ denote the constant sheaf on $X$ with stalk $G$. If $Y\subset X$ is a closed subspace and $\mathcal{F}^\bullet$ is a complex of sheaves, we define the \emph{relative cohomology groups} 
$$H^\bullet(X,Y;\mathcal{F}^\bullet)=H^\bullet(X,j_!\mathcal{F}^\bullet)$$ where $j:X-Y\to X$ is the embedding. We set $H^\bullet(X;G)=H^\bullet(X;G_X)$ and $H^\bullet(X,Y;G)=H^\bullet(X,Y;G_X)$.


The category $V^2_\CC$ may be viewed as the category of all couples $(X,j_!G_U)$ where $j:U\to X$ is an open embedding. Let $(X,Y,Z)\in V_\CC^3$. Recall that we have an exact sequence of sheaves
$$0\to j_!^Z G_{X-Z}\to j_!^Y G_{X-Y}\to i^Y_*(j^{Z,Y}_! G_{Y-Z})\to 0$$
where $j^Z:X-Z\to X, j^Y:X-Y\to X, j^{Z,Y}:Y-Z\to Y$, and $i^Y:Y\to X$ are the embeddings. The resulting cohomology long exact sequence reads
\begin{equation}\label{exact_seq_relative_coho_sheaves}
\cdots\to H^i(X,Z;G)\to H^i(X,Y;G)\to H^i(Y,Z;G)\to H^{i+1}(X,Z;G)\to\cdots .
\end{equation}

\begin{definition}\label{def:classical_betti_theory}
An enhanced Betti theory $((\mathcal{H}^i: V^{2}_\CC \to \mathcal{A}), \Phi)$ is \emph{classical} if there is an Abelian group $G$ and an isomorphism $$\Phi(\mathcal{H}^\bullet(X,Y))\cong H^\bullet(X,Y;G)$$ of functors from $V_\CC^2$ to graded Abelian groups that moreover identifies the exact sequence~(\ref{exact_seq_relative_coho_weak_theories}) with the exact sequence~(\ref{exact_seq_relative_coho_sheaves}).
\end{definition}

The enhanced Betti theories from Examples~\ref{example:betti_obvious} and~\ref{example:mhs_as_enhanced_betti} are classical.

\smallskip

Now we need a natural domain of definition for the Leray spectral sequence. Let $\mathcal{C}$ be the category of all couples $(f,Z)$ where $f:X\to Y$ is a morphism in $V_\CC$, and $Z\subset X$ is a closed subscheme. Equivalently, $\mathcal{C}$ is the category of all couples $(f,j_! G_U)$ where $f$ is as above and $j:U\to X$ is an open embedding. We can then set ${}_{Ab}E$ to be the Leray spectral sequence functor from $\mathcal{C}$ to spectral sequences of Abelian groups:
$${}_{Ab}E^{p,q}_r(f,j_! G_U)=\mathcal{L}_r^{p,q}(f,j_! G_U),r\geq 2.$$
Finally, let the $F:\mathcal{C}\to V_\CC^2$ be the functor that takes $(f:X\to Y,Z)$ to $(X,Z)$.

\begin{theorem}\label{chap4:main_thm}
Let $\mathcal{H}^\bullet=((\mathcal{H}^i: V^{2}_\CC \to \mathcal{A}), \Phi)$ be a classical enhanced Betti theory. Set $\mathcal{C}^{prop}$ to be the full subcategory of $\mathcal{C}$ on all $(f,Z)$ such that $f$ is proper. Then the Leray spectral sequence functor is motivic on $\mathcal{C}^{prop}$ (with respect to $\mathcal{H}^\bullet$ and $F$).
\end{theorem}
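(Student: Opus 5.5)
The plan follows Arapura's strategy in three steps: reduce to an affine target, realize the Leray spectral sequence there as the spectral sequence of a filtration by closed subschemes, and then invoke the motivic nature of filtration spectral sequences (Example~\ref{example:filtration_motivic_mhs}, or its relative version Proposition~\ref{chap4:spec_seq_from_filtr_is_motivic}).

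\textbf{Affine target.} Let $f:X\to Y$ be proper with $Y$ affine, and put $\mathcal{F}^\bullet=j_!G_U$. The complex $Rf_*\mathcal{F}^\bullet$ is constructible by properness. Using Beilinson's lemma (Lemma~\ref{chap4:Beilinson_lemma}), I build a filtration $\varnothing=Y_{-1}\subset Y_0\subset\cdots\subset Y_d=Y$ by closed subschemes that is ``cellular'' for $Rf_*\mathcal{F}^\bullet$. Concretely, with $i_a:Y_a-Y_{a-1}\to Y$, each group $H^k(Y_a,Y_{a-1};R^qf_*\mathcal{F}^\bullet)$ vanishes for $k\neq a$. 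The construction is inductive: at each stage choose a closed subset of smaller dimension so that the relative cohomology of the complement of the perverse (or cohomology) sheaves is concentrated in a single degree.

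Filter $X$ by the preimages $X_a=f^{-1}(Y_a)$. This gives the exact couple built from the triples $(X,X_a\cup Z,X_{a-1}\cup Z)$. Proper base change identifies $H^\bullet(X_a,X_{a-1};\mathcal{F}^\bullet)$ with $H^\bullet(Y_a,Y_{a-1};Rf_*\mathcal{F}^\bullet)$. Comparing with the canonical-filtration description of the Leray spectral sequence, I then identify $E_1$ of the filtration spectral sequence, after Deligne's décalage, with $\mathcal{L}_2$; this is the argument of \cite[Sections 2--3]{Ar03}. Since the theory is classical, the filtration spectral sequence lifts to $\mathcal{A}$, which makes Leray motivic for affine $Y$.

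\textbf{General $Y$.} Take the affine model $m_Y:M_Y\to Y$ from Theorem~\ref{chap3:main_thm_Spec_C} and set $f':X'=X\times_YM_Y\to M_Y$. The morphism $f'$ is proper, and I compare the two Leray spectral sequences via the map induced by pullback. Part~\ref{item:base_change} gives $m_Y^*Rf_*\simeq Rf'_*h^*$. Part~\ref{item:coho_iso}, applied to $R^qf_*\mathcal{F}^\bullet$ and to truncations of $Rf_*\mathcal{F}^\bullet$, shows the map is an isomorphism on $E_2$ and on abutments, hence on every page. This proves the second half of Theorem~\ref{chap4:leray_on_affine_come_from_filtration}.

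I then define ${}_\mathcal{A}E(f,Z)$ to be the lift for $f'$. The $H^\bullet$-component is $\mathcal{H}^\bullet(X',Z')$, and I must identify it with $\mathcal{H}^\bullet(X,Z)$ inside $\mathcal{A}$, not merely after applying $\Phi$. For this I use that $X'\to X$ belongs to $\mathcal{W}$ (Lemma~\ref{chap2:stab_of_pullback}), so the map is an isomorphism after $\Phi$. Because $\Phi$ is exact and conservative, the map $\mathcal{H}^\bullet(X,Z)\to\mathcal{H}^\bullet(X',Z')$ is then an isomorphism in $\mathcal{A}$.

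\textbf{Main obstacle: functoriality.} Affine models are not functorial and filtrations are choices. Given a morphism of pairs over $g:Y_1\to Y_2$, I use Theorem~\ref{chap2:main_thrm} (via Lemma~\ref{chap2:semi-functoriality}, where $M_{Y_2}$ is independent of the source) to get $g_M:M_{Y_1}\to M_{Y_2}$ covering $g$. I also need to show that filtrations can be chosen compatibly, e.g.\ by refining the filtration of $M_{Y_1}$ so that $g_M(Y_{1,a})\subset Y_{2,a}$, as in Arapura's argument. Finally, I must check that the resulting lift is independent of all choices up to canonical isomorphism. The route is to show that any two choices are dominated by a common refinement, with comparison maps that are isomorphisms after $\Phi$ and therefore isomorphisms by conservativity. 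I expect this independence-of-choices step, which is needed to get an actual functor on $\mathcal{C}^{prop}$, to require the most care.
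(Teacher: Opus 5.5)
Your proposal is correct and follows essentially the same route as the paper. The shared steps are: cellular filtrations from Beilinson's lemma on affine targets; reduction to the pullback over an affine model by comparing Leray spectral sequences (Lemmas~\ref{chap4:comp_of_spec_sequences} and~\ref{chap4:reduce_to_aff}); independence of the filtration via a common cellular refinement; and functoriality via Theorem~\ref{chap2:main_thrm}. You use the base-change property of $m_Y$ where the paper uses proper base change, and either works.

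The one comparison you leave unspecified is between two different affine models of $Y$. The paper handles it by passing through $M_Y^{(1)}\times_Y M_Y^{(2)}$ and showing that its projections induce cohomology isomorphisms (Lemma~\ref{aux_lemma}). Conversely, your conservativity argument identifying $\mathcal{H}^\bullet(X',Z')\cong\mathcal{H}^\bullet(X,Z)$ in $\mathcal{A}$ is a step the paper leaves implicit. That argument also needs $Z'\to Z$ to be in $\mathcal{W}$, not just $X'\to X$; alternatively you can use the abutment comparison you already established.
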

The idea of the proof is as follows. Suppose we are given a morphism $f:X\to Y$. We can reduce the theorem to the case when $Y$ is affine by taking an affine model $m_{Y}: M_{Y} \to Y$ for $Y$ and pulling $f$ back to $M_Y$. If $Y$ is affine, we have a filtration $Y_{\bullet}$ of $Y$ by closed subsets and the pullback filtration $X_{\bullet} = f^{-1}(Y_{\bullet})$. The latter gives one an exact couple such that the resulting spectral sequence is motivic and isomorphic to the Leray spectral sequence of $f$ from the second page on.

\subsubsection{Filtered complexes of sheaves}

Recall that given a topological space $Z$, assumed locally compact and Hausdorff, and a complex of sheaves $\mathcal{F}^{\bullet}$ on $Z$ equipped with a bounded decreasing filtration $S$ by subcomplexes of sheaves, one has a spectral sequence
\[ 
E_{1}^{p,q} = H^{p+q}(Z;\operatorname{Gr}_{S}^p(\mathcal{F}^{\bullet})) \Longrightarrow  H^{p+q}(Z;\mathcal{F}^{\bullet}).
\]

One can construct a filtration $S$ on any complex of sheaves $\mathcal{F}^\bullet$ starting from an increasing filtration $Z_{\bullet}$ of $Z$ by closed subsets, which we assume exhaustive and finite:
\[
\varnothing = Z_{-1}\subset Z_{0}\subset\dots\subset Z_{M} = Z.
\]
One proceeds as follows. Let $k_{a}: Z- Z_{a} \hookrightarrow Z$ be the inclusion. One can define a subcomplex $S^{a}(Z_{\bullet}, \mathcal{F}^{\bullet})$ of $\mathcal{F}^{\bullet}$ degree-wise by the formula
\[
(k_{a})_{!}(k_{a})^{*}\mathcal{F}^{\bullet}.
\]
In other words, the subcomplex $S^{a}(Z_{\bullet}, \mathcal{F}^{\bullet})\subset\mathcal{F}^\bullet$ is obtained by restricting $\mathcal{F}^{\bullet}$ to the open subset $Z- Z_{a}$ and then extending by zero. This gives us a decreasing filtration on $\mathcal{F}^\bullet$.

Let $i_{a}: Z_{a} \hookrightarrow Z$ be the closed inclusion. One can check that the quotient complex
\[
    S^{a}/S^{a+1} \cong (i_{a})_{*}\mathcal{F}_{a}^{\bullet},
\]
where 
\[
 \mathcal{F}_{a} = (j_{a})_{!}(j_{a})^{*}(i_a)^*\mathcal{F}^{\bullet},
\]
and $j_{a}$ is the inclusion $j_{a}: Z_{a}- Z_{a-1}\hookrightarrow Z_{a}$. Therefore we obtain the following lemma.
\begin{lemma}\label{chap4:filtrartion_to_spec_seq}
    Let $Z_{\bullet}$ be an increasing finite exhaustive filtration of a locally compact Hausdorff topological space $Z$ by closed subsets. Then for each complex of sheaves $\mathcal{F}^{\bullet}$ on $Z$, there is the following spectral sequence:
    \[
    E_{1}^{p,q} \cong H^{p+q}(Z; (i_{p})_{*}\mathcal{F}^{\bullet}_{p}) \Longrightarrow H^{p+q}(Z; \mathcal{F}^{\bullet}).
    \]
\end{lemma}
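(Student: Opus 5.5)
The plan is to obtain the spectral sequence as the one attached to the decreasing filtration $S^{a}=S^a(Z_\bullet,\mathcal{F}^\bullet)$ of $\mathcal{F}^\bullet$ defined above. The filtration is finite, since $S^{-1}=\mathcal{F}^\bullet$ (because $Z-Z_{-1}=Z$) and $S^{M}=0$ (because $Z-Z_M=\varnothing$). We therefore have a bounded filtered complex of sheaves, and the general spectral sequence of a filtered complex, recalled at the start of this subsection, gives $E_1^{p,q}=H^{p+q}(Z;\operatorname{Gr}^p_S\mathcal{F}^\bullet)\Rightarrow H^{p+q}(Z;\mathcal{F}^\bullet)$. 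To make this rigorous one replaces $(\mathcal{F}^\bullet,S)$ by a filtered injective (or Godement) resolution. Because the $S^a$ are defined by the exact functors $(k_a)_!(k_a)^*$, the filtration by subcomplexes has exact graded pieces. Applying $\Gamma$ degreewise to the filtered resolution then gives a filtered complex of groups whose spectral sequence has the stated $E_1$ term and abutment.

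The remaining input is the identification $\operatorname{Gr}^p_S=S^p/S^{p+1}\cong (i_p)_*\mathcal{F}^\bullet_p$, which is asserted before the lemma. I would check it degreewise on a single sheaf $\mathcal{G}$. There is the inclusion $(k_{p+1})_!(k_{p+1})^*\mathcal{G}\subset (k_p)_!(k_p)^*\mathcal{G}$, since $Z-Z_{p+1}\subset Z-Z_p$ is open. The quotient is the extension by zero of the restriction of $\mathcal{G}$ to $Z-Z_p$, taken along the locally closed set $(Z-Z_p)\cap Z_{p+1}$.

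There is an indexing point to settle here. With the definitions as written, this locally closed set is $Z_{p+1}-Z_p$. The statement instead uses $Z_p-Z_{p-1}$, which differs by a shift of index. I would fix the convention, shifting $a$ by one so that $\operatorname{Gr}^p$ is supported on $Z_p-Z_{p-1}$, and note that this only reindexes the spectral sequence. Either way the quotient is the standard exact sequence $0\to U_!\to V_!\to (V\cap C)_!\to 0$ for $U=V-C$ open in $V$, where $C$ is the relevant closed stratum. That sequence identifies it with $(i)_*(j)_!(j)^*(i)^*\mathcal{G}$, which is $(i_p)_*\mathcal{F}_p$ after the shift. One checks this on stalks: at points of $Z_p-Z_{p-1}$ both sides equal $\mathcal{G}_z$, and at all other points both sides are $0$.

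The main obstacle is bookkeeping rather than mathematics: making the indexing of $S^a$ match $\mathcal{F}_a$, and justifying that the filtered resolution computes hypercohomology of the graded pieces. For the latter I would use Godement resolutions, which are functorial and exact, so they preserve the filtration and the identification of its graded pieces. Local compactness and the Hausdorff property are not really needed beyond the standard sheaf theory already used.
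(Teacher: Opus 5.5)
Your proposal is correct and is the paper's own argument: take the finite decreasing filtration $S^a(Z_\bullet,\mathcal{F}^\bullet)$, identify $\operatorname{Gr}^a_S$ with $(i_a)_*\mathcal{F}^\bullet_a$, and apply the spectral sequence of a filtered complex. The paper leaves the resolution step implicit, and your Godement-resolution justification fills it in correctly. The off-by-one you flagged is real in the text as written, and your shifted convention $S^a=(k_{a-1})_!k_{a-1}^*\mathcal{F}^\bullet$ is exactly the one the paper itself uses later in the proof of Lemma~\ref{chap4:functoriality_of_basic_spec_seq}.
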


$\square$

We will denote this spectral sequence $E^{p,q}_r(Z_\bullet,\mathcal{F}^\bullet)$.

The following lemma enables one to construct a convenient replacement for $(\mathcal{F}, S^{\bullet}(Z_{\bullet}, \mathcal{F}))$. We keep the notation and assumptions of Lemma~\ref{chap4:filtrartion_to_spec_seq}.
\begin{lemma}\label{chap4:inj_resolution_for_sheaf_in_filtered_derived_cat}
   Let $\mathcal{F}^\bullet\to \mathcal{I}^{\bullet}$ be a quasi-isomorphism of complexes of sheaves on $Z$, and let $Z_{\bullet}$ be a filtration of $Z$ with closed subsets. Assume that both $\mathcal{F}^\bullet$ and $\mathcal{I}^\bullet$ are bounded below, or that the cohomological dimension of $Z$ is finite. Then the natural morphism
  \[
  (\mathcal{F}^\bullet,   S^{\bullet}(Z_{\bullet},\mathcal{F})) \to ( \mathcal{I}^{\bullet}, S^{\bullet}( Z_{\bullet}, \mathcal{I}^{\bullet}))
  \]
  induces an isomorphism of the cohomology groups of the associated graded complexes of sheaves.
\end{lemma}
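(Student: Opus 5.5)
The associated graded pieces of the two filtrations are computed by the formula preceding Lemma~\ref{chap4:filtrartion_to_spec_seq}:
\[
S^{a}(Z_\bullet,\mathcal{F}^\bullet)/S^{a+1}(Z_\bullet,\mathcal{F}^\bullet)\cong (i_a)_*(j_a)_!(j_a)^*(i_a)^*\mathcal{F}^\bullet ,
\]
and likewise for $\mathcal{I}^\bullet$. The plan is to show that the map $\mathcal{F}^\bullet\to\mathcal{I}^\bullet$ induces, for each $a$, a quasi-isomorphism
\[
\operatorname{Gr}^a_S\mathcal{F}^\bullet\to\operatorname{Gr}^a_S\mathcal{I}^\bullet
\]
of complexes of sheaves. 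Then the hypercohomology of the graded pieces is compared via the hypercohomology spectral sequence, which is where the boundedness or finite cohomological dimension assumption enters.

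\textbf{Step 1: exactness.} All functors in the formula are exact on sheaves of abelian groups: $(i_a)^*$ and $(j_a)^*$ (inverse images), $(j_a)_!$ (extension by zero along an open embedding), and $(i_a)_*$ (pushforward along a closed embedding). Applied degree-wise to a complex, an exact functor commutes with taking cohomology sheaves. Hence it sends quasi-isomorphisms to quasi-isomorphisms. Naturality of the isomorphism $S^a/S^{a+1}\cong(i_a)_*\mathcal{F}_a$ in $\mathcal{F}^\bullet$ is clear from the definition of $S^a=(k_a)_!(k_a)^*\mathcal{F}^\bullet$, which is itself an exact functor. Therefore the induced map on graded pieces is a quasi-isomorphism. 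Equivalently, one can check this on stalks: at $z\in Z_a-Z_{a-1}$ the stalk of $\operatorname{Gr}^a$ is $\mathcal{F}^\bullet_z$, and elsewhere it is $0$.

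\textbf{Step 2: hypercohomology.} Next, pass from a quasi-isomorphism of complexes of sheaves to an isomorphism
\[
H^\bullet(Z;\operatorname{Gr}^a_S\mathcal{F}^\bullet)\cong H^\bullet(Z;\operatorname{Gr}^a_S\mathcal{I}^\bullet),
\]
and similarly for the filtered pieces $S^a$ themselves if needed, using the short exact sequences $0\to S^{a+1}\to S^a\to\operatorname{Gr}^a\to 0$ and the five lemma. For bounded below complexes this follows from the hypercohomology spectral sequence ${}'E$ of~\cite[II.4.5]{Go58}, which converges. Without boundedness, finite cohomological dimension of $Z$ guarantees the same: one truncates, and the comparison stabilises in each fixed degree. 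These isomorphisms identify the $E_1$ pages of the two spectral sequences from Lemma~\ref{chap4:filtrartion_to_spec_seq} compatibly with the differentials. The differentials come from the connecting maps of the exact couples, which are natural.

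The main obstacle is the unbounded case. There, one must argue that quasi-isomorphisms induce hypercohomology isomorphisms using only finite cohomological dimension, since Cartan--Eilenberg resolutions need not give convergent spectral sequences. I would handle this by noting that with dimension $d$, $H^n$ depends only on the truncation $\tau_{\geq n-d-1}$. This reduces the problem to the bounded below case.
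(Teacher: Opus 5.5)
Your proposal is correct and follows the paper's proof: exactness of $\operatorname{Gr}^a_S=(i_a\circ j_a)_!(i_a\circ j_a)^*$ makes $\operatorname{Gr}^a_S\mathcal{F}^\bullet\to\operatorname{Gr}^a_S\mathcal{I}^\bullet$ a quasi-isomorphism, and a hypercohomology spectral sequence then upgrades this to an isomorphism on cohomology. Make sure the spectral sequence you invoke is the one from the canonical filtration, $E_2^{p,q}=H^p(Z;\mathcal{H}^q(-))$ (the paper cites \cite[Theorem II.4.6.1]{Go58}), and not the one with $E_1^{p,q}=H^q(Z;\mathcal{F}^p)$, which is not invariant under quasi-isomorphisms; when $Z$ has finite cohomological dimension $d$ only the columns $0\le p\le d$ are nonzero, so this same spectral sequence also covers the unbounded case, and your truncation step is an optional alternative.
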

This is a variation of \cite[Lemma 3.3]{Ar03}. Note that in the proof of this lemma it is claimed that extension by zero along an open embedding preserves injective sheaves. This is not always the case: Let $Z$ be compact Hausdorff and take $j:U\to Z$ to be an inclusion with $U$ open and dense in $Z$. Let $\mathcal{F}$ be an injective sheaf on $U$ that has a nowhere vanishing section. Then this section does not extend to a section of $j_! \mathcal{F}$, so $j_!\mathcal{F}$ is not flabby, hence it is not injective.
\begin{proof}
    Since the functor $\operatorname{Gr}^{a}_{S} = (i_{a}\circ j_{a})_{!}\circ (i_{a}\circ j_{a})^{*}$ is exact, we see that the induced morphism $\operatorname{Gr}^{a}_{S}\mathcal{F}^\bullet \to \operatorname{Gr}^{a}_{S}\mathcal{I}^{\bullet}$ is a quasi-isomorphism as well. Applying the spectral sequence that calculates the hypercohomology using the canonical filtration (see e.g.~\cite[Theorem II.4.6.1]{Go58}), we obtain that the corresponding map $\operatorname{Gr}^{a}_{S}\mathcal{F}^\bullet \to \operatorname{Gr}^{a}_{S}\mathcal{I}^{\bullet}$ is an $H$-acyclic morphism.
\end{proof}

In the sequel we will prove that, given a morphism $f:X\to Y$ in $V_\CC$ with $Y$ affine, there is an exhaustive finite filtration of $Y$ such that the Leray spectral sequence of $f$ is isomorphic to the spectral sequence from Lemma \ref{chap4:filtrartion_to_spec_seq}. This will establish the motivic nature of the Leray spectral sequence due to the following proposition, c.f.\ Example~\ref{example:filtration_motivic_mhs}:
\begin{proposition}\label{chap4:spec_seq_from_filtr_is_motivic}
Let $((\mathcal{H}^i: V^{2}_\CC \to \mathcal{A}), \Phi)$ be a classical enhanced Betti theory with underlying Abelian group $G$. Suppose $X_{\bullet}$ is an exhaustive filtration of a separated scheme $X$ of finite type over $\CC$ by closed subschemes, and let $Z\subset X$ be another closed subscheme with $j:X-Z\to X$ being the embedding. Then the spectral sequence $E_{1}^{p,q}(X_{\bullet}, j_!G_{X-Z})\Rightarrow H^{p+q}(X,Z;G)$ is motivic.
\end{proposition}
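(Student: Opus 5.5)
We construct the lift of $E_r^{p,q}(X_\bullet, j_!G_{X-Z})$ to $\mathcal{A}$ directly as the spectral sequence of an exact couple built from the weak cohomology theory $\mathcal{H}^\bullet$. We then use the classical structure, i.e.\ the isomorphism $\Phi\circ\mathcal{H}^\bullet\cong H^\bullet(-,-;G)$ compatible with the long exact sequences, to identify its image under $\Phi$ with the sheaf-theoretic spectral sequence of Lemma~\ref{chap4:filtrartion_to_spec_seq}. This follows the idea indicated in Example~\ref{example:filtration_motivic_mhs}, i.e.\ \cite[Lemma 3.8]{Ar03}, adapted to the relative setting.

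\textbf{Step 1: set up the exact couple.} Put $Z_a=X_a\cup Z$, closed subschemes of $X$ with $Z_{-1}=Z$ and $Z_M=X$. Each triple $(X,Z_a,Z_{a-1})$ lies in $V^3_\CC$, so by~(\ref{exact_seq_relative_coho_weak_theories}) we get long exact sequences
\[
\cdots\to \mathcal{H}^{i}(X,Z_a)\to\mathcal{H}^i(X,Z_{a-1})\to \mathcal{H}^i(Z_a,Z_{a-1})\to \mathcal{H}^{i+1}(X,Z_a)\to\cdots .
\]
Define $D_1=\bigoplus\mathcal{H}^\bullet(X,Z_a)$ and $E_1=\bigoplus\mathcal{H}^\bullet(Z_a,Z_{a-1})$, indexed as in Lemma~\ref{chap4:filtrartion_to_spec_seq}. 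This gives an exact couple in $\mathcal{A}$. The filtration is finite and exhaustive, with $\mathcal{H}(X,Z_M)=\mathcal{H}(X,X)=0$. (The vanishing holds after applying $\Phi$, since $H^\bullet(X,X;G)=0$, and $\Phi$ is conservative.) Hence the couple yields a spectral sequence in $\mathcal{A}$ converging to $\mathcal{H}^\bullet(X,Z)=\mathcal{H}^\bullet(F(c))$, with its filtration given by the images of the $\mathcal{H}^\bullet(X,Z_a)$. Functoriality holds for morphisms of filtered pairs, since everything is built from functors on $V^2_\CC$ and $V^3_\CC$.

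\textbf{Step 2: compare after applying $\Phi$.} $\Phi$ is exact, so it commutes with forming the derived couple. By classicality, $\Phi$ of our couple is the exact couple built from the sequences~(\ref{exact_seq_relative_coho_sheaves}) for the triples $(X,Z_a,Z_{a-1})$. It remains to check that this topological couple produces $E_r^{p,q}(X_\bullet,j_!G_{X-Z})$.

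Take the filtration $S^a$ on $j_!G_{X-Z}$. We have
\[
S^a=(k_a)_!(k_a)^*j_!G_{X-Z}=j^{Z_a}_!G_{X-Z_a},
\]
so $H^\bullet(X;S^a)=H^\bullet(X,Z_a;G)$. The short exact sequences $0\to S^{a+1}\to S^a\to \operatorname{Gr}^a\to 0$ are exactly the sheaf sequences underlying~(\ref{exact_seq_relative_coho_sheaves}). The graded piece $\operatorname{Gr}^a$ corresponds to $H^\bullet(Z_a,Z_{a-1};G)$: it is the extension by zero of $G$ from $X_a-(X_{a-1}\cup Z)$, and this equals $i_*$ of $j_!G$ on $Z_a-Z_{a-1}$. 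The spectral sequence of a filtered complex (after passing to an injective resolution via Lemma~\ref{chap4:inj_resolution_for_sheaf_in_filtered_derived_cat}) is the spectral sequence of the exact couple formed by $H^\bullet(S^a)$ and $H^\bullet(\operatorname{Gr}^a)$ with the connecting maps. This identifies the two, functorially.

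\textbf{Main obstacle.} The delicate point is Step 2. One must check that the identification coming from Definition~\ref{def:classical_betti_theory} matches not only the groups but also the connecting homomorphisms, signs, and the filtration on the abutment. The filtration on the abutment is determined by the images of $H^\bullet(X,Z_a)\to H^\bullet(X,Z)$, and compatibility of the whole couple with $\Phi$ should give that. A further care point is the indexing convention relating the filtered-complex spectral sequence to the exact couple: I would fix it once, using $S^a$ decreasing as in the text, and then reuse it throughout.
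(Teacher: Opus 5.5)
Your proposal is correct and is essentially the paper's argument. The paper simply invokes \cite[Lemma 3.8]{Ar03}, where the lift is the spectral sequence of the exact couple formed by the long exact sequences of triples (cf.\ Example~\ref{example:filtration_motivic_mhs}), and your replacement of $X_a$ by $Z_a=X_a\cup Z$, with $S^a=j^{Z_a}_!G_{X-Z_a}$, is exactly the relative adaptation needed. The off-by-one in your indexing of $\operatorname{Gr}^a$ is cosmetic, and the compatibility of connecting maps you flag is automatic, because (\ref{exact_seq_relative_coho_sheaves}) is by definition the long exact sequence of the same short exact sequences of sheaves $0\to S^{a+1}\to S^a\to\operatorname{Gr}^a\to 0$.
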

\begin{proof}
    The proof is essentially the same as the proof of~\cite[Lemma 3.8]{Ar03}. 
\end{proof}

\smallskip

The key point of the proof that the Leray spectral sequence is motivic is the following definition.
\begin{definition}
   An increasing exhaustive filtration $Y_{\bullet}$ of $Y\in V_\CC$ by closed subschemes is called \emph{cellular} with respect to a sheaf $\mathcal{F}$ of Abelian groups on $Y$ if 
    \[
        H^{i}(Y; \mathcal{F}_{a}) = 0\ \text{unless}\ i=a.
    \]
\end{definition}

\begin{definition}\label{def:weakly_constr_sheaf}
A sheaf $\mathcal{F}$ of Abelian groups on $Y\in V_\CC$ is \emph{weakly constructible} if there is an exhaustive filtration $Y_\bullet$ of $Y$ by closed subschemes such that $\mathcal{F}$ is locally constant on every $Y_i-Y_{i-1}$.
\end{definition}
The following lemma due to Beilinson shows that cellular filtrations exist for affine schemes and weakly constructible sheaves:
\begin{lemma}[Beilinson]\label{chap4:Beilinson_lemma}
    If $\mathcal{F}$ is a weakly constructible sheaf on an affine $Y\in V_\CC$ of dimension $n$, then there exists a non-empty open subscheme $j: U\hookrightarrow X$ such that
    \[
    H^{i}(Y; j_{!}j^{*}\mathcal{F}) = 0\ \text{unless}\ i = n.
    \]
\end{lemma}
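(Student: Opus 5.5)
The plan is to follow Beilinson's original argument, which reduces the statement to Artin vanishing together with a generic base change / Noether normalization argument. (I read ``$j:U\hookrightarrow X$'' in the statement as $j:U\hookrightarrow Y$.)

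\textbf{Step 1: Noether normalization.} Since $Y$ is affine of dimension $n$, choose a finite morphism $\pi:Y\to\mathbb{A}^n_\CC$. Finite maps have exact direct image, so $H^i(Y;j_!j^*\mathcal{F})=H^i(\mathbb{A}^n;\pi_*j_!j^*\mathcal{F})$. Moreover $\pi_*\mathcal{F}$ is again weakly constructible, since images of strata under a finite map stratify the target and $\pi_*$ of a locally constant sheaf on the stratified pieces is locally constant over a suitable refinement. If we choose $U=\pi^{-1}(V)\cap Y^\circ$ with $V\subset\mathbb{A}^n$ open and $Y^\circ$ the open stratum where $\mathcal{F}$ is locally constant, then $\pi_*j_!j^*\mathcal{F}=j'_!j'^*\pi_*(\ldots)$ over $V$. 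So it suffices to treat $Y=\mathbb{A}^n$ with $\mathcal{F}$ weakly constructible. Alternatively, one can avoid this reduction and work directly on $Y$ with a generic projection.

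\textbf{Step 2: induction on $n$ via a projection.} Choose a linear projection $p:\mathbb{A}^n\to\mathbb{A}^{n-1}$, after a generic linear change of coordinates, so that the closed set $D$ where $\mathcal{F}$ fails to be locally constant is finite over $\mathbb{A}^{n-1}$. Take $U=\mathbb{A}^n- D'$, where $D'\supset D$ is a hypersurface finite over $\mathbb{A}^{n-1}$. Then $\mathcal{G}=j_!j^*\mathcal{F}$ restricted to each fiber $p^{-1}(t)\cong\mathbb{A}^1$ is a locally constant sheaf on $\mathbb{A}^1$ minus finitely many points, extended by zero. Such a sheaf has $H^0=0$, because a section vanishing near the removed points and on a punctured line must vanish, and $H^2=0$, by Artin vanishing on an affine curve with compact support considerations. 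Hence fiberwise cohomology is concentrated in degree $1$.

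Shrinking further, if needed by removing the preimage of a proper closed subset of $\mathbb{A}^{n-1}$ (so that the number of points of $D'$ in the fibers is constant, i.e.\ $D'\to\mathbb{A}^{n-1}$ is étale), we get that $R^1p_*\mathcal{G}$ is weakly constructible and $R^qp_*\mathcal{G}=0$ for $q\neq1$. This last point uses proper base change on the compactification $\mathbb{P}^1\times\mathbb{A}^{n-1}$ and topological local triviality over the strata. By induction, choose an open $V\subset\mathbb{A}^{n-1}$ for $R^1p_*\mathcal{G}$ with cohomology concentrated in degree $n-1$. Replace $U$ by $U\cap p^{-1}(V)$ and apply the Leray spectral sequence, which degenerates, to get cohomology only in degree $n$. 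Note that extension by zero commutes with $p_*$ appropriately because $p^{-1}(V)\to V$ is a pullback.

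\textbf{Main obstacle.} The hardest part is making $Rp_*$ of the extension by zero behave well, i.e.\ base change for a non-proper $p$. I would handle it by compactifying $p$ to $\bar p:\mathbb{P}^1\times\mathbb{A}^{n-1}\to\mathbb{A}^{n-1}$, adding the section at infinity to $D'$ so that $\mathcal{G}$ extends by zero to it. Then $Rp_*\mathcal{G}=R\bar p_*\bar{\mathcal{G}}=R\bar p_!\bar{\mathcal{G}}$ satisfies proper base change, and generic topological triviality (Thom–Mather / Verdier) over a dense open of $\mathbb{A}^{n-1}$ gives local constancy and the fiber computation. The base case $n=0$ is trivial with $U=Y$.
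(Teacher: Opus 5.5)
Your route differs from the paper's, which simply cites Nori's Basic Lemma. Step~1 and your fibrewise computation ($H^0=H^2=0$ for a locally constant sheaf on $\mathbb{A}^1$ minus a nonempty finite set, extended by zero) are fine. The gap is the step you flag as the main obstacle, and the fix you propose is wrong.

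If $\kappa:\mathbb{A}^n\hookrightarrow\mathbb{P}^1\times\mathbb{A}^{n-1}$ and $\bar{\mathcal{G}}=\kappa_!\mathcal{G}$, then $R\bar p_*\bar{\mathcal{G}}=R\bar p_!\kappa_!\mathcal{G}=Rp_!\mathcal{G}$, not $Rp_*\mathcal{G}$. These differ already for $n=1$. Take $\mathcal{G}$ the extension by zero of $\mathbb{Z}$ from $\mathbb{A}^1-\{0\}$. Then $H^\bullet(\mathbb{A}^1;\mathcal{G})=H^\bullet(\mathbb{A}^1,\{0\};\mathbb{Z})=0$, whereas $H^\bullet_c(\mathbb{A}^1-\{0\};\mathbb{Z})$ is $\mathbb{Z}$ in degrees $1$ and $2$. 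With your identification the stalks become $H^\bullet_c(\mathbb{A}^1-D'_t;L)$, which has $H^2_c\neq 0$ (the coinvariants of $L$). So the concentration in degree $1$ that you correctly computed for ordinary cohomology is lost.

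Likewise, ``extension by zero commutes with $p_*$ because $p^{-1}(V)\to V$ is a pullback'' is not a formal fact for non-proper $p$. Take $p(x,y)=x$ on $\mathbb{A}^2$, $\mathcal{G}$ the extension by zero of $\mathbb{Z}$ from $\mathbb{A}^2-\{xy=1\}$, and $V=\mathbb{A}^1-\{0\}$. The stalk at $0$ of $Rp_*$ of the extension by zero from $p^{-1}(V)$ is $H^\bullet(W\times\mathbb{C},Z_W)$ with $Z_W\simeq S^1\sqcup\mathrm{pt}$, which is nonzero. Without a correct base change statement you get none of the following: $(R^qp_*\mathcal{G})_t\cong H^q(p^{-1}(t);\mathcal{G})$; vanishing over the removed locus $B$; weak constructibility of $R^1p_*\mathcal{G}$ on all of $\mathbb{A}^{n-1}$ (generic triviality over a dense open does not give this); or the final Leray step.

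The ingredient that rescues the argument is one you built in but never used. Since $D'$ is finite over $\mathbb{A}^{n-1}$, its closure misses $\{\infty\}\times\mathbb{A}^{n-1}$. Write $Rp_*=R\bar p_*\circ R\kappa_*$, using $R\kappa_*$, not $\kappa_!$.

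Over a small ball $W\subset\mathbb{A}^{n-1}$, a punctured neighbourhood of infinity is $\Delta^*\times W$, disjoint from $D'$. There $\mathcal{G}$ is a local system pulled back from $\Delta^*$. By this product structure, $R\kappa_*\mathcal{G}$ commutes with restriction to $\mathbb{P}^1\times T$ for every locally closed $T\subset\mathbb{A}^{n-1}$. Proper base change for $\bar p$ then gives $(Rp_*\mathcal{G})|_T\cong R(p_T)_*(\mathcal{G}|_{p^{-1}(T)})$.

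This yields three things:
\begin{enumerate}
\item Stalks are fibre cohomology.
\item Combined with local triviality of $(\mathbb{A}^1,D'_t)$ over strata where the number of points of $D'_t$ is constant, $R^1p_*\mathcal{G}$ is weakly constructible.
\item Taking $T=\mathbb{A}^{n-1}-V$ and using the triangle $k_{V!}k_V^*\mathcal{G}\to\mathcal{G}\to\iota_*\iota^*\mathcal{G}$, $Rp_*$ commutes with extension by zero from $p^{-1}(V)$.
\end{enumerate}

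You also need not remove $p^{-1}(B)$. Removing it destroys the local-system structure near infinity over $B$, and it is unnecessary. $D'$ is a hypersurface finite over $\mathbb{A}^{n-1}$, hence surjective onto it, so every $D'_t$ is nonempty. The fibre cohomology is therefore concentrated in degree $1$ for every $t$.
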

\begin{proof}
    See e.g.~\cite[Basic Lemma]{No18}.
\end{proof}
From this lemma, one can deduce the following corollary.
\begin{corollary}\label{chap4:cell_filtr_from_arbitrary_filtr}
    Let $Y$ be an affine $Y\in V_\CC$ of dimension $n$. Suppose that $\{ \mathcal{F}_{i}\}_{i=1}^{m}$ is a finite collection of weakly constructible sheaves on $Y$, and that
    \[
    Y_{0}'\subset Y_{1}'\subset\dots \subset Y_{n}' = Y
    \]
    is a chain of closed subschemes such that $\operatorname{dim}Y_{i}' \leq i$. Then there exists a filtration by closed subschemes
\[
Y_{0}\subset Y_{1}\subset \dots \subset Y_{n} = Y,
\]
such that 
\begin{itemize}
    \item $\operatorname{dim}Y_{i} =i$ for $i=0,\ldots, n$.
    \item $Y_{i}'\subset Y_{i}$ for $i=0,\ldots, n$.
    \item The filtration $Y_{\bullet}$ is cellular with respect to $\{\mathcal{F}_{i}\}_{i=1}^{m}$.
\end{itemize}
\end{corollary}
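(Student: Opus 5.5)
We build the filtration from the top down by descending induction on the index, using Beilinson's Lemma~\ref{chap4:Beilinson_lemma} at each step. Set $Y_n=Y$. Suppose $Y_{i}$ has been constructed, is affine of dimension $i$, contains $Y_i'$, and all sheaves $\mathcal{F}_k$ restricted to $Y_i$ remain weakly constructible. Closed subschemes of affine schemes are affine, and restrictions of weakly constructible sheaves to closed subschemes are weakly constructible, since we can intersect the stratifications. We then want to choose a closed $Y_{i-1}\subset Y_i$ of dimension $i-1$ containing $Y_{i-1}'$ such that for $U=Y_i-Y_{i-1}$ with inclusion $j:U\to Y_i$ we have $H^p(Y_i;j_!j^*\mathcal{F}_k|_{Y_i})=0$ for $p\neq i$ and all $k$.

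The key step is to get a single open subset that works for all $m$ sheaves at once. Here I would rely on the standard strengthening contained in the proof of the Basic Lemma in~\cite{No18}: the open $U$ can be taken to be the complement of a sufficiently general hypersurface section containing any prescribed closed subset $W$ of dimension $<i$, and the vanishing holds for any finite family of weakly constructible sheaves simultaneously. An alternative is to apply the lemma to the single sheaf $\bigoplus_k\mathcal{F}_k|_{Y_i}$. Cohomology commutes with finite direct sums and $j_!j^*$ is additive, so vanishing for the sum gives vanishing for each summand. For $W$ we take the union of $Y_{i-1}'$, the singular locus, the lower-dimensional irreducible components, and the closures of the lower strata of the stratifications of the $\mathcal{F}_k$. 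The complement $Y_{i-1}=Y_i-U$ is then closed of dimension exactly $i-1$, because a hypersurface section of an equidimensional-enough affine of dimension $i\ge1$ has dimension $i-1$. If needed, we enlarge it by a point or a curve to force equality of dimension, and check that enlarging $W$ is permitted.

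Once the chain is built, cellularity follows. For the filtration $Y_\bullet$ the graded sheaf $(\mathcal{F}_k)_a$ equals $(j_a)_!(j_a)^*(i_a)^*\mathcal{F}_k$, where $j_a:Y_a-Y_{a-1}\to Y_a$. Pushing forward along the closed embedding $i_a$ does not change cohomology, so $H^p(Y;(\mathcal{F}_k)_a)=H^p(Y_a;j_!j^*\mathcal{F}_k|_{Y_a})$, which vanishes for $p\neq a$ by construction. For the base case $i=0$, $Y_0$ is a finite set of points and there is nothing to check.

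The main obstacle is the precise form of Beilinson's lemma. The version stated only guarantees some nonempty open $U$. We need $U$ to avoid a prescribed closed set of dimension $<i$, and we need $Y_i-U$ to be of pure codimension one, so that the dimension conditions hold. I would extract both from the proof in~\cite{No18}, where $U$ arises as the complement of a general hyperplane section of a suitable projective compactification. That construction allows the removed hypersurface to contain any given lower-dimensional set.
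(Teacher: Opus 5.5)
Your proposal is correct and matches the argument the paper intends; the paper only cites Arapura's Lemma~3.7 and says to apply Beilinson's lemma to $\bigoplus_i\mathcal{F}_i$. Your version is a descending induction from $Y_n=Y$ that uses, on each $Y_i$, the form of the lemma in which $Y_i-U$ contains a prescribed closed $W$ with $\dim W<i$ and has dimension $<i$.

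Two small tidy-ups. First, to get $\dim Y_{i-1}=i-1$, put an $(i-1)$-dimensional closed subset of $Y_i$ into $W$ beforehand; enlarging $Y_{i-1}$ afterwards would destroy the vanishing. Second, you do not need to dig into the proof in~\cite{No18}: the strengthened form follows from the stated lemma. Apply it on each $i$-dimensional irreducible component $C$ to the sheaf extended by zero from $C$ minus $W$ and the other components, then take the disjoint union of the resulting open sets.
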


This is Lemma 3.7 in~\cite{Ar03}. The main idea is to apply Beilinson's lemma to $ \bigoplus\limits_{i=1}^{m}\mathcal{F}_{i}$.

Now let us review the basic properties of cellular filtrations. We have the following simple observation.
\begin{lemma}\label{chap4:first_page_of_spec_seq_with_cell_filtr}
    Let $Y_{\bullet}$ be a filtration of $Y\in V_\CC$ which is cellular with respect to a sheaf $\mathcal{F}$. Then
    \[
        H^{i}(Y; \mathcal{F}) \cong H^{i}(E_{1}^{\bullet, 0}(Y_{\bullet}, \mathcal{F}), d_{1}).
    \]
\end{lemma}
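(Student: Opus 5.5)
The plan is to run the spectral sequence $E_1^{p,q}(Y_\bullet,\mathcal{F})$ of Lemma~\ref{chap4:filtrartion_to_spec_seq} and show it degenerates in a way that isolates the row $q=0$. By construction $E_1^{p,q}=H^{p+q}(Y;(i_p)_*\mathcal{F}_p)=H^{p+q}(Y;\mathcal{F}_p)$, where we identify the sheaf $\mathcal{F}_a$ on $Y_a$ with its extension $(i_a)_*\mathcal{F}_a$ to $Y$. This is harmless because $i_a$ is a closed embedding, so $(i_a)_*$ is exact and preserves cohomology. Cellularity says exactly that $H^{i}(Y;\mathcal{F}_p)=0$ unless $i=p$. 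Hence $E_1^{p,q}=0$ unless $p+q=p$, i.e.\ unless $q=0$, so the whole $E_1$ page is concentrated on the single row $q=0$.

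Next we observe that the spectral sequence degenerates at $E_2$. The differential $d_r$ has bidegree $(r,-r+1)$, so for $r\ge 2$ it maps row $q$ to row $q-r+1\neq q$. Since only row $0$ is nonzero, every $d_r$ with $r\geq 2$ is zero. Therefore
\[
E_\infty^{p,0}=E_2^{p,0}=H^p(E_1^{\bullet,0},d_1),
\qquad E_\infty^{p,q}=0 \text{ for } q\neq 0 .
\]

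Finally we read off the abutment. The filtration is finite and exhaustive, since it runs from $Y_{-1}=\varnothing$ to $Y_M=Y$, so the spectral sequence converges to $H^\bullet(Y;\mathcal{F})$. Thus the induced filtration on $H^i(Y;\mathcal{F})$ has graded pieces $E_\infty^{p,i-p}$. Only the piece with $p=i$ survives, and it is $H^i(E_1^{\bullet,0},d_1)$. A finite filtration with a single nonzero graded piece gives an isomorphism, so $H^i(Y;\mathcal{F})\cong H^i(E_1^{\bullet,0}(Y_\bullet,\mathcal{F}),d_1)$.

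The only point needing a little care is the convergence and finiteness bookkeeping. The filtration $S^a$ on $\mathcal{F}$ is bounded, and the spectral sequence of a bounded filtered complex converges. I expect no real obstacle here; the argument is just the standard "concentrated in one row" degeneration.
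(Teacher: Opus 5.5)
Your proof is correct and follows essentially the same route as the paper, which cites Arapura's Lemma 3.4 for this statement: cellularity makes $E_1^{p,q}=H^{p+q}(Y;\mathcal{F}_p)$ vanish off the row $q=0$. Hence every $d_r$ with $r\geq 2$ is zero, $E_2=E_\infty$, and the finite filtration on $H^i(Y;\mathcal{F})$ has the single graded piece $E_2^{i,0}=H^i(E_1^{\bullet,0},d_1)$.
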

This is~\cite[Lemma 3.4]{Ar03}.
\begin{lemma}\label{chap4:functoriality_of_basic_spec_seq}
    Suppose $f: X \to Y$ is a proper morphism in $V_\CC$, let $Y_{\bullet}$ be an arbitrary filtration by close subschemes, and let $\mathcal{F}^\bullet$ be an arbitrary complex of sheaves over $X$. 
    Then setting $X_\bullet=f^{-1}(Y_\bullet)$ one has the following isomorphism of spectral sequences
    \[
    E_{1}^{p,q}(Y_{\bullet}, Rf_{*}\mathcal{F}^\bullet) \cong E_{1}^{p,q}(X_{\bullet}, \mathcal{F}^\bullet).
    \]
\end{lemma}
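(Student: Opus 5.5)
The plan is to compare the two filtered complexes that produce these spectral sequences, rather than the spectral sequences themselves. By construction, $E_r(X_\bullet,\mathcal{F}^\bullet)$ is the spectral sequence of the filtered complex $R\Gamma(X; (\mathcal{F}^\bullet, S^\bullet(X_\bullet,\mathcal{F}^\bullet)))$. Likewise $E_r(Y_\bullet, Rf_*\mathcal{F}^\bullet)$ comes from $R\Gamma(Y;(Rf_*\mathcal{F}^\bullet, S^\bullet(Y_\bullet, Rf_*\mathcal{F}^\bullet)))$. It therefore suffices to produce a morphism of filtered complexes between them that is a quasi-isomorphism on every graded piece. Since the filtrations are finite, this gives an isomorphism of spectral sequences from $E_1$ onwards, compatible with the abutment $H^\bullet(Y;Rf_*\mathcal{F}^\bullet)=H^\bullet(X;\mathcal{F}^\bullet)$.

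First, replace $\mathcal{F}^\bullet$ by a bounded below complex $\mathcal{I}^\bullet$ of injective (or flabby) sheaves. By Lemma~\ref{chap4:inj_resolution_for_sheaf_in_filtered_derived_cat} this does not change $E_1$, hence not the spectral sequence. Then $Rf_*\mathcal{F}^\bullet$ is represented by $f_*\mathcal{I}^\bullet$. Write $k_a:X-X_a\to X$ and $k'_a:Y-Y_a\to Y$, and note that $X-X_a=f^{-1}(Y-Y_a)$ and $X_a=f^{-1}(Y_a)$. The key point is the identity
\[
f_*(k_a)_!(k_a)^*\mathcal{I}^\bullet \cong (k'_a)_!(k'_a)^*f_*\mathcal{I}^\bullet ,
\]
viewed as subcomplexes of $f_*\mathcal{I}^\bullet$. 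This identity is where properness enters. A section of the left side over $V\subset Y$ is a section $s$ of $\mathcal{I}^\bullet$ over $f^{-1}(V)$ whose support is closed in $f^{-1}(V)$ and contained in $f^{-1}(V-Y_a)$. Since $f$ is proper and hence closed, $f(\operatorname{supp}s)$ is closed in $V$ and lies in $V-Y_a$. That is precisely the condition for the corresponding section of $f_*\mathcal{I}^\bullet$ over $V$ to lie in $(k'_a)_!(k'_a)^*f_*\mathcal{I}^\bullet$, because the support of a section of $f_*$ is $f(\operatorname{supp} s)$. The converse inclusion is clear. Thus $f_*$ carries the filtration $S^\bullet(X_\bullet,\mathcal{I}^\bullet)$ isomorphically onto $S^\bullet(Y_\bullet,f_*\mathcal{I}^\bullet)$.

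Applying $\Gamma(Y;-)$ then gives an identification of filtered complexes $\Gamma(X;(\mathcal{I}^\bullet,S^\bullet))=\Gamma(Y;(f_*\mathcal{I}^\bullet,S^\bullet))$. To obtain the spectral sequences one needs $R\Gamma$ on graded pieces, and here lies the main obstacle, which is the same issue flagged after Lemma~\ref{chap4:inj_resolution_for_sheaf_in_filtered_derived_cat}: $j_!$ need not preserve injectivity. So one cannot simply take $\Gamma$ of the graded pieces; one must check $\Gamma$-acyclicity, or else compare derived functors directly. I would argue at the level of $\operatorname{Gr}$. By exactness of restriction and of $f_*$ on the relevant pieces, one has $\operatorname{Gr}^a_S f_*\mathcal{I}^\bullet= f_*\operatorname{Gr}^a_S\mathcal{I}^\bullet$. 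Moreover, by proper base change, $Rf_*$ commutes with $(i_a j_a)_!(i_a j_a)^*$, since $f$ restricted to the locally closed piece $X_a-X_{a-1}\to Y_a-Y_{a-1}$ is proper and $f_!=f_*$. Hence
\[
R\Gamma(Y;\operatorname{Gr}^a_S Rf_*\mathcal{F}^\bullet)\cong R\Gamma(X;\operatorname{Gr}^a_S\mathcal{F}^\bullet),
\]
compatibly with the maps coming from the filtrations. To make this a genuine morphism of filtered complexes, I would take functorial Godement resolutions of $\operatorname{Gr}$-pieces (as in the proof of Lemma~\ref{chap4:inj_resolution_for_sheaf_in_filtered_derived_cat}). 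The five lemma on the short exact sequences $0\to S^{a+1}\to S^a\to\operatorname{Gr}^a\to0$ then upgrades the graded isomorphisms to an isomorphism of exact couples, and hence of the spectral sequences $E_1^{p,q}(Y_\bullet,Rf_*\mathcal{F}^\bullet)\cong E_1^{p,q}(X_\bullet,\mathcal{F}^\bullet)$.
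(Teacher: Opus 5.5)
Your proposal is correct and is essentially the paper's argument: pass to a Godement (flabby) resolution and show that $f_*S^a(X_\bullet,\mathcal{I}^\bullet)=S^a(Y_\bullet,f_*\mathcal{I}^\bullet)$ as subcomplexes of $f_*\mathcal{I}^\bullet$. The paper derives this identity formally from $f_*=f_!$ and proper base change, whereas you check it directly on supports using only that $f$ is closed; your extra discussion of acyclicity spells out a point the paper leaves tacit (extension by zero of flabby sheaves gives c-soft sheaves, which are $\Gamma$- and $f_*$-acyclic for proper $f$), so it clarifies rather than changes the route.
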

This is a variation of \cite[Lemma 3.9 and Corollary 3.10]{Ar03}. Note that the lemma is not true without assuming $f$ proper: one could take for instance $f$ to be an inclusion of $X=\mathbb{A}^1_\CC$ in $Y=\mathbb{P}^1_\CC$, the complex $\mathcal{F}^\bullet$ equal a constant sheaf on $X$, and the filtration of $Y$ equal $(Y- X)\subset Y$.
\begin{proof}
By Lemma \ref{chap4:inj_resolution_for_sheaf_in_filtered_derived_cat} we may replace $\mathcal{F}^\bullet$ with its Godement resolution.
    So we will prove the lemma if we show the following equivalence
    \[
    f_{*}S^{a}(X_{\bullet}, \mathcal{G}) \cong S^{a}(Y_{\bullet}, f_{*}\mathcal{G})
    \]
    for an arbitrary sheaf $\mathcal{G}$ on $X$.

    Let $k_{a}: X- X_{a} \hookrightarrow X$ and $K_{a}:Y- Y_{a}\hookrightarrow Y$ be the natural inclusions. Then $S^{a}(X_{\bullet}, \mathcal{G}) = (k_{a-1})_{!}k_{a-1}^{*}\mathcal{G}$, and similarly $S^{a}(Y_{\bullet}, f_{*}\mathcal{G}) = (K_{a-1})_{!}K_{a-1}^{*}f_{*}\mathcal{G}$. Note that since $f$ is proper, we have $f_{*} = f_{!}$. Using this and proper base change we get
    \begin{multline*}
        f_{*}S^{a}(X_{\bullet}, \mathcal{G}) = f_{*}(k_{a-1})_{!}k_{a-1}^{*}\mathcal{G} = (f\circ k_{a-1})_{!}k_{a-1}^{*}\mathcal{G} = (K_{a-1})_{!}f_{*}k_{a-1}^{*}\mathcal{G} = (K_{a-1})_{!}K_{a-1}^{*} f_{*}\mathcal{G} =\\= S^{a}(Y_{\bullet}, f_{*}\mathcal{G}).
    \end{multline*}
\end{proof}
\begin{lemma}
    Let $f:X\to Y$ be as in Lemma~\ref{chap4:functoriality_of_basic_spec_seq}, and suppose $\mathcal{F}^\bullet$ is a complex of sheaves on $X$. Let $Y_{\bullet}$ be a filtration of $Y$ by closed subschemes which is cellular with respect to the sheaf $\bigoplus\limits_{q\in\mathbb{N}} R^{q}f_{*}\mathcal{F}$. Then one has the following isomorphism
    \[
    E_{2}^{p,q}(X_{\bullet}, \mathcal{F}^\bullet) \cong H^{p}(Y; R^{q}f_{*}\mathcal{F}^\bullet).
    \]
\end{lemma}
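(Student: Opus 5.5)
First I will use Lemma~\ref{chap4:functoriality_of_basic_spec_seq}, which identifies $E_{1}^{p,q}(X_{\bullet}, \mathcal{F}^\bullet)$ with $E_{1}^{p,q}(Y_{\bullet}, Rf_{*}\mathcal{F}^\bullet)$ as spectral sequences. It then suffices to compute the second page of the filtration spectral sequence on $Y$ for the complex $\mathcal{K}^\bullet = Rf_*\mathcal{F}^\bullet$. By Lemma~\ref{chap4:inj_resolution_for_sheaf_in_filtered_derived_cat}, I may replace $\mathcal{K}^\bullet$ by any quasi-isomorphic bounded below complex without changing $E_1$. The plan is to compare this spectral sequence with the one obtained by first filtering $\mathcal{K}^\bullet$ canonically, $\tau_{\le q}\mathcal{K}^\bullet$, whose graded pieces are $R^qf_*\mathcal{F}^\bullet[-q]$.

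The key computation is the $E_1$ term. Since $\operatorname{Gr}^p_S$ is exact and commutes with cohomology sheaves, there is a hypercohomology spectral sequence
\[
H^{s}\bigl(Y; (i_p)_*(R^{t}f_*\mathcal{F}^\bullet)_p\bigr)\Longrightarrow H^{s+t}\bigl(Y;(i_p)_*\mathcal{K}^\bullet_p\bigr).
\]
Cellularity with respect to $\bigoplus_q R^qf_*\mathcal{F}$ means that $H^s(Y;(R^tf_*\mathcal{F})_p)=0$ for $s\ne p$, because each summand is a direct summand of the direct sum. The spectral sequence therefore collapses to a single column, and I get
\[
E_1^{p,q}=H^{p+q}(Y;(i_p)_*\mathcal{K}^\bullet_p)\cong H^{p}\bigl(Y;(R^{q}f_*\mathcal{F}^\bullet)_p\bigr).
\]
I need this identification to be natural in $p$, so that it is compatible with $d_1$. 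To get that, I will realise the isomorphism through the canonical truncation: the inclusions $\tau_{\le q}\mathcal{K}^\bullet\to\mathcal{K}^\bullet$ and the projections to $R^qf_*\mathcal{F}^\bullet[-q]$ induce maps of filtered complexes $(\tau_{\le q}\mathcal{K}^\bullet, S)\to(\mathcal{K}^\bullet,S)$. Cellularity makes these maps isomorphisms on the relevant $E_1$ entries.

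With that in hand, the row $E_1^{\bullet,q}$ together with $d_1$ is identified with the complex $E_1^{\bullet,0}(Y_\bullet, R^qf_*\mathcal{F}^\bullet)$, where the shift by $q$ matches the degrees. Its $d_1$ is the connecting map of the triples $(Y_{p+1},Y_p,Y_{p-1})$, and this map is natural in the coefficient complex, which is what makes the identification compatible with $d_1$. Lemma~\ref{chap4:first_page_of_spec_seq_with_cell_filtr}, applied to the sheaf $R^qf_*\mathcal{F}^\bullet$ (cellular by the same direct summand argument), then gives $E_2^{p,q}\cong H^p(Y;R^qf_*\mathcal{F}^\bullet)$.

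I expect the main obstacle to be this compatibility of $d_1$ with the identification. The differential on $E_1(Y_\bullet,\mathcal{K}^\bullet)$ is the boundary map for the short exact sequence $0\to\operatorname{Gr}^{p+1}\to S^p/S^{p+2}\to\operatorname{Gr}^p\to0$. I have to check that, under the truncation maps, this boundary map agrees with the corresponding boundary map for $R^qf_*\mathcal{F}^\bullet[-q]$. I would handle it by a diagram chase using the naturality of connecting homomorphisms, applied to the morphisms of filtered complexes $\tau_{\le q}\mathcal{K}^\bullet\to\mathcal{K}^\bullet$ and $\tau_{\le q}\mathcal{K}^\bullet\to R^qf_*\mathcal{F}^\bullet[-q]$. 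In the bidegrees that matter, both morphisms induce isomorphisms on $E_1$, by cellularity and by the collapse established above.
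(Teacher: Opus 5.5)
Your proposal is correct and follows essentially the argument behind the paper's proof, which just cites Arapura's Corollaries~3.10 and~3.11: transfer to $Y$ via Lemma~\ref{chap4:functoriality_of_basic_spec_seq}, use cellularity to collapse the hypercohomology spectral sequence of each $\operatorname{Gr}^p_S Rf_*\mathcal{F}^\bullet$ so that the $q$-th row of $E_1$ becomes $E_1^{\bullet,0}(Y_\bullet,R^qf_*\mathcal{F}^\bullet)$, and conclude with Lemma~\ref{chap4:first_page_of_spec_seq_with_cell_filtr}. Your use of the truncation maps $\tau_{\le q}\mathcal{K}^\bullet\to\mathcal{K}^\bullet$ and $\tau_{\le q}\mathcal{K}^\bullet\to R^qf_*\mathcal{F}^\bullet[-q]$ as morphisms of filtered complexes is a clean way to make the $d_1$-compatibility explicit, which the citation leaves implicit.
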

This is~\cite[Corollaries 3.10 and 3.11]{Ar03}.

\smallskip

\begin{proposition}\label{chap4:leray_iso_to_spec_seq_from_filtr}
    Let $f:X\to Y$ be as in Lemma~\ref{chap4:functoriality_of_basic_spec_seq}, and suppose $\mathcal{F}^\bullet$ is a complex of sheaves on $X$. 
    Then, for every filtration $Y_{\bullet}$ of $Y$ by closed subschemes, there is a natural morphism of spectral sequences
    \[
        \mathcal{L}_{2}^{p,q}(f, \mathcal{F}^\bullet) \to E_{2}^{p,q}(X_{\bullet}, \mathcal{F}^\bullet)
    \]
    where $X_{\bullet} = f^{-1}(Y_{\bullet})$.

    Moreover, if the filtration $Y_{\bullet}$ is cellular with respect to each sheaf $R^q\mathcal{F}^\bullet$, this morphism is an isomorphism.
\end{proposition}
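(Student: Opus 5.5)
The plan is to compare the Leray spectral sequence, viewed via Deligne's décalage as the spectral sequence of the canonical filtration $\tau$ on $Rf_*\mathcal{F}^\bullet$, with the spectral sequence $E(Y_\bullet,Rf_*\mathcal{F}^\bullet)$ of the filtration $S^\bullet(Y_\bullet,-)$. Lemma~\ref{chap4:functoriality_of_basic_spec_seq} identifies the latter with $E(X_\bullet,\mathcal{F}^\bullet)$, since $f$ is proper. Concretely, replace $Rf_*\mathcal{F}^\bullet$ by $f_*\mathcal{I}^\bullet$, where $\mathcal{I}^\bullet$ is the Godement resolution; Lemma~\ref{chap4:inj_resolution_for_sheaf_in_filtered_derived_cat} allows this. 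Let $\mathcal{K}^\bullet=f_*\mathcal{I}^\bullet$ on $Y$. Then $\mathcal{K}^\bullet$ carries two filtrations: the canonical filtration $\tau_{\le q}$ and the filtration $S^a=S^a(Y_\bullet,\mathcal{K}^\bullet)$. Following Arapura (\cite[Section~3]{Ar03}, and the construction of Deligne's filtration $\mathrm{Dec}$), form the combined filtration
$$
G^p\mathcal{K}^\bullet \;=\; \sum_{a+q\ge p}\; S^a\cap\tau_{\le -q}\ \ \text{(suitably reindexed)},
$$
or more simply use the bifiltered complex $(\mathcal{K}^\bullet,\tau,S)$. The morphism of spectral sequences comes from the inclusion $\tau_{\le q}\subset\mathcal{K}$ and exactness of $S^a(Y_\bullet,-)$, which is extension by zero after restriction. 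These give a map of filtered complexes from $(\mathcal{K},\mathrm{Dec}\,\tau)$ to $(\mathcal{K},G)$, and $G$ is compared with $S$ in the same way Arapura does. Naturality follows because everything is built from functorial resolutions and from the functors $(k_a)_!k_a^*$.

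Concretely, we check the map on $E_2$ pages. On the Leray side, $E_2^{p,q}=H^p(Y;R^qf_*\mathcal{F}^\bullet)$. On the filtration side, the $E_1$ term of the graded piece $\mathrm{Gr}_\tau^q$ is $H^{p+q}(Y;(R^qf_*\mathcal{F})_p[-q])=H^p(Y;(R^qf_*\mathcal{F})_p)$. The resulting $d_1$-complex is $E_1^{\bullet,0}(Y_\bullet,R^qf_*\mathcal{F})$, so its cohomology maps from $H^p(Y;R^qf_*\mathcal{F})$ via the edge map $H^p(Y;\mathcal{G})\to H^p(E_1^{\bullet,0}(Y_\bullet,\mathcal{G}))$, which is natural in the sheaf $\mathcal{G}$. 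This gives the map on $E_2$. Compatibility with the differentials $d_r$ for $r\ge2$ and with the abutment filtrations is automatic, because the map is induced by a morphism of filtered complexes.

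For the isomorphism claim, assume $Y_\bullet$ is cellular for every $R^qf_*\mathcal{F}^\bullet$. Lemma~\ref{chap4:first_page_of_spec_seq_with_cell_filtr} then says the edge map $H^p(Y;R^qf_*\mathcal{F})\to H^p(E_1^{\bullet,0}(Y_\bullet,R^qf_*\mathcal{F}),d_1)$ is an isomorphism for all $p,q$. Moreover, cellularity makes the $E_1$ term of $\mathrm{Gr}_\tau^q$ concentrated in a single row, so the $E_2$ page of $E(X_\bullet,\mathcal{F}^\bullet)$ is exactly $\bigoplus_q H^p(Y;R^qf_*\mathcal{F})$, which is the preceding lemma. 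A morphism of spectral sequences that is an isomorphism on $E_2$ is an isomorphism on all subsequent pages. Both sequences converge to $H^\bullet(X;\mathcal{F}^\bullet)$ with bounded filtrations, so the abutments agree as well.

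The main obstacle is making the first step precise: producing an honest morphism of filtered complexes between $(\mathcal{K},\mathrm{Dec}\,\tau)$ and the $S$-filtered complex, with the correct indexing so that the page shift $E_r\leftrightarrow\mathcal{L}_{r+1}$ lines up. I would first try Arapura's diagonal filtration $P^p=\sum_{a}S^a\cap\tau_{\le a-p}$, check that $\mathrm{Gr}_P$ is quasi-isomorphic to $\bigoplus_a(R^{a-p}f_*\mathcal{F})_a[\ldots]$, and verify that the natural inclusions $P\subset\mathrm{Dec}\,\tau$ and $P\subset S$ (after reindexing) induce the required maps.
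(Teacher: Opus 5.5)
The paper gives no argument of its own here. It cites Arapura's Lemma~3.13 for the morphism and the proof of his Theorem~3.1 for the isomorphism. Your second half matches that argument in outline: both $E_2$ pages are $H^p(Y;R^qf_*\mathcal F^\bullet)$ by the preceding lemma and Lemma~\ref{chap4:first_page_of_spec_seq_with_cell_filtr}, and then the comparison theorem applies. But it presupposes exactly what you leave open: a morphism of spectral sequences, inducing the identity on $H^\bullet(X;\mathcal F^\bullet)$, whose $E_2$ component is your edge map. Calling compatibility with $d_r$ and the abutment ``automatic'' is circular until such a filtered morphism exists, and the candidates you name do not provide it.

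\begin{itemize}
\item \emph{Wrong filtration for the d\'ecalage.} Since $R\Gamma(Y,(\mathcal K,\tau))$ already has $E_r=\mathcal L_{r+1}$, the d\'ecalage must be applied to $S$, not $\tau$. What you need is a morphism $R\Gamma(Y,(\mathcal K,\tau))\to\mathrm{Dec}\,R\Gamma(Y,(\mathcal K,S))$ in the filtered derived category.
\item \emph{The diagonal filtration does not help.} Each stalk of $S^a\mathcal K$ is $0$ or all of $\mathcal K_y$, so your $P^p=\sum_a S^a\cap\tau_{\le a-p}$ is exactly $\mathrm{Dec}(S)$ formed in sheaves. One has $\tau\subset P$, not $P\subset\mathrm{Dec}\,\tau$, and $R\Gamma$ does not commute with $\mathrm{Dec}$. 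Already for $\mathcal K=\mathcal G$ a single sheaf in degree $0$ one gets $P=S$. Then $\tau\subset P$ induces $E_1^{0,q}(\tau)=H^q(Y;\mathcal G)\to E_1^{0,q}(S)=H^q(Y;\mathcal G_0)$, which is zero for $q>0$ when $Y_\bullet$ is cellular.
\item \emph{No cochain-level map.} In the same example the identity of $R\Gamma(Y,\mathcal G)$ is not even a filtered map into $\mathrm{Dec}\,S$ on cochains. The $\tau$-filtration is everything in filtration degree $0$, whereas $\mathrm{Dec}(S)^0$ lies in $S^n$ in cochain degree $n$. So the morphism has to be constructed in the filtered derived category by a real argument.
\end{itemize}

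There is a second problem. The edge map $H^p(Y;\mathcal G)\to H^p(E_1^{\bullet,0}(Y_\bullet,\mathcal G),d_1)$ that you use to define the $E_2$ component does not exist for an arbitrary filtration. It needs $H^i(Y;\mathcal G_a)=0$ for $i>a$, which gives $S^pH^p=H^p$ and $E_\infty^{p,0}\subset E_2^{p,0}$. This vanishing holds for cellular filtrations, and in Arapura's setting it comes from Artin vanishing for affine $Y$ with $\dim Y_a\le a$. Without some such hypothesis, no morphism compatible with the identity on the abutment exists at all. Take $f=\mathrm{id}$ on $Y=\mathbb{C}^*$, $\mathcal F$ constant, and the one-step filtration $Y_0=Y$. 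The filtration spectral sequence then sits in the column $p=0$, so $S^1H^1=0$, while the Leray filtration has $L^1H^1=H^1\neq0$.

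So the missing idea is this: under that vanishing, build the filtered-derived morphism above so that it restricts to the identity on $R\Gamma(Y,\mathcal K)$, and check that on $E_1(\tau)=\mathcal L_2$ it is your edge map. This also needs a bifiltered acyclic resolution, because $\Gamma$ of $\tau_{\le q}f_*\mathcal I^\bullet$ does not compute $R\Gamma$. Only then does your isomorphism argument go through.
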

\begin{proof}
   See~\cite[Lemma 3.13]{Ar03} for the first part of this proposition, and ibid., proof of Theorem~3.1 for the second.
\end{proof}

\subsubsection{Applying affine models}

Summarizing, we have shown that if $f:X\to Y$ is a proper morphism in $V_\CC$, and $Y$ is affine, then there is a filtration $Y_{\bullet}$ of $Y$ such that the Leray spectral sequence $\mathcal{L}_{2}^{p,q}(f, \mathcal{F}^\bullet)$ is isomorphic to $E_{2}^{p,q}(X_{\bullet}, \mathcal{F}^\bullet)$. Here $\mathcal{F}^\bullet$ is a complex of sheaves on $X$ with constructible cohomology, and $X_{\bullet} = f^{-1}(Y_{\bullet})$. Affine models will enable us to reduce the case of an arbitrary target $Y$ to the case of affine $Y$.

Let $m_{Y}: M_{Y} \to Y$ be an affine model constructed in the proof of Theorem \ref{chap2:main_thrm}. Consider the following pullback square
\begin{equation}\label{chap4:pullback_square_to_reduce_to_aff_model}
\begin{tikzcd}
	{X\times_{Y}M_{Y}} & X \\
	M_Y & Y\arrow[ul, phantom, "\ulcorner", very near end].
	\arrow["{\operatorname{pr}_{1}}", from=1-1, to=1-2]
	\arrow["f'"', from=1-1, to=2-1]
	\arrow["f", from=1-2, to=2-2]
	\arrow["m_Y"', from=2-1, to=2-2]
\end{tikzcd}
\end{equation}
\begin{lemma}\label{chap4:reduce_to_aff}
    Assume $f$ proper, and let $\mathcal{F}^\bullet$ be a complex of sheaves on $X$ with constructible cohomology. One then has a natural isomorphism of the Leray spectral sequences
    \[
    \mathcal{L}_{r}^{p,q}(f,\mathcal{F}^\bullet) \cong \mathcal{L}_{r}^{p,q}(f',\mathcal{F}^\bullet),r\geq 2.
    \]
\end{lemma}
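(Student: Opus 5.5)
We compare both Leray spectral sequences through the canonical filtration of the respective direct images. Write $\mathcal{F}'^\bullet=\operatorname{pr}_1^*\mathcal{F}^\bullet$ for the pullback to $X\times_Y M_Y$. Recall from the beginning of Section~\ref{Motivic_nature_of_Leray} that $\mathcal{L}_r(f,\mathcal{F}^\bullet)$ (for $r\geq 2$) is, up to reindexing, the spectral sequence of $H^\bullet(Y;-)$ applied to $Rf_*\mathcal{F}^\bullet$ with its canonical filtration $\tau_{\leq\bullet}$. The analogous statement holds for $\mathcal{L}_r(f',\mathcal{F}'^\bullet)$ with $Rf'_*\mathcal{F}'^\bullet$ on $M_Y$. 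The plan is to produce a filtered quasi-isomorphism
\[
m_Y^*(Rf_*\mathcal{F}^\bullet,\tau_{\leq\bullet})\xrightarrow{\ \sim\ }(Rf'_*\mathcal{F}'^\bullet,\tau_{\leq\bullet}).
\]
We then show that pulling back along $m_Y$ induces an isomorphism of the resulting hypercohomology spectral sequences.

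\textbf{Step 1.} We compare the direct images. The base change map $\alpha:m_Y^*Rf_*\mathcal{F}^\bullet\to Rf'_*\operatorname{pr}_1^*\mathcal{F}^\bullet$ is a quasi-isomorphism. This follows from Theorem~\ref{chap3:main_thm_Spec_C}, part~\ref{item:base_change}, i.e.\ from Lemmas~\ref{chap4:lemma_base_change_for_affine_model_global_version} and~\ref{chap4:lemma_affine_model_is_almost_submersion}. Properness of $f$ is not even needed here, although by proper base change it also gives this directly. Since $m_Y^*$ is exact, it commutes with truncations, so $m_Y^*\tau_{\leq q}Rf_*\mathcal{F}^\bullet=\tau_{\leq q}m_Y^*Rf_*\mathcal{F}^\bullet$. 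Hence $\alpha$ is a filtered quasi-isomorphism for the canonical filtrations. In particular, $m_Y^*R^qf_*\mathcal{F}^\bullet\cong R^qf'_*\mathcal{F}'^\bullet$.

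\textbf{Step 2.} We compare the hypercohomology spectral sequences. The natural map $H^\bullet(Y;\mathcal{G}^\bullet)\to H^\bullet(M_Y;m_Y^*\mathcal{G}^\bullet)$ is functorial in $\mathcal{G}^\bullet$. Applying it to the filtered complex $(Rf_*\mathcal{F}^\bullet,\tau)$, after choosing functorial (Godement) resolutions as in the proof of Lemma~\ref{chap4:inj_resolution_for_sheaf_in_filtered_derived_cat}, gives a morphism of spectral sequences
\[
E_r(Y;Rf_*\mathcal{F}^\bullet,\tau)\to E_r(M_Y;m_Y^*Rf_*\mathcal{F}^\bullet,\tau).
\]
Composing with the map induced by $\alpha$ from Step 1 yields a morphism $\mathcal{L}_r(f,\mathcal{F}^\bullet)\to\mathcal{L}_r(f',\mathcal{F}'^\bullet)$. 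On $E_1$ of the canonical-filtration spectral sequence, i.e.\ on $\mathcal{L}_2$, this morphism is
\[
H^p(Y;R^qf_*\mathcal{F}^\bullet)\to H^p(M_Y;m_Y^*R^qf_*\mathcal{F}^\bullet)\cong H^p(M_Y;R^qf'_*\mathcal{F}'^\bullet).
\]
This is an isomorphism by Theorem~\ref{chap3:main_thm_Spec_C}, part~\ref{item:coho_iso}, applied to the single sheaf $R^qf_*\mathcal{F}^\bullet$. A morphism of spectral sequences that is an isomorphism on one page is an isomorphism on all later pages. The abutments agree as well: they are identified by part~\ref{item:coho_iso} applied to $Rf_*\mathcal{F}^\bullet$, and the map respects the induced filtrations. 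Naturality in $f$ and $\mathcal{F}^\bullet$ comes from the naturality of the base change map and of pullback.

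The main obstacle is bookkeeping in Step 2 rather than any new input. One must check that the comparison map is genuinely a morphism of spectral sequences, i.e.\ that the pullback on cohomology is realized at the level of filtered complexes, compatibly with the $d_r$. The plan is to handle this with Godement resolutions, which are functorial, exact, and commute with $m_Y^*$ up to a natural map. Boundedness is not an issue, since $Rf_*\mathcal{F}^\bullet$ is bounded below for bounded below $\mathcal{F}^\bullet$. Constructibility of the cohomology is not needed for this lemma; it only matters later, when cellular filtrations are applied on $M_Y$.
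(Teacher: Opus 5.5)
Your argument is correct and is essentially the paper's. The paper invokes Lemma~\ref{chap4:comp_of_spec_sequences}, which checks the comparison map on $\mathcal{L}_2$ as the composite of the pullback isomorphism (part~\ref{item:coho_iso} of Theorem~\ref{chap3:main_thm_Spec_C}) and the base change isomorphism, exactly as in your Step~2. The only differences are that the paper gets the base change isomorphism from proper base change (its sole use of properness of $f$) rather than from part~\ref{item:base_change}, and that it leaves implicit the filtered-complex bookkeeping you spell out.
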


This is an analog of Lemma 3.14 in~\cite{Ar03}. We will prove a slightly more general version which will come in useful in the sequel.

\begin{lemma}\label{chap4:comp_of_spec_sequences}
Let
$$
\begin{tikzcd}
A' \ar[r,"n"]\ar[d,"g'"'] & A\ar[d,"g"]\\
B' \ar[r,"m"] & B\arrow[ul, phantom, "\ulcorner", very near end]
\end{tikzcd}
$$
be a Cartesian square of continuous maps of topological spaces. Let $\mathcal{G}^\bullet$ be a a complex of sheaves on $A$. Assume for every sheaf $\mathcal{H}$ on $B$ the pullback map $H^\bullet(B;\mathcal{H})\to H^\bullet(B';m^*(\mathcal{H}))$ is an isomorphism, and the base change maps
$$m^*(R^q g_*\mathcal{G}^\bullet)\to R^q g'_*(n^*(\mathcal{G}^\bullet))$$ are isomorphisms too. Then the map of the Leray spectral sequences 
\begin{equation}\label{chap4:comp_of_spec_sequences_map}
\mathcal{L}^{p,q}_r(g,\mathcal{G}^\bullet)\to\mathcal{L}^{p,q}_r(g',n^*\mathcal{G}^\bullet)
\end{equation}
is an isomorphism for $r\geq 2$.
\end{lemma}

\begin{proof}[Proof of Lemma~\ref{chap4:comp_of_spec_sequences}]
For $r=2$ the comparison map~(\ref{chap4:comp_of_spec_sequences_map}) is
$$\mathcal{L}^{p,q}_2(g,\mathcal{G}^\bullet)=H^p(B;R^q g_*\mathcal{G}^\bullet)\to H^q(B';m^*R^q g_*\mathcal{G}^\bullet)\to H^q(B';R^q g'_*(n^*\mathcal{G}^\bullet))=\mathcal{L}^{p,q}_2(g',n^*\mathcal{G}^\bullet).$$
Here the first arrow is a pullback map, and the second one is induced by a base change map; both are isomorphisms by our assumptions.
\end{proof}

\begin{proof}[Proof of Lemma~\ref{chap4:reduce_to_aff}] We use Lemma~\ref{chap4:comp_of_spec_sequences}, which is applicable by part~\ref{item:coho_iso} of Theorem~\ref{chap3:main_thm_Spec_C} and the assumption that $f$ is proper.
\end{proof}  

\subsubsection{Checking that the lift is well defined}

Let $((\mathcal{H}^i: V^{2}_\CC \to \mathcal{A}), \Phi)$ be a classical enhanced Betti theory with underlying Abelian group $G$. By Proposition~\ref{chap4:spec_seq_from_filtr_is_motivic}, for every exhaustive filtration $X_\bullet$ of $X\in V_\CC$ by closed subschemes the spectral sequence $E^{p,q}_r(X_\bullet,j_! G_U)$ is motivic where $j:U\to X$ is an open embedding. Let now $f:X\to Y$ be a proper morphism. Corollary~\ref{chap4:cell_filtr_from_arbitrary_filtr} and Proposition~\ref{chap4:leray_iso_to_spec_seq_from_filtr} allow us then to lift the individual Leray spectral sequence $\mathcal{L}_{2}^{p,q}(f,j_! G_U)$ to $\mathcal{A}$. The following lemmas show that this lift does not depend on any choices made in the construction.

\begin{lemma}\label{chap4:lift_doesnt_depend_on_filtr}
    The lift of the Leray spectral sequence $\mathcal{L}_{2}^{p,q}(f,j_! G_U)$ to an enhanced Betti theory does not depend on the choice of a cellular filtration of $Y$.
\end{lemma}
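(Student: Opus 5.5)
The plan is to compare any two cellular filtrations through a common refinement, in the way Arapura does in~\cite{Ar03}. Let $Y$ be affine, let $f:X\to Y$ be proper, and let $Y_\bullet$ and $Y'_\bullet$ be two filtrations of $Y$ that are cellular with respect to $\bigoplus_q R^qf_*(j_!G_U)$. These sheaves are weakly constructible, since $f$ is proper and $j_!G_U$ is constructible.

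First, set $Y''_i=Y_i\cup Y'_i$. Then $\dim Y''_i\le i$, so Corollary~\ref{chap4:cell_filtr_from_arbitrary_filtr} gives a cellular filtration $Y'''_\bullet$ with $Y_i\subset Y'''_i$ and $Y'_i\subset Y'''_i$ for all $i$. It therefore suffices to compare $Y_\bullet$ with $Y'''_\bullet$ when one filtration is contained termwise in the other, say $Y_i\subset Y'_i$ for all $i$. Put $X_\bullet=f^{-1}(Y_\bullet)$ and $X'_\bullet=f^{-1}(Y'_\bullet)$. The inclusion $X_i\subset X'_i$ gives morphisms of triples $(X,X_{a+1},X_a)\to(X,X'_{a+1},X'_a)$ in $V^3_\CC$. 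Functoriality of the weak cohomology theory $\mathcal{H}^\bullet$ and of its long exact sequences then yields a morphism of exact couples in $\mathcal{A}$, hence a morphism of the lifted spectral sequences $E_r(X'_\bullet)\to E_r(X_\bullet)$. This morphism is compatible with the abutments, because on $\mathcal{H}^\bullet(X,Z)$ it is the identity.

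Next, apply $\Phi$. By the classical identification, the image under $\Phi$ of this morphism is the map of spectral sequences $E_r(X'_\bullet,j_!G_U)\to E_r(X_\bullet,j_!G_U)$ of sheaf cohomology induced by the inclusion of filtrations $S^\bullet(X'_\bullet)\subset S^\bullet(X_\bullet)$. The comparison maps of Proposition~\ref{chap4:leray_iso_to_spec_seq_from_filtr}, namely $\mathcal{L}_2(f)\to E_2(X_\bullet)$ and $\mathcal{L}_2(f)\to E_2(X'_\bullet)$, are natural in the filtration. So the triangle they form with the refinement map commutes. Both comparison maps are isomorphisms by cellularity, so the refinement map is an isomorphism from $E_2$ on.

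Finally, since $\Phi$ is exact and conservative, the refinement morphism is already an isomorphism in $\mathcal{A}$ from page $2$ on. This identifies the two lifts compatibly with the isomorphisms to $\mathcal{L}_r(f,j_!G_U)$. Any two cellular filtrations are connected by a zigzag through the common refinement $Y'''_\bullet$, so the lift is well defined up to canonical isomorphism.

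The main obstacle is the naturality step in the second paragraph: checking that Arapura's comparison map $\mathcal{L}_2\to E_2(X_\bullet)$ commutes with refinement of filtrations. I would try to get this by showing that the filtration $S^\bullet$ on $Rf_*$ dominates the canonical filtration compatibly with inclusions of filtrations, via Lemma~\ref{chap4:functoriality_of_basic_spec_seq}.
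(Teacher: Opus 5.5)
Your proposal is correct and takes the same route as the paper: you form $Y_i\cup Y'_i$, enlarge it to a cellular filtration via Corollary~\ref{chap4:cell_filtr_from_arbitrary_filtr}, and compare each original filtration with this common refinement using Proposition~\ref{chap4:leray_iso_to_spec_seq_from_filtr}. You also spell out steps the paper leaves implicit: the comparison is a morphism in $\mathcal{A}$ induced by inclusions of triples, it is compatible with the comparison maps from $\mathcal{L}_2$, and conservativity of $\Phi$ then makes it an isomorphism of lifts.
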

\begin{proof}
    Let $\{ Y_{i}^{(1)}\}_{i=0}^{n}$ and $\{Y_{i}^{(2)}\}_{i=0}^{n}$ be filtrations which are both cellular with respect to the sheaf $\mathcal{G} = \bigoplus\limits_{q\in\mathbb{N}} R^{q}f_{*}j_! G_U$. Consider the filtration
    \[
     Y_{0}^{(1)}\cup Y_{0}^{(2)}\subset Y_{1}^{(1)}\cup Y_{1}^{(2)}\subset \cdots \subset Y_{n}^{(1)}\cup Y_{n}^{(2)} = Y.
    \]
    By Corollary \ref{chap4:cell_filtr_from_arbitrary_filtr} there is a cellular filtration $\{ Y_{i}\}_{i=0}^{n}$ such that $Y_{i}^{(1)}\cup Y_{i}^{(2)} \subset Y_{i},\ i = 0,\ldots, n$. So the lifts of the Leray spectral sequence corresponding to the filtrations $\{ Y_{i}^{(1)}\}_{i=0}^{n}$ and $\{Y_{i}^{(2)}\}_{i=0}^{n}$ are both isomorphic to the lift of the Leray spectral sequence associated with $\{ Y_{i}\}_{i=1}^{n}$ by Proposition~\ref{chap4:leray_iso_to_spec_seq_from_filtr}, which shows the independence of the lifts on the choice of a cellular filtration.
\end{proof}
\begin{lemma}\label{chap4:funct_of_lifts_for_affine_targets}
        Let
    \[\begin{tikzcd}
    	{X'} & X \\
    	{Y'} & Y
    	\arrow["h", from=1-1, to=1-2]
    	\arrow["{f'}"', from=1-1, to=2-1]
    	\arrow["f", from=1-2, to=2-2]
    	\arrow[from=2-1, to=2-2]
    \end{tikzcd}\]
    be a commutative diagram of morphisms in $V_\CC$ with both $Y, Y'$ affine and both $f,f'$ proper. Let $Z'\subset X', Z\subset X$ be closed subschemes such that $h(Z')\subset Z$, and let $j':X'-Z'\to X', j:X-Z\to X$ be the corresponding open embeddings. The map of the Leray spectral sequences $$\mathcal{L}^{p,q}_r(f,j_!G_{X-Z})\to \mathcal{L}^{p,q}_r(f',j'_!G_{X'-Z'})$$ lifts to a map of the lifts to $\mathcal{A}$.
\end{lemma}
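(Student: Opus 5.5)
The plan is to realize the map of Leray spectral sequences as a map of spectral sequences coming from filtrations, and then apply the functoriality of Proposition~\ref{chap4:spec_seq_from_filtr_is_motivic} with respect to morphisms of filtered pairs. Write $u:Y'\to Y$ for the bottom arrow. Fix a filtration $Y_\bullet$ of $Y$ that is cellular with respect to $\bigoplus_q R^qf_*j_!G_{X-Z}$, given by Corollary~\ref{chap4:cell_filtr_from_arbitrary_filtr}. Then use the freedom in the same corollary, applied on $Y'$, to choose a cellular filtration $Y'_\bullet$ with respect to $\bigoplus_q R^qf'_*j'_!G_{X'-Z'}$ that contains the pullback chain. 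Concretely, take $Y'^{\prime}_i$ to be (a closed subscheme of dimension $\le i$ containing) $u^{-1}(Y_i)\cap Y'^{\,\prime\prime}_i$. The point is to arrange $u(Y'_i)\subset Y_i$ for all $i$.

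The dimension condition is the first thing to address. $u^{-1}(Y_i)$ may have dimension larger than $i$, so we cannot simply require $Y'_i\supset u^{-1}(Y_i)$. We need the reverse inclusion $Y'_i\subset u^{-1}(Y_i)$, and Corollary~\ref{chap4:cell_filtr_from_arbitrary_filtr} only enlarges filtrations. I would instead rerun Beilinson's Lemma~\ref{chap4:Beilinson_lemma} inductively from the top. Set $Y'_n=Y'$. At step $i$, apply Beilinson's lemma on the affine scheme $Y'_{i+1}$ to the restricted sheaves, choosing the open $U$ so that its complement $Y'_i$ contains $Y'_{i+1}\cap u^{-1}(Y_i)$ whenever the latter is a proper closed subset of dimension $<i+1$. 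When it is not proper, that is, when $u(Y'_{i+1})\subset Y_i$, keep going by shrinking indices. Beilinson's lemma allows $U$ to be shrunk arbitrarily, so this is compatible with cellularity.

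An alternative that avoids this bookkeeping is to replace $Y$-filtrations by filtrations with shifted indices. Compare both sequences with the filtration $u^{-1}(Y_\bullet)$ on $Y'$, and use that $E_2$ of any filtration receives the natural map from $\mathcal{L}_2$ of Proposition~\ref{chap4:leray_iso_to_spec_seq_from_filtr}, naturally in morphisms of filtered maps. I expect this naturality to be the main obstacle. We need that the maps $\mathcal{L}_2\to E_2$ of Proposition~\ref{chap4:leray_iso_to_spec_seq_from_filtr} commute with pullback along $h$ and $u$ when $u(Y'_\bullet)\subset Y_\bullet$. This should follow because Arapura's construction, via Lemma~\ref{chap4:functoriality_of_basic_spec_seq}, is induced by maps of filtered complexes $Rf_*\to Ru_*Rf'_*h^*$ compatible with $S^\bullet$, given that $u^{-1}(Y_a)\supset Y'_a$.

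With compatible cellular filtrations in hand, $h$ maps $X'_\bullet=f'^{-1}(Y'_\bullet)$ into $X_\bullet=f^{-1}(Y_\bullet)$ and $Z'$ into $Z$. It therefore induces morphisms of triples $(X'_{a+1}\cup Z',X'_a\cup Z',\dots)\to(X_{a+1}\cup Z,\dots)$ in $V^3_\CC$. Applying the functor $\mathcal{H}^\bullet$ to these triples gives a morphism of exact couples in $\mathcal{A}$. By Proposition~\ref{chap4:spec_seq_from_filtr_is_motivic} and its naturality, its image under $\Phi$ is the map $E_r(X_\bullet)\to E_r(X'_\bullet)$. Composing with the isomorphisms of Proposition~\ref{chap4:leray_iso_to_spec_seq_from_filtr} gives the lift. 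Lemma~\ref{chap4:lift_doesnt_depend_on_filtr} then shows that the lift is independent of the chosen filtrations.
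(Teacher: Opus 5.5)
\textbf{The gap.} You choose the cellular filtration $Y_\bullet$ on the target $Y$ first, and then look for a cellular $Y'_\bullet$ on $Y'$ with $Y'_i\subset u^{-1}(Y_i)$. Corollary~\ref{chap4:cell_filtr_from_arbitrary_filtr} and Lemma~\ref{chap4:Beilinson_lemma} only ever \emph{enlarge} the closed pieces: shrinking $U$ enlarges its complement. So they cannot impose an upper bound of the form $Y'_i\subset u^{-1}(Y_i)$. In fact your inductive step arranges $Y'_i\supset Y'_{i+1}\cap u^{-1}(Y_i)$, which is the wrong inclusion.

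Worse, such a $Y'_\bullet$ need not exist at all. Take the following data:
\begin{itemize}
\item $X=Y=X'=Y'=\mathbb{A}^1_\CC$, with $f,f'$ the identities and $Z=Z'=\varnothing$;
\item the cellular filtration $Y_0=\{0\}\subset Y_1=Y$ of $Y$;
\item $u=h$ the constant map with value $1\notin Y_0$.
\end{itemize}
Then $u^{-1}(Y_0)=\varnothing$ forces $Y'_0=\varnothing$. If $Y'_k$, $k\geq 1$, is the first non-empty piece, then $\mathcal{F}_k$ is the constant sheaf $G$ on $Y'_k$. It has $H^0\neq 0$ although $k\neq 0$, so $Y'_\bullet$ is not cellular.

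Your alternative via the filtration $u^{-1}(Y_\bullet)$ of $Y'$ fails for the same reason. That filtration is not cellular in general: its pieces may be empty or of dimension $>i$. So the comparison map of Proposition~\ref{chap4:leray_iso_to_spec_seq_from_filtr} for it is not an isomorphism, and it yields no lift of the Leray spectral sequence of $f'$.

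\textbf{The fix.} Reverse the order, as the paper does.
\begin{itemize}
\item First choose a cellular filtration $Y'_\bullet$ of $Y'$.
\item Since $\dim\overline{u(Y'_i)}\le\dim Y'_i\le i$, the chain $\overline{u(Y'_0)}\subset\overline{u(Y'_1)}\subset\cdots$, capped by $Y$ in degree $\dim Y$, satisfies the hypothesis of Corollary~\ref{chap4:cell_filtr_from_arbitrary_filtr}.
\item That corollary then gives a cellular filtration $Y_\bullet$ of $Y$ with $u(Y'_i)\subset Y_i$. This is exactly the direction in which enlarging works.
\item From $f\circ h=u\circ f'$ you get $h(f'^{-1}(Y'_i))\subset f^{-1}(Y_i)$.
\end{itemize}
The rest of your argument then goes through as you wrote it: the morphisms of triples, the induced map of exact couples in $\mathcal{A}$, and Lemma~\ref{chap4:lift_doesnt_depend_on_filtr} for independence of the choices.
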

\begin{proof}
       Indeed, both Leray spectral sequences are constructed via an exact sequence of triples associated with some cellular filtrations $\{Y_{i}\}_{i=0}^{n},\ \{Y'_{i}\}_{i=0}^{n'}$ of $Y$ and $Y'$ respectively with $\operatorname{dim}Y_{i} = \operatorname{dim}Y'_{i} = i$. Hence we will be done if we show that one can choose the cellular filtrations in such a way that $h(Y_{i}') \subset Y_{i},\ i = 0,\ldots, n'$. Consider the following chain of Zariski closed subsets of $Y$:
    \[
     \overline{h(Y_{0}')} \subset \overline{h(Y_{1}')}\subset\cdots \subset \overline{h(Y_{n'}')}.
    \]
   Applying Corollary \ref{chap4:cell_filtr_from_arbitrary_filtr} we may assume that this chain is included in a cellular filtration $\{\tilde Y\}_{i=0}^n$ of~$Y$. Lemma~\ref{chap4:lift_doesnt_depend_on_filtr} gives us the required morphism of the lifts. 
\end{proof}
\begin{lemma}\label{chap4:lift_doesnt_depend_on_aff_model}
    Let $f:X\to Y$ be a proper morphism in $V_\CC$, and let $\mathcal{F}=j_! G_U$ where $j:U\to X$ is an open subscheme. The lift of the Leray spectral sequence $\mathcal{L}^{p,q}_r(f,\mathcal{F})$ to $\mathcal{A}$ does not depend on a choice of an affine model of $Y$ constructed using an affine open covering as in the proof of Theorem \ref{chap2:main_thrm}.
\end{lemma}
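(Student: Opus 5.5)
The plan is to compare two affine models $m_1:M_1\to Y$ and $m_2:M_2\to Y$, both built from affine open coverings as in the proof of Theorem~\ref{chap2:main_thrm}, through a third affine model dominating both. We take $M_{12}$ to be an affine model of the fibre product $M_1\times_Y M_2$. This fibre product is affine: it is a closed subscheme of $M_1\times_S M_2$, because $Y$ is separated, and $M_1\times_S M_2$ is affine over $S=\Spec\CC$. So we may simply take $M_{12}=M_1\times_Y M_2$, with its projections $p_k:M_{12}\to M_k$.

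The first step is to check that $M_{12}\to Y$ has the same good properties as $m_1,m_2$. By Lemma~\ref{chap2:stab_of_pullback} (in the form of Theorem~\ref{chap3:pullback_along_subvariety}), $p_1$ is the pullback of $m_2$ along $m_1$ and lies in $\mathcal{W}$. Moreover, $p_1$ is obtained by the same inductive pushout construction, pulled back to $M_1$. The argument of Lemma~\ref{chap4:lemma_affine_model_is_almost_submersion} therefore shows that $p_1$ is a weak acyclic submersion. The same holds for $p_2$ and for the composite $M_{12}\to Y$.

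Lemma~\ref{chap4:lemma_base_change_for_affine_model_global_version} then gives base change isomorphisms along $p_1$ and $p_2$. Hence Lemma~\ref{chap4:comp_of_spec_sequences}, applied to the Cartesian squares with bottom arrows $p_1$ and $p_2$ and to the pulled-back proper maps $f_k:X\times_Y M_k\to M_k$, yields isomorphisms of Leray spectral sequences
\[
\mathcal{L}_r(f_1,\mathcal{F}_1)\to\mathcal{L}_r(f_{12},\mathcal{F}_{12})\leftarrow \mathcal{L}_r(f_2,\mathcal{F}_2),\quad r\geq 2.
\]
Each map is compatible with the isomorphisms to $\mathcal{L}_r(f,\mathcal{F})$ given by Lemma~\ref{chap4:reduce_to_aff}, because pullbacks compose. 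Here $\mathcal{F}_k$ is again of the form $j_!G$ for the preimage of $U$, so it stays in the class where lifts are defined.

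Next, all three targets $M_1,M_2,M_{12}$ are affine and all three maps $f_1,f_2,f_{12}$ are proper. Lemma~\ref{chap4:funct_of_lifts_for_affine_targets}, applied to the commutative squares formed by $f_{12}$, $f_k$ and $p_k$, shows that both comparison maps lift to morphisms of the lifts in $\mathcal{A}$. Since $\Phi$ is exact and conservative and the underlying maps are isomorphisms, the lifted maps are isomorphisms in $\mathcal{A}$ on every page. By Lemma~\ref{chap4:lift_doesnt_depend_on_filtr}, the lifts do not depend on the chosen cellular filtrations. It follows that the lift obtained from $M_1$ is isomorphic to the lift obtained from $M_2$, compatibly with the identifications of their images under $\Phi$ with $\mathcal{L}(f,\mathcal{F})$. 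In particular the abutments agree with $\mathcal{H}^\bullet(X,Z)$.

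I expect the main obstacle to be the first step. The pulled-back model $M_{12}\to M_1$ is not literally a model constructed from an affine cover of $M_1$. So the weak acyclic submersion property and the cohomology isomorphism for all sheaves (part~\ref{item:coho_iso} of Theorem~\ref{chap3:main_thm_Spec_C}) have to be re-proved for pullbacks. To do this I would rerun the induction from the proof of Theorem~\ref{chap2:main_thrm} for $X\times_Y M_Y\to X$, as in Lemma~\ref{chap2:stab_of_pullback}. That induction exhibits $M_1\times_Y M_2$ as an iterated pushout along closed embeddings of products with $\mathbb{A}^n$, covering an open cover of $M_1$. The Mayer--Vietoris arguments of Theorem~\ref{chap3:main_thm_Spec_C} and of Lemma~\ref{chap4:lemma_affine_model_is_almost_submersion} then apply verbatim.
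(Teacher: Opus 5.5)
Your proposal is correct and follows essentially the same route as the paper. The paper also passes through the fibre product $M_Y^{(1)}\times_Y M_Y^{(2)}$, proves the cohomology isomorphism for its projections by rerunning the inductive Mayer--Vietoris argument (Lemma~\ref{aux_lemma}), and compares the lifts via Lemmas~\ref{chap4:reduce_to_aff} and~\ref{chap4:funct_of_lifts_for_affine_targets}. The paper additionally takes an affine model of the fibre product, which your observation that it is already affine makes unnecessary, and the base change isomorphisms you get from the weak acyclic submersion property already follow from proper base change, since $f$ is proper.
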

\begin{proof}
    Let $m_{Y}^{(1)}:M_{Y}^{(1)}\to Y,\ m_{Y}^{(2)}: M_{Y}^{(2)}\to Y$ be affine models constructed using affine open coverings $\{U_{i}\}$ and $\{V_{j}\}$ of $Y$ respectively. Consider the following pullback square
    \begin{equation}\label{diag_two_affine_models}
    \begin{tikzcd}
    	Z & {M_{Y}^{(1)}} \\
    	{M_{Y}^{(2)}} & Y\arrow[ul, phantom, "\ulcorner", very near end].
    	\arrow["n_1", from=1-1, to=1-2]
    	\arrow["n_2"', from=1-1, to=2-1]
    	\arrow["{m_{Y}^{(1)}}", from=1-2, to=2-2]
    	\arrow["{m_{Y}^{(2)}}"', from=2-1, to=2-2]
    \end{tikzcd}
    \end{equation}
    Let us pull back $f$ along the composition $Z\to M_Y^{(2)}\to Y$. The result is
\begin{equation}\label{diagram:aux_comparison_spec_seq}
\begin{tikzcd}
X\times_Y Z\ar[r]\ar[d,"f_Z"] & X\times_Y M_Y^{(2)}\ar[r]\ar[d,"f_2"] & X\ar[d,"f"]\\
Z\ar[r,"n_2"] & M_Y^{(2)}\ar[r] \arrow[ul, phantom, "\ulcorner", very near end]& Y\arrow[ul, phantom, "\ulcorner", very near end].
\end{tikzcd}
\end{equation}
Let us finish the proof of Lemma~\ref{chap4:lift_doesnt_depend_on_aff_model} assuming the following
\begin{lemma}\label{aux_lemma} 
Equip all schemes in the top row of~(\ref{diagram:aux_comparison_spec_seq}) with the pullbacks of the sheaf $\mathcal{F}$. Then the map of the Leray spectral sequence of $f_{2}$ to the Leray spectral sequence of $f_{Z}$ is an isomorphism from the second page on. 
\end{lemma}

Let $M_Z\to Z$ be an affine model. Lemmas~\ref{aux_lemma} and~\ref{chap4:reduce_to_aff} show that after pulling $f$ back to each of theschemes $M_Y^{(1)},M_Y^{(2)},M_Z$ we get Leray spectral sequences which are all isomorphic from the second term on. Furthermore, Lemma~\ref{chap4:funct_of_lifts_for_affine_targets} allows us to compare the lifts of the Leray spectral sequence of $f$ constructed using $M_Z$ and $M_Y^{(1)}$, and similarly for $M_Y^{(2)}$.
\end{proof}

\begin{proof}[Proof of Lemma~\ref{aux_lemma}] By Lemma~\ref{chap4:comp_of_spec_sequences}, it suffices to show that for any sheaf $\mathcal{G}$ on $M_Y^{(2)}$ the pullback map 
$$H^\bullet(M_Y^{(2)};\mathcal{G})\to H^\bullet(Z;n_2^*\mathcal{G})$$ is an isomorphism. We will do this by induction by the number $j$ of elements in an affine covering $\{U_{i}\}_{i=1}^{j}$ of $Y$. The argument is very similar to the one we used to prove part~\ref{item:coho_iso} of Theorem~\ref{chap3:main_thm_Spec_C}.

Let $j= 2$. Then 
$Z$ is the pushout of the following diagram
\[\begin{tikzcd}
	{(m_{Y}^{(2)})^{-1}(U_{1})\cap (m_{Y}^{(2)})^{-1}(U_{2})} & {(m_{Y}^{(2)})^{-1}(U_{2})\times\mathbb{A}^n_\CC} \\
	{(m_{Y}^{(2)})^{-1}(U_{1})\times \mathbb{A}^n_\CC}.
	\arrow[from=1-1, to=1-2]
	\arrow[from=1-1, to=2-1]
\end{tikzcd}\]
Applying Theorem~\ref{appendix:Mayer_Vietoris_main_thrm} we get the result we are after.

Now suppose the lemma holds true for all $Y$ that admit affine open covers $\{U_{i}\}_{i=1}^{j}$ with $j\geq 2$ elements. Let us prove that the lemma then also holds for $Y$ that can be covered by $j+1$ affine open sets, $Y=\bigcup_{i=1}^{j+1} U_i$. Denote $V = \bigcup_{i=1}^{j} U_{i}$. Then $Z$ is the pushout of the following diagram:
\[\begin{tikzcd}
	{(m_{Y}^{(2)})^{-1}(V) \cap (m_{Y}^{(2)})^{-1}(U_{j+1})} & {(m_{Y}^{(2)})^{-1}(U_{j+1}) \times \mathbb{A}^n_\CC} \\
	{(m_{Y}^{(2)})^{-1}(V)\times \mathbb{A}^n_\CC}
	\arrow[hook, from=1-1, to=1-2]
	\arrow[hook, from=1-1, to=2-1]
\end{tikzcd}\]
Applying induction hypothesis and Theorem~\ref{appendix:Mayer_Vietoris_main_thrm} again we obtain that the morphism $n_2:Z \to M_{Y}^{(2)}$ induces an isomorphism of the cohomology groups of an arbitrary sheaf.
\end{proof}
The following lemma shows that the lifts are functorial.
\begin{lemma}\label{chap4:functoriality_of_lifts}
We keep the notation and assumptions of Lemma~\ref{chap4:funct_of_lifts_for_affine_targets} except we no longer assume $Y$ or $Y'$ affine. Then the conclusion of Lemma~\ref{chap4:funct_of_lifts_for_affine_targets} is still true: the map of the Leray spectral sequences $$\mathcal{L}^{p,q}_r(f,j_!G_{X-Z})\to \mathcal{L}^{p,q}_r(f',j'_!G_{X'-Z'})$$ lifts to a map of the lifts to $\mathcal{A}$.
\end{lemma}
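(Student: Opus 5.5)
The plan is to reduce to Lemma~\ref{chap4:funct_of_lifts_for_affine_targets} by lifting the square to a square of affine models. Write $u:Y'\to Y$ for the bottom map. By the second half of Theorem~\ref{chap2:main_thrm}, or more precisely by the proof of Lemma~\ref{chap2:semi-functoriality} over $S=\Spec\CC$, we choose the affine model $m_Y:M_Y\to Y$ from the construction of Theorem~\ref{chap2:main_thrm}. We then take $M_{Y'}$ to be an affine model (again built by that construction) of $Y'\times_Y M_Y$, and let $u_M:M_{Y'}\to M_Y$ be the composite $M_{Y'}\to Y'\times_Y M_Y\to M_Y$. This gives a commutative square with $m_Y\circ u_M=u\circ m_{Y'}$, where $m_{Y'}$ is the composite $M_{Y'}\to Y'\times_YM_Y\to Y'$.

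Next we pull everything back. Set $X_M=X\times_YM_Y$ and $X'_M=X'\times_{Y'}M_{Y'}$. These come with proper maps $f_M,f'_M$ to the affine schemes $M_Y,M_{Y'}$, a map $h_M:X'_M\to X_M$ induced by $h$ and $u_M$, and closed subschemes given by the preimages of $Z,Z'$. The sheaves $j_!G$ pull back to sheaves of the same form, and $h_M$ respects the closed subschemes. Lemma~\ref{chap4:funct_of_lifts_for_affine_targets} then provides a map of lifts $\mathcal{L}(f_M)\to\mathcal{L}(f'_M)$ in $\mathcal{A}$. By definition, the lift for $f$ is transported along the isomorphism $\mathcal{L}(f)\cong\mathcal{L}(f_M)$ of Lemma~\ref{chap4:reduce_to_aff}, and this does not depend on the model by Lemma~\ref{chap4:lift_doesnt_depend_on_aff_model}.

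For $f'$ there is a subtlety: $M_{Y'}$ is not literally one of the affine models from the construction of Theorem~\ref{chap2:main_thrm} for $Y'$. It is an affine model of $Y'\times_YM_Y$ composed with the projection. The projection $Y'\times_YM_Y\to Y'$ is a base change of $m_Y$, so it is described by the same pushout recipe, and it satisfies the cohomology isomorphism and base change properties exactly as in the proof of Lemma~\ref{aux_lemma} and Lemma~\ref{chap4:lemma_base_change_for_affine_model_global_version}. Composing it with $M_{Y'}\to Y'\times_YM_Y$, which satisfies part~\ref{item:coho_iso} and part~\ref{item:base_change} of Theorem~\ref{chap3:main_thm_Spec_C}, we can apply Lemma~\ref{chap4:comp_of_spec_sequences} twice and obtain $\mathcal{L}(f')\cong\mathcal{L}(f'_M)$ from page 2 on. We must also check that the lift defined this way agrees with the one defined through an honest model of $Y'$. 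We do this as in the proof of Lemma~\ref{chap4:lift_doesnt_depend_on_aff_model}: form the fibre product $P=M_{Y'}\times_{Y'}M^{(0)}_{Y'}$ with an honest model $M^{(0)}_{Y'}$, and take an affine model $M_P$ of $P$. Both comparison maps from $M_P$ are cohomology isomorphisms by the Mayer--Vietoris induction of Lemma~\ref{aux_lemma}, and Lemma~\ref{chap4:funct_of_lifts_for_affine_targets} applied to the affine targets $M_P\to M_{Y'}$ and $M_P\to M^{(0)}_{Y'}$ identifies the two lifts.

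Finally, we must check that the square of Leray spectral sequences commutes. On one side we have the map induced by $(h,u)$; on the other, the composite of the comparison isomorphisms with the map induced by $(h_M,u_M)$. This follows from the naturality of pullback and base change maps of Leray spectral sequences with respect to commutative cubes of Cartesian squares, since the spectral sequences are Grothendieck spectral sequences of composites of derived functors. Transporting the map of lifts along the identifications of the previous steps then gives the desired map of lifts over $\mathcal{L}(f)\to\mathcal{L}(f')$. I expect the main obstacle to be the independence check for $M_{Y'}$, namely extending Lemma~\ref{chap4:lift_doesnt_depend_on_aff_model} from honest affine models to composites of base-changed and honest models. The cohomology isomorphisms themselves are routine from the Mayer--Vietoris arguments above.
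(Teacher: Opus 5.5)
Your proposal is correct and follows the paper's proof. Both arguments use the functoriality part of Theorem~\ref{chap2:main_thrm}, taking $M_{Y'}$ to be an affine model of $Y'\times_Y M_Y$, then pull $f$ and $f'$ back and transport the map of lifts from Lemma~\ref{chap4:funct_of_lifts_for_affine_targets} along the isomorphisms of Lemma~\ref{chap4:reduce_to_aff}. Your extra checks are steps the paper leaves implicit, and your arguments for them are sound: the composite $M_{Y'}\to Y'\times_Y M_Y\to Y'$ still induces cohomology isomorphisms, and it gives the same lift as an honest model of $Y'$ via the fibre-product trick of Lemma~\ref{chap4:lift_doesnt_depend_on_aff_model}. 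The commutativity of the square of Leray spectral sequences is even simpler than your cube argument, since it follows directly from functoriality of the Leray spectral sequence.
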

\begin{proof}
To reduce the case of arbitrary $Y, Y'$ to the affine case we use Theorem \ref{chap2:main_thrm} to construct affine models $M_{Y}, M_{Y'}$ such that the following diagram commutes:
    \[\begin{tikzcd}
    	{M_{Y'}} & {M_{Y}} \\
    	{Y'} & Y.
    	\arrow[from=1-1, to=1-2]
    	\arrow[from=1-1, to=2-1]
    	\arrow[from=1-2, to=2-2]
    	\arrow[from=2-1, to=2-2]
    \end{tikzcd}\]
    This is possible due to the functoriality part of Theorem \ref{chap2:main_thrm}. Then one can replace the initial diagram 
\begin{equation}\label{chap4:functoriality_of_lifts_initial_diagram}
\begin{tikzcd}
	{X'} & X \\
	{Y'} & Y
	\arrow["h", from=1-1, to=1-2]
	\arrow["{f'}"', from=1-1, to=2-1]
	\arrow["f", from=1-2, to=2-2]
	\arrow[from=2-1, to=2-2]
\end{tikzcd}
\end{equation}
with the following one:
    \[\begin{tikzcd}
    	{X\times_{Y}M_{Y'}} & {X\times_{Y} M_{Y}} \\
    	{M_{Y'}} & {M_{Y}},
    	\arrow[from=1-1, to=1-2]
    	\arrow[from=1-1, to=2-1]
    	\arrow[from=1-2, to=2-2]
    	\arrow[from=2-1, to=2-2]
    \end{tikzcd}\]
    and the Leray spectral sequences of both columns will be isomorphic to those of the columns of the original diagram~(\ref{chap4:functoriality_of_lifts_initial_diagram}) by Lemma~\ref{chap4:reduce_to_aff}, allowing us to make the desired reduction.
\end{proof}

\subsubsection{Proofs of the main results}

\begin{proof}[Proof of Theorem~\ref{chap4:leray_on_affine_come_from_filtration}]
    The theorem follows from Propositions~\ref{chap4:spec_seq_from_filtr_is_motivic} and \ref{chap4:leray_iso_to_spec_seq_from_filtr}, and Lemma~\ref{chap4:reduce_to_aff}.
\end{proof}
\begin{proof}[Proof of Theorem \ref{chap4:main_thm}]
    The theorem follows from Theorem~\ref{chap4:leray_on_affine_come_from_filtration}, and Lemmas~\ref{chap4:lift_doesnt_depend_on_filtr},~\ref{chap4:lift_doesnt_depend_on_aff_model} and~\ref{chap4:functoriality_of_lifts}.
\end{proof}

\begin{proof}[Proof of Corollary \ref{chap4:main_corollary}]
The corollary follows from Theorem~\ref{chap4:main_thm}.

\end{proof}

\subsubsection{Cohomology with compact support}

Now we will consider the case of the compactly supported Leray spectral sequence. Let $X$ be a Hausdorff locally compact space and $Y\subset X$ a closed subspace. This case is similar to what we've seen above in Section~\ref{motivic_nature_complex}, so we will only sketch the proofs. We set $H^\bullet_c(X;G)=H^\bullet_c(X;G_X)$ and $H^\bullet_c(X,Y;G)=H^\bullet_c(X;j_!G_X)$ where $j:X-Y\to X$ is an open embedding. 

If $f:X\to Y$ is a continuous map, and $\mathcal{F}^\bullet$ is a complex of sheaves on $X$, we let $(\mathcal{L}^{p,q}_{c,r}(f,\mathcal{F}^\bullet))$ denote the corresponding Leray spectral sequence with compact support. We have
$$\mathcal{L}_{c,2}^{p,q}(f,\mathcal{F}^\bullet)= H^p_c(Y;R^q \mathcal{F}^\bullet)\Rightarrow H^{p+q}_c(X;\mathcal{F}^\bullet).$$

\begin{definition}\label{def:classical_betti_theory_compact_supp}
We say than an enhanced Betti theory $((\mathcal{H}^i: V^{2}_\CC \to \mathcal{A}), \Phi)$ is \emph{classical with compact support} if there is an isomorphism $$\Phi(\mathcal{H}^\bullet(X,Y))\cong H^\bullet_c(X,Y;G)$$ of functors from $V_\CC^2$ to graded Abelian groups that moreover identifies the exact sequence~(\ref{exact_seq_relative_coho_weak_theories}) with the compactly supported version of~(\ref{exact_seq_relative_coho_sheaves}).
\end{definition}

\begin{example}
A basic example is $\mathcal{H}^\bullet(X,Y)=H^\bullet_c(X,Y;\Q)$ viewed as a mixed Hodge structure, cf.\ Example~\ref{example:mhs_as_enhanced_betti}.
\end{example}

Let the category $\mathcal{C}$ and the functor $F:\mathcal{C}\to V^2_\CC$ be as in Section~\ref{subsection:main_result_c}. We set ${}_{Ab}E$ to be the Leray spectral sequence with compact support viewed as a functor from $\mathcal{C}$ to spectral sequences of Abelian groups:
$${}_{Ab}E^{p,q}_{r}(f,j_! G_U)=\mathcal{L}^{p,q}_{c,r}(f,j_! G_U),r\geq 2.
$$

\begin{theorem}\label{chap4:main_thm_comp_supp}
Let $\mathcal{H}^\bullet=((\mathcal{H}^i: V^{2}_\CC \to \mathcal{A}), \Phi)$ be a classical enhanced Betti theory with compact support. Then the Leray spectral sequence with compact support is motivic (with respect to $\mathcal{H}^\bullet$ and~$F$).
\end{theorem}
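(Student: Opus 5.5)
We follow the proof of Theorem~\ref{chap4:main_thm} step by step, replacing ordinary cohomology with compactly supported cohomology. The reason this works is that all the ingredients (filtrations of the source, proper base change, the comparison with affine models) behave at least as well for $H^\bullet_c$ and $R f_!$. Note that no properness assumption appears in the statement; for proper $f$ we have $Rf_!=Rf_*$. In general the compactly supported Leray spectral sequence is the Grothendieck spectral sequence for $R(pt_X)_!=R(pt_Y)_!\circ Rf_!$, so we work with $Rf_!$ and $R^qf_!$ throughout.

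\textbf{Affine target.} Suppose first that $Y$ is affine. For a finite exhaustive filtration $Y_\bullet$ by closed subschemes and $X_\bullet=f^{-1}(Y_\bullet)$, we use the compactly supported analogue of Lemma~\ref{chap4:filtrartion_to_spec_seq}:
\[
E_1^{p,q}=H^{p+q}_c(Z;\operatorname{Gr}^p_S\mathcal{F}^\bullet)\Rightarrow H^{p+q}_c(Z;\mathcal{F}^\bullet).
\]
The graded pieces are $j_!$-extensions from the strata, and the differentials come from the long exact sequences of triples for $H_c$. The argument of Proposition~\ref{chap4:spec_seq_from_filtr_is_motivic} then shows that this spectral sequence is motivic for a theory that is classical with compact support.

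The analogue of Lemma~\ref{chap4:functoriality_of_basic_spec_seq} becomes easier. We have $Rf_!(k_a)_!k_a^*=(K_a)_!K_a^*Rf_!$ by composition of $!$-pushforwards together with proper base change for $f_!$, which holds without any properness of $f$. This gives $E_1(Y_\bullet,Rf_!\mathcal{F}^\bullet)\cong E_1(X_\bullet,\mathcal{F}^\bullet)$.

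Next we need a cellular filtration for compact supports. For this we use Beilinson's lemma in its compactly supported form: on an affine $Y$ of dimension $n$ and for a weakly constructible $\mathcal{F}$, there is a dense open $U$ with $H^i_c(Y;j_!j^*\mathcal{F})=0$ for $i\neq n$. This follows by Verdier duality from the version for $Rj_*$ in \cite{No18}, which is also stated there. The Corollary~\ref{chap4:cell_filtr_from_arbitrary_filtr} argument then goes through verbatim for the constructible sheaves $R^qf_!j_!G_U$. With a cellular filtration, the analogues of Lemma~\ref{chap4:first_page_of_spec_seq_with_cell_filtr} and Proposition~\ref{chap4:leray_iso_to_spec_seq_from_filtr} identify $\mathcal{L}_{c,r}$ with $E_r(X_\bullet,\cdot)$ for $r\ge2$, exactly as in \cite{Ar03}.

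\textbf{General target.} To reduce to the affine case we pull back along an affine model $m_Y$. By the compactly supported version of Lemma~\ref{chap4:comp_of_spec_sequences}, we need two inputs.
\begin{enumerate}
\item Proper base change $m_Y^*Rf_!\cong Rf'_!\,\mathrm{pr}_1^*$, which holds for arbitrary $f$.
\item An isomorphism $H^\bullet_c(Y;\mathcal{H})\cong H^\bullet_c(M_Y;m_Y^*\mathcal{H})$.
\end{enumerate}

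The second point is \textbf{the main obstacle}, and it is false as stated: $M_Y$ has $\mathbb{A}^n$-fibres, so there is a degree shift. Take $Y=U_1\cup U_2$. Then $M_Y$ is glued from $U_i\times\mathbb{A}^n$ along $U_1\cap U_2$, and the pieces carry different shifts: $H_c^\bullet(U_i\times\mathbb{A}^n)=H_c^{\bullet-2n}(U_i)$, while the intersection piece has no shift. The closed Mayer--Vietoris sequence would then mix these shifts.

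I would therefore avoid a direct comparison and use a different comparison tool. The plan is to compare via $Rm_{Y!}m_Y^!$ instead of $m_Y^*$, or equivalently to work with Borel--Moore homology and duality. Concretely, I would replace pullback by the trace map $Rm_{Y!}m_Y^!\mathcal{H}\to\mathcal{H}$ and show it is an isomorphism. This can be checked by the same Mayer--Vietoris induction used for Theorem~\ref{chap3:main_thm_Spec_C}(\ref{item:coho_iso}): on $U\times\mathbb{A}^n$ the dualizing twist $m^!=m^*[2n]$ cancels the shift in $H_c$, and on the closed gluing locus $m$ is the identity. What must be checked is that the open/closed Mayer--Vietoris triangles are compatible with $m^!$.

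If that compatibility fails, the fallback is more direct. Any $Y$ has a finite stratification $Y=\bigsqcup_k Y_k^\circ$ by affine locally closed pieces. The compactly supported spectral sequences are additive along such stratifications through exact triangles of the form $j_!j^*\to\mathrm{id}\to i_*i^*$. So one can define the lift on each affine stratum by the affine-target case above, and glue the lifts using the motivic long exact sequences of triples. Independence of the choices and functoriality would then follow as in Lemmas~\ref{chap4:lift_doesnt_depend_on_filtr}--\ref{chap4:functoriality_of_lifts}: refine two filtrations (or stratifications) to a common cellular one, and pass to common refinements.
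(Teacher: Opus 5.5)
\textbf{There are genuine gaps at both load-bearing steps.}

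\emph{Affine target.} The compactly supported Beilinson lemma you invoke is false. You have $H^i_c(Y;j_!j^*\mathcal{F})=H^i_c(U;\mathcal{F})$. For $\mathcal{F}=\Z$ one has $H^{2n}_c(U;\Z)\neq 0$ for every non-empty open $U$ of an $n$-dimensional $Y$; for instance, $Y=\mathbb{A}^1$ and $U$ equal to $\mathbb{A}^1$ minus finitely many points give $H^2_c(U;\Z)=\Z$. Verdier duality does not rescue it: the dual of $H^\bullet(Y;j_!j^*\mathcal{G})$ is $H^\bullet_c(Y;Rj_*j^*D\mathcal{G})$, not a compactly supported cohomology of $U$. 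So compactly supported cellular filtrations of the kind you need do not exist, and your identification of $\mathcal{L}_{c,r}$ with $E_r(X_\bullet,\cdot)$ has nothing to stand on.

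\emph{General target.} You correctly see that affine models give no comparison for $H_c$; indeed $m_Y$ is not proper, so there is not even a pullback on $H_c$. Neither of your fixes works, however.
\begin{enumerate}
\item The trace map $Rm_{Y!}m_Y^!\to\mathrm{id}$ may well be an isomorphism, dually to $\mathcal{F}\cong Rm_{Y*}m_Y^*\mathcal{F}$. But it is not induced by a morphism of pairs, so the enhanced theory $\mathcal{H}^\bullet$ gives you no map in $\mathcal{A}$ realizing it.
\item The complex $\mathrm{pr}_1^!(j_!G_U)$ on $X\times_Y M_Y$ is not of the form $j'_!G_{U'}$, even up to shift. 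Over $U_1\cap U_2$ the fibres are $\mathbb{A}^n\vee\mathbb{A}^n$, whose dualizing complex has cohomology in two degrees at the wedge point. Hence Proposition~\ref{chap4:spec_seq_from_filtr_is_motivic} does not apply there.
\item You would also need $m_Y^!Rf_!\cong Rf'_!\,\mathrm{pr}_1^!$, which is not a base change isomorphism available in general.
\end{enumerate}
The fallback of gluing lifts over an affine stratification along long exact sequences of triples is not a construction. The middle term of a long exact sequence is determined only up to extension, and the differentials of a spectral sequence are not determined by those of the pieces.

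\emph{Missing idea.} Compactify rather than affinize; this is what the paper does. By Nagata, extend $f$ to $\bar f:\overline{X}\to\overline{Y}$ between compactifications. Then $\bar f$ is automatically proper, so $\bar f_*=\bar f_!$. On a Godement resolution $\mathcal{I}^\bullet$ of $\mathcal{F}^\bullet$ this gives $j^Y_!f_!\mathcal{I}^\bullet=\bar f_*j^X_!\mathcal{I}^\bullet$. Since $j^Y_!$ is exact, it commutes with canonical truncations, and $\Gamma(\overline{Y},j^Y_!-)=\Gamma_c(Y,-)$. Hence $\mathcal{L}^{p,q}_{c,r}(f,\mathcal{F}^\bullet)\cong\mathcal{L}^{p,q}_r(\bar f,j^X_!\mathcal{F}^\bullet)$ for $r\geq 2$.

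For $\mathcal{F}=j_!G_U$, the sheaf $j^X_!\mathcal{F}$ is the one attached to the pair $(\overline{X},\overline{X}-U)$, and $H^\bullet(\overline{X},\overline{X}-U;G)=H^\bullet_c(U;G)=H^\bullet_c(X,X-U;G)$. So the already established proper case, Theorem~\ref{chap4:main_thm} applied to $\bar f$, supplies the lift. Independence of the compactification and functoriality follow because any two compactifications are dominated by a third. This route avoids both a compactly supported Beilinson lemma and any comparison of $H_c$ along affine models.
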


Note that unlike Theorem~\ref{chap4:main_thm}, Theorem~\ref{chap4:main_thm_comp_supp} applies to arbitrary morphisms, not just proper ones.

\begin{proof} Theorem~\ref{chap4:main_thm_comp_supp} follows from Theorem~\ref{chap4:main_thm}, the next proposition, and the fact that any two compactifications of $X\in V_\CC$ are dominated by a third.
\end{proof}

 \begin{proposition}
    Let $f: X \to Y$ be an arbitrary morphism in $V_\CC$. Suppose $\mathcal{F}^\bullet$ is a complex of sheaves on $X$. Let $\overline{X}, \overline{Y}$ be compactifications of $X$ and $Y$ respectively such that there is a morphism $\overline{f}: \overline{X}\to \overline{Y}$ that makes the following diagram commute
\[\begin{tikzcd}
	{\overline{X}} & {\overline{Y}} \\
	X & Y.
	\arrow["{\overline{f}}", from=1-1, to=1-2]
	\arrow["{j^{X}}", hook, from=2-1, to=1-1]
	\arrow["f"', from=2-1, to=2-2]
	\arrow["{j^{Y}}"', hook', from=2-2, to=1-2]
\end{tikzcd}\]
Then one has the following isomorphism of spectral sequences
\[
\mathcal{L}_{c,r}^{p,q}(f,\mathcal{F}^\bullet) \cong \mathcal{L}_{r}^{p,q}(\overline{f}, j^X_{!}\mathcal{F}^\bullet),r\geq 2.
\]
\end{proposition}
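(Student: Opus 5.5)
The plan is to compare the two Leray spectral sequences through their $E_2$ pages and the abutments, using the standard functoriality of the Grothendieck spectral sequence. Since $\overline{X}$ and $\overline{Y}$ are compact, $H^\bullet_c(\overline{X};-)=H^\bullet(\overline{X};-)$ and $H^\bullet_c(\overline{Y};-)=H^\bullet(\overline{Y};-)$. The compactly supported Leray spectral sequence of $f$ is the Grothendieck spectral sequence for $R\Gamma_c(X;-)=R\Gamma_c(Y;-)\circ Rf_!$. The Leray spectral sequence of $\overline{f}$ applied to $j^X_!\mathcal{F}^\bullet$ is the one for $R\Gamma(\overline{X};-)=R\Gamma(\overline{Y};-)\circ R\overline{f}_*$.

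The first step is a sheaf-level identification. Because $\overline{f}$ is proper, $R\overline{f}_*=R\overline{f}_!$. Since $\overline{f}\circ j^X=j^Y\circ f$, we get
$$R\overline{f}_*\, j^X_!\mathcal{F}^\bullet\cong R\overline{f}_!\, j^X_!\mathcal{F}^\bullet\cong R(\overline{f}\circ j^X)_!\mathcal{F}^\bullet\cong j^Y_!\,Rf_!\mathcal{F}^\bullet,$$
using $Rj_!=j_!$ for open embeddings. I would verify this by realizing it on the level of complexes: replace $\mathcal{F}^\bullet$ by a c-soft (e.g.\ Godement) resolution. Then $j^X_!$ of it is still c-soft, so $\overline{f}_*=\overline{f}_!$ computes the derived functor and the identity $\overline{f}_! j^X_!=j^Y_! f_!$ holds on the nose. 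Taking cohomology sheaves and using exactness of $j^Y_!$ gives
$$R^q\overline{f}_*(j^X_!\mathcal{F}^\bullet)\cong j^Y_!R^qf_!\mathcal{F}^\bullet.$$

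The second step is the identification of $E_2$ terms. For any sheaf $\mathcal{G}$ on $Y$ we have $H^p(\overline{Y};j^Y_!\mathcal{G})=H^p_c(Y;\mathcal{G})$, the standard description of compact support cohomology via a compactification. Hence
$$\mathcal{L}_2^{p,q}(\overline{f},j^X_!\mathcal{F}^\bullet)\cong H^p_c(Y;R^qf_!\mathcal{F}^\bullet)=\mathcal{L}_{c,2}^{p,q}(f,\mathcal{F}^\bullet).$$
The abutments agree as well, since $H^\bullet(\overline{X};j^X_!\mathcal{F}^\bullet)=H^\bullet_c(X;\mathcal{F}^\bullet)$.

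The main obstacle is upgrading these termwise isomorphisms to an isomorphism of spectral sequences, meaning compatibility with all differentials and with the abutment filtration. I would use the second description of the Leray spectral sequence recalled in Section~\ref{motivic_nature_complex}: it is the spectral sequence of the canonical filtration $\tau_{\le\bullet}$ on $R\overline{f}_*j^X_!\mathcal{F}^\bullet$, respectively on $Rf_!\mathcal{F}^\bullet$, with reindexing from $E_1$. Since $j^Y_!$ is exact, it commutes with canonical truncations. So the filtered complex $(j^Y_!Rf_!\mathcal{F}^\bullet,\tau)$ is $j^Y_!$ applied to $(Rf_!\mathcal{F}^\bullet,\tau)$. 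Applying $R\Gamma(\overline{Y};-)\circ j^Y_!=R\Gamma_c(Y;-)$ to this filtered object gives a filtered isomorphism, hence an isomorphism of the induced spectral sequences from $E_2$ on (i.e.\ from $E_1$ in the canonical-filtration indexing). The delicate point is to perform the identification $\overline{f}_*j^X_!\cong j^Y_!f_!$ functorially in the filtered derived category rather than just the derived category. I would handle this by fixing c-soft resolutions and using the identity of functors on complexes, as in the proof of Lemma~\ref{chap4:functoriality_of_basic_spec_seq}.
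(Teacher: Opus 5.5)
Your proposal is correct and follows essentially the same route as the paper. You pass to the Godement resolution $\mathcal{I}^\bullet$ and use properness of $\overline{f}$ to get the identity $\overline{f}_*(j^X_!\mathcal{I}^\bullet)=j^Y_!(f_!\mathcal{I}^\bullet)$ on the level of complexes. You then use exactness of $j^Y_!$, so that it commutes with the canonical truncations $\tau_{\le n}$, together with $\Gamma(\overline{Y};j^Y_!(-))=\Gamma_c(Y;-)$, to identify the two filtered objects and hence the two spectral sequences. Your separate identification of the $E_2$ terms and abutments is redundant once the filtered identification is in place, but it does no harm.
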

\begin{proof}
Let us recall the construction of the Leray spectral sequences. Let $\mathcal{F}^\bullet \to \mathcal{I}^{\bullet}$ be the Godement resolution, which is term-wise $f_{!}$-acyclic and $f_*$-acyclic. Up to renumbering, the Leray spectral sequence $\mathcal{L}_{c,r}^{p,q}(f,\mathcal{F}^\bullet)$ is the spectral sequence induced by taking the $R\Gamma_!$ of the canonical filtration on $f_{!}\mathcal{I}^{\bullet}$ (which represents $Rf_! \mathcal{F}^\bullet$), c.f.\ the beginning of Section~\ref{motivic_nature_complex}. 

Similarly, the functor $j^X_!$ takes the components of the Godement resolution to soft sheaves, and $\bar f_*$ preserves soft sheaves as $\bar f$ is proper. So up to the same renumbering, the Leray spectral sequence $\mathcal{L}_{r}^{p,q}(\overline{f},j^X_{!}\mathcal{F}^\bullet)$ is induced by taking the $R\Gamma$ of the canonical filtration on $\bar f_*(j^X_!\mathcal{I}^{\bullet})$ (which represents $R\bar f_*(j^X_!\mathcal{F}^\bullet)$). 

Since $\bar f$ is proper, we have $\bar f_!=\bar f_*$, and hence
$$j^Y_!(f_{!}\mathcal{I}^{\bullet})=\bar f_*(j^X_!\mathcal{I}^{\bullet}).$$ Moreover, the functor $j^Y_!$ is exact, so it commutes with taking the canonical filtration. Denoting the latter by $\tau_{\leq n}$ we get
$$\Gamma_!(\tau_{\leq n}f_!\mathcal{I}^\bullet)=\Gamma\circ j^Y_!(\tau_{\leq n}f_!\mathcal{I}^\bullet)=\Gamma(\tau_{\leq n}(j^Y_! f_!\mathcal{I}^\bullet))=\Gamma(\tau_{\leq n}\bar f_*(j^X_!\mathcal{I}^{\bullet})),$$
which identifies the two spectral sequences.
\end{proof}
As an analog of Corollary \ref{chap4:main_corollary} we obtain the following corollary.
\begin{corollary}\label{chap4:main_corollary_compact_support}
    For $f: X \to Y$ an arbitrary morphism in $V_\CC$ and an open subscheme $j:U\to X$, the corresponding Leray spectral sequence
    \[
    E_{c,2}^{p,q} \cong H^{p}_c(Y; R^{q}f_{*}j_!\mathbb{Q}_U) \Longrightarrow H^{p+q}_c(X,X-U; \mathbb{Q})=H^{p+q}_c(U; \mathbb{Q})
    \]
    has a mixed Hodge structure (MHS) starting from the second page. This MHS is compatible with the one on the cohomology groups $H^{*}_c(U; \mathbb{Q})$, and is functorial in $f$.
\end{corollary}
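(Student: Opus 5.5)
The plan is to deduce Corollary~\ref{chap4:main_corollary_compact_support} from Theorem~\ref{chap4:main_thm_comp_supp}, in the same way Corollary~\ref{chap4:main_corollary} was obtained from Theorem~\ref{chap4:main_thm}. We apply that theorem to the classical enhanced Betti theory with compact support $\mathcal{H}^\bullet(X,Y)=H^\bullet_c(X,Y;\Q)$, regarded as a mixed Hodge structure, with $\mathcal{A}$ the category of mixed Hodge structures and $\Phi$ the forgetful functor to Abelian groups. This $\Phi$ is exact and conservative.

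The first step is to check that this theory is classical with compact support in the sense of Definition~\ref{def:classical_betti_theory_compact_supp}. Deligne's mixed Hodge theory gives functorial MHS on $H^\bullet_c(X,Y;\Q)$ for pairs $(X,Y)\in V^2_\CC$. It also gives that the long exact sequence of a triple $(X,Y,Z)$ with compact support is a sequence of MHS, coming from the short exact sequence of sheaves $0\to j^Z_!\to j^Y_!\to i^Y_*j^{Z,Y}_!\to 0$. So the underlying Abelian groups, together with the exact sequences, are identified with~(\ref{exact_seq_relative_coho_sheaves}) in its compactly supported form, with $G=\Q$. I expect this step to be the only substantive one. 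The point to watch is that the Hodge-theoretic connecting maps agree with the sheaf-theoretic ones, and I would take this from Deligne's construction via mixed Hodge complexes.

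Next, Theorem~\ref{chap4:main_thm_comp_supp} applies to the object $(f,X-U)\in\mathcal{C}$ with sheaf $j_!\Q_U$. It gives a lift of $\mathcal{L}^{p,q}_{c,r}(f,j_!\Q_U)$, for $r\geq2$, to spectral sequences in $\mathcal{A}$. The $H^\bullet$-component of this lift is $\mathcal{H}^\bullet(X,X-U)=H^\bullet_c(X,X-U;\Q)$ with its MHS, and functoriality in $(f,Z)$ is part of being a lift of a functor. Since the $E_2$ term is $H^p_c(Y;R^qf_!j_!\Q_U)$ (written $R^qf_*$ in the statement), this gives a MHS on every page from $r=2$ onward. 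The differentials are morphisms of MHS, and the abutment filtration and the isomorphisms $E_\infty\cong\operatorname{Gr}_F H^\bullet$ are compatible with the MHS on $H^\bullet_c$.

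Finally, we identify $H^\bullet_c(X,X-U;\Q)=H^\bullet_c(X;j_!\Q_U)=H^\bullet_c(U;\Q)$. This holds because $j$ is an open embedding, so $\Gamma_c\circ j_!=\Gamma_c$ on $U$. We should also note that the MHS on relative compactly supported cohomology agrees with Deligne's MHS on $H^\bullet_c(U;\Q)$, again as a standard consequence of the functoriality of the triple sequence.
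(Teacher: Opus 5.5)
Your proposal is correct and is the paper's own argument: the paper states this corollary without a written proof, as the analogue of Corollary~\ref{chap4:main_corollary}, i.e.\ it applies Theorem~\ref{chap4:main_thm_comp_supp} to the theory $H^\bullet_c(-,-;\Q)$ with its MHS and identifies $H^\bullet_c(X,X-U;\Q)=H^\bullet_c(U;\Q)$, exactly as you do. One caveat, which the paper shares, is that $H^\bullet_c$ is contravariant only along proper maps. So the functoriality on $V^2_\CC$ that you invoke, and the ``functorial in $f$'' conclusion, should be read for commutative squares whose horizontal arrows are proper.
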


$\square$

\subsection{Etale case}
Here we take $S=\Spec k$ where $k$ is an algebraically closed field of characteristic $p>0$. In this subsection we briefly outline the \'etale analog of the results of Section~\ref{motivic_nature_complex}. Let $G$ be a finite Abelian group of order coprime with $p$. The definition of a classical enhanced Betti theory, possibly with compact support (Definitions~\ref{def:classical_betti_theory} and~\ref{def:classical_betti_theory_compact_supp}) extends to the \'etale case in a straightforward way by replacing the singular cohomology $H^\bullet(-,-;G)$ with the \'etale cohomology $H^\bullet_{\mbox{\scriptsize\rm \'et}}(-,-;G)$.

Let $\mathcal{C}$ be the category of all couples $(f,Z)$ where $f:X\to Y$ is a morphism in $V_k$, and $Z\subset X$ is a closed subvariety, or equivalently, the category of all couples $(f,j_! G_U)$ where $f$ is as above and $j:U\to X$ is an open embedding. On $\mathcal{C}$ we have the Leray spectral sequence functor and the Leray spectral sequence functor with compact support:
$$(f,j_! G_U)\mapsto (\mathcal{L}^{p,q}_{r}(f,j_! G_U)), (f,j_! G_U)\mapsto (\mathcal{L}^{p,q}_{c,r}(f,j_! G_U)).$$ As in Section~\ref{subsection:main_result_c} we define the functor $F:\mathcal{C}\to V_k^2$ by $F(f:X\to Y,Z)=(X,Z)$.

Finally, let $\mathcal{C}^{prop}\subset \mathcal{C}$ be the full subcategory on all $(f:X\to Y,Z)$ such that such that $f$ is proper.
\begin{theorem}\label{chap4:etale_case_motivic_nature}
   Let $\mathcal{H}^\bullet=((\mathcal{H}^i: V^{2}_k \to \mathcal{A}), \Phi)$ be a classical enhanced Betti theory. Then the Leray spectral sequence is motivic on $\mathcal{C}^{prop}$ with respect to $\mathcal{H}^\bullet$ and $F$.

   Let $\mathcal{H}^\bullet=((\mathcal{H}^i: V^{2}_k \to \mathcal{A}), \Phi)$ be a classical enhanced Betti theory with compact support. Then the Leray spectral sequence with compact support is motivic on $\mathcal{C}$ with respect to $\mathcal{H}^\bullet$ and $F$.
\end{theorem}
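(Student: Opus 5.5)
The plan is to copy the proof of Theorems~\ref{chap4:main_thm} and~\ref{chap4:main_thm_comp_supp} step by step, swapping singular cohomology of $X(\CC)$ for \'etale cohomology with coefficients in $G$ (finite, of order prime to $p$). For the ordinary Leray spectral sequence on $\mathcal{C}^{prop}$ we first treat an affine target $Y$. Here we need an \'etale cellular filtration. Beilinson's basic lemma (Lemma~\ref{chap4:Beilinson_lemma}) holds for constructible \'etale sheaves on affine varieties over an algebraically closed field; its proof (see~\cite{No18}) uses only Artin vanishing and generic base change, both available for torsion prime to $p$. Hence Corollary~\ref{chap4:cell_filtr_from_arbitrary_filtr} carries over to the constructible sheaves $R^q f_*j_!G_U$, which are constructible by proper base change.

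Next we establish the \'etale versions of the filtration machinery. Lemmas~\ref{chap4:filtrartion_to_spec_seq}, \ref{chap4:inj_resolution_for_sheaf_in_filtered_derived_cat} and \ref{chap4:functoriality_of_basic_spec_seq} are purely formal. They use only $j_!$, $i_*$, exactness of $\operatorname{Gr}$ and proper base change with $f_*=f_!$ for proper $f$, so they hold verbatim on the \'etale site. Lemma~\ref{chap4:first_page_of_spec_seq_with_cell_filtr}, Proposition~\ref{chap4:leray_iso_to_spec_seq_from_filtr} and Proposition~\ref{chap4:spec_seq_from_filtr_is_motivic} are likewise formal consequences. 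Together they identify the Leray spectral sequence from $E_2$ on with the spectral sequence of the exact couple built from the long exact sequences of the triples $(X,X_{a+1},X_a)$. That spectral sequence lifts to $\mathcal{A}$ because $\mathcal{H}^\bullet$ is a weak cohomology theory whose exact sequences $\Phi$ identifies with the \'etale relative sequences.

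For arbitrary $Y$ we pull back along the affine model $m_Y$ of Theorem~\ref{chap3:main_thrm_etale_case}, which supplies the two inputs of Lemma~\ref{chap4:comp_of_spec_sequences}. Part~1 gives the cohomology isomorphism, and part~2 gives the base change isomorphism. The \'etale version of Lemma~\ref{chap4:comp_of_spec_sequences} has the same proof, so we obtain the analog of Lemma~\ref{chap4:reduce_to_aff}. Independence of the lift is then proved as in Lemmas~\ref{chap4:lift_doesnt_depend_on_filtr}, \ref{chap4:funct_of_lifts_for_affine_targets} and \ref{chap4:lift_doesnt_depend_on_aff_model}. Choices of cellular filtrations are dominated by a common refinement, and two affine models are compared through the fibre product $Z$. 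Functoriality follows as in Lemma~\ref{chap4:functoriality_of_lifts}, using the functoriality part of Theorem~\ref{chap2:main_thrm}.

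The main obstacle is the étale analog of Lemma~\ref{aux_lemma}: we must show that $n_2\colon Z\to M_Y^{(2)}$ induces isomorphisms on \'etale cohomology for every torsion sheaf, not only for pullbacks from $Y$. I would argue by induction on the number of affine opens, using the open and closed Mayer--Vietoris sequences of Theorem~\ref{appendix:Mayer_Vietoris_main_thrm} together with homotopy invariance $H^*_{\mbox{\scriptsize\rm \'et}}(W\times\mathbb{A}^n;\pi^*\mathcal{G})\cong H^*_{\mbox{\scriptsize\rm \'et}}(W;\mathcal{G})$. That invariance holds only for \emph{locally constant} or suitably tame coefficients, so I would restrict all sheaves to those pulled back along the relevant projections, which is all the argument actually needs.

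Finally, the compact-support statement reduces to the proper case. Choose Nagata compactifications $\overline{f}\colon\overline X\to\overline Y$ of $f$; any two such compactifications are dominated by a third. The formal identity $j^Y_!f_!=\overline f_*j^X_!$ then identifies $\mathcal{L}_{c}(f,\mathcal{F})$ with $\mathcal{L}(\overline f,j^X_!\mathcal{F})$, exactly as in the proposition preceding Corollary~\ref{chap4:main_corollary_compact_support}.
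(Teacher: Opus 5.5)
Your plan is essentially the paper's: it also transports the proofs of Theorems~\ref{chap4:main_thm} and~\ref{chap4:main_thm_comp_supp} and treats the \'etale Beilinson lemma as the only new input, but it obtains that lemma from Beilinson's perverse-sheaf Lemma~\ref{chap4:original_Beilinson_lemma} together with Artin vanishing (Corollary~\ref{chap4:Beilinson_lemma_positive_char}), rather than by re-running Nori's argument, whose validity in characteristic $p$ you only assert. Your ``main obstacle'' is not one: $H^{*}_{\mbox{\scriptsize\rm \'et}}(W\times\mathbb{A}^n;\pi^{*}\mathcal{G})\cong H^{*}_{\mbox{\scriptsize\rm \'et}}(W;\mathcal{G})$ holds for every torsion sheaf $\mathcal{G}$ of order prime to $p$, locally constant or not (this is the result~\cite[Chapter VI, Corollary 4.20]{Mi80} used in Theorem~\ref{chap3:etale_case_admissibility}), and the \'etale Lemma~\ref{aux_lemma} in fact follows immediately from Lemma~\ref{chap2:stab_of_pullback}, because $n_2$ is the base change of $m_Y^{(1)}$ along $m_Y^{(2)}$ and membership in the \'etale class $\mathcal{W}$ is by definition a cohomology isomorphism for all $l$-torsion sheaves.
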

The proof of this theorem mirrors the proofs of Theorems~\ref{chap4:main_thm} and~\ref{chap4:main_thm_comp_supp}.

An \'etale sheaf is \emph{weakly constructible} if it is locally constant on every consecutive difference of some filtration by closed subschemes, cf.\ Definition~\ref{def:weakly_constr_sheaf}.

\begin{theorem}\label{chap4:etale_case_existence_of_filtration}
    Let $f:X\to Y$ be a proper morphism in $V_k$, and let $\mathcal{F}$ be a weakly constructible \'etale sheaf on $X$ of torsion Abelian groups of order coprime with $p$. Assume $Y$ has a smooth point. 
    If $Y$ is affine over $\Spec k$, then there is a filtration $Y_{\bullet}$ of $Y$ by closed subschemes such that the corresponding spectral sequence is isomorphic, starting from the second page, to the Leray spectral sequence of the morphism $f$.
    
    Moreover, there is an affine model $m_{Y}: M_{Y} \to$ such that the \'etale Leray spectral sequence of $f$ and $\mathcal{F}$ is isomorphic to the Leray spectral sequence of $M_{Y}\times_{Y} X \to M_{Y}$ and $m_{Y}^{*}(\mathcal{F})$.
\end{theorem}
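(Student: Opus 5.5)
The plan is to transcribe the complex-analytic argument of Section~\ref{motivic_nature_complex} into the \'etale setting. Affine targets are treated with a cellular filtration, and general targets are reduced to affine ones through an affine model. Throughout, \'etale cohomology with coefficients in torsion sheaves of order prime to $p$ replaces singular cohomology.

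\emph{Affine case.} Let $Y$ be affine over $k$ and set $\mathcal{G}=\bigoplus_q R^qf_*\mathcal{F}$. Since $f$ is proper and $\mathcal{F}$ is weakly constructible, the proper base change theorem makes each $R^qf_*\mathcal{F}$ weakly constructible. We may take $q\le 2\dim X$, so only finitely many of these sheaves are nonzero.

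The key input is an \'etale version of Beilinson's Lemma~\ref{chap4:Beilinson_lemma}. For a weakly constructible torsion sheaf on an affine variety of dimension $n$, there should be a dense open $j:U\hookrightarrow Y$ with $H^i_{\mbox{\scriptsize\rm \'et}}(Y;j_!j^*\mathcal{G})=0$ for $i\ne n$. Beilinson's original proof is \'etale in nature: it uses Artin vanishing on affines, a Noether normalization $Y\to\mathbb{A}^n$, and a generic projection to a curve.

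I expect the hypothesis that $Y$ has a smooth point to be used here. It lets us shrink to a smooth open subset on which the relevant sheaves are locally constant, so that Poincar\'e duality gives vanishing in the other degree. Iterating on the closed complements, as in Corollary~\ref{chap4:cell_filtr_from_arbitrary_filtr}, should produce a filtration $Y_\bullet$ that is cellular for $\mathcal{G}$. The complements $Y-U$ may lack smooth points, so I would either pass to the reduced structure, where a smooth point always exists over an algebraically closed field, or argue componentwise.

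Given a cellular filtration, I would repeat the chain Lemma~\ref{chap4:filtrartion_to_spec_seq}, then Lemma~\ref{chap4:functoriality_of_basic_spec_seq}, then Proposition~\ref{chap4:leray_iso_to_spec_seq_from_filtr}. The topological inputs are $f_*=f_!$ for proper $f$ and proper base change, and both hold \'etale-locally. The output is an isomorphism, from $E_2$ on, between $E_r(f^{-1}(Y_\bullet),\mathcal{F})$ and the Leray spectral sequence.

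\emph{General case.} Take the affine model $m_Y:M_Y\to Y$ constructed in Theorem~\ref{chap3:main_thrm_etale_case} and form the Cartesian square~(\ref{chap4:pullback_square_to_reduce_to_aff_model}). I would apply Lemma~\ref{chap4:comp_of_spec_sequences}, whose argument is purely formal and goes through verbatim for \'etale sheaves. It needs two inputs.

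First, $H^\bullet_{\mbox{\scriptsize\rm \'et}}(Y;\mathcal{H})\to H^\bullet_{\mbox{\scriptsize\rm \'et}}(M_Y;m_Y^*\mathcal{H})$ must be an isomorphism for the torsion sheaves $\mathcal{H}=R^qf_*\mathcal{F}$. This is part~1 of Theorem~\ref{chap3:main_thrm_etale_case}; torsion coefficients reduce to $l$-torsion by decomposing into primary parts.

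Second, the base change maps $m_Y^*R^qf_*\mathcal{F}\to R^qf'_*\mathrm{pr}_1^*\mathcal{F}$ must be isomorphisms. These follow directly from proper base change, since $f$ is proper; alternatively part~2 of Theorem~\ref{chap3:main_thrm_etale_case} gives them.

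\emph{Main obstacle.} I expect the \'etale Beilinson lemma, together with the precise role of the smooth-point assumption, to be the hardest step. The reduction to affine targets is formal once Theorem~\ref{chap3:main_thrm_etale_case} is available. If proving the lemma directly stalls, I would cite its standard \'etale form, which requires $k$ algebraically closed and coefficients of order prime to $p$, and check that the cellular-filtration bookkeeping of Corollary~\ref{chap4:cell_filtr_from_arbitrary_filtr} goes through unchanged.
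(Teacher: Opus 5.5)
Your proposal follows the paper's route: for affine $Y$, a cellular filtration produced by an \'etale Beilinson lemma and fed into Lemmas~\ref{chap4:filtrartion_to_spec_seq}, \ref{chap4:functoriality_of_basic_spec_seq} and Proposition~\ref{chap4:leray_iso_to_spec_seq_from_filtr}, and for general $Y$, Lemma~\ref{chap4:comp_of_spec_sequences} applied to the affine model, using the cohomology isomorphism of Theorem~\ref{chap3:main_thrm_etale_case} together with proper base change. The one divergence is the Beilinson lemma itself: the paper (Corollary~\ref{chap4:Beilinson_lemma_positive_char}) passes to $Y^{\mathrm{red}}$, uses a smooth dense affine open $V$ only to make $(j_V)_!j_V^*\mathcal{G}[n]$ perverse, and gets vanishing below degree $n$ from Beilinson's Lemma~\ref{chap4:original_Beilinson_lemma} (a generic hyperplane complement $W$, intersected with $V$) and above degree $n$ from Artin vanishing, whereas your idea that smoothness plus Poincar\'e duality supplies the missing degree fails on its own (for $U=Y=\mathbb{A}^1$ with constant coefficients, $H^0\neq 0$), so citing the lemma, your stated fallback, is the step actually needed.
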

The proof of this theorem is similar to the proof of the Theorem~\ref{chap4:leray_on_affine_come_from_filtration}.

%

\smallskip

The only difference from the complex algebraic case is the proof of Beilinson's lemma. In the \'etale case the statement follows from Lemma 3.3 in Beilinson's original paper~\cite{Be87}:
\begin{lemma}\label{chap4:original_Beilinson_lemma}
    Let $f: X \to Y$ be any morphism of separated schemes of finite type over $\Spec k$. Let $r_{l}: U_{l} \hookrightarrow X$ be a finite set of affine open embeddings and $M$ a perverse sheaf of middle perversity. Then there exists a perverse sheaf of middle perversity $N \twoheadrightarrow M$ such that
    \[
    H^{i}_{\mbox{{\scriptsize\rm \'et}}} (Y;\left(f|_{U_{l}}\right)_{*} N|_{U_{l}}) = 0\ \text{for}\ i < 0.
    \] 
    Moreover, if $X$ is quasi-projective, then $N$ can be chosen of the form $j_{!}(M|_{V})$, where $j: V \hookrightarrow X$ is a certain affine open embedding.
\end{lemma}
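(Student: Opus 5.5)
The plan is to follow Beilinson's original strategy: treat the quasi-projective case by an explicit construction, then glue to get the general case. Throughout I use two facts about the middle perversity. First, the \emph{Artin vanishing theorem}: for an affine morphism $g$, $Rg_*$ is right $t$-exact and $g_!$ is left $t$-exact. Second, extension by zero along an affine open embedding $j$ is $t$-exact, so $j_!$ of a perverse sheaf is perverse.

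\emph{Quasi-projective case.} Embed $X\subset\mathbb{P}^N$. I would choose a hypersurface $H$ of large degree in \emph{general position} with respect to the following finite data:
\begin{itemize}
\item a stratification of $X$ adapted to $M$;
\item the opens $U_l$;
\item compactifications of the morphisms $f\circ r_l$.
\end{itemize}
Set $V=X- H$; then $j:V\hookrightarrow X$ is an affine open embedding and $N:=j_!(M|_V)$ is perverse.

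To show $N\twoheadrightarrow M$, write $i:X\cap H\hookrightarrow X$ and use the triangle $j_!j^*M\to M\to i_*i^*M$. Because $H$ is transversal to all strata, $i^*M[-1]$ is perverse. The triangle therefore becomes a short exact sequence of perverse sheaves
\[
0\to i_*i^*M[-1]\to N\to M\to 0.
\]

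For the vanishing, consider the affine open $U_l\cap V$ with its embedding $j_l$ into $U_l$. One has
\[
(f|_{U_l})_*\,N|_{U_l}=(f|_{U_l})_*(j_l)_!(M|_{U_l\cap V}).
\]
Genericity of $H$ (a generic base change / transversality argument on a compactification of $f\circ r_l$) should show that the natural map from the $!$-pushforward to this complex is an isomorphism, i.e.\ that
\[
(f|_{U_l})_!(j_l)_!\to (f|_{U_l})_*(j_l)_!
\]
is an isomorphism on the relevant part. The composite $U_l\cap V\to Y$ is then handled by the left $t$-exactness of $!$-pushforward along affine maps (Artin vanishing, dual form). Taking global sections on $Y$, I would then argue that negative-degree cohomology vanishes.

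I expect this last point, controlling $H^i(Y;-)$ rather than only perverse degrees, to be the main obstacle. It is presumably why the general-position choice must also make the pushforward $(f\circ r_l\circ j_l)_!$ of the perverse sheaf $M|_{U_l\cap V}$ have a support and dimension bound. The first thing I would try is to compactify $Y$ as well and apply Artin vanishing to the affine variety $U_l\cap V$ mapped to a point, combined with the $!=*$ identification above.

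\emph{General case.} Cover $X$ by finitely many affine opens $W_k$ (quasi-projective) with embeddings $w_k$. Apply the quasi-projective case on each $W_k$ to $M|_{W_k}$, using the affine opens $U_l\cap W_k$ (affine since $X$ is separated) and the morphism $f|_{W_k}$. This gives perverse sheaves $N_k\twoheadrightarrow M|_{W_k}$. Set
\[
N=\bigoplus_k (w_k)_!N_k ,
\]
which is perverse because $w_k$ is an affine open embedding. The adjunction maps assemble into $N\to M$, which is surjective since it is surjective on each $W_k$. The vanishing for $N$ follows summand by summand: $(w_k)_!N_k$ restricted to $U_l$ equals $(w_k)_!$ of $N_k|_{U_l\cap W_k}$, which reduces to the quasi-projective statement for the open $U_l\cap W_k$.
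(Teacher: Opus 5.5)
The paper does not prove this lemma. It quotes Beilinson's Lemma~3.3 and uses it only with $X$ affine, $Y=\Spec k$ and $r_l=\mathrm{id}$. Your proposal has two genuine gaps.

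\textbf{The key step of the quasi-projective case is false.} Genericity of $H$ gives no comparison between $(f|_{U_l})_!(j_l)_!$ and $(f|_{U_l})_*(j_l)_!$. Their difference comes from the boundary of $U_l$ at infinity, which $H$ does not see. Restricting the claim to ``the relevant part'' presupposes the vanishing you want to prove. Already for $X=U_l=\mathbb{A}^1$, $Y=\Spec k$, $M=\Lambda[1]$ (with $\Lambda$ a finite coefficient ring prime to $p$) and $H$ consisting of $d\geq 1$ general points, the two sides differ: $R\Gamma(\mathbb{A}^1;j_!\Lambda_V[1])\cong\Lambda^{d-1}$ is concentrated in degree $0$, whereas $R\Gamma_c(V;\Lambda[1])$ has $H^1=\Lambda(-1)\neq 0$.

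What generic base change actually gives is that extension by zero across $H$ commutes with $*$-extension along a compactification. Let $\bar U_l$ be the closure of the graph of $f\circ r_l$ in $\mathbb{P}^N\times Y$, with open immersion $k:U_l\hookrightarrow\bar U_l$ and proper $\bar f:\bar U_l\to Y$. For general $H$ one has $k_*(j_l)_!\cong\bar j_!\,k'_*$ on $M|_{U_l\cap V}$, where $\bar j:\bar U_l-H\hookrightarrow\bar U_l$ and $k':U_l\cap V\hookrightarrow\bar U_l-H$. Hence $(f|_{U_l})_*N|_{U_l}\cong(\bar f\bar j)_!\,k'_*(M|_{U_l\cap V})$.
\begin{itemize}
\item Since $\bar U_l-H$ is closed in $(\mathbb{P}^N-H)\times Y$, the map $\bar f\bar j$ is affine, so $(\bar f\bar j)_!$ is left $t$-exact by Artin vanishing.
\item $k'_*$ is left $t$-exact because $k'$ is an open immersion.
\end{itemize}
Together these give ${}^pD^{\geq 0}(Y)$, the perverse form of the statement.

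The obstacle you flag then disappears, but not by your suggested route. A bound in ${}^pD^{\geq0}(Y)$ does not force $H^i(Y;-)=0$ for $i<0$; consider $\Lambda_Y[\dim Y]$. Dual Artin vanishing on $U_l\cap V$ computes $R\Gamma_c(U_l\cap V;M)$, which is the wrong group. Instead, note that $H^i(Y;R(f|_{U_l})_*K)=H^i(U_l;K)$. So the statement as printed is just the case of target $\Spec k$ (take $H$ general also for the closure of $U_l$ in $\mathbb{P}^N$), where perverse and ordinary degrees agree.

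A smaller point: in characteristic $p$, transversality to strata is not the right criterion for $i^*M[-1]$ to be perverse. Use instead that $i^*M[-1]\cong i^!M[1]$ for general $H$, by Deligne's generic base change.

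\textbf{The reduction of the general case is invalid.} One has $r_l^*(w_k)_!N_k=(w_{k,l})_!(N_k|_{U_l\cap W_k})$ with $w_{k,l}:U_l\cap W_k\hookrightarrow U_l$. But $(f|_{U_l})_*(w_{k,l})_!$ is not $(f|_{U_l\cap W_k})_*=(f|_{U_l})_*(w_{k,l})_*$. The quasi-projective case controls only the latter. The difference is the pushforward of a complex supported on $U_l-W_k$, which can have cohomology in negative degrees.

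Here is a counterexample. Take $X=U_1=\mathbb{G}_m\times\mathbb{A}^1$, $Y=\Spec k$, $M=\Lambda[2]$, and $W_1=\mathbb{G}_m\times\mathbb{G}_m$ embedded in $\mathbb{A}^4\subset\mathbb{P}^4$ by $(x,x^{-1},y,y^{-1})$. Let $N_1=j_!(\Lambda[2]|_{W_1-D})$ with $D$ a general hypersurface section of degree $e$.
\begin{itemize}
\item $N_1$ is an admissible output of your quasi-projective step, so $H^{-2}(W_1;N_1)=0$.
\item The equation of $D$ has a nonzero $y^{-e}$ term, so the closure of $D$ in $X$ misses $Z=\mathbb{G}_m\times\{0\}$. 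Hence $i^*(w_1)_*N_1\cong\Lambda_Z[2]\oplus\Lambda_Z(-1)[1]$.
\item The triangle $(w_1)_!N_1\to(w_1)_*N_1\to i_*i^*(w_1)_*N_1$ then yields an injection $\Lambda=H^{-2}(Z;i^*(w_1)_*N_1)\hookrightarrow H^{-1}(X;(w_1)_!N_1)$.
\end{itemize}
So the summand $(w_1)_!N_1$ alone already violates the required vanishing. For non-quasi-projective $X$ you would need the auxiliary hypersurfaces to be chosen compatibly with the boundaries $U_l-W_k$, and your argument does not provide this. For the use the paper makes of the lemma, the corrected quasi-projective argument suffices.
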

A perverse sheaf of middle perversity is a complex of sheaves with some additional properties. We will now state two of those, which imply an analog of Lemma \ref{chap4:Beilinson_lemma} in positive characteristic. Let $Y$ be an affine smooth variety of dimension $n$, and let $\mathcal{F}$ be a locally constant sheaf on $Y$.
\begin{itemize}
    \item The complex $\mathcal{F}[n]$ is a perverse sheaf of middle perversity. See e.g.~\cite[Section III.2]{Rein01}
    \item Let $j: Y \hookrightarrow Y'$ be an affine open embedding. Then $j_{!}\mathcal{F}$ is a perverse sheaf of middle perversity. This is~\cite[Corollaire 4.1.3]{BBD82}.
\end{itemize}
For our purposes we need the following corollary of Lemma \ref{chap4:original_Beilinson_lemma}:
\begin{corollary}\label{chap4:Beilinson_lemma_positive_char}
    Let $X$ be a separated affine scheme of finite type over $\Spec k$ of dimension $n$. Let $\mathcal{F}$ be a weakly constructible sheaf on $X$. Then there exists a Zariski open subset $j:U \hookrightarrow X$ such that 
    \[
    H^{k}_{\mbox{{\scriptsize\rm \'et}}}(X; j_{!}j^{*}\mathcal{F}) = 0\ \text{if}\ k\neq n.
    \]
\end{corollary}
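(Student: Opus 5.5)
The plan is to derive the corollary from Lemma~\ref{chap4:original_Beilinson_lemma}, using the two perversity facts just listed, in the same way the complex Beilinson lemma (Lemma~\ref{chap4:Beilinson_lemma}) is proved. The first step is to shrink $X$. Since $\mathcal{F}$ is weakly constructible, there is a dense open $V\subset X$ on which $\mathcal{F}$ is locally constant. We shrink $V$ further so that it is smooth of pure dimension $n$. This is where the hypothesis that a smooth point exists (as in Theorem~\ref{chap4:etale_case_existence_of_filtration}) enters, and we take $V$ inside a top-dimensional component with the other components removed. We also take $V$ affine, by removing a principal divisor. Put $Y=V$ and $L=\mathcal{F}|_V$. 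By the first bullet, $L[n]$ is perverse on $V$.

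Next we apply Lemma~\ref{chap4:original_Beilinson_lemma} to $f=\mathrm{id}_V$ with $M=L[n]$. Since $V$ is affine, hence quasi-projective, we obtain an affine open $j':W\hookrightarrow V$ such that $N=j'_!(L|_W)[n]$ satisfies
\[H^{i}_{\text{\'et}}(V;N)=0 \quad\text{for } i<0,\]
that is, $H^k_{\text{\'et}}(V;j'_!L|_W)=0$ for $k<n$. Now let $U=W$, viewed as an open in $X$, with $j:U\hookrightarrow X$. Write $j=\iota\circ j'$ with $\iota:V\hookrightarrow X$. The composition $j$ is affine, because $U$ and $X$ are affine and $X$ is separated.

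The vanishing above $n$ comes from Artin vanishing. The sheaf $j_!j^*\mathcal{F}=j_!(L|_U)$ is extended by zero from the affine $U$, and the third step is to compare $H^k_{\text{\'et}}(X;j_!L|_U)$ with the cohomology on $V$. Here $\iota_!$ is not $\iota_*$, so instead I apply the second bullet directly on $X$. Since $j$ is an affine open embedding and $L|_U$ is locally constant on smooth $U$, the complex $j_!L|_U[n]$ is perverse on the affine $X$. By Artin vanishing for perverse sheaves on affine schemes, $H^i=0$ for $i>0$, that is, $H^k=0$ for $k>n$.

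For the vanishing below $n$, the cleaner route is to apply Lemma~\ref{chap4:original_Beilinson_lemma} on $X$ itself with $M=\iota_!L[n]$, which is perverse by the second bullet applied to the affine embedding $\iota$. This yields $N=j_!(M|_U)=j_!(L|_U)[n]$ with $H^i(X;N)=0$ for $i<0$. Combining the two ranges gives $H^k_{\text{\'et}}(X;j_!j^*\mathcal{F})=0$ for $k\neq n$.

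The step I expect to be the main obstacle is making sure the open set produced by Lemma~\ref{chap4:original_Beilinson_lemma} is affine in $X$, so that Artin vanishing applies, and that it is nonempty. If $N=0$ the vanishing statement is vacuous, so the open set must be nonempty. To handle this I would intersect with a further principal open set if needed and check that the conclusion of the lemma survives such shrinking, or else rely on the proof in~\cite{Be87}, where the open set is the complement of a hypersurface section.
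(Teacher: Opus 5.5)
Your final argument (the ``cleaner route'') is the paper's proof. Apply Lemma~\ref{chap4:original_Beilinson_lemma} on $X$ itself, with $Y=\Spec k$ and $r_l=\mathrm{id}_X$, to $M=(j_V)_!(\mathcal{F}|_V)[n]$, where $V$ is a smooth affine open of dimension $n$ on which $\mathcal{F}$ is locally constant. Let $j_W\colon W\hookrightarrow X$ be the affine open the lemma provides, and put $U=W\cap V$. The lemma gives vanishing below $n$, Artin vanishing on the affine $X$ gives vanishing above $n$, and nonemptiness comes from Beilinson's choice of $W$ as the complement of a generic hyperplane section. The application of the lemma on $V$ in your second paragraph is superfluous; as you notice yourself, it says nothing about $X$, since $R\iota_*\neq\iota_!$. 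Just make sure the upper-range argument is run for the same $U=W\cap V$, for which $(j_W)_!(M|_W)=j_!(\mathcal{F}|_U)[n]$.

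The one genuine gap is in your first step. The corollary makes no smooth-point assumption, and for non-reduced $X$ the step ``shrink $V$ so that it is smooth of pure dimension $n$'' fails outright: e.g.\ $\Spec k[x,y]/(y^2)$ has no smooth point. Borrowing the hypothesis of Theorem~\ref{chap4:etale_case_existence_of_filtration} does not fix this. A smooth point would not even suffice unless it lies on an $n$-dimensional component. The missing idea is the reduction the paper makes first. The embedding $X_{\mathrm{red}}\hookrightarrow X$ induces an equivalence of \'etale sites, so you may replace $X$ by $X_{\mathrm{red}}$ without changing $\mathcal{F}$ or $H^k(X;j_!j^*\mathcal{F})$. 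A reduced scheme of finite type over the algebraically closed field $k$ is generically smooth, so every $n$-dimensional component contains a smooth affine open of dimension $n$.

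Two minor remarks. For the upper range you do not need perversity of $j_!(\mathcal{F}|_U)[n]$: an affine $X$ of dimension $n$ has \'etale cohomological dimension $\le n$ for all torsion sheaves, which is what the paper uses. Your nonemptiness worry also disappears without opening Beilinson's proof. The epimorphism $N=(j_W)_!(M|_W)\twoheadrightarrow M$ forces $W\cap V\neq\varnothing$ unless $\mathcal{F}|_V=0$, and in that case $U=V$ already works.
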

\begin{proof}
    Consider the reduced scheme $i: X^{red} \hookrightarrow X$. By e.g.~\cite[Proposition 59.45.4 (3)]{Stacks} $i^{*}$ induces an isomorphism on \'etale cohomology. Therefore, we may assume that the scheme $X$ is reduced.
    
    Since base field $k$ is algebraically closed and $X$ is reduced, $X$ is in fact geometrically reduced. Then by e.g.~\cite[Lemma 33.25.7]{Stacks} there is a Zariski open affine subset $j_{V}:V\hookrightarrow X$ which is a smooth affine variety of dimension $n$. Since $\mathcal{F}$ is weakly constructible one may assume that, possibly after shrinking $V$, the restriction $j_{V}^{*}\mathcal{F}$ is locally constant. Then the complex $j_{V}^{*}\mathcal{F}[n]$ is a perverse sheaf of middle perversity on $V$. Since $V$ is affine, the complex $(j_{V})_{!}j_{V}^{*}\mathcal{F}[n]$ is a perverse sheaf of middle perversity on $X$.

    Applying Lemma \ref{chap4:original_Beilinson_lemma} with $Y = \Spec k$, $f$ equal the structure morphism, and $r_{l}$ being the identity map $X\to X$, one can deduce that there is an affine Zariski open subset $j_{W}: W\hookrightarrow X$ such that 
    \[
    H^{k}_{\mbox{{\scriptsize \'et}}}(X; (j_{W})_{!}j_{W}^{*}(j_{V})_{!}j_{V}^{*}\mathcal{F}[n]) = 0\ \text{if}\ k<0.
    \]
    Consider the intersection $j: U = W\cap V\hookrightarrow X$. By base change, we have $(j_{W})_{!}j_{W}^{*}(j_{V})_{!}j_{V}^{*}\mathcal{F}[n]=j_{!}j^{*}\mathcal{F}[n]$, so
    \[
    H^{k}_{\mbox{{\scriptsize \'et}}}(X; j_{!}j^{*}\mathcal{F}[n]) = 0\ \text{if}\ k < 0.
    \]
    Apriori the intersection $W\cap V$ may be empty. However, from the proof of Lemma \ref{chap4:original_Beilinson_lemma}, the open set $W$ can be chosen to be the complement of a generic hyperplane section. Hence one can choose $W$ such that the intersection $U = W\cap V$ is non empty.

    We have $H^{k}_{\mbox{{\scriptsize \'et}}}(X; j_{!}j^{*}\mathcal{F}) = 0$ for $k<n$.
    But since $X$ is affine, by e.g.~\cite[Theorem 15.1]{Mi13}, the cohomological dimension of $X$ is $n$, which implies $H^{k}_{\mbox{{\scriptsize \'et}}}(X; j_{!}j^{*}\mathcal{F}) = 0$ for $k > n$.
\end{proof}

\begin{appendices}

\section{Mayer-Vietoris sequence}\label{appendix_Mayer_Vietoris}
\setcounter{equation}{0}
\renewcommand{\theequation}{\thesection.\arabic{equation}}
In this appendix we will deduce the Mayer-Vietoris sequence from the \v{C}ech-to-cohomology spectral sequence. Let $\mathcal{C}$ be a small category which has all pullbacks, i.e.\ limits of diagrams of the type $\bullet\to \bullet\gets \bullet$. We will use the following notions of a coverage and a sheaf with respect to a coverage which can be found e.g.\ in \cite[C2.1]{Jo02}:
\begin{definition}
A \emph{coverage} on the category $\mathcal{C}$ consists of a function assigning to each object $X \in \mathcal{C}$ a collection of families of morphisms $\{ f_{i}: U_{i}\to X\}_{i\in I}$, called \emph{covering families}, such that
\begin{itemize}
    \item if $\{ f_{i}: U_{i}\to X\}$ is a covering family and $g: Y\to X$ is a morphism in $\mathcal{C}$, then the pullback family $\{g^{*}(f_i):g^{*}U_{i} \to Y\}$ is a covering family of $Y$.
\end{itemize}
\end{definition}

Let $\mathcal{S}$ be a coverage on $\mathcal{C}$, and let $X \in \mathcal{C}$ be an object. We denote the \emph{slice category} of arrows $Y\to X$ in $\mathcal{C}$ by $\mathcal{C}_{/X}$. The coverage $\mathcal{S}$ induces a coverage on $\mathcal{C}_{/X}$, denoted $\mathcal{S}_{/X}$. If $\mathcal{F}$ is a presheaf on $\mathcal{C}_{/X}$, and $Y\to X$ is an object of $\mathcal{C}_{/X}$, we will sometimes write $\mathcal{F}(Y)$ instead of $\mathcal{F}(Y\to X)$ by abuse of notation. 

\begin{definition}
        Let $\mathcal{C}'$ be a category with pullbacks. A presheaf $\mathcal{F}$ of Abelian groups on $\mathcal{C}'$ satisfies the \emph{sheaf axiom} for a family of morphisms $\{f_{i}: U_{i} \to Z\}_{i\in I}$ in $\mathcal{C}'$ if the following holds: suppose $s_{i} \in \mathcal{F}(U_i)$ are such that for all $j, k \in I$ we have $\mathcal{F}(g)(s_{j}) = \mathcal{F}(h)(s_{k})$ where $g: U_{j}\times_{Z} U_{k} \to U_{j}$ and $h: U_{j}\times_{Z} U_{k}$ are the natural projections; then there exists a unique $s \in \mathcal{F}(Z)$ such that $\mathcal{F}(f_{i})(s) = s_{i}$ for each $i\in I$.
        
        A \emph{sheaf of Abelian groups} on $\mathcal{C}_{/X}$ is a presheaf of Abelian groups on 
        $\mathcal{C}_{/X}$ satisfying the sheaf axiom for any covering family in the coverage $\mathcal{S}_{/X}$ on $\mathcal{C}_{/X}$. 
\end{definition}

\begin{remark}
    The definitions above are~\cite[C2.1, Definition 2.1.1]{Jo02} and~\cite[C2.1, Definition 2.1.2]{Jo02}; the definition of a sheaf simplifies a little in our case because $\mathcal{C}_{/X}$ has all pullbacks.
\end{remark}
\begin{remark}
    Every coverage generates a Grothendieck topology with the same category of sheaves of Abelian groups. Moreover, Grothendieck topologies are in a natural bijection with the \emph{saturated} coverages (see e.g.~\cite[Proposition 6.35]{Mi25}).
\end{remark}

Let $X\in\mathcal{C}$. Let $\mathop{Sh}_X(\mathcal{S})$, respectively $\mathop{PreSh}_X(\mathcal{S})$ denote the category of presheaves, respectively sheaves of Abelian groups on $\mathcal{C}_{/X}$. We will be omitting $\mathcal{S}$ from this notation when it is clear which $\mathcal{S}$ is meant. The category $\mathop{Sh}_X(\mathcal{S})$ is a reflective subcategory of $\mathop{PreSh}_X(\mathcal{S})$ (see e.g.~\cite[C2.1, Corollary 2.11.1]{Jo02}). Let $\mathop{Forget}: \mathop{Sh}_X(\mathcal{S}) \hookrightarrow \mathop{PreSh}_X(\mathcal{S})$ denote the forgetful functor, and let $\mathop{Sh}: \mathop{PreSh}_X(\mathcal{S}) \to \mathop{Sh}_X(\mathcal{S})$ be the left adjoint of $\mathop{Forget}$, the sheafification functor. Observe that $\mathop{Sh}$ is exact. By the construction in~\cite[Section 19.7]{Stacks} we see that $\mathop{Sh}_X(\mathcal{S})$ has enough injectives.

Next, observe that the global sections functor $\Gamma: \mathop{Sh}_X(\mathcal{S}) \to Ab$ 
can be written as the composition $\widehat{\Gamma}\circ \mathop{Forget}$ where $\widehat{\Gamma}: \mathop{PreSh}_X(\mathcal{S}) \to Ab$ is the global sections functor on the category of presheaves. The functor $\mathop{Forget}$ preserves kernels as it is a right adjoint, and so does $\widehat{\Gamma}$ because it is exact. So the composition $\Gamma$ also preserves kernels and is therefore left exact.

So we can define the \emph{$q$-th cohomology group} functor $H^q(X;-):\mathop{Sh}_{X}(\mathcal{S})\to Ab$ as $R^q\Gamma(-)$.

\smallskip

Let us define the \v{C}ech complex associated with a family of morphisms. Let $\mathcal{U} = \{U_{i}\to X\}_{i\in I}$ be an arbitrary set of morphisms with a fixed target $X$ in $\mathcal{C}$, and let $\mathcal{F}$ be an arbitrary presheaf of Abelian groups on $\mathcal{C}_{/X}$. Then we define the \emph{\v{C}ech complex} $C^{\bullet}(\mathcal{U}; \mathcal{F})$ as follows:
\[
    \prod\limits_{i_{0}\in I}\mathcal{F}(U_{i_{0}}) \to \prod\limits_{i_{0},i_{1}\in I}\mathcal{F}(U_{i_{0}}\times_{X} U_{i_{1}}) \to \prod\limits_{i_{0}, i_{1}, i_{2}\in I}\mathcal{F}(U_{i_{0}}\times_{X}U_{i_{1}}\times_{X} U_{i_{2}})\to \cdots ,
\]
see~\cite[Definition 59.18.1]{Stacks}. Set $\check{H}^{p}(\mathcal{U}; \mathcal{F}) = H^{p}(C^{\bullet}(\mathcal{U}; \mathcal{F}))$. 
\begin{remark}
    If $I$ is totally ordered, then we can also define the \emph{ordered \v{C}ech complex} $\mathcal{C}_{o}^{\bullet}(\mathcal{U}; \mathcal{F})$ as the following quotient complex of $\mathcal{C}^{\bullet}(\mathcal{U}; \mathcal{F})$:
    \begin{equation*}
        \prod\limits_{i_0 \in I} \mathcal{F}(U_{i_0}) \to \prod\limits_{i_0 < i_1 \in I} \mathcal{F}(U_{i_0}\times_{X} U_{i_1}) \to \prod\limits_{i_0 < i_1 < i_2\in I}\mathcal{F}(U_{i_0}\times_{X} U_{i_1}\times_X U_{i_2}) \to \cdots
    \end{equation*}
    The natural factorization map $\mathcal{C}^{\bullet}(\mathcal{U}; \mathcal{F}) \to \mathcal{C}^{\bullet}_o(\mathcal{U}; \mathcal{F})$ is a quasi-isomorphism.
\end{remark}
\begin{definition}
    A family $\mathcal{U} = \{ U_{i} \to X\}_{i\in I}$ of morphisms in $\mathcal{C}$ with the same target $X\in\mathcal{C}$ is called an \emph{admissible cover} if for any sheaf $\mathcal{F}$ on $\mathcal{C}_{/X}$ the following isomorphism of functors holds
    \begin{equation}\label{iso_func}
        H^{0}(U; \mathcal{F}) = \mathcal{F}(X\stackrel{\mathrm{id}}{\longrightarrow} X)\cong \check{H}^{0}(\mathcal{U}; \mathcal{F})\circ \mathop{Forget}(\mathcal{F}).
    \end{equation}
\end{definition}

Note that an element of an admissible cover may or may not belong to any of the covering families for $X$. We are now ready to state the main result. 
\begin{theorem}\label{thm_mv}
    Let $\mathcal{U}=\{U_{i}\to X\}_{i\in I}$ be an admissible cover, and let $\mathcal{F}$ be a sheaf on $\mathcal{C}_{/X}$. There is a spectral sequence
    \begin{equation}\label{Cech_to_cohomology_spec_sequence}
    E^{p,q}_{2} \cong \check{H}^{p}(\mathcal{U}; \mathcal{H}^{q}(\mathcal{F}))\Longrightarrow H^{p+q}(X; \mathcal{F}),
    \end{equation}
    where $\mathcal{H}^q(\mathcal{F})$ is the presheaf on $\mathcal{C}_{/X}$ that takes $(Y\to X) \in \mathcal{C}_{/X}$ to $\mathcal{H}^{q}(\mathcal{F})(Y) = H^{q}(Y; \mathcal{F})$.
\end{theorem}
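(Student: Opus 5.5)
The plan is to construct the spectral sequence in the standard way, as the spectral sequence of the double complex obtained by applying the Čech construction to an injective resolution of $\mathcal{F}$. Since $\mathop{Sh}_X(\mathcal{S})$ has enough injectives, choose an injective resolution $\mathcal{F}\to\mathcal{I}^\bullet$ in $\mathop{Sh}_X(\mathcal{S})$. Form the first-quadrant double complex $K^{p,q}=C^p(\mathcal{U};\mathop{Forget}(\mathcal{I}^q))$, with horizontal differential the Čech differential and vertical differential induced by $\mathcal{I}^\bullet$. Each $U_{i_0}\times_X\cdots\times_X U_{i_p}$ is an object of $\mathcal{C}_{/X}$, because $\mathcal{C}$ has pullbacks, so the complex makes sense.

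First I will compute the total cohomology by filtering by $q$, i.e.\ taking Čech cohomology first. The claim is that for an injective sheaf $\mathcal{I}$ the complex $C^\bullet(\mathcal{U};\mathop{Forget}\mathcal{I})$ is acyclic in positive degrees, and that in degree $0$ it computes $\mathcal{I}(X)$. The degree-$0$ statement is exactly the admissibility hypothesis~(\ref{iso_func}). For higher degrees I will use the standard trick (cf.\ \cite[Section 59.18--19]{Stacks}): there is a presheaf $\mathbb{Z}_{\mathcal{U}}$ (the free presheaf on the $h_{U_i}$, or its sheafification) whose Čech resolution
\[
\cdots\to\bigoplus \mathbb{Z}_{U_{i_0}\times_X U_{i_1}}\to\bigoplus\mathbb{Z}_{U_{i_0}}\to \mathbb{Z}_{\mathcal{U}}\to 0
\]
is exact as presheaves, hence exact after applying the exact functor $\mathop{Sh}$. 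Applying $\mathop{Hom}_{\mathop{Sh}}(-,\mathcal{I})$, which is exact since $\mathcal{I}$ is injective, identifies $C^\bullet(\mathcal{U};\mathcal{I})$ with an exact complex in degrees $\geq 1$. Its $H^0$ is $\mathop{Hom}(\mathop{Sh}\mathbb{Z}_{\mathcal{U}},\mathcal{I})$, which equals $\check H^0(\mathcal{U};\mathcal{I})=\mathcal{I}(X)$ by admissibility. Hence the total cohomology is $H^*(\Gamma(X,\mathcal{I}^\bullet))=H^*(X;\mathcal{F})$.

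Second, I will filter by $p$, taking vertical cohomology first. The $E_1$ term is then $E_1^{p,q}=\prod H^q(\mathcal{I}^\bullet(U_{i_0}\times\cdots\times U_{i_p}))$. I must check that restriction to the slice $\mathcal{C}_{/V}$, for $V\to X$, sends injective sheaves to injective sheaves, or at least to $\Gamma$-acyclic ones. I plan to obtain this from the exact left adjoint "extension by zero" $j_!$: it is defined by sheafifying the presheaf extension by zero, which is exact because products and coproducts here are taken over Hom-sets. Granting this, $H^q(\mathcal{I}^\bullet(V))=H^q(V;\mathcal{F})=\mathcal{H}^q(\mathcal{F})(V)$, and the $d_1$ is the Čech differential, so $E_2^{p,q}=\check H^p(\mathcal{U};\mathcal{H}^q(\mathcal{F}))$. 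Convergence holds because the double complex lies in the first quadrant.

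I expect the main obstacle to be the first step: the positive-degree acyclicity of Čech complexes of injective sheaves when $\mathcal{U}$ is merely admissible rather than a covering family. Exactness of the Čech resolution of $\mathbb{Z}_{\mathcal{U}}$ is objectwise combinatorial, and injectivity handles the rest. The delicate point is that $H^0$ must be $\mathcal{I}(X)$ and not $\mathop{Hom}(\mathop{Sh}\mathbb{Z}_{\mathcal{U}},\mathcal{I})$ for some different sheaf. This is precisely where (\ref{iso_func}) is used, and I will verify that it forces $\mathop{Sh}\mathbb{Z}_{\mathcal{U}}\to\mathop{Sh}\mathbb{Z}_X$ to be an isomorphism via Yoneda.
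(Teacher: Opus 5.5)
Your proof is correct, but it is organized differently from the paper's.

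The paper obtains the spectral sequence in one stroke as the Grothendieck spectral sequence of the composite $\mathop{Sh}_X\xrightarrow{\mathop{Forget}}\mathop{PreSh}_X\xrightarrow{\check H^0(\mathcal U;-)}Ab$. Admissibility identifies the composite with $\Gamma$. The derived functors of $\check H^0(\mathcal U;-)$ on presheaves are quoted to be $\check H^p(\mathcal U;-)$, and $R^q\mathop{Forget}(\mathcal F)=\mathcal H^q(\mathcal F)$ is declared clear.

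You instead build the \v{C}ech double complex $C^p(\mathcal U;\mathcal I^q)$ of an injective resolution and compare its two filtrations. The inputs are the same, but you prove what the paper leaves implicit. Your vanishing of $\check H^{p}(\mathcal U;\mathcal I)$ for $p>0$ and injective $\mathcal I$ (via $\mathop{Hom}$ out of the sheafified \v{C}ech resolution of $\mathbb Z_{\mathcal U}$) is exactly the statement that $\mathop{Forget}$ sends injectives to $\check H^0(\mathcal U;-)$-acyclics. The Grothendieck spectral sequence needs this, and the paper only gets it tacitly from exactness of $\mathop{Sh}$. Likewise, your $j_!$ argument that restriction to $\mathcal C_{/V}$ preserves injectives is precisely what justifies the ``clear'' identification $R^q\mathop{Forget}=\mathcal H^q$.

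The paper's packaging buys brevity and, more importantly, functoriality in $(\mathcal U,\mathcal F)$, which is used immediately afterwards in Theorem~\ref{appendix:Mayer_Vietoris_main_thrm}. In your version this has to be checked by hand via the comparison theorem for injective resolutions.

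Minor points:
\begin{itemize}
\item The closing Yoneda step is unnecessary: (\ref{iso_func}) applied to each $\mathcal I^q$ already gives the degree-zero identification, compatibly with the differentials of $\mathcal I^\bullet$.
\item $\mathbb Z_{\mathcal U}$ should be the free abelian presheaf on the image of $\coprod_i h_{U_i}\to h_X$, not on the $h_{U_i}$ themselves. This does not affect the argument, since only positive-degree exactness of the resolution is used.
\item For restriction to preserve injectives it suffices that $j_!$ preserve monomorphisms, which is immediate from the direct-sum formula and exactness of sheafification.
\end{itemize}
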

\begin{proof}
    This is the Grothendieck spectral sequence associated with the following composition of functors
    \[
    Sh_X\xrightarrow{\mathop{Forget}(-)} PreSh_X \xrightarrow{\check{H}^{0}(\mathcal{U}; -)} Ab.
    \]

    Indeed, the right derived functor of the composition $\check{H}^{0}(\mathcal{U};-)\circ \mathop{Forget}(-) \cong H^{0}(X; -)$ is the derived functor of the global sections over $X$. So
    \[
        R^{p+q}(\check{H}^{0}(\mathcal{U};\mathcal{F})\circ \mathop{Forget}(\mathcal{F})) \cong H^{p+q}(X; \mathcal{F}).
    \]

    By e.g.~\cite[Theorem 59.18.8]{Stacks} the $p$-th right derived functor of $\check{H}^{0}(\mathcal{U};-)$ is $\check{H}^{p}(\mathcal{U}; -)$. It is clear that the $q$-th derived functor of $\mathop{Forget}: \mathop{Sh_X} \to \mathop{PreSh}_X$ takes a sheaf $\mathcal{F}$ to the presheaf $\mathcal{H}^{q}(\mathcal{F})$. Therefore, $\Bigl(R^{p}(\check{H}^{0}(\mathcal{U}; -)) \circ R^{q}(\mathop{Forget}(-))\Bigr)(\mathcal{F}) \cong \check{H}^{p}(\mathcal{U}; \mathcal{H}^{q}(\mathcal{F}))$.
\end{proof}

As an example, consider an admissible cover $\mathcal{U} = \{ \{U \to X\}, \{ V \to X\}\}$. In that example for simplicity we totally order $\mathcal{U}$ and use the ordered \v{C}ech complex.
Note that 
\[
E^{0,n}_{2} \cong\check{H}^{0}(\mathcal{U}; \mathcal{H}^{n}(\mathcal{F})) \cong \ker\left[d: C^{0}(\mathcal{U}; \mathcal{H}^{n}(\mathcal{F}))\to C^{1}(\mathcal{U}; \mathcal{H}^{n}(\mathcal{F}))\right].
\]
So $E^{0,n}_{2}$ is the subgroup of $H^{n}(U; \mathcal{F})\oplus H^{n}(V; \mathcal{F})$ which consists of elements $(t,s)$ such that $t|_{U\times_{X}V} = s|_{U\times_{X}V}$ in $H^{n}(U\times_{X}V; \mathcal{F})$.

In a similar manner, $E_{2}^{1,n-1}$ is the quotient group of $H^{n-1}(U\cap V; \mathcal{F})$ by the subgroup generated by the differences $t - s$ where $t$ is an element coming by restriction from $H^{n-1}(U; \mathcal{F})$, and similarly for $s$.

Observe that $E_{2}^{p,q} = 0$ for $p \geq 2$, so $E_{2}^{p,q}= E^{p,q}_{\infty}$.
Hence we have the following exact sequence for every $n \geq 0$:
\begin{equation}\label{exact_seq}
0\to E_{2}^{1,n-1}\to H^{n}(X; \mathcal{F}) \to E^{0,n}_{2}\to 0.
\end{equation}

With this description in mind, it is easy to glue the exact sequences~(\ref{exact_seq}) into one long exact sequence, called the \emph{Mayer-Vietoris sequence}.

Let us give a few examples of admissible covers.

\begin{example}\label{appendix_ex_of_adm_cover_site}
    All elements of $\mathcal{U}$ belong to $\mathcal{S}$. This follows immediately from the definition of a sheaf.
\end{example}
\begin{example}\label{appendix_ex_of_adm_cover_extenstion_of_sites}
    Let $\mathcal{S}' \subset \mathcal{S}$ be two coverages on $\mathcal{C}$. Let $\mathcal{U} = \{ U_{i}\to X\}_{i\in I}$ be an admissible cover with respect to the coverage $\mathcal{S}'$. Then $\mathcal{U}$ is also admissible with respect to the coverage $\mathcal{S}$.

    Indeed, let $\operatorname{Res}: Sh_{X}(\mathcal{S}) \to Sh_{X}(\mathcal{S}')$ be the restriction functor. By definition,  for any sheaf $\mathcal{F} \in Sh_{X}(\mathcal{S})$, the functor $\operatorname{Res}$ acts as the identity on the Abelian group $\mathcal{F}(Y\to X)$ for $(Y\to X)  \in \mathcal{C}_{/X}$. This yields isomorphisms
    \[
    H^{0}(X; \mathcal{F}) \cong H^{0}(X; \operatorname{Res}(\mathcal{F})),         \check{H}^{0}(\mathcal{U}; \mathcal{F}) \cong \check{H}^{0}(\mathcal{U}; \operatorname{Res}(\mathcal{F})).
    \]
    
    Since $\mathcal{U}$ is admissible with respect to the coverage $\mathcal{S}'$, one has the following chain of isomorphisms
    \[
    H^{0}(X; \mathcal{F}) \cong H^{0}(X; \operatorname{Res}(\mathcal{F})) \cong \check{H}^{0}(\mathcal{U}; \operatorname{Res}(\mathcal{F})) \cong \check{H}^{0}(\mathcal{U};\mathcal{F}).
    \]
\end{example}
\begin{example}\label{appendix_ex_of_adm_cover_Zarisky_site}
    For this example we need a small but interesting subcategory of the category $Top$ of topological spaces and continuous maps. One way to proceed is as follows. Let $\mathcal{C}$ be the full subcategory of $Top$ on all spaces obtained by topologizing subsets of a sufficiently large infinite set. Let $\mathcal{S}$ be the open embeddings. Then an open cover is admissible, as is a locally finite closed cover. The first assertion follows from Example~\ref{appendix_ex_of_adm_cover_site}, and the second one from~\cite[Theorem~II.1.3.1]{Go58}. 
\end{example}
The last example applies in particular to schemes over a given base scheme equipped with the \'etale topology.
\begin{example}\label{appendix_ex_of_adm_cover_etale_site}
    Let $\mathcal{C}$ be a small full subcategory of schemes of finite type over a given base which contains at least one object in every isomorphism class. Take $\mathcal{S}$ to be the \'etale morphisms. Then, by Examples \ref{appendix_ex_of_adm_cover_extenstion_of_sites} and \ref{appendix_ex_of_adm_cover_Zarisky_site}, an open or locally finite closed cover in the Zariski topology is admissible.
\end{example}

Suppose now $\mathcal{S}$ is a coverage on $\mathcal{C}$ and $f:X\to Y$ is a morphism in $\mathcal{C}$. Recall that if $\mathcal{F}\in \mathop{Sh}_Y$, then the \emph{pullback sheaf} $f^{*}\mathcal{F}\in\mathop{Sh}_X$ is the sheafification of the presheaf that takes $(U\to X)$ to $\mathcal{F}(U \to X \xrightarrow{f} Y)$. The functor $\mathcal{F}\mapsto f^*\mathcal{F}$ is the left adjoint of the \emph{pushforward functor} $f_*:\mathop{Sh}_X\to \mathop{Sh}_Y$, that takes $(V\to Y)$ to $\mathcal{F}(X\times_Y V)$.

Consider families of morphisms $\mathcal{U} = \{U_{i} \to X\}_{i \in I}$ and $\mathcal{V} = \{ V_{i} \to Y\}_{i\in I}$ in $\mathcal{C}$, and morphisms $g_{i}: U_{i} \to V_{i},i\in I$ making the following square commute:
\begin{equation}\label{appendix:diagram_of_morphism_in_D}\begin{tikzcd}
	{U_{i}} & {V_{i}} \\
	X & Y.
	\arrow["{g_{i}}", from=1-1, to=1-2]
	\arrow[from=1-1, to=2-1]
	\arrow[from=1-2, to=2-2]
	\arrow["f"', from=2-1, to=2-2]
\end{tikzcd}
\end{equation}

Note that for any presheaf $\mathcal{F}$ on $\mathcal{C}_{/Y}$ there is a map
\begin{equation*}
    \check{H}^{0}(\mathcal{V}; \mathcal{F}) \to \check{H}^{0}(\mathcal{U}; f^{*}\mathcal{F}),
\end{equation*}
which is natural with respect to $\mathcal{F}$. This map is induced by the following composition
\begin{equation*}
    \mathcal{F}(V_{i} \to Y) \to \mathcal{F}(U_{i} \to Y) \to f^{*}\mathcal{F}(U_{i} \to X),
\end{equation*}
where the first morphism is induced by $g_{i}$ from the commutative diagram (\ref{appendix:diagram_of_morphism_in_D}) and the second map is induced by sheafification.

We will now define a new category, which we denote $\mathcal{D}$. Its objects are pairs $(\mathcal{U}, \mathcal{F})$ where $\mathcal{U}$ is an admissible cover of some $X\in \mathcal{C}$, and $\mathcal{F}\in\mathop{Sh}_{X}$. Morphisms exists only between $(\mathcal{U}, f^{*}\mathcal{F})$ and $(\mathcal{V}, \mathcal{F})$ where $f: X\to Y$ is an arbitrary morphism in $\mathcal{C}$, $\mathcal{U} = \{U_{i} \to X\}_{i \in I}$, respectively $\mathcal{V} = \{ V_{i} \to Y\}_{i\in I}$ is an admissible cover of $X$, respectively $Y$, and $\mathcal{F}\in\mathop{Sh}_Y$, in which case the morphisms are given by collections $(g_{i}: U_{i} \to V_{i})_{i\in I}$ such that (\ref{appendix:diagram_of_morphism_in_D}) commutes.

\begin{theorem}\label{appendix:Mayer_Vietoris_main_thrm}
    The assignment $\mathcal{D} \ni (\mathcal{U}, \mathcal{F}) \mapsto (E_{r}^{p,q})\Longrightarrow H^{p+q}(X; \mathcal{F})$ where $E_{2}^{p,q}\cong \check{H}^{p}(\mathcal{U}; \mathcal{F})$ is a functor from $\mathcal{D}$ to spectral sequences of Abelian groups. 
    
    In particular, suppose we have a morphism $f: X\to Y$ in $\mathcal{C}$, admissible covers $\mathcal{U} = \{ U_{1}, U_{2}\},\ \mathcal{V} = \{ V_{1}, V_{2}\}$ of $X$ and $Y$ respectively, and arrows $g_i:U_i\to V_i,i=1,2$ in $\mathcal{C}$ such that (\ref{appendix:diagram_of_morphism_in_D}) commutes. Then this data induces a morphism of the corresponding Mayer-Vietoris long exact sequences.
\end{theorem}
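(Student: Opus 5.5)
The plan is to realize the spectral sequence of Theorem~\ref{thm_mv} as a Grothendieck spectral sequence that is functorial in the input data. We must show two things: that a morphism in $\mathcal{D}$ induces a morphism of spectral sequences compatible with the abutments $H^\bullet(Y;\mathcal{F})\to H^\bullet(X;f^*\mathcal{F})$, and that this assignment respects composition and identities.

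\textbf{Step 1: the basic natural transformation.} Fix a morphism $(g_i)\colon(\mathcal{U},f^*\mathcal{F})\to(\mathcal{V},\mathcal{F})$ over $f\colon X\to Y$. The composite maps $\mathcal{F}(V_{i_0}\times_Y\cdots\times_Y V_{i_p})\to f^*\mathcal{F}(U_{i_0}\times_X\cdots\times_X U_{i_p})$ are induced by the $g_i$ (using the universal property of fibre products) followed by the sheafification map. These give a map of \v{C}ech complexes
$$C^\bullet(\mathcal{V};\mathcal{G})\to C^\bullet(\mathcal{U};f^*\mathcal{G}),$$
natural in presheaves $\mathcal{G}$ on $\mathcal{C}_{/Y}$. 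In degree $0$ it is the map $\check{H}^0(\mathcal{V};-)\to\check{H}^0(\mathcal{U};f^*-)$ written down before the theorem. By admissibility, on sheaves this becomes the usual restriction $\Gamma(Y;\mathcal{F})\to\Gamma(X;f^*\mathcal{F})$.

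\textbf{Step 2: resolutions.} Choose an injective resolution $\mathcal{F}\to\mathcal{I}^\bullet$ in $\mathop{Sh}_Y$ and an injective resolution $f^*\mathcal{F}\to\mathcal{J}^\bullet$ in $\mathop{Sh}_X$. Since $f^*$ is exact (sheafification is exact and the presheaf pullback is exact), $f^*\mathcal{I}^\bullet$ is a resolution of $f^*\mathcal{F}$. It therefore maps to $\mathcal{J}^\bullet$, uniquely up to homotopy, over the identity of $f^*\mathcal{F}$. Now form the double complexes $C^\bullet(\mathcal{V};\mathop{Forget}\mathcal{I}^\bullet)$ and $C^\bullet(\mathcal{U};\mathop{Forget}\mathcal{J}^\bullet)$. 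The standard construction of the Grothendieck spectral sequence for $\check{H}^0(\mathcal{U};-)\circ\mathop{Forget}$ uses a Cartan--Eilenberg resolution, but the \v{C}ech double complex suffices, because of two facts.
\begin{itemize}
\item $\mathop{Forget}$ sends injective sheaves to presheaves that are $\check{H}^0$-acyclic. This is \cite[Lemma 59.18.x]{Stacks}; the proof there, namely that injective sheaves have vanishing higher \v{C}ech cohomology for a family along which the sheaf axiom holds, is what admissibility buys us.
\item Filtering the double complex by the \v{C}ech degree gives $E_2^{p,q}=\check{H}^p(\mathcal{U};\mathcal{H}^q(\mathcal{F}))$, while the other filtration shows the total complex computes $H^\bullet(X;\mathcal{F})$.
\end{itemize}
Composing Step 1 with $f^*\mathcal{I}^\bullet\to\mathcal{J}^\bullet$ gives a map of double complexes, hence a morphism of the filtered total complexes. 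This yields a morphism of spectral sequences on every page and on the filtered abutment. On $E_2$ it is the map induced on \v{C}ech cohomology by the restriction maps $H^q(V_{i_0\ldots i_p};\mathcal{F})\to H^q(U_{i_0\ldots i_p};f^*\mathcal{F})$.

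\textbf{Step 3: independence of choices and functoriality.} The main obstacle is showing that the morphism of spectral sequences does not depend on the chosen resolutions and comparison map. Two comparison maps are chain homotopic, and a homotopy of resolutions induces a homotopy of double complexes that respects the \v{C}ech filtration, because the homotopy only moves the resolution degree. It therefore induces the same map from $E_1$ on, and the same map on the filtered abutment. I would first try checking this directly on the double complex rather than invoking Cartan--Eilenberg machinery. The same uniqueness-up-to-homotopy argument then shows that composites in $\mathcal{D}$ go to composites and identities go to identities, which gives the functor.

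\textbf{Step 4: the Mayer--Vietoris statement.} For two-element covers, use the ordered \v{C}ech complex. The spectral sequence then has only the two columns $p=0,1$, so it degenerates to the short exact sequences~(\ref{exact_seq}). A morphism of spectral sequences gives compatible maps of these short exact sequences. The connecting maps of the glued long exact sequence are built from $E_2^{0,n}$ and $E_2^{1,n}$ and the $d$ of the \v{C}ech complex, all of which are preserved by our maps. Hence $f$ induces a morphism of the Mayer--Vietoris long exact sequences.
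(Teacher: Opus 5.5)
Your proof is correct, but it is organized differently from the paper's.

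\textbf{What the paper does.} It regards the spectral sequence of Theorem~\ref{thm_mv} as the Grothendieck spectral sequence of $\check{H}^{0}(\mathcal{U};-)\circ\mathop{Forget}$. It then settles the theorem in one line by citing functoriality of Grothendieck spectral sequences with respect to natural transformations.

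\textbf{What you do.} You model the spectral sequence by the \v{C}ech double complex $C^{\bullet}(\mathcal{U};\mathcal{J}^{\bullet})$, filtered by \v{C}ech degree. You then prove functoriality by hand, using the comparison map $f^{*}\mathcal{I}^{\bullet}\to\mathcal{J}^{\bullet}$ and a homotopy that preserves the \v{C}ech filtration.

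\textbf{What each route buys.} Yours is more explicit, and it makes visible something the paper's one-liner glosses over. A morphism of $\mathcal{D}$ changes three things at once: the site, the sheaf (through $f^{*}$), and the outer functor ($\check{H}^{0}(\mathcal{V};-)$ versus $\check{H}^{0}(\mathcal{U};-)$). This is not literally a natural transformation of $G$. Your argument needs only exactness of $f^{*}$ and the comparison theorem, and it avoids Cartan--Eilenberg resolutions. It also identifies the maps on $E_{1}$, $E_{2}$ and the filtered abutment concretely, which is exactly what the five-lemma arguments in Section~\ref{section_examples} use. The paper's route buys brevity and produces literally the Grothendieck spectral sequence of Theorem~\ref{thm_mv}. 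With your route you should either note that the double-complex spectral sequence agrees with that one from $E_{2}$ on, or simply take your model as the definition. The latter is harmless, since later only $E_{2}$, the abutment and functoriality are used.

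\textbf{Two small corrections.}
\begin{itemize}
\item The vanishing $\check{H}^{p}(\mathcal{U};\mathcal{J})=0$ for $p>0$ and $\mathcal{J}$ an injective sheaf holds for every family, not just admissible ones. The reason is that $\mathop{Forget}$ has the exact left adjoint $\mathop{Sh}$, so it sends injective sheaves to injective presheaves. Admissibility is used only to get $\check{H}^{0}(\mathcal{U};\mathcal{J})=\Gamma(X;\mathcal{J})$, that is, to identify the abutment.
\item Your justification of exactness of $f^{*}$ only gives left exactness, because $\mathop{Forget}$ is merely left exact in the composite $\mathop{Sh}\circ f^{*}_{\mathrm{pre}}\circ\mathop{Forget}$. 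Right exactness comes from $f^{*}$ being left adjoint to $f_{*}$.
\end{itemize}
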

\begin{proof}
    The theorem follows immediately from the discussion above and the functoriality of the Grothendieck spectral sequence $R^{p}F\circ R^{q}G \Longrightarrow R^{p+q}(F\circ G)$ with respect to natural transformations of $G$.
\end{proof}
\end{appendices}

\begin{flushleft}
Alexey Gorinov: Faculty of Mathematics, Higher School of Economics, email: \url{agorinov@hse.ru}, \url{gorinov@mccme.ru}.
\end{flushleft}
\begin{flushleft}
Egor Kosolapov: Higher School of Modern Mathematics, Moscow Institute of Physics and Technology, email: \url{kosolapov.es@phystech.edu}.
\end{flushleft}

\end{document}